\newcommand{\eproof}{\hfill$\square$}
\numberwithin{equation}{section}
\numberwithin{figure}{section}
\theoremstyle{plain}
\newtheorem{thm}{\protect\theoremname}[section]
  \theoremstyle{definition}
  \theoremstyle{remark}
  \theoremstyle{plain}
  \newtheorem{lem}[thm]{\protect\lemmaname}
  \theoremstyle{plain}
  \newtheorem{prop}[thm]{\protect\propositionname}
  \theoremstyle{plain}
  \newtheorem{cor}[thm]{\protect\corollaryname}
  \providecommand{\definitionname}{Definition}
  \providecommand{\lemmaname}{Lemma}
  \providecommand{\propositionname}{Proposition}
  \providecommand{\remarkname}{\rm\textbf{Remark}}
\providecommand{\theoremname}{Theorem}
\providecommand{\corollaryname}{Corollary}
\providecommand{\examplename}{Example}
\begin{document}
\title
{Frame set for Gabor systems with Haar window}
\author{Xin-Rong Dai}

\address{[Xin-Rong Dai] School of Mathematics, Sun Yat-sen University, Guangzhou, 510275,
P.R. China }

\email{daixr@mail.sysu.edu.cn}

\author{Meng Zhu}

\address{[Meng Zhu] School of Mathematics, Sun Yat-sen University, Guangzhou, 510275,
P.R. China }

\email{zhum53@mail2.sysu.edu.cn}

\begin{abstract}

We show the full structure of the frame set for the  Gabor system $\mathcal{G}(g;\alpha,\beta):=\{e^{-2\pi i m\beta\cdot}g(\cdot-n\alpha):m,n\in\Bbb Z\}$ with the window being the Haar function $g=-\chi_{[-1/2,0)}+\chi_{[0,1/2)}$. The  strategy of this paper is to introduce the piecewise linear transformation $\mathcal{M}$ on the unit circle, and to  provide a complete characterization of  structures for its (symmetric) maximal invariant sets. This transformation is related to the famous three gap  theorem of Steinhaus  which may be of independent interest.  Furthermore, a classical criterion on Gabor frames is improved, which  allows us to establish  {a} necessary and sufficient condition for the Gabor system $\mathcal{G}(g;\alpha,\beta)$ to be a frame, i.e.,  the symmetric invariant set of the transformation $\mathcal{M}$ is empty.

Compared with the  previous studies, the present paper provides a self-contained environment to study Gabor frames by a new perspective, which includes that the techniques developed here are new  and all the proofs could be understood thoroughly by the readers without reference to the known results in the previous literature.
 \end{abstract}

\subjclass[2010]{Primary 42C15, 42C40; Secondary 28D05, 37A05, 94A20.}

\keywords{Gabor frame, Haar function,
piecewise linear {transformation}, maximal invariant set, symmetric maximal invariant set}

\maketitle
\tableofcontents{}

\section{Introduction}

\vspace{0.15cm}
 Let $g\in L^2:=L^2(\Bbb R)$ and $\alpha,\beta>0$. The \emph{Gabor system} $\mathcal{G}(g;\alpha,\beta)$ with the \emph{window function} (or \emph{window} for short)  $g$ and \emph{time--frequency parameters} $\alpha$ and $\beta$, is defined by
\begin{equation}\label{eq.Gabor-System}
\mathcal{G}(g;\alpha,\beta):=\{e^{-2\pi im\beta x}g(x-n\alpha): m,n\in\Bbb Z\},
\end{equation}
  and  we call that the Gabor system $\mathcal{G}(g;\alpha,\beta)$ a \emph{Gabor frame}
if it forms a frame for $L^2$, i.e., there exist positive constants $A$ and $B$ such that
\begin{equation}\nonumber
A\| f\|_2^2\le  \sum_{m,n\in \Bbb Z}\mid\langle f,e^{-2\pi im\beta\cdot}g(\cdot-n\alpha)\rangle\mid^2      \le B\| f\|_2^2\ \ \mbox{for all } f\in L^2.
\end{equation}

 By $\Bbb N$, $\Bbb Q_+$ and $\Bbb R_+$ we denote the set of all natural numbers, positive rational numbers and positive real numbers.  For $x,y\in\Bbb Q_+$, we use $\gcd(x,y)$ to denote the greatest positive rational number that divides both $x$ and $y$.  The characteristic function on a set $E$ is denoted by $\chi_E$. In this paper, we give the following full description of the frame set  for the Haar function.

 \begin{thm} \label{main1.01}
 Let $(\alpha,\beta)\in {\mathbb R}_+^2$ and
 $\mathcal{G}(H_{1/2};\alpha,\beta)$ be the Gabor system \eqref{eq.Gabor-System} generated by the Haar function $H_{1/2}=-\chi_{[-\frac12,0)}+\chi_{[0,\frac12)}$. Then the following statements hold.
\begin{enumerate}
\item [\rm(i)]
If $\mathcal{G}(H_{1/2};\alpha,\beta)$ is a  frame for $L^2$, then
\begin{equation} \label{main1.01-1}
\alpha\beta\le1\ \mbox{ and }\ \alpha\le 1.
\end{equation}
\item[\rm(ii)] If  $(\alpha,\beta)$ satisfies \eqref{main1.01-1} and $\alpha\beta\not\in\Bbb Q$, then
$\mathcal{G}(H_{1/2};\alpha,\beta)$ is a frame for $L^2$ if and only if $\beta\not\in 2\Bbb N$.
\item[\rm(iii)]  If $(\alpha,\beta)$ satisfies \eqref{main1.01-1} and $\alpha\beta\in\Bbb Q$,
then $\mathcal{G}(H_{1/2};\alpha,\beta)$ is a frame for $L^2$ if and only if
$$
|\beta-2n| \ge {\gcd(n, \alpha\beta)} 
$$
 and
$$
|1/\alpha-2n|\ge {\gcd(n, 1/\alpha\beta)}-(1/\alpha\beta-1)
$$
for all $n\in \Bbb N$.
\end{enumerate}
\end{thm}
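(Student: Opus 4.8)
The plan is to reduce the frame property of $\mathcal{G}(H_{1/2};\alpha,\beta)$ to the emptiness of an explicit compact subset of the circle $\mathbb{T}=\mathbb{R}/\mathbb{Z}$ that is invariant both under a piecewise linear transformation $\mathcal{M}=\mathcal{M}_{\alpha,\beta}$ and under $x\mapsto -x$, and then to classify completely these symmetric maximal invariant sets. Part (i) is handled separately: $\alpha\beta\le 1$ is the density bound (Ramanathan--Steger / Rieffel) valid for any window, and $\alpha\le 1$ follows from $H_{1/2}$ being supported on the interval $[-1/2,1/2)$ of length $1$; if $\alpha>1$ the translates $\operatorname{supp}H_{1/2}(\cdot-n\alpha)$ leave a slab of $\mathbb{R}$ uncovered, so any function supported there annihilates the whole system and the lower frame bound fails. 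In the rest of the argument one therefore assumes \eqref{main1.01-1}.

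For the main direction I would first set up the operator-theoretic reduction. The Gabor frame operator $S$ commutes with the time--frequency shifts $\pi(n\alpha,m\beta)$, hence is unitarily equivalent to a measurable field of operators over $\mathbb{T}$ (via the Zak transform when $\alpha\beta\in\mathbb{Q}$, via Ron--Shen fiberization in general), and $\mathcal{G}(H_{1/2};\alpha,\beta)$ is a frame if and only if these fibers are uniformly bounded below. Since $H_{1/2}$ has compact support and $\alpha\beta\le1$, the Walnut representation of $S$ is a finite band sum, so each fiber is a genuine matrix (finite when $\alpha\beta$ is rational) whose entries are piecewise linear functions of the circle variable, with breakpoints exactly at the points provided by the three gap theorem for the rotation by $\alpha\beta$ (or its reciprocal); this is the place where the classical frame criterion gets \emph{improved}, namely the uniform lower bound is shown to be equivalent to pointwise invertibility of every fiber. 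A fiber degenerates precisely when its parameter lies in a certain ``bad'' set $\Lambda$; an explicit computation with the two plateaus $\pm\chi$ of the Haar window shows that $x\in\Lambda$ forces a companion point $\mathcal{M}(x)\in\Lambda$, and the oddness of $H_{1/2}$ forces $-x\in\Lambda$. Thus $\Lambda$ sits inside the symmetric maximal invariant set $\Omega$ of $\mathcal{M}$, and conversely every point of $\Omega$ is realized by a degenerate fiber, so that $\mathcal{G}(H_{1/2};\alpha,\beta)$ is a frame if and only if $\Omega=\emptyset$.

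It then remains to decide when $\Omega=\emptyset$ in terms of $\alpha,\beta$. If $\alpha\beta\notin\mathbb{Q}$, the rotation component of $\mathcal{M}$ is uniquely ergodic, which forces $\Omega$ to be either $\emptyset$ or all of $\mathbb{T}$; following the affine pieces of $\mathcal{M}$ shows $\Omega=\mathbb{T}$ exactly when the frequency-side gaps of $H_{1/2}$ resonate with the lattice, which unwinds to $\beta\in 2\mathbb{N}$, giving (ii). If $\alpha\beta=p/q\in\mathbb{Q}$, the map $\mathcal{M}$ is an interval-exchange-type transformation of finitely many arcs, its maximal invariant set is a finite union of periodic orbits, and $\Omega=\emptyset$ becomes a finite list of arithmetic inequalities. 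One family comes from $\mathcal{M}$ itself and reads $|\beta-2n|\ge\gcd(n,\alpha\beta)$; the other comes from the dual system $\mathcal{G}(H_{1/2};1/\beta,1/\alpha)$ (a Riesz sequence, by Ron--Shen duality), in which the roles of $\beta$ and $\alpha\beta$ are taken by $1/\alpha$ and $1/(\alpha\beta)$, and reads $|1/\alpha-2n|\ge\gcd(n,1/(\alpha\beta))$ up to the correction $-(1/(\alpha\beta)-1)$ caused by the Haar support having length exactly $1$ rather than $1/\beta$, so that the dual window and dual lattice are slightly mismatched. Together these give (iii).

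The main obstacle is the middle step: extracting $\mathcal{M}$ in closed form from the Haar fibers and proving the full structure theorem for its symmetric maximal invariant sets. The arc combinatorics of $\mathcal{M}$ varies with the continued fraction expansion of $\alpha\beta$, so a uniform description --- and the verification that the only obstructions are those recorded by the two gcd inequalities, with the precise boundary correction --- is delicate; making the improved criterion yield a genuine uniform lower bound (not just pointwise non-degeneracy), particularly at the critical density $\alpha\beta=1$, also needs care. The remaining work is bookkeeping with the Walnut sum, the three gap theorem, and elementary arithmetic of $\alpha\beta$ and $1/(\alpha\beta)$.
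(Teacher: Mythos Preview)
Your overall architecture --- reduce the frame property to emptiness of a symmetric maximal invariant set $\mathcal{E}$ for a piecewise linear circle map $\mathcal{M}$, then classify nonempty $\mathcal{E}$ --- is exactly the paper's, and part (i) is correct. But two of the specific mechanisms you propose would not go through as written. For the irrational case, the dichotomy ``$\Omega=\emptyset$ or $\Omega=\mathbb{T}$'' is false for the map that actually arises: the paper's $\mathcal{M}$ fixes an interval $\mathbf{H}$ pointwise, so any invariant set avoiding $\mathbf{H}$ is a proper subset of $\mathbb{T}$, and unique ergodicity of a ``rotation component'' does not directly apply. What the paper does instead is build a squeezing bijection $Y\colon\mathcal{S}\to\mathbb{T}$ conjugating $\mathcal{M}|_{\mathcal{S}}$ to the rotation by $Y(\alpha)$ (Theorem~\ref{thm-mod-1}), show $Y(\alpha)\notin\mathbb{Q}\Rightarrow\mathcal{E}=\emptyset$ by orbit density (Theorem~\ref{thm-exist-e-1}), and separately tie $Y(\alpha)\in\mathbb{Q}$ to $a=\alpha\beta\in\mathbb{Q}$ (Lemma~\ref{lem-Y-map}). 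The exceptional $\beta\in2\mathbb{N}$ (i.e.\ $c\in\mathbb{N}$ after normalization) is handled outside the invariant-set machinery by a one-line obstruction (Theorem~\ref{main0}(i): the constant sequence $q_j\equiv1$ annihilates every row).

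Your derivation of the second inequality family via Ron--Shen duality with an ad hoc correction $-(1/(\alpha\beta)-1)$ is also not what the paper does, and the correction has no justification in your sketch. In the paper both inequality families come from a \emph{single} classification (Proposition~\ref{thm-exist-e++}): the first from $\mathcal{S}\subset U$ or $\mathcal{S}\subset V$, the second from $\mathcal{S}$ meeting both, where a three-type structure theorem for $\mathcal{S}$ (Theorem~\ref{thm-4-struct}) and a disturbance argument (Proposition~\ref{prop-delta-tur}) determine $\mathcal{E}$ explicitly; the correction term emerges from the measure identity $|\mathcal{S}|=1-(M-1)\Delta$ with $\Delta=(1-a)/a$ after the normalization $a=\alpha\beta$, $c=\beta/2$. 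Finally, your ``improved criterion'' needs to be made precise: the paper's version (Theorem~\ref{bound-sequence-0}) replaces the $\ell^2$ null vector in Ron--Shen by a nonzero \emph{bounded} sequence at a single base point, and the passage from failure of a uniform lower bound to such a pointwise bounded obstruction is the content of a localization lemma (Lemma~\ref{lem-ell2-contral}) plus a weak-$*$ compactness extraction.
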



\subsection {Background and   reformulation of Theorem \ref{main1.01}}
The conception of the Gabor frame was originated with two historical problems  on the Gabor system with Gaussian window discussed by von Neumann\cite{neumannbook} in 1932 and Gabor\cite{gabor46} in 1946, and the frame theory introduced by Duffin and Schaeffer \cite{duffintams52} in 1952.
The Gabor system can usually be viewed as a Schr\"{o}dinger representation on the Weyl-Heisenberg group equipped with the Haar measure, and therefore Gabor frame is also known as Weyl-Heisenberg frame, see \cite{dgm86}, \cite{grochenigboo} and \cite{C-heil} and references therein.
A landmark paper\cite{dgm86} was published in 1986, where Daubechies, Grossmann and Meyer proved that  for each pair $(\alpha,\beta)\in {\mathbb R}_+^2$ with $\alpha\beta<1$, there exists  a compactly supported smooth function $g$ such that  $\mathcal{G}(g;\alpha,\beta)$ is a Gabor frame. Since then, Gabor theory has been received great attentions.
The readers may refer to the surveys  by Casazza\cite{casazzasurvey}, Gr\"{o}chenig\cite{grochenigboo} and Heil\cite{heilsurvey} for historical remarks and its interaction to a wide range of mathematical fields, such as operator algebra and complex analysis, and many engineering applications involving time-dependent frequency content.

\vspace{0.2cm}

A fundamental problem in Gabor analysis is to determine the \emph{frame set}
$$
\mathcal{F}(g)=\{(\alpha,\beta)\in\Bbb R^2_+:\ \mathcal{G}(g;\alpha,\beta) \ \ \mbox{is a Gabor frame} \ \}
$$
for a given window function $g\in L^2$.  Well-known results on the frame set include its openness for window functions in the Feichtinger algebra \cite{feichtinger83,feichtinger04}, and the following density requirement
\begin{equation}\nonumber
\mathcal{F}(g)\subset \{(\alpha,\beta)\in\Bbb R^2_+: \alpha\beta\le1\}
\end{equation}
for arbitrary window function $g\in L^2$
\cite{heilsurvey,grochenigbook,daubechiesbook}.
 The first breakthrough on the frame set is that $\mathcal{F}(g)=\{(\alpha,\beta)\in\Bbb R^2_+: \alpha\beta< 1\}$ for the Gaussian window $g=e^{-x^2}$, which was conjectured by Daubechies and Grossmann\cite{dg88} and
confirmed by Lyubarskii \cite{lyubarskii92} and Seip and Wallst\'en\cite{seip92,seipwallsten92} independently. Later  on, Janssen and Strohmer \cite{janssenstrohmer,janssen96,janssenjfa03} proved that the window functions $(e^x+e^{-x})^{-1}, (1+x^2)^{-1}$ and $e^{-|x|}$  share the same frame set with the Gaussian window, and the frame sets for $e^{-x}\chi_{\Bbb R_+}(x)$ and $(1+ix)^{-1}$ are $\{(\alpha,\beta)\in\Bbb R^2_+: \alpha\beta\le1\}$.
A recent significant progress was made by Gr\"{o}chenig, Romero and St\"{o}ckler  \cite{grochenigstockler,grochenigrs}, where they proved that the frame sets for a class of totally positive functions are $\{(\alpha,\beta)\in\Bbb R^2_+: \alpha\beta<1\}$  and indicated that all the window functions mentioned above,  with the exception of the hyperbolic secant window $(e^x+e^{-x})^{-1}$, are either  totally positive functions or their slight modifications. In 2021, Belov, Kulikov and Lyubarskii \cite{bkl} obtained some deep results on the frame sets for Herglotz functions and rational functions.

\vspace{0.2cm}

Gabor frames with compact windows (i.e., window functions are compactly supported) occupy  important positions in Gabor theory due to their natural time-frequency localizations\cite{daubechiesIEEE}, and they are widely studied in recent years, see \cite{chris-kim,chris-kim-1,lemvig-N,grochenigbook} and references therein.
 Among them the known necessary condition for the Gabor system $\mathcal{G}(g;\alpha,\beta)$  with a compact window $g$  being  a frame for $L^2$ is that the support of its $\alpha-$shifts should cover the whole line, and hence 
\begin{equation}\label{frame-set-2}
\mathcal{F}(g)\subset \{(\alpha,\beta)\in\Bbb R^2_+: \alpha\beta\le1 \mbox{ and } \alpha\le |\mbox{supp } g|\},
\end{equation}
where $\mbox{supp } g$ denotes the closed interval with minimal length that contains the support of the window function $g$.
The only compact window that the frame set was totally determined is the characteristic function on an interval.
It was first considered by Janssen\cite{janssen03}, and then, after the contribution of Gu and Han \cite{hangu}, Sun and the first author\cite{D-S} provided a complete characterization to the frame set by 14 subfamilies. 


\vspace{0.2cm}

 As stated in our main result Theorem \ref{main1.01}, the main purpose of this paper is to explore the frame set of the Haar function $g(x)=H_{1/2}(x)$. Throughout this paper, we always denote  $H(x)=-\chi_{[-1,0)}(x)+\chi_{[0,1)}(x)$ and
\begin{equation}\label{haar-function}
H_c(x)=-\chi_{[-c,0)}(x)+\chi_{[0,c)}(x)\ \ \mbox{for all }\ c>0.
\end{equation}
 It is well known that the Haar function  plays special roles in scientific fields, and the Haar wavelet basis generated by the  Haar function has been widely used in many engineering fields related to information science.  Therefore, we naturally expect that the Gabor system generated by the Haar window function should have considerable impact on basic and applied sciences.


\vspace{0.2cm}

 For $g\in L^2$ and scale $c>0$, define
\begin{equation}\nonumber
g_c(x):=g(x/ c),
\end{equation}
and the Gabor system with normalized shift $\beta=1$ by
\begin{equation}\label{new-gabor-sys}
\mathcal{G}(g;\alpha):=\mathcal{G}(g;\alpha,1)=\{e^{-2\pi imx}g(x-n\alpha): m,n\in\Bbb Z\}.
\end{equation}
Observe that for any $c>0$, the Gabor system $\mathcal{G}(g;\alpha,\beta)$ with $(\alpha,\beta)\in\Bbb R_+^2$ is a frame for $L^2$ if and only if
$\mathcal{G}(g_{c};\alpha c,\beta/c)$ is a frame for $L^2$.  Thus,
 by taking $c=\beta/2$ and $a=\alpha\beta$,
the full description of the frame set of the Haar window function $H_{1/2}(x)$  is reduced to finding parameter pairs $(a,c)\in\Bbb R^2_+$ such that the normalized Gabor system $\mathcal{G}(H_c;a)$ is a frame for $L^2$.
Furthermore, according to \eqref{frame-set-2},  the assertion (i) of Theorem \ref{main1.01} holds, and hence it suffices  to consider the parameter pair  $(a,c)$ satisfying
\begin{equation}\label{frame-set-3}
0<a\le \min(1 ,2 c).
\end{equation}
 In view of the facts mentioned above, one can formulate the following statement that is equivalent to the assertions (ii) and (iii) of  Theorem \ref{main1.01}.

\vspace{0.2cm}

For simplicity, we denote a non-negative rational number $x$ by {simple fraction} $p/q$, which means that $p,q$ are co-prime positive integers if $x\not\in\Bbb Z$, and $p=x$ and $q=1$ if $x\in\Bbb Z$.
%
%

\begin{thm} \label{main1}
Let $a,c$ satisfy \eqref{frame-set-3},
 $H_c$ be as in \eqref{haar-function} and $\mathcal{G}(H_c;a)$ be the Gabor system in \eqref{new-gabor-sys}. Then the following statements hold.
\begin{enumerate}
\item[\rm(i)] If 
$a\not\in\Bbb Q$, then
$\mathcal{G}(H_c;a)$ is not a  frame for $L^2$ if and only if $c\in \Bbb N$.

\item[\rm(ii)] If 
$a=p/q$ is a simple fraction,
then $\mathcal{G}(H_c;a)$ is not a  frame for $L^2$ if and only if either
\begin{equation}
\label{main1.eq1}
|c-n| < \frac{\gcd(n, p)}{2q}\ \mbox{ for some }\ n\in\Bbb N
\end{equation}
 or
\begin{equation}\label{main1.eq2}
|c-\frac{np}q|< \frac{\gcd(n, q)}{2q}-\frac{q-p}{2q}\ \mbox{ for some }\ n\in\Bbb N\setminus q\Bbb N.
\end{equation}

\end{enumerate}
\end{thm}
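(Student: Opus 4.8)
Since $H_c$ is bounded with support contained in $[-c,c]$, the upper frame bound of $\mathcal G(H_c;a)$ is automatic, so the entire content of the theorem is the presence or absence of a positive lower frame bound. My plan is to turn this analytic question into a pointwise non-degeneracy statement through the fiberization of the frame operator, then into a dynamical statement about the circle map $\mathcal M$, and finally to read \eqref{main1.eq1}--\eqref{main1.eq2} off the structure of the (symmetric) invariant sets of $\mathcal M$. First I would invoke the classical reduction of the lower frame bound to an invertibility condition for the fiberized frame operator --- the Zibulski--Zeevi matrix $\Phi_{a,c}$ when $a=p/q\in\Bbb Q$, and its natural analogue for irrational $a$ --- whose entries are translated and modulated Zak transforms of $H_c$; since $H_c$ is a signed indicator, these entries depend on the fiber variable in a piecewise-constant, piecewise-linear fashion. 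Here the improved Gabor-frame criterion announced in the introduction is essential: it upgrades ``$\Phi_{a,c}$ boundedly invertible for a.e.\ fiber'' to ``$\Phi_{a,c}$ of full rank at every point of a distinguished closed set of fibers'', so that $\mathcal G(H_c;a)$ fails to be a frame exactly when some genuine fiber makes $\Phi_{a,c}$ drop rank. An analytic norm estimate has thereby become a finite incidence problem about intervals on the line.

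Next I would realize this incidence problem dynamically. The translate $H_c(\cdot-na)$ jumps at $na-c$, $na$, $na+c$; projecting these points to $\Bbb R/\Bbb Z$ and tracking which translates are active over a moving fiber packages the data of the previous step into the piecewise-linear transformation $\mathcal M$ on $\Bbb R/\Bbb Z$, whose breakpoints are generated by $a\Bbb Z\bmod 1$ together with $\pm c\bmod 1$ --- this is precisely where the three gap theorem of Steinhaus intervenes, governing the number and lengths of the resulting pieces. I would then show that the rank-drop set of the fiberization, transported to $\Bbb R/\Bbb Z$, is an $\mathcal M$-invariant set, and that the oddness $H_c(-x)=-H_c(x)$ makes the obstruction invariant under $x\mapsto-x$; hence $\mathcal G(H_c;a)$ is \emph{not} a frame if and only if the symmetric maximal invariant set of $\mathcal M$ is non-empty. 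From here one feeds the parameters into the structural classification of (symmetric) maximal invariant sets of $\mathcal M$ developed in the body of the paper, and the remaining task is simply to decide when that classification returns the empty set.

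For part (i), when $a\notin\Bbb Q$ the rotation by $a$ on $\Bbb R/\Bbb Z$ is minimal, so any non-empty $\mathcal M$-invariant set must fill out a full orbit closure; a direct check then shows this occurs exactly when $c\in\Bbb N$, in which case the obstruction can be exhibited explicitly, its source being the exact cancellation $\sum_{j\in\Bbb Z}H_c(\cdot-j)\equiv 0$ (equivalently, $\widehat{H_c}$ vanishing on $\Bbb Z$) enjoyed only by integer-scale Haar windows, while for $c\notin\Bbb N$ equidistribution of $\{na\bmod 1\}$ forces emptiness. For part (ii), when $a=p/q$ the breakpoints of $\mathcal M$ are rational, its maximal invariant set is finite and periodic, and its non-emptiness reduces to finitely many linear inequalities among $p,q,c,n$. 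Separating the two mechanisms that keep a periodic orbit alive --- one in which a translate $H_c(\cdot-na)$ sits nearly centered over a fiber, with the distance from $c$ to the integer $n$ measured against the three-gap length $\gcd(n,p)/(2q)$, and one coming from the ``wrap-around'' translates indexed by $n\notin q\Bbb N$, with the distance from $c$ to $na=np/q$ measured against $\gcd(n,q)/(2q)$ corrected by the defect $(q-p)/(2q)$ recording $a\ne 1$ --- yields respectively the strict inequalities \eqref{main1.eq1} and \eqref{main1.eq2}; a separate treatment of the boundary cases (equality in either, where the pointwise criterion is decisive) confirms strictness and shows that for $q\mid n$ the second mechanism is subsumed by the first, which is the reason $n$ is restricted to $\Bbb N\setminus q\Bbb N$ in \eqref{main1.eq2}.

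The main obstacle is the rational case of the last stage: isolating the correct two obstruction families, matching the purely dynamical statement ``the symmetric maximal invariant set is non-empty'' with this precise pair of $\gcd$-inequalities, and getting the endpoints right --- strict versus non-strict, and the $q\Bbb N$ exclusion. A close second is making the first reduction fully rigorous for this discontinuous window, in particular justifying the passage from an a.e.\ lower bound to a genuinely pointwise rank criterion; it is exactly this upgrade that allows a single bad fiber to decide the problem.
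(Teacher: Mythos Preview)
Your overall architecture matches the paper's: reduce the frame question to a pointwise linear-dependence criterion (the paper uses an improved Ron--Shen/bounded-sequence criterion, Theorem~\ref{bound-sequence-0}, rather than Zibulski--Zeevi matrices, but the two are close cousins), translate this into emptiness of the symmetric maximal invariant set $\mathcal E$ of the piecewise-linear map $\mathcal M$ (Theorem~\ref{thm-gabor-eq-e-set}), and read off the parameter conditions from the structural classification of $\mathcal E$ (Proposition~\ref{thm-exist-e++}).

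There is, however, a genuine gap in your argument for part~(i). The transformation $\mathcal M$ is \emph{not} rotation by $a$: its parameters are $\alpha=\langle\lfloor c\rfloor/a\rangle$ and $x_1,x_2$ as in \eqref{alpha-xx}--\eqref{alpha-xx-0}, and it translates by the \emph{different} amounts $\alpha+x_2-x_1$ on $U$ and $\alpha$ on $V$. Minimality of $t\mapsto t+a$ says nothing about $\mathcal M$-invariant sets; indeed the maximal invariant set $\mathcal S$ can be nonempty when $a\notin\Bbb Q$. What must be shown is that the \emph{symmetric} set $\mathcal E$ is empty, and the paper's route is two nontrivial steps you have skipped: first, $\mathcal E\ne\emptyset$ forces $Y(\alpha)\in\Bbb Q$ for the squeezing map $Y$ (Theorem~\ref{thm-exist-e-1}, which uses density of $\{nY(\alpha)\}$ inside the renormalized dynamics, not of $\{na\}$); second, for the specific $\alpha,x_1,x_2$ coming from $(a,c)$ one has $Y(\alpha)\in\Bbb Q\Longleftrightarrow a\in\Bbb Q$ (Lemma~\ref{lem-Y-map}), whose proof is a genuine computation exploiting the structure of $\mathcal S$. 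Your sentence ``equidistribution of $\{na\bmod 1\}$ forces emptiness'' conflates these and would not go through as written.

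A second, smaller gap: the $\mathcal M$-machinery is set up only for $0<a<1<c$ with $c\notin\Bbb N$ (see \eqref{a-c-condition-1.3}). The regimes $c\in\Bbb N$, $a=1$, and $a/2\le c<1$ are handled separately and elementarily in Theorem~\ref{main0}, directly from the bounded-sequence criterion; in particular the case $c<1$ in part~(ii) (which accounts for \eqref{main1.eq1} with $n=1$) does not come from the invariant-set analysis at all. Your proposal does not isolate these boundary cases.
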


\vspace{0.2cm}

It is not difficult to verify that     \eqref{main1.eq1} and \eqref{main1.eq2} have no intersection since we have got rid of $n\in q\Bbb N$ in \eqref{main1.eq2} which coincides to \eqref{main1.eq1} with $n\in p\Bbb N$.

\vspace{0.2cm}

 Roughly speaking, with the exception of some special cases (see Theorem \ref{main0}), the approach to Theorem \ref{main1} is based on establishing a link between the frame properties for the Gabor system $\mathcal{G}(H_c;a)$ and the symmetric maximal invariant set $\mathcal{E}$ for the piecewise linear transformation $\mathcal{M}$ (see Theorem \ref{thm-gabor-eq-e-set}),   which involves a detailed analysis of the  dynamical  asymptotic behavior of $\mathcal{M}$ and the structure of maximal invariant set $\mathcal{S}$, e.g., see Theorems \ref{thm-s-1}, \ref{thm-mod-1} and Proposition \ref{thm-exist-e++}.
\vspace{0.2cm}

\subsection{Piecewise linear  transformation on the unit circle }\label{subsection 1.2}
  Let $\Bbb T:=\Bbb R/\Bbb Z$ be the unit circle. The famous ``Steinhaus conjecture'' on the  mod 1 distribution said that, for given $\alpha>0$ and  $N\in\Bbb N$, there are at most three arcs of the various lengths on the circle $\Bbb T$  partitioned  by $\{\langle n\alpha\rangle:n=0,1,\ldots, N\}$, where $\langle x\rangle$ denotes the fractional part of number $x$. This conjecture was proved independently by P. Erd\"{o}s, G. Haj\'{o}s, V. T. S\'{o}s, J. Sur\'{a}nyi, S. \'{S}wierczkowski and  P. Sz\"{u}sz, and was well known as the three gap theorem, see \cite{Vos, Suranyi} for details, and also see \cite{HM-norm} and references therein for its recent progresses and extensions.
\vspace{0.2cm}

 Throughout this paper, we will use $[0, 1)$ and also  $(0, 1]$ to represent the unit circle $\Bbb T$, and denote by $\Bbb T_{[0,1)}$ and $\Bbb T_{(0,1]}$ the topological space retaining the topology of $\Bbb T$, in the usual manner.
For $0\le\alpha<1$ and $0\le x_1<x_2\le 1$,  we define the \emph{piecewise linear transformation}  $\mathcal{M}:=\mathcal{M}_{\alpha, x_1,x_2}$ from $\Bbb T_{[0,1)}$ to itself as follows,
\begin{equation}\label{map.def}
\mathcal{M}_{\alpha, x_1,x_2}(t)=\left\{\begin{array} {lll}
\langle t+ \alpha  +x_2-x_1\rangle &
{\rm if} \ t\in [0,x_1)=:U\\
t & {\rm if} \ t\in  [x_1,x_2)=:\textbf{H}\\
\langle t+\alpha\rangle  &
{\rm if} \ t\in [x_2,1)=:V
\end{array}\right.;
\end{equation}
and the \emph{maximal invariant set}  of the map $\mathcal{M}$ is defined to be the maximal set $\mathcal{S}$ for which
\begin{equation}\label{s-set.def}
\mathcal{M}(\mathcal{S})=\mathcal{S}\mbox{ and }\mathcal{S}\subset \Bbb T_{[0,1)}\setminus\textbf{H}.
\end{equation}

%
%
%
%
%
%
%
%
%

\vspace{0.2cm}

Obviously, 
$\mathcal{S}$ consists of the points $t\in \Bbb T_{[0,1)}$ such that the orbit $\{\mathcal{M}^n(t)\}_{n\in \Bbb N}$ will not be absorbed in $\textbf{H}$ when $n$ tends to infinity.
We shall point out that the asymptotic behavior of the points in  $\mathcal{S}$ under the map $\mathcal{M}$ behave like that in the Steinhaus' problem. For instance, if $\mathcal{S}\subset U$ (or $\mathcal{S}\subset V$),  it is clear that $ \mathcal{M}^n(t)=\langle t+n(\alpha+x_2-x_1)\rangle $ (or $ \mathcal{M}^n(t)=\langle t+n\alpha\rangle$) for all $t\in\mathcal{S}$, and we will further show that if 
\begin{equation}\label{SUV.def}
{\mathcal S}\cap U\ne\emptyset\ \mbox{ and }\ {\mathcal S}\cap V\ne\emptyset,
\end{equation}
there will have a bijective map $Y$ from $\mathcal{S}$ to $\Bbb T_{[0,1)}$ such that $Y(\mathcal{M}^n(t))=\langle Y(t)+nY(\alpha)\rangle$ holds for all $t\in\mathcal{S}$, see Theorem \ref{thm-mod-1} below.


\vspace{0.2cm}

An extended topic for the Steinhaus' problem
is the interval exchange transformations, for which considering the transformations on ${[0,1)}$ by exchanging the order of the partition $I_j, j=1,2,\ldots, n$, of ${[0,1)}$ via a substitution  over $\{1,2,\cdots,n\}$.
This topic links to vast intrinsic problems in many subjects of pure mathematics such as number theory, dynamical system, 
harmonic analysis and so on, see \cite{Keane,ferenczi,chaika,Veech,MMY} and references therein for details. 
It is worth noting that the map $\mathcal{M}$ defined in \eqref{map.def} is a 4--intervals exchange transformation 
by redefining $\mathcal{M}(\textbf{H})=\textbf{H}+\alpha-x_1$ and the appropriate division on $\Bbb T_{[0,1)}$. 
 In the topic of the interval exchange transformation, the studies mainly focus on the transformations with the property  ``unique ergodicity'', which leads to $\mathcal{S}=\emptyset$.
 However, in this paper we shall characterize the condition for $\mathcal{S}\ne\emptyset$, as well as its structure, and the property of its {symmetric maximal} invariant subset, and so on.
%

\vspace{0.2cm}

 The following theorem gives us a characterization on the structure of the maximal invariant set $\mathcal{S}$ of the transformation $\mathcal{M}$.

\begin{thm}\label{thm-s-1}
Let $\mathcal{M}$ be the map defined in \eqref{map.def}, and ${\mathcal S}$ be the maximal invariant set satisfying \eqref{s-set.def}. Then there exists {a} non-negative integer $N$ such that
\begin{equation}\label{eq-thm-s-1}
{\mathcal S}=\mathcal{M}^N(\Bbb T_{[0,1)})\setminus \textbf{H}.
\end{equation}
\end{thm}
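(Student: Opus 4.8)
The plan is to show that the sequence of sets $\mathcal{M}^n(\Bbb T_{[0,1)})\setminus\textbf{H}$ is eventually constant, and that its stable value is exactly $\mathcal{S}$. First I would observe the monotonicity: since $\textbf{H}\subset\Bbb T_{[0,1)}$ we have $\mathcal{M}(\Bbb T_{[0,1)}\setminus\textbf{H})\subset\mathcal{M}(\Bbb T_{[0,1)})$, and an easy induction gives a decreasing chain
\begin{equation}\nonumber
\Bbb T_{[0,1)}\setminus\textbf{H}\ \supset\ \mathcal{M}(\Bbb T_{[0,1)})\setminus\textbf{H}\ \supset\ \mathcal{M}^2(\Bbb T_{[0,1)})\setminus\textbf{H}\ \supset\ \cdots.
\end{equation}
Indeed, writing $S_n:=\mathcal{M}^n(\Bbb T_{[0,1)})\setminus\textbf{H}$, one checks $S_{n+1}=\mathcal{M}(S_n)\setminus\textbf{H}$: the inclusion $\mathcal{M}(S_n)\setminus\textbf{H}\subset S_{n+1}$ is clear, and conversely any point of $S_{n+1}$ is $\mathcal{M}^{n+1}(t)$ for some $t$, hence $\mathcal{M}(\mathcal{M}^n(t))$ with $\mathcal{M}^n(t)\notin\textbf{H}$ unless $\mathcal{M}^{n+1}(t)=\mathcal{M}^n(t)\in\textbf{H}$, contradicting $\mathcal{M}^{n+1}(t)\notin\textbf{H}$; so $\mathcal{M}^n(t)\in S_n$ and $\mathcal{M}^{n+1}(t)\in\mathcal{M}(S_n)\setminus\textbf{H}$. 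Then $S_{n+1}=\mathcal{M}(S_n)\setminus\textbf{H}\subset S_n$ follows by induction from $S_1\subset S_0$.

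Next I would argue the chain stabilizes in finitely many steps. The key structural fact is that at every stage $S_n$ is a finite union of half-open subintervals of $\Bbb T_{[0,1)}$ (arcs), because $\mathcal{M}$ is piecewise linear with finitely many pieces and removing $\textbf{H}$ only cuts finitely many arcs; moreover each application of $\mathcal{M}$ followed by deletion of $\textbf{H}$ does not increase—and typically, while the chain is strictly decreasing, must decrease—the total number of arc-components, or else decreases total length by a definite amount tied to the finitely many breakpoints. The clean way to make this rigorous is to track the finite set $B_n$ of endpoints of the arcs comprising $S_n$: $B_{n+1}$ is contained in $\mathcal{M}(B_n)$ together with the four fixed breakpoints $\{0,x_1,x_2\}$ (and their $\mathcal{M}$-images). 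One shows $|B_n|$ is bounded uniformly in $n$ (the endpoints all lie in a fixed finitely-generated orbit structure), hence the decreasing sequence $S_n$ of finite unions of arcs with a bounded number of endpoints must stabilize: there is $N$ with $S_{N+1}=S_N$, and then $S_n=S_N$ for all $n\ge N$.

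Finally I would identify the stable set with $\mathcal{S}$. From $S_{N+1}=S_N$ and $S_{n+1}=\mathcal{M}(S_n)\setminus\textbf{H}$ we get $\mathcal{M}(S_N)\setminus\textbf{H}=S_N$, so in particular $S_N\subset\Bbb T_{[0,1)}\setminus\textbf{H}$ and $\mathcal{M}(S_N)\supset S_N$; combined with $\mathcal{M}(S_N)\cap\textbf{H}$ being empty on the invariant part, one concludes $\mathcal{M}(S_N)=S_N$, so $S_N$ is an invariant set disjoint from $\textbf{H}$, whence $S_N\subset\mathcal{S}$ by maximality of $\mathcal{S}$. For the reverse inclusion, if $t\in\mathcal{S}$ then $\mathcal{M}^n(t)\in\mathcal{S}$ for all $n$, and since $\mathcal{S}\cap\textbf{H}=\emptyset$ we have $t=\mathcal{M}^{-N}$ of nothing directly—rather $\mathcal{M}^N(t)\in\mathcal{M}^N(\mathcal{S})=\mathcal{S}$ and also $\mathcal{M}^N(t)\in\mathcal{M}^N(\Bbb T_{[0,1)})$, and $\mathcal{M}^N(t)\notin\textbf{H}$, so $\mathcal{M}^N(t)\in S_N$; but $\mathcal{M}^N:\mathcal{S}\to\mathcal{S}$ is a bijection (it restricts to an invertible map on the invariant set), so $t\in\mathcal{M}^{-N}(S_N\cap\mathcal{S})\subset\mathcal{S}$, and tracing back along the orbit gives $t\in S_N$. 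This yields $\mathcal{S}=S_N=\mathcal{M}^N(\Bbb T_{[0,1)})\setminus\textbf{H}$, which is \eqref{eq-thm-s-1}.

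I expect the main obstacle to be the finiteness/stabilization step: proving a uniform bound on the number of arc-components (equivalently on $|B_n|$) so that the decreasing chain cannot shrink forever. One must rule out the possibility that $\mathcal{M}$ repeatedly fragments arcs into ever more pieces; this is where the special piecewise-translation structure of $\mathcal{M}$ in \eqref{map.def}—only two translation constants $\alpha+x_2-x_1$ and $\alpha$, and only the three cut points $0,x_1,x_2$—must be used to show that new endpoints can only arise from the images of a fixed finite set, so the combinatorial complexity is controlled. A careful bookkeeping argument, possibly splitting into the cases according to which of $U,V$ the relevant orbits eventually enter (mirroring the discussion around \eqref{SUV.def}), should close this gap.
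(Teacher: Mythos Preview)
Your outline is sound and is essentially dual to the paper's proof: writing $S_n=\mathcal{M}^n(\Bbb T_{[0,1)})\setminus\textbf{H}$, one checks (as you do) that $S_{n+1}=\mathcal{M}(S_n)\setminus\textbf{H}$ and that the chain is decreasing with $\mathcal{S}\subset\bigcap_n S_n$. The paper works instead with the complements, tracking $T_n=\mathcal{M}^n(T)\setminus\textbf{H}$ where $T=[\alpha,\alpha+x_2-x_1)$; since $\mathcal{M}$ restricted to $U\cup V$ is a bijection onto $\Bbb T_{[0,1)}\setminus T$, one has $\Bbb T_{[0,1)}\setminus S_n=\textbf{H}\cup T_0\cup\cdots\cup T_{n-1}$, so the two viewpoints are equivalent.

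The genuine gap is your stabilization mechanism. You propose to bound the number of arc endpoints $|B_n|$ uniformly, but this does not work as stated: a priori the number of arcs in $S_n$ (equivalently in its complement $\textbf{H}\cup T_0\cup\cdots\cup T_{n-1}$) can grow linearly in $n$, so an endpoint count alone cannot force stabilization. The paper's argument is measure-based and uses two nontrivial inputs. First, Proposition~\ref{prop-4-point}(iii) (if $\mathcal{S}\ne\emptyset$ then $\mathcal{S}$ meets every neighborhood of $0$ or of $1$) guarantees that no $T_k$ wraps around $0$, hence each $T_k$ lies entirely in $U$ or entirely in $V$ and remains a \emph{single} half-open arc under $\mathcal{M}$. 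Second, the $T_k$ are pairwise disjoint, and since $\mathcal{M}(T_k)\cap\textbf{H}\ne\emptyset$ for at most two values of $k$ (two half-open arcs can meet $\textbf{H}$), the lengths $|T_k|$ are eventually constant; disjointness then forces $T_N=\emptyset$. This is exactly the ``careful bookkeeping'' you allude to, but it hinges on Proposition~\ref{prop-4-point}, which itself requires a separate argument; without it the arcs could fragment and your endpoint bound fails.

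Two smaller points on the identification step. For $\mathcal{M}(S_N)=S_N$: from $\mathcal{M}(S_N)\setminus\textbf{H}=S_N$ you only get $\mathcal{M}(S_N)\supset S_N$; to conclude equality you need $|\mathcal{M}(S_N)|=|S_N|$ (true since $S_N\cap\textbf{H}=\emptyset$ and $\mathcal{M}$ is a piecewise translation there) together with the fact that both sets are finite unions of half-open intervals, so equal measure implies equality. Your phrase ``empty on the invariant part'' does not capture this. For the reverse inclusion $\mathcal{S}\subset S_N$, your argument via bijectivity of $\mathcal{M}^N$ on $\mathcal{S}$ is circuitous; simply note $\mathcal{S}=\mathcal{M}^N(\mathcal{S})\subset\mathcal{M}^N(\Bbb T_{[0,1)})$ and $\mathcal{S}\cap\textbf{H}=\emptyset$.
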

\vspace{0.2cm}

We might see at once from Theorem \ref{thm-s-1} that ${\mathcal S}$ is composed of no more than $N+1$ left-closed and right-open intervals when ${\mathcal S}\ne\emptyset$.  By squeezing out all the intervals in $\Bbb T_{[0,1)}\setminus {\mathcal S}$ and reconnecting their endpoints from left to right, and then by rescaling,  it makes sense to define a map from $\Bbb T_{[0,1)}$ to itself,
\begin{equation}\label{biject-y.def}
 Y(t)=\big\langle  {|{\mathcal S}\cap [0,t)|}/{|\mathcal{S}|}\big\rangle, \ \ t\in [0,1),
\end{equation}
where $|E|$ means the Lebesgue measure  of set $E\subset[0,1)$.  
The dynamical behavior of $Y$ acting on $\mathcal{S}$ is described as follows.
\vspace{0.2cm}

\begin{thm}\label{thm-mod-1}
Let $\mathcal{M}$ be the map defined in \eqref{map.def} and ${\mathcal S}$ be its maximal invariant set.
If ${\mathcal S}\ne\emptyset$, then the map $Y$ defined by \eqref{biject-y.def} is a bijection from ${\mathcal S}$ to $\Bbb T_{[0,1)}$, and for each $t\in {\mathcal S}$,
\begin{equation}\label{eq-thm-mod-01}
Y(\mathcal{M}(t))=\langle Y(t)+Y(\alpha)\rangle.
\end{equation}
Moreover, $Y(\alpha)$ is irrational only if $\mathcal{S}$ satisfies ${\mathcal S}\cap U\ne\emptyset$ and ${\mathcal S}\cap V\ne\emptyset$.
\end{thm}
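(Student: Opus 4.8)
The plan is to build the bijection $Y$ explicitly from the interval structure of $\mathcal{S}$ given by Theorem \ref{thm-s-1}, and then to track how the three linear branches of $\mathcal{M}$ act on these intervals. By Theorem \ref{thm-s-1}, when $\mathcal{S}\ne\emptyset$ we may write $\mathcal{S}=\bigsqcup_{j=1}^{k}[a_j,b_j)$ as a disjoint union of at most $N+1$ left-closed right-open arcs, listed in cyclic order around $\Bbb T_{[0,1)}$. The map $Y$ of \eqref{biject-y.def} collapses each complementary gap to a point and renormalizes total length to $1$; on each piece $[a_j,b_j)$ it is an orientation-preserving affine map onto an arc of $\Bbb T_{[0,1)}$ of length $(b_j-a_j)/|\mathcal{S}|$, and these image arcs tile $\Bbb T_{[0,1)}$ in the same cyclic order. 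That $Y$ is a bijection from $\mathcal{S}$ to $\Bbb T_{[0,1)}$ is then immediate once one checks the endpoints match up (the right endpoint $b_j$ is identified with the left endpoint $a_{j+1}$ of the next piece precisely because the intervening gap is squeezed out), which is exactly the reason for the half-open convention; I would record this as a short lemma.

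The heart of the argument is the conjugacy identity \eqref{eq-thm-mod-01}. The key structural observation is that $\mathcal{M}$ restricted to $\mathcal{S}$ is a piecewise translation: on $\mathcal{S}\cap U$ it adds $\alpha+x_2-x_1\pmod 1$ and on $\mathcal{S}\cap V$ it adds $\alpha\pmod 1$ (the branch on $\textbf{H}$ never applies, since $\mathcal{S}\cap\textbf{H}=\emptyset$ by \eqref{s-set.def}). Because $\mathcal{M}(\mathcal{S})=\mathcal{S}$, the map permutes the pieces $[a_j,b_j)$ up to cutting: each image $\mathcal{M}([a_j,b_j))$ is a union of pieces of $\mathcal{S}$, and conversely $\mathcal{M}$ is injective on $\mathcal{S}$ (it is a translation on each of $U$ and $V$, and one must rule out overlap of the two translated families inside $\mathcal{S}$ — this follows from maximality of $\mathcal{S}$, or can be read off from the interval-exchange description in \S\ref{subsection 1.2}). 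So $\mathcal{M}|_{\mathcal{S}}$ is an interval exchange of the pieces $[a_j,b_j)$. Now push this through $Y$: since $Y$ is affine with the same positive slope $1/|\mathcal{S}|$ on every piece, the conjugate $Y\circ\mathcal{M}|_{\mathcal{S}}\circ Y^{-1}$ is an orientation-preserving isometric interval exchange of the tiling $\{Y([a_j,b_j))\}$ of $\Bbb T_{[0,1)}$. The amount by which a point of $\mathcal{S}\cap U$ is moved is $x_2-x_1+\alpha$ minus the total length of the gaps it jumps over, divided by $|\mathcal{S}|$; I would compute this and likewise for $\mathcal{S}\cap V$, and show both equal the same constant $c_0\pmod 1$. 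Evaluating at $t=\alpha$ (note $\alpha\in\mathcal{S}$ must be justified, or \eqref{eq-thm-mod-01} interpreted so that $Y(\alpha)$ is by definition this rotation number) forces $c_0\equiv Y(\alpha)$, giving \eqref{eq-thm-mod-01}. The main obstacle is precisely this bookkeeping: verifying that the gap-lengths skipped over by the $U$-branch and by the $V$-branch conspire so that the two translation amounts agree after normalization; this is where the special form of the jump sizes $\alpha+x_2-x_1$ and $\alpha$ and the defining relation $\mathcal{M}(\mathcal{S})=\mathcal{S}$ are both essential, and it is not a purely formal manipulation.

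For the last assertion, suppose $Y(\alpha)$ is irrational. If $\mathcal{S}\cap U=\emptyset$ then $\mathcal{S}\subset V$, so $\mathcal{M}|_{\mathcal{S}}$ is the single rotation $t\mapsto\langle t+\alpha\rangle$; but then $\mathcal{S}$, being invariant under an irrational rotation and (by Theorem \ref{thm-s-1}) a finite union of arcs, would have to be all of $\Bbb T_{[0,1)}$, contradicting $\mathcal{S}\cap\textbf{H}=\emptyset$ and $\textbf{H}\ne\emptyset$ — unless one is in the degenerate case $x_1=x_2$, which is excluded or trivial. The symmetric argument rules out $\mathcal{S}\cap V=\emptyset$. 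Hence irrationality of $Y(\alpha)$ forces \eqref{SUV.def}, i.e. both $\mathcal{S}\cap U\ne\emptyset$ and $\mathcal{S}\cap V\ne\emptyset$. I expect the first two parts to take real work and this third part to be a short corollary once the conjugacy is in hand.
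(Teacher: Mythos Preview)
Your overall strategy is sound, and the bijection and the final irrationality clause are essentially as in the paper. The real difference is in how you establish the conjugacy identity \eqref{eq-thm-mod-01}.

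You propose to compute the translation amount separately on $\mathcal{S}\cap U$ and $\mathcal{S}\cap V$ and then verify these agree after normalization; you correctly identify this bookkeeping as the main obstacle. The paper sidesteps it entirely with a local-to-global argument: one shows directly that $\langle Y(\mathcal{M}(t))-Y(t)\rangle$ is constant on each component interval of $\mathcal{S}$ (because $[t_1,t_2)\subset\mathcal{S}$ implies $\mathcal{M}([t_1,t_2))\subset\mathcal{S}$, so both have the same $\mathcal{S}$-measure), and then that this constant does not jump across a gap (by taking $t\to b^-$ on one side and $t=c$ on the other side of a gap $[b,c)$, using that $Y$ identifies $b$ with $c$). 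This never requires separating the $U$ and $V$ branches, so the equality of the two translation amounts falls out automatically rather than being a computation you have to survive.

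On evaluating the constant: your worry that $\alpha\in\mathcal{S}$ needs justification is well placed --- in fact $\alpha\notin\mathcal{S}$ always, since $[\alpha,\alpha+x_2-x_1)$ lies outside $\mathcal{M}(\Bbb T_{[0,1)}\setminus\textbf{H})\supset\mathcal{S}$. The paper instead evaluates at $t\to 0$ or $t\to 1$ (Proposition~\ref{prop-4-point}(iii) guarantees at least one of these limits lies in $\overline{\mathcal{S}}$), using $\mathcal{M}(0)=\alpha+x_2-x_1$, $Y(0)=0$, and $Y(\alpha+x_2-x_1)=Y(\alpha)$ (the last because $[\alpha,\alpha+x_2-x_1)\cap\mathcal{S}=\emptyset$). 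So $Y(\alpha)$ is well defined via \eqref{biject-y.def} on all of $[0,1)$, not just on $\mathcal{S}$, and no reinterpretation is needed.

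For the last assertion, your direct argument is close but slips: you write ``$\mathcal{S}$, being invariant under an irrational rotation'' while assuming only that $Y(\alpha)$ is irrational, not $\alpha$. One extra line fixes this (irrational $Y(\alpha)$ forces $\mathcal{M}|_{\mathcal{S}}$ to have no periodic points, hence $\alpha$ irrational, hence $\mathcal{S}=\Bbb T_{[0,1)}$). The paper instead derives this from Proposition~\ref{prop-s-rational}, which computes $Y(\alpha)$ explicitly as $p/q$ whenever $\mathcal{S}\subset U$ or $\mathcal{S}\subset V$.
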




We will later know that if $\mathcal{S}$ satisfies \eqref{SUV.def}, then Theorem \ref{thm-mod-1} provides a powerful tool to determine the location and size of the intervals constituting $\mathcal{S}$ by using the squeezing map $Y$, see Subsection \ref{subsection-4-1}. Another powerful tool to characterize the structure of such maximal invariant set $\mathcal{S}$ is the disturbance for the map $\mathcal{M}$, more precisely, for small $\delta$, we call $\mathcal{M}_\delta=\mathcal{M}_{\alpha, x_1+\delta,x_2+\delta}$ the $\delta$-disturbance of $\mathcal{M}.$ Moreover, $\mathcal{M}_\delta$ inherits most of the important properties of $\mathcal{M}$.
Roughly speaking,
if we denote the maximal invariant set and the squeezing map of $\mathcal{M}_\delta$ by $\mathcal{S}_\delta$ and $Y_\delta$, then $Y_\delta(\alpha)=Y(\alpha)$ and $\mathcal{M}_\delta(t)=\mathcal{M}(t)$ for all $t\in \mathcal{S}\cap\mathcal{S}_\delta$; in particular, we have either $\mathcal{S}=\mathcal{S}_\delta+[-\delta, 0]$ or $\mathcal{S}_\delta=\mathcal{S}+[0,\delta]$ when $\delta>0$; see Theorem \ref{thm-4-struct} and Proposition \ref{prop-delta-tur}.

\vspace{0.2cm}

The next objective  is to consider whether there exists a non-empty symmetric invariant subset of $\mathcal{S}$. Let $ {\mathcal{M}}= {\mathcal{M}}_{\alpha, x_1,x_2}$ be the map defined in \eqref{map.def}. We define $\widetilde{\mathcal{M}}:=\widetilde{\mathcal{M}}_{\alpha, x_1,x_2}$ be the \emph{coherent map} of $ {\mathcal{M}}$, that is the piecewise linear map from $\Bbb T_{(0,1]}$ to itself
\begin{equation}\label{map-conv.def}
\widetilde{\mathcal{M}}_{\alpha, x_1,x_2}(t):=\left\{\begin{array} {lll}
\langle t+ \alpha  +x_2-x_1\rangle^\ast &
{\rm if} \ t\in (0,x_1]  =:\widetilde{U},\\
t &
{\rm if} \ t\in  (x_1,x_2]=:\widetilde{\textbf{H}},\\
\langle t+\alpha\rangle^\ast  &
{\rm if} \ t\in (x_2,1]=:\widetilde{V},
\end{array}\right.
\end{equation}
here $\langle x\rangle^\ast:=1-\langle 1-x\rangle$.   The \emph{symmetric maximal invariant set} of $\mathcal{M}$ is defined to be the maximal set $\mathcal{E}$ satisfying
\begin{equation}\label{e-set.def}
\mathcal{E}\subset \Bbb T_{[0,1)}\setminus \big(\textbf{H}\cup (1-\widetilde{\textbf{H}})\big),\ \ \mathcal{M} (\mathcal{E})= \mathcal{E} \ \mbox{ and }\   \widetilde{\mathcal{M}} (1-\mathcal{E})= 1-\mathcal{E}.
\end{equation}

By virtue of the structure of $\mathcal{S}$ and the disturbance property of $\mathcal{M}$, we have the following proposition, which provides a full description on the symmetric maximal invariant set $\mathcal{E}\ne \emptyset$ and its structure. This proposition, together with Theorem \ref{thm-gabor-eq-e-set} in the next subsection, will play the core role in the proof of Theorem \ref{main1}.

\begin{prop}\label{thm-exist-e++}
Let $\mathcal{M}$ be the map defined in \eqref{map.def}, and $\mathcal{E}$ be the symmetric maximal invariant set of $\mathcal{M}$. Assume that $\mathcal{E}\ne\emptyset $. Then $Y(\alpha)\in\Bbb Q$. Moreover, if we write $Y(\alpha) =\frac NM$ as the simple fraction, then one of the following statements is valid:
\begin{enumerate}
   \item [\rm(i)]$x_2<\frac 1{2M}$ and $ \alpha=\frac NM$.
   \item [\rm(ii)] $x_1>1-\frac 1{2M}$ and $\alpha=\langle\frac NM-(x_2-x_1)\rangle$. 
   \item [{\rm(iii)}] $M>1$ 
   and there exist
   $0<\Delta<\frac 1{M-1}$ and $|\delta|<\frac 1{2M}-\frac {M-1}{2M}\Delta$, such that
   \begin{equation}\label{alpha-E-U-V}
   x_1=\frac {M-N}M-\frac NM\Delta+\delta,\ x_2=\frac {M-N}M(1+\Delta)+\delta\ \mbox{ and }\ \alpha=\frac NM-\frac {M-N}M\Delta.
   \end{equation}
\end{enumerate}

Conversely, if
$\alpha$, $x_1$ and $x_2$ satisfy either one of the assertions \rm{(i), (ii) and (iii)} for some integers $0\le N< M$ such that $\frac NM$ is a simple fraction, then $\mathcal{E}\ne\emptyset$ and $Y(\alpha) =\frac NM$. Furthermore,
\begin{equation}\label{E-structu}
\mathcal{E}=\cup_{k=0}^{M-1} I_k,
\end{equation}
where $I_k=\frac kM+[x_2,\frac 1M-x_2)$, $I_k=\frac kM+[1-x_1,x_1-\frac {M-1}M)$ and $I_k=\frac kM(1+\Delta)+[|\delta|,\frac {1-(M-1)\Delta}M-|\delta|)$, $k=0,1,\ldots, M-1$, corresponding to  {\rm{(i), (ii) and (iii),}} respectively.
\end{prop}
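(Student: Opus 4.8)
The plan is to prove Proposition \ref{thm-exist-e++} in two directions, using Theorems \ref{thm-s-1}, \ref{thm-mod-1} and the disturbance machinery (Theorem \ref{thm-4-struct}, Proposition \ref{prop-delta-tur}) as black boxes. For the forward direction, assume $\mathcal{E}\ne\emptyset$. Since $\mathcal{E}$ is in particular an invariant subset of $\mathcal{S}$, we have $\mathcal{S}\ne\emptyset$, so Theorem \ref{thm-mod-1} applies and produces the squeezing bijection $Y:\mathcal{S}\to\Bbb T_{[0,1)}$ conjugating $\mathcal{M}|_{\mathcal{S}}$ to the rotation $t\mapsto\langle t+Y(\alpha)\rangle$. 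The defining condition \eqref{e-set.def} forces $\mathcal{E}$ to be symmetric under $t\mapsto 1-t$ composed with the coherent dynamics; transporting this symmetry through $Y$, one obtains that $Y(1-\mathcal{E})=1-Y(\mathcal{E})$ and that $Y(\mathcal{E})$ is a nonempty set invariant under the rotation by $Y(\alpha)$ and symmetric about a point. A rotation-invariant proper-looking subset that is also reflection-symmetric can only be a finite union of equally spaced arcs, which forces $Y(\alpha)\in\Bbb Q$; writing $Y(\alpha)=N/M$ in lowest terms, $Y(\mathcal{E})$ is then a union of $M$ translates of a single arc $\frac kM+[\,u,\,v\,)$. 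Pulling back through $Y^{-1}$ (which is affine on each component interval of $\mathcal{S}$), the three cases in the statement correspond exactly to the trichotomy for $\mathcal{S}$ itself: $\mathcal{S}\subset U$ (giving (i), where $\alpha=N/M$ directly), $\mathcal{S}\subset V$ (giving (ii), where the shift picks up $x_2-x_1$), and the mixed case \eqref{SUV.def} (giving (iii), where $Y$ is genuinely a nontrivial rescaling and $M>1$ is forced). In each case one reads off the constraints on $x_1,x_2,\alpha$ by writing down that the arc endpoints of $Y(\mathcal{E})$ must land on the correct gap structure of $\mathcal{S}$, and the parameters $\Delta$ and $\delta$ in (iii) are precisely the disturbance parameter and the offset governing which family $\mathcal{M}_\delta$ the map belongs to, as in Theorem \ref{thm-4-struct}.

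For the converse, I would fix integers $0\le N<M$ with $N/M$ a simple fraction, assume $\alpha,x_1,x_2$ satisfy one of (i), (ii), (iii), and directly verify that the set $\mathcal{E}$ given by \eqref{E-structu} satisfies all three requirements in \eqref{e-set.def}. This is a computation: in case (i) one checks that each $I_k=\frac kM+[x_2,\frac1M-x_2)$ lies in $U$ (using $x_2<\frac1{2M}$), that $\mathcal{M}$ acts on it by $t\mapsto\langle t+\alpha+x_2-x_1\rangle=\langle t+N/M\rangle$ — wait, in case (i) it is $\alpha=N/M$ and the translation by $x_2-x_1$ must be tracked — so that $\mathcal{M}(I_k)=I_{k'}$ with $k'\equiv k+N \pmod M$, hence $\mathcal{M}(\mathcal{E})=\mathcal{E}$; then one checks $1-\mathcal{E}$ is carried to itself by $\widetilde{\mathcal{M}}$, which follows because the family $\{I_k\}$ is symmetric about $1/2$ by the form of the interval $[x_2,\frac1M-x_2)$. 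Cases (ii) and (iii) are analogous, with (iii) requiring one to verify that $\frac kM(1+\Delta)+[|\delta|,\frac{1-(M-1)\Delta}M-|\delta|)$ straddles the boundary between $U$ and $V$ in the right way so that $\mathcal{M}$ permutes these $M$ arcs cyclically — this is exactly where the inequality $|\delta|<\frac1{2M}-\frac{M-1}{2M}\Delta$ and $0<\Delta<\frac1{M-1}$ are needed to keep the arcs disjoint and inside $\Bbb T_{[0,1)}\setminus\textbf{H}$. Maximality of $\mathcal{E}$ (that it is not merely \emph{an} invariant symmetric set but the largest one) follows because any larger such set would push a point into $\textbf{H}$ or $1-\widetilde{\textbf{H}}$ under iteration, contradicting \eqref{e-set.def}; alternatively it follows from $\mathcal{E}\subseteq\mathcal{S}=\mathcal{M}^N(\Bbb T_{[0,1)})\setminus\textbf{H}$ via Theorem \ref{thm-s-1} together with the explicit count of components.

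The main obstacle I expect is the bookkeeping in case (iii): one must simultaneously track the $\delta$-disturbance (identifying the correct $\mathcal{M}_\delta$ and invoking $Y_\delta(\alpha)=Y(\alpha)$ and $\mathcal{S}=\mathcal{S}_\delta+[-\delta,0]$ or $\mathcal{S}_\delta=\mathcal{S}+[0,\delta]$ from Proposition \ref{prop-delta-tur}), the affine form of $Y^{-1}$ on each component, and the reflection symmetry, and show these three structures are mutually compatible \emph{only} for parameters of the stated form. The cleanest route is probably to first handle $\delta=0$ (the ``balanced'' configuration where $\mathcal{E}$ is symmetric about $1/2$ in the most transparent way), deduce the structure there from Theorem \ref{thm-mod-1} applied to the symmetrized dynamics, and then transport to general $|\delta|<\frac1{2M}-\frac{M-1}{2M}\Delta$ by the disturbance theorem, noting that the symmetric invariant set is rigid under disturbance in the sense that $\mathcal{E}_\delta$ and $\mathcal{E}$ share the same $Y$-image. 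The degenerate sub-cases where $M=1$ collapse (iii) into (i) or (ii) and should be noted separately to avoid dividing by $M-1$.
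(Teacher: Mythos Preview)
Your overall architecture (trichotomy on $\mathcal{S}$, disturbance to reduce case (iii) to $\delta=0$, direct verification for the converse) matches the paper's, but there are two genuine gaps.

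First, the argument for $Y(\alpha)\in\Bbb Q$ does not go through as written. You claim that the symmetry in \eqref{e-set.def} transports through $Y$ to give $Y(1-\mathcal{E})=1-Y(\mathcal{E})$, but $Y$ is only defined on $\mathcal{S}$, and the identity $Y(1-t)=1-Y(t)$ would require $|\mathcal{S}\cap[0,1-t)|=|\mathcal{S}\cap[t,1)|$, i.e.\ that $\mathcal{S}$ itself is measure-symmetric about $1/2$. That is not known a priori; it is essentially part of the conclusion. The paper instead proves rationality by a direct density argument (Theorem~\ref{thm-exist-e-1}): if $Y(\alpha)\notin\Bbb Q$, then for every $t\in\mathcal{S}$ some iterate $\mathcal{M}^n(t)$ lands in a short interval $J$ chosen so that $1-J\subset(\alpha,\alpha+x_2-x_1)$, whence $\widetilde{\mathcal{M}}^{N-1}(1-\mathcal{M}^n(t))\in\widetilde{\textbf{H}}$ and $t\notin\mathcal{E}$.

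Second, and more seriously, in case (iii) you are missing the key rigidity step. Knowing only that $Y(\mathcal{E})$ is a union of $M$ equally spaced arcs does \emph{not} let you ``read off'' the constraints \eqref{alpha-E-U-V}: the structure theorem (Theorem~\ref{thm-4-struct}) allows the gap count $N$ to be any integer in $\{1,\dots,M-1\}$, and different values of $N$ give different formulas for $x_1,x_2,\alpha$ in terms of $\Delta$. The paper isolates this as a separate lemma (Lemma~\ref{lem-suv}): if $\mathcal{E}\ne\emptyset$ and $\mathcal{S}$ meets both $U$ and $V$, then necessarily $N=M-1$. The proof is a nontrivial combinatorial argument comparing $\mathcal{E}\cap J_0$ with $(1-\mathcal{E})\cap\pi(J_0)$ and showing that any $N<M-1$ forces $\mathcal{E}\cap J_0=\emptyset$. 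Only after $N=M-1$ is established does Proposition~\ref{prop-tur-E} yield the explicit parametrisation \eqref{alpha-E-U-V} and the interval description in \eqref{E-structu}. Finally, a minor correction: you have cases (i) and (ii) swapped --- in (i) one has $\alpha=N/M$ and the $I_k$ lie in $V$ (not $U$), with $\mathcal{M}$ acting as rotation by $\alpha$; case (ii) is the $\mathcal{S}\subset U$ situation.
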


The statements  (i), (ii) and (iii)  above are corresponding to the cases $\mathcal{S}\subset V$, $\mathcal{S}\subset U$ and $\mathcal{S}$ satisfying \eqref{SUV.def}, respectively. 
Especially, for $\mathcal{S}$ satisfying \eqref{SUV.def}, there are two parameters $\Delta=x_2-x_1$ and $\delta$, 
 by the disturbance of $\mathcal{M}$, we may focus our attention on the assumption that $\delta=0$. In this time, if $\mathcal{E}\ne \emptyset$, then
 $x_2=1-\alpha$ and  $|\mathcal{S}|=1-(M-1)\Delta$. Furthermore, 
 $\mathcal{E}=\mathcal{S}$ is composed of $M$ intervals $I_k,k=0,1,\ldots, M-1,$ with length $|I_k|=\frac {1-(M-1)\Delta}M$, and $\Bbb T_{[0,1)}\setminus\mathcal{E}$ is composed of $M-1$ intervals (gaps) with length $\Delta$, i.e.,
 $\Bbb T_{[0,1)}$ is partitioned alternately by the $M$ intervals and the $M-1$ gaps.
See Lemma \ref{lem-suv} and Proposition \ref{prop-tur-E}.

\vspace{0.2cm}

From \eqref{E-structu} in Proposition \ref{thm-exist-e++}, it is easy to see that $\mathcal{M}(I_k)=I_{k+N}$ for all $k\ge 0$, where we denote $I_k=I_p$ if $k\equiv p$ (mod $M$). Therefore, we have the following corollary, which indicates that $\mathcal{E}$ is "symmetric", i.e., $\pi(\mathcal{E})=1-\mathcal{E}$, and $1-\widetilde{\mathcal{M}}(1-\cdot)$ is an inverse map of $\mathcal{M}$ on $\mathcal{E}$.
Here and hereafter,  if $J=[c,d)\subset [0,1)$, we define
 \begin{equation}\label{eq.pi}
   \pi(J):=(c,d]\subset(0,1];
\end{equation}
and if $E=\cup_{k=1}^n J_k$ is a union of finitely many left-closed and right-open intervals $J_k\subset [0,1)$, we define $\pi(E)=\cup_{k=1}^n \pi(J_k)$.

\begin{cor}\label{thm-exist-e-2}
Let $\mathcal{M}$ be the map defined in \eqref{map.def}. If ${\mathcal E}\ne\emptyset$,
 then
\begin{equation}\label{eq-sym-e-set}
\mathcal{E}=\mathcal{S}\cap (1-\pi(\mathcal{S})),
\end{equation}
and for each $t\in \mathcal{E}$,
\begin{equation}\label{eq-map-invers}
\mathcal{M}(1-\widetilde{\mathcal{M}}(1-t))=1-\widetilde{\mathcal{M}}(1-\mathcal{M}(t))=t.
\end{equation}
\end{cor}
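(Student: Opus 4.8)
The plan is to read off both identities from the explicit description of $\mathcal{E}$ supplied by Proposition \ref{thm-exist-e++}. Under the standing hypothesis $\mathcal{E}\neq\emptyset$ we may write $Y(\alpha)=\frac NM$ in lowest terms, $\mathcal{E}=\bigcup_{k=0}^{M-1}I_k$ with the $I_k$ the explicit left-closed right-open intervals in \eqref{E-structu} (listed in the cyclic order in which they occur around $\Bbb T$), $\mathcal{M}(I_k)=I_{k+N}$ for all $k$ (indices read mod $M$), and --- because each $I_k$ avoids $\textbf{H}$ while $\mathcal{M}(I_k)$ is a single interval --- each $I_k$ lies entirely in $U$ or entirely in $V$; likewise each $I_k$ avoids $1-\widetilde{\textbf{H}}$. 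I will also use the coherence identity: if $J$ is a left-closed right-open interval contained in $U$ (or in $V$) and $\mathcal{M}(J)$ is a single interval, then $\widetilde{\mathcal{M}}(\pi(J))=\pi(\mathcal{M}(J))$ --- the operation $\langle\cdot\rangle^\ast$ in \eqref{map-conv.def} is exactly what makes this correct at the wrap-around endpoint --- so in particular $\widetilde{\mathcal{M}}(\pi(I_k))=\pi(I_{k+N})$.

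To prove \eqref{eq-sym-e-set} I would insert the explicit forms of $\mathcal{E}$ (from \eqref{E-structu}) and of the maximal invariant set $\mathcal{S}$ (from Theorem \ref{thm-s-1} and the case analysis underlying Proposition \ref{thm-exist-e++}) and compute $\mathcal{S}\cap(1-\pi(\mathcal{S}))$ directly in each of the three cases. In case (i), $\mathcal{S}\subseteq V$, $\alpha=\frac NM$ and $\mathcal{S}=\bigcup_{k=0}^{M-1}\big(\tfrac kM+[x_2,\tfrac1M)\big)$, so $\pi(\mathcal{S})=\bigcup_k\big(\tfrac kM+(x_2,\tfrac1M]\big)$, hence $1-\pi(\mathcal{S})=\bigcup_k\big(\tfrac kM+[0,\tfrac1M-x_2)\big)$, and intersecting block by block, using $x_2<\tfrac1{2M}$, gives $\mathcal{S}\cap(1-\pi(\mathcal{S}))=\bigcup_k\big(\tfrac kM+[x_2,\tfrac1M-x_2)\big)=\mathcal{E}$. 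Case (ii), $\mathcal{S}\subseteq U$, is the mirror image (here $\mathcal{S}=\bigcup_k\big(\tfrac kM+[0,x_1-\tfrac{M-1}M)\big)$ and one uses $x_1>1-\tfrac1{2M}$). In case (iii), $\mathcal{S}$ satisfies \eqref{SUV.def}; I would first normalize $\delta=0$ via the disturbance results (Theorem \ref{thm-4-struct} and Proposition \ref{prop-delta-tur}), where $\mathcal{S}=\mathcal{E}$ is already symmetric and $1-\pi(\mathcal{S})=\mathcal{S}$ by a short computation with \eqref{E-structu}, so the identity is immediate; for $\delta\neq0$ those same disturbance results express $\mathcal{S}$ and $\mathcal{E}$ in terms of their $\delta=0$ counterparts and the identity transfers. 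This establishes \eqref{eq-sym-e-set}; in particular $1-\mathcal{E}=\pi(\mathcal{E})$, so that (since reflection reverses, and $\pi$ preserves, the cyclic order of the $I_k$) there is a constant $\sigma_0$ with $1-\pi(I_k)=I_{\sigma_0-k}$ for every $k$, equivalently $1-I_k=\pi(I_{\sigma_0-k})$.

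For \eqref{eq-map-invers}, put $g(t):=1-\widetilde{\mathcal{M}}(1-t)$; I must show $\mathcal{M}\circ g=g\circ\mathcal{M}=\mathrm{id}$ on $\mathcal{E}$. Fix $t\in I_k$. Then $1-t\in 1-I_k=\pi(I_{\sigma_0-k})$, so $\widetilde{\mathcal{M}}(1-t)\in\widetilde{\mathcal{M}}(\pi(I_{\sigma_0-k}))=\pi(I_{\sigma_0-k+N})$ and therefore $g(t)\in 1-\pi(I_{\sigma_0-k+N})=I_{k-N}$; since $I_k$ avoids $1-\widetilde{\textbf{H}}$, the map $g$ restricted to $I_k$ is a rotation, and as $|I_{k-N}|=|I_k|$ this forces $g(I_k)=I_{k-N}$. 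Consequently $\mathcal{M}(g(I_k))=\mathcal{M}(I_{k-N})=I_k$; since $I_{k-N}$ avoids $\textbf{H}$, the restriction of $\mathcal{M}\circ g$ to $I_k$ is a composition of rotations of $\Bbb T$, hence a rotation, and it maps the proper sub-arc $I_k$ onto itself, so it is the identity, i.e.\ $\mathcal{M}(1-\widetilde{\mathcal{M}}(1-t))=t$. The symmetric computation --- $\mathcal{M}(t)\in I_{k+N}$, $g(I_{k+N})=I_k$, and $g\circ\mathcal{M}$ a rotation of $\Bbb T$ mapping $I_k$ onto $I_k$ --- gives $1-\widetilde{\mathcal{M}}(1-\mathcal{M}(t))=t$, and \eqref{eq-map-invers} follows.

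The step I expect to be the main obstacle is the reverse inclusion in \eqref{eq-sym-e-set} in case (iii) with $\delta\neq0$: there $\mathcal{S}$ is strictly larger than $\mathcal{E}$ and not symmetric, so no purely formal argument is available and one must check that intersecting $\mathcal{S}$ with its reflection $1-\pi(\mathcal{S})$ trims off exactly the asymmetric portion and returns $\mathcal{E}$; this is precisely why the reduction to $\delta=0$ through the disturbance machinery is needed. A pervasive secondary nuisance --- the real reason $\pi$ is introduced --- is keeping the left-closed/right-open bookkeeping consistent: all the set equalities above hold only with the precise $\pi$- and $\langle\cdot\rangle^\ast$-conventions, and in particular the coherence identity $\widetilde{\mathcal{M}}(\pi(J))=\pi(\mathcal{M}(J))$ must be verified with care at the wrap-around endpoint.
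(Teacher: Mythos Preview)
Your proposal is correct and follows essentially the same route as the paper. The paper proves \eqref{eq-sym-e-set} by citing Propositions \ref{prop-s-rational} and \ref{thm-exist-e-3-1} in cases (i)--(ii) and Propositions \ref{prop-delta-tur} and \ref{prop-tur-E} in case (iii) --- exactly the explicit computations and disturbance reduction you sketch; note that Proposition \ref{prop-tur-E}(iii) already delivers $\mathcal{E}_\delta=\mathcal{S}_\delta\cap(1-\widetilde{\mathcal{S}_\delta})$ for all admissible $\delta$, so once you pass to the Type I normalisation you need not ``transfer back'' by hand. For \eqref{eq-map-invers} the paper records $\mathcal{M}(I_k)=I_{k+N}$ and $1-I_k=\pi(I_{M-1-k})$ (your $\sigma_0=M-1$) and declares the conclusion ``immediate''; your rotation argument is precisely the content of that word, and your coherence identity is Lemma \ref{lem-endpoint-change}.
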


\vspace{0.2cm}

We remark that the condition $\mathcal{E}\ne \emptyset$ in Corollary \ref{thm-exist-e-2} is very strict for the map $\mathcal{M}$ as ${\mathcal E}=\emptyset$ occurs in many cases that the right hand side of \eqref{eq-sym-e-set} is not empty, especially for all the maps with $Y(\alpha)\not\in\Bbb Q$.


\vspace{0.2cm}

\subsection{Connections between transformation $\mathcal{M}_{\alpha,x_1,x_2}$ and Gabor system $\mathcal{G}(H_c;a)$}\label{subsection 1.3}
 A surprising fact is that the frame properties for $\mathcal{G}(H_c;a)$ are intimately  connected to the dynamical properties of the  transformation  $\mathcal{M}_{\alpha, x_1,x_2}$. With the help of {Theorem \ref{main0}}, excluding some special cases, we focus on the parameter pair $(a,c)$ satisfying that
\begin{equation}\label{a-c-condition-1.3}
0<a<1<c \mbox{ and  } c\not\in\Bbb N,
\end{equation}
and let $x_1,x_2$ and $\alpha$ be as follows,
\begin{equation}\label{alpha-xx}
x_1=\max\Big(\frac {a+\langle c\rangle-1}{a}, 0\Big), \
\ x_2=\min\Big(\frac {\langle c\rangle}{a}, 1\Big)
\end{equation}
and
\begin{equation}\label{alpha-xx-0}
\alpha=\big\langle\frac {\lfloor c\rfloor}{a}\big\rangle \mbox{ if } x_2\ne 1  \mbox{  and } \alpha=\big\langle
\frac c{a}\big\rangle \mbox{ if } x_2= 1,
\end{equation}
where $\lfloor x\rfloor:=x-\langle x\rangle$ is the  greatest integer less than or equal to $x$.
Their connections are stated as the following Theorem \ref{thm-gabor-eq-e-set}, whose proof involves an improvement of the Ron-Shen's criterion (Theorem \ref{bound-sequence-0}).

\vspace{0.2cm}

\begin{thm}\label{thm-gabor-eq-e-set}
Let $a,c$ be as in \eqref{a-c-condition-1.3}, and $\mathcal{M}=\mathcal{M}_{\alpha, x_1,x_2}$  be the map defined as \eqref{map.def} with  $x_1,x_2$ and $\alpha$ defined in \eqref{alpha-xx} and \eqref{alpha-xx-0}. Suppose that $\mathcal{E}$ is the symmetric maximal invariant set of $\mathcal{M}$ defined as \eqref{e-set.def}. Then the Gabor system  $\mathcal{G}(H_c;a)$ is a frame for $L^2$ if and only if $\mathcal{E}=\emptyset$.
\end{thm}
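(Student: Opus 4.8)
The plan is to reduce the frame property of $\mathcal{G}(H_c;a)$ to the Zak-transform / Ron--Shen style criterion and then to recognize the resulting condition as the emptiness of $\mathcal{E}$. First I would recall (or set up) the characterization of Gabor frames with rational or irrational density via the Ron--Shen matrix: $\mathcal{G}(H_c;a)$ is a frame if and only if the family of infinite matrices $\{P(x)\}_{x}$ built from the samples $H_c(x - n a + k/1)$ (the pre-Gramian / Ron--Shen operators with $\beta=1$) is uniformly bounded below. The key analytic input is the improved version of Ron--Shen's criterion announced as Theorem~\ref{bound-sequence-0}; with it one does not need separate arguments for rational and irrational $a$, and one can work with the ``column vectors'' of the pre-Gramian directly. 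Because $H_c$ is a step function taking values in $\{-1,0,1\}$ supported on $[-c,c)$, each such column vector has only finitely many nonzero entries, all equal to $\pm 1$, and the pattern of $\pm1$ entries is governed purely by which of the intervals $[-c,0)$ and $[0,c)$ the points $x - na$ land in. This is exactly the combinatorial data encoded by the partition $\{U,\textbf{H},V\}$ and the translation parameters $x_1,x_2,\alpha$ defined in \eqref{alpha-xx}--\eqref{alpha-xx-0}.

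Next I would make the dictionary between ``the Ron--Shen operator at $x$ fails to be bounded below'' and ``the orbit of some point under $\mathcal{M}$ (and simultaneously under the coherent map $\widetilde{\mathcal{M}}$) never enters $\textbf{H}$''. Concretely: the unboundedness of the lower frame bound forces, by a compactness/normal-families argument, the existence of a point $x_0 \in \Bbb T$ at which the corresponding infinite $\pm1$ matrix has a nontrivial kernel (or almost-kernel); tracking how the nonzero rows shift as one moves the phase $x_0$ is precisely the action of $\mathcal{M}$ on $[0,1)$, while the ``mirror'' copy coming from the reflection symmetry $H_c(-x) = -H_c(x-)$ gives the action of $\widetilde{\mathcal{M}}$ on $1-\mathcal{E}$. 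One direction is then: if $\mathcal{E}\neq\emptyset$, pick $t\in\mathcal{E}$, use \eqref{E-structu} and Corollary~\ref{thm-exist-e-2} to build an explicit (or limiting) $\ell^2$ sequence witnessing that the lower bound degenerates, so $\mathcal{G}(H_c;a)$ is not a frame. The converse: if $\mathcal{E}=\emptyset$, then every phase's orbit eventually hits $\textbf{H}$ (in both the $\mathcal{M}$ and $\widetilde{\mathcal{M}}$ senses), which forces the pre-Gramian columns to ``split'' into finite blocks that are uniformly invertible, and one assembles this into a uniform lower frame bound, invoking Theorem~\ref{thm-s-1} to get uniformity (only $N+1$ intervals, hence uniform control) and Proposition~\ref{thm-exist-e++} to rule out the borderline resonant configurations.

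The main obstacle I expect is precisely this uniformity in the converse direction: knowing $\mathcal{E}=\emptyset$ says each orbit is absorbed into $\textbf{H}$, but a priori the absorption time could grow without bound as the phase varies, which would destroy the uniform frame bound. Overcoming this is where the structural theorems earn their keep --- Theorem~\ref{thm-s-1} says $\mathcal{S} = \mathcal{M}^N(\Bbb T_{[0,1)})\setminus\textbf{H}$ for a \emph{fixed finite} $N$, so the relevant hitting times are bounded by a constant depending only on $(a,c)$, and the disturbance/squeezing-map machinery (Theorem~\ref{thm-mod-1}, Proposition~\ref{thm-exist-e++}) pins down exactly the resonant $(\alpha,x_1,x_2)$ for which the symmetric version can survive. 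A secondary technical point is handling the boundary cases $x_2=1$ versus $x_2\neq 1$ in \eqref{alpha-xx-0} and the degenerate situations $x_1=0$ (i.e.\ $\mathcal{S}\subset V$) or $x_2=1$ (i.e.\ $\mathcal{S}\subset U$), where the map $\mathcal{M}$ is a pure rotation and the frame criterion reduces to a classical three-distance / Kronecker-type statement; these should be dispatched directly and reconciled with cases (i) and (ii) of Proposition~\ref{thm-exist-e++}. Finally I would double-check that the improved Ron--Shen criterion of Theorem~\ref{bound-sequence-0} is genuinely needed rather than the classical one --- the point being that the classical criterion requires $a\in\Bbb Q$ (rational oversampling), whereas Haar's non-integrability of $|g|$ away from the jump lets the improved version cover irrational $a$ as well, matching part~(ii) of Theorem~\ref{main1}.
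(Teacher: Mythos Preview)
Your necessity direction ($\mathcal{E}\ne\emptyset\Rightarrow$ not a frame) is on the right track and matches the paper's approach: from $t\in\mathcal{E}$ one builds an explicit bounded sequence witnessing the linear dependence of Corollary~\ref{bound-sequence-cor}, which is exactly what Proposition~\ref{prop-lin-e-set} does.

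The sufficiency direction, however, has a genuine gap. You propose to argue directly that $\mathcal{E}=\emptyset$ implies a uniform lower frame bound, because ``every phase's orbit eventually hits $\textbf{H}$'' with absorption time bounded by the $N$ of Theorem~\ref{thm-s-1}. But $\mathcal{E}=\emptyset$ does \emph{not} imply $\mathcal{S}=\emptyset$: there are many parameter choices (in fact every one with $Y(\alpha)\notin\Bbb Q$, and plenty with $Y(\alpha)\in\Bbb Q$ but failing the conditions of Proposition~\ref{thm-exist-e++}) for which $\mathcal{S}\ne\emptyset$ yet $\mathcal{E}=\emptyset$. For $t\in\mathcal{S}$ the $\mathcal{M}$-orbit \emph{never} enters $\textbf{H}$, so no ``finite block'' decomposition of the pre-Gramian is available and Theorem~\ref{thm-s-1} gives you nothing. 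Invoking Proposition~\ref{thm-exist-e++} ``to rule out resonant configurations'' does not convert this structural information into a uniform lower bound.

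The paper avoids this by arguing the sufficiency contrapositively and through a finiteness reduction you are missing. Assuming $\mathcal{G}(H_c;a)$ is not a frame, Corollary~\ref{bound-sequence-cor} produces a bounded nonzero sequence; Proposition~\ref{prop-gabor-suff-e-set} then extracts, for every $N$, a point $t$ whose \emph{two-sided} orbit tree $\{\mathcal{L}^{\{\mathbf{i}\}}(t):\mathbf{i}\in\Sigma^N\}$ avoids $\textbf{H}\cup(1-\widetilde{\textbf{H}})$. This forces $\mathcal{S}\ne\emptyset$, then $Y(\alpha)\in\Bbb Q$ by Theorem~\ref{thm-exist-e-1}, and then $a\in\Bbb Q$ by Lemma~\ref{lem-Y-map}. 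Writing $a=p/q$, the full orbit tree $K_t=\{\mathcal{L}^{\{\mathbf{i}\}}(t):\mathbf{i}\in\Sigma^*\}$ lives in $t+\tfrac1p\Bbb Z$ and hence has at most $p$ elements; pigeonhole gives $K_{t_0,k}=K_{t_0,k+1}$ for some $k\le p$, so $\mathcal{L}^{\{\pm1\}}$ act as bijections on a finite set contained in $\Bbb T_{[0,1)}\setminus(\textbf{H}\cup(1-\widetilde{\textbf{H}}))$, which forces $\mathcal{E}\ne\emptyset$. The reduction to $a\in\Bbb Q$ and the pigeonhole step are the heart of the argument and are absent from your plan.
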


\vspace{0.2cm}

We remark that the relationship between $\mathcal{M}_{\alpha, x_1,x_2}$ and $\mathcal{G}(H_c;a)$ in Theorem \ref{thm-gabor-eq-e-set} 
may be extended to more general situations, after redefining $x_1,x_2$ and $\alpha$ as appropriate. We refer the readers to see Theorem \ref{main0} for the rest cases and its proof in the appendix.
At last, as a  complement of Theorem \ref{thm-gabor-eq-e-set}, we have the following lemma on the connection between $a$ and $Y(\alpha)$.

\vspace{0.2cm}

\begin{lem}\label{lem-Y-map}
Let $a,c$ and $\mathcal{M}$ be as in Theorem \ref{thm-gabor-eq-e-set}.
Suppose $\mathcal{S}\ne \emptyset$, then $Y(\alpha)\in\Bbb Q$ if and only if $a\in\Bbb Q$.
\end{lem}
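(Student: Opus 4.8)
The plan is to analyze the structure of $\mathcal{M}=\mathcal{M}_{\alpha,x_1,x_2}$ with the specific parameters \eqref{alpha-xx} and \eqref{alpha-xx-0} and to track the arithmetic nature of $\alpha$ through the passage to the squeezing map $Y$. First I would dispose of the degenerate cases $x_1=0$ and $x_2=1$. When $x_2=1$ we have $\mathcal{S}\subset U=[0,x_1)$ (up to the $\textbf{H}$-part), the iteration is just rotation by $\alpha+x_2-x_1=\langle 1+\alpha-x_1\rangle$, and by Theorem \ref{thm-mod-1} $Y(\alpha)=Y$ applied to a point that is, after rescaling by $|\mathcal{S}|$, rationally related to $\alpha$; similarly for $x_1=0$ where $\mathcal{S}\subset V$ and the rotation is by $\alpha=\langle c/a\rangle$. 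In both cases one checks directly from \eqref{alpha-xx-0} that $\alpha\in\Bbb Q$ iff $c/a\in\Bbb Q$, and since $c\notin\Bbb N$ with the normalizations in force, this reduces to $a\in\Bbb Q$; the rescaling by $|\mathcal{S}|$ (a rational combination of $a$, $\langle c\rangle$, $1$) preserves rationality in both directions, giving $Y(\alpha)\in\Bbb Q\iff\alpha\in\Bbb Q\iff a\in\Bbb Q$.

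The substantive case is $0<x_1$ and $x_2<1$, i.e. $\mathcal{S}$ satisfying \eqref{SUV.def}. Here I would use Theorem \ref{thm-mod-1} and the description of $\mathcal{S}$ from Theorem \ref{thm-s-1}: $\mathcal{S}=\mathcal{M}^N(\Bbb T_{[0,1)})\setminus\textbf{H}$ is a finite union of intervals whose endpoints lie in the additive group generated over $\Bbb Z$ by $x_1$, $x_2$, $\alpha$ — and by \eqref{alpha-xx}–\eqref{alpha-xx-0} these are all $\Bbb Z$-linear combinations of $1/a$, $\langle c\rangle/a$, $1$ (equivalently of $1$, $1/a$, $c/a$). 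Consequently $|\mathcal{S}|$ and each $|\mathcal{S}\cap[0,t_j)|$ for endpoints $t_j$ lie in $\frac1a\Bbb Z+\frac ca\Bbb Z+\Bbb Z$. The forward direction ($a\in\Bbb Q\Rightarrow Y(\alpha)\in\Bbb Q$) is then immediate: if $a=p/q$ then $c\in\Bbb Q$ is forced by $\mathcal{S}\ne\emptyset$ together with Proposition \ref{thm-exist-e++}-type rigidity — actually more simply, when $a\in\Bbb Q$ all the break-point data of $\mathcal{M}$ after finitely many iterates become commensurable, $\mathcal{M}$ restricted to $\mathcal{S}$ is a rotation on finitely many intervals of commensurable lengths, so $Y(\alpha)=|\text{(shift of one interval)}|/|\mathcal{S}|\in\Bbb Q$.

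For the converse ($Y(\alpha)\in\Bbb Q\Rightarrow a\in\Bbb Q$) I would argue by contraposition. Suppose $a\notin\Bbb Q$. Then $1$, $1/a$ are rationally independent, and (using $c\notin\Bbb N$) one shows $\alpha=\langle\lfloor c\rfloor/a\rangle$ is irrational and, more to the point, that the orbit of $\mathcal{M}$ on $\mathcal{S}$ cannot be conjugate to a rational rotation: if $Y(\alpha)=N/M$ were rational, then $Y\circ\mathcal{M}^M=\mathrm{id}$ on $\mathcal{S}$, so $\mathcal{M}^M=\mathrm{id}$ on $\mathcal{S}$, forcing every point of $\mathcal{S}$ to be periodic with period dividing $M$; but each branch of $\mathcal{M}^M$ is a translation by an element of $\frac1a\Bbb Z+\Bbb Z$, and such a translation fixes a nonempty open interval only if its translation amount is $0$, which would make the translation amounts (built from $\alpha$ and the $x_i$, hence from $1/a$ and $c/a$) satisfy a nontrivial $\Bbb Z$-linear relation with $1$ — contradicting irrationality of $a$ once the case $c\in\Bbb Q$ is separately handled. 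The case $c\in\Bbb Q$, $a\notin\Bbb Q$ needs its own bookkeeping: there $\langle c\rangle/a$ is irrational, $x_1,x_2$ are irrational, and again the translation amounts of $\mathcal{M}^M$ involve $1/a$ nontrivially, so no branch can be the identity.

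The main obstacle, I expect, is the bookkeeping in this last step: one must verify that in \emph{every} branch of the first-return / $M$-th iterate map on the (finitely many) intervals comprising $\mathcal{S}$, the translation constant is a $\Bbb Z$-linear combination of $1$ and $1/a$ (and $c/a$, which however is itself controlled) in which the coefficient of $1/a$ cannot vanish unless the whole constant does — and to confirm that $\mathcal{M}^M$ cannot be the identity on $\mathcal{S}$ when $a\notin\Bbb Q$. This is essentially a rigidity statement: an irrational rotation parameter cannot be "hidden" by the squeezing conjugacy $Y$. I would make it precise by invoking the more detailed structure of $\mathcal{S}$ and of $Y$ recorded in Theorems \ref{thm-s-1} and \ref{thm-mod-1} (and the disturbance/structure results), which already encode that $Y(\alpha)$ is irrational exactly when the orbit genuinely equidistributes, and then reading off that equidistribution here is equivalent to irrationality of $a$ via \eqref{alpha-xx}–\eqref{alpha-xx-0}.
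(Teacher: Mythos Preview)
Your converse direction ($Y(\alpha)\in\Bbb Q\Rightarrow a\in\Bbb Q$) in the main case has the right idea and is in fact more direct than the paper's. If $Y(\alpha)=N/M$, then by Theorem \ref{thm-mod-1} the bijection $Y$ conjugates $\mathcal{M}^M|_{\mathcal{S}}$ to the identity, so $\mathcal{M}^M(t)=t$ for every $t\in\mathcal{S}$; along any orbit the total translation is $M\alpha+k(x_2-x_1)\pmod 1$ for some $0\le k\le M$, and with $\alpha=\langle\lfloor c\rfloor/a\rangle$ and $x_2-x_1=(1-a)/a$ this equals $(M\lfloor c\rfloor+k)/a$ minus an integer; setting it to zero gives $a\in\Bbb Q$ since $M\lfloor c\rfloor+k\ge M\ge1$. (Your worry that ``the case $c\in\Bbb Q$ needs its own bookkeeping'' is unfounded: the argument is uniform in $c$.) The paper instead reduces to Type I via Proposition \ref{prop-delta-tur}, writes $\alpha$ explicitly as $p\frac{1-N(x_2-x_1)}{M}+s(x_2-x_1)$, substitutes the Gabor parameters, and extracts a divisibility contradiction $M\mid pN$ with $\gcd(p,M)=1$ and $1\le N<M$; your route avoids that detour.

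Two other parts have genuine gaps. First, in the degenerate cases you write the rotation amount as $\langle c/a\rangle$ and claim $c/a\in\Bbb Q$ reduces to $a\in\Bbb Q$ via $c\notin\Bbb N$; this implication is false (take $c=\sqrt2$, $a=\sqrt2/2$). The correct observation, which is the paper's, is that when $\mathcal{S}\subset V$ (resp.\ $\mathcal{S}\subset U$) Proposition \ref{prop-s-rational} gives $Y(\alpha)=\alpha=\langle\lfloor c\rfloor/a\rangle$ (resp.\ $Y(\alpha)=\langle\alpha+x_2-x_1\rangle=\langle(\lfloor c\rfloor+1)/a\rangle$), and since $\lfloor c\rfloor\ge1$ these are rational iff $a$ is. Note also that your case split by $x_1=0$, $x_2=1$ does not coincide with the trichotomy $\mathcal{S}\subset V$, $\mathcal{S}\subset U$, neither. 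Second, your forward direction in the main case asserts that $a\in\Bbb Q$ forces $c\in\Bbb Q$ and makes the interval lengths of $\mathcal{S}$ commensurable; the first claim is false, and the second is not established (the break points $x_1,x_2$ involve $\langle c\rangle/a$, which may be irrational). The clean argument --- the paper's --- is simply that the two translation amounts $\alpha=\langle\lfloor c\rfloor/a\rangle$ and $\alpha+x_2-x_1=\langle(\lfloor c\rfloor+1)/a\rangle$ are both rational when $a\in\Bbb Q$, so every $\mathcal{M}$-orbit in $\mathcal{S}$ is finite, and Theorem \ref{thm-mod-1} then forces $Y(\alpha)\in\Bbb Q$.
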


\vspace{0.2cm}

\subsection{Organization of the rest of the paper}
\ \

In Section \ref{prof-main1} we prove Theorem \ref{main1} by using Proposition \ref{thm-exist-e++}, Theorem \ref{thm-gabor-eq-e-set}, Lemma \ref{lem-Y-map}, and Theorem \ref{main0} for some special cases.

In Section \ref{section-2} we are  devoted to  describe the pointwise linear dependence of the $a-$shift of the window function, and to provide  a criterion that  can be used to determine whether a Gabor system with compact window is frame or not, see Theorem \ref{bound-sequence-0}.

In Section 4 we investigate the structure of the maximal invariant sets. We first prove Theorems \ref{thm-s-1} and \ref{thm-mod-1}, and then provide the structures of $\mathcal{S}$ for different situations (Proposition \ref{prop-s-rational} and Theorem \ref{thm-4-struct}) and the relationship between them (Proposition \ref{prop-delta-tur}).

In Section \ref{section-4+1} we focus on the symmetric maximal invariant sets. After characterizing the conditions for  $\mathcal{E}\ne\emptyset$ (Propositions \ref{thm-exist-e-3-1}, \ref{prop-tur-E} and Lemma \ref{lem-suv}), we prove Proposition \ref{thm-exist-e++} and Corollary \ref{thm-exist-e-2}.

In Section \ref{section-5} we give the proofs of Lemma \ref{lem-Y-map} and Theorem \ref{thm-gabor-eq-e-set}.

In Section \ref{section-7}, we prove Theorem \ref{main0}.

\vspace{0.2cm}

\section{Proof of the main result}\label{prof-main1}  

 In this section, we prove Theorem \ref{main1}, the main result of this paper.
Firstly in the following theorem, we discuss some special cases of pairs $(a, c)$
whether $\mathcal{G}(H_c;a)$ is or is not a Gabor frame, which helps us to focus on the challenging pair of parameters $a,c$  with
 \begin{equation}\label{a-c-condition-006}
0<a<1<c  \mbox{ and }\ c\not\in\Bbb N.
\end{equation}

\begin{thm} \label{main0}
Let $(a,c)\in {\mathbb R}_+^2$ and
$\mathcal{G}(H_c;a)$ be the Gabor system \eqref{new-gabor-sys} generated by the Haar function $H_c$ in \eqref{haar-function}. Then the following statements hold.
\begin{enumerate}
   \item [{\rm(i)}]  If $c\in\Bbb N$, then $\mathcal{G}(H_c;a)$ is not a Gabor frame.
   \item [{\rm(ii)}] If $a=1$ and $c\ge\frac 12$, then $\mathcal{G}(H_c;a)$ is a Gabor frame if and only if $ c=\frac {2n-1}2$ for some $n\in\Bbb N$.
   \item [{\rm(iii)}] If $a<1$, $a\not\in\Bbb Q$ and $\frac a2\le c< 1$, then $\mathcal{G}(H_c;a)$ is
   a Gabor frame.
   \item [{\rm(iv)}] If $a<1$, $a=\frac pq$ with co-prime $p,q\in\Bbb N$ and $\frac a2\le c< 1$, then $\mathcal{G}(H_c;a)$ is a Gabor frame if and only if $c\le 1-\frac 1{2q}$.
  \end{enumerate}
\end{thm}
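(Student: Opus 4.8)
My plan is to handle the four assertions by rather different devices; only (iii)--(iv) involve the transformation $\mathcal M$, and I would treat them last. For \textbf{(i)}: since the Fourier transform is unitary and (up to unimodular constants) carries $\mathcal G(g;\alpha,\beta)$ onto $\mathcal G(\widehat g;\beta,\alpha)$, the system $\mathcal G(H_c;a)=\mathcal G(H_c;a,1)$ is a frame iff $\mathcal G(\widehat{H_c};1,a)$ is. I would then use the classical necessary condition that a frame $\mathcal G(h;\alpha,\beta)$ forces $A\le\sum_{k\in\Bbb Z}|h(x-k\alpha)|^2\le B$ for a.e.\ $x$ (obtained by testing the frame inequality on functions supported on a short interval). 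Applied with $h=\widehat{H_c}$ and shift $1$, using $\widehat{H_c}(\xi)=2\sin^2(\pi c\xi)/(\pi i\xi)$ and the identity $\sum_k(\xi-k)^{-2}=\pi^2/\sin^2(\pi\xi)$, this gives, for $c\in\Bbb N$ (so that $\sin^2(\pi c(\xi-k))=\sin^2(\pi c\xi)$),
\[
\sum_{k\in\Bbb Z}|\widehat{H_c}(\xi-k)|^2=\frac{4\sin^4(\pi c\xi)}{\sin^2(\pi\xi)},
\]
which tends to $0$ as $\xi\to0$. Hence no lower frame bound exists and $\mathcal G(H_c;a)$ is not a frame.

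For \textbf{(ii)}, at the critical density $a=1$ I would invoke the $a=1$ instance of Theorem \ref{bound-sequence-0}, which at critical density is equivalent to the Zak-transform criterion: $\mathcal G(g;1,1)$ is a frame iff $\mathrm{ess\,inf}_{x,\xi}|Zg(x,\xi)|>0$, where $Zg(x,\xi)=\sum_k g(x+k)e^{2\pi ik\xi}$ (the essential supremum being finite automatically since $H_c$ is bounded with compact support). Evaluating the finite sum with $z=e^{2\pi i\xi}$, $\ell=\lfloor c\rfloor$, $s=\langle c\rangle$, $x\in[0,1)$ yields
\[
ZH_c(x,\xi)=\frac{(z^{\ell}-1)^2}{z^{\ell}(z-1)}+z^{\ell}\chi_{[0,s)}(x)-z^{-\ell-1}\chi_{[1-s,1)}(x).
\]
I would then inspect the finitely many Laurent polynomials arising as $x$ runs over $[0,1)$. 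If $s\ne\tfrac12$ (and $c\ge\tfrac12$, so $\ell\ge1$ when $s<\tfrac12$), one piece vanishes somewhere on $|z|=1$ on an $x$-interval of positive length --- the base term $\tfrac{(z^{\ell}-1)^2}{z^{\ell}(z-1)}$ alone on $[s,1-s)$ when $s<\tfrac12$ (a zero at an $\ell$-th root of unity), and the piece valid on $[1-s,s)$ at $z=1$ when $s>\tfrac12$ --- so $\mathrm{ess\,inf}|ZH_c|=0$ and there is no frame. If $s=\tfrac12$ the only two pieces are $\tfrac{z^{2\ell+1}-2z^{\ell}+1}{z^{\ell}(z-1)}$ and $\tfrac{z^{2\ell+1}-2z^{\ell+1}+1}{z^{\ell+1}(z-1)}$; in each, numerator and denominator vanish to the same order $1$ at $z=1$, and for $|z|=1$, $z\ne1$, the vanishing of the numerator would force two unimodular numbers to equal $1$ simultaneously (hence $z=1$), so both are zero-free on the unit circle and $\mathcal G(H_c;a)$ is a frame. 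Since $c\ge\tfrac12$, the condition $s=\tfrac12$ is exactly $c=(2n-1)/2$. (If $c<\tfrac12$ with $a=1$, then $a>2c$, so it is not a frame.)

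For \textbf{(iii)} and \textbf{(iv)} I would carry out the programme announced after Theorem \ref{thm-gabor-eq-e-set}: for the ``short-window'' regime $0<a<1$ and $a/2\le c<1$ (so $c\notin\Bbb N$ automatically), set up the analogue of the dictionary of Subsection \ref{subsection 1.3} by redefining $x_1,x_2,\alpha$ appropriately for $c<1$, and prove, via Theorem \ref{bound-sequence-0} and the pointwise-dependence analysis of Section \ref{section-2}, that $\mathcal G(H_c;a)$ is a frame iff the symmetric maximal invariant set $\mathcal E$ of the corresponding map $\mathcal M=\mathcal M_{\alpha,x_1,x_2}$ is empty. For (iii): since $a\notin\Bbb Q$, Lemma \ref{lem-Y-map} gives $Y(\alpha)\notin\Bbb Q$ whenever $\mathcal S\ne\emptyset$, while Proposition \ref{thm-exist-e++} requires $Y(\alpha)\in\Bbb Q$ for $\mathcal E\ne\emptyset$; in either case $\mathcal E=\emptyset$, so $\mathcal G(H_c;a)$ is a frame. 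For (iv): writing $a=p/q$ ($q\ge2$), I would compute $Y(\alpha)=N/M$ and the data $(\Delta,\delta)$ attached to $\mathcal M$ explicitly in terms of $p,q,c$, and then test the three alternatives of Proposition \ref{thm-exist-e++}; under $c<1$ a nonempty $\mathcal E$ can occur only in the degenerate configuration with $\mathcal S\subset U$ or $\mathcal S\subset V$ and $M=q$, and precisely when $c>1-\tfrac1{2q}$. Equivalently, $\mathcal G(H_c;a)$ is a frame iff $c\le1-\tfrac1{2q}$.

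The \textbf{main obstacle} lies in (iii)--(iv) and is twofold. First, one must re-derive the $\mathcal G(H_c;a)\leftrightarrow\mathcal M$ correspondence in the regime $c<1$: the bookkeeping of which $a$-shifts of $H_c$ are pointwise linearly dependent is different from the $c>1$ situation of Theorem \ref{thm-gabor-eq-e-set}, so the formulas \eqref{alpha-xx}--\eqref{alpha-xx-0} and the improved Ron--Shen argument must be reworked for this degenerate configuration of $\mathcal M$. Second, one must carry out the exact arithmetic matching in (iv), verifying that the three-case description of Proposition \ref{thm-exist-e++} collapses, for $c<1$, to the single clean inequality $c>1-\frac1{2q}$ --- in particular, that the analogue of condition \eqref{main1.eq2} is never triggered when $c<1$. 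Parts (i)--(ii), by contrast, are essentially mechanical once the classical criteria are in place.
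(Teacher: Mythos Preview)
Your treatments of (i) and (ii) are correct but take genuinely different routes from the paper. For (i), the paper simply observes that the constant sequence $q_j\equiv 1$ with $t_0=0$ satisfies \eqref{eq-linear-cor}, since for $c\in\Bbb N$ each interval $[na-c,na+c)$ contains exactly $c$ integers in each half; your Fourier-side argument via $\sum_k|\widehat{H_c}(\xi-k)|^2$ is also valid and arguably more conceptual. For (ii), the paper works entirely through the bounded-sequence criterion (Corollary~\ref{bound-sequence-cor}) and the reduction Proposition~\ref{rational-case-2}: when $\langle c\rangle\ne\tfrac12$ it reduces to the integer case (i), and when $\langle c\rangle=\tfrac12$ it derives the recursion $2q_k=q_{k-\lfloor c\rfloor}+q_{k+\lfloor c\rfloor+1}$ and iterates to a contradiction with boundedness. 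Your Zak-transform computation is equivalent in spirit and equally clean; either buys the result with comparable effort.

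For (iii) and (iv), however, your plan has a real gap. You propose to re-establish the full correspondence $\mathcal G(H_c;a)\leftrightarrow\mathcal E(\mathcal M_{\alpha,x_1,x_2})$ in the regime $c<1$, then invoke Lemma~\ref{lem-Y-map} and Proposition~\ref{thm-exist-e++}. But those results are stated and proved only under \eqref{a-c-condition-1.3}, i.e.\ $c>1$; for $c<1$ one has $\lfloor c\rfloor=0$, so $\alpha=0$ and the bookkeeping of \eqref{alpha-x-x-1} collapses, and the argument of Section~\ref{section-5} (in particular Propositions~\ref{prop-gabor-suff-e-set} and~\ref{prop-lin-e-set}) does not carry over without substantial reworking. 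You acknowledge this as ``the main obstacle'' but give no indication of how to overcome it, so as written (iii)--(iv) remain unproved.

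The paper bypasses all of this. Its proofs of (iii) and (iv) are short and use only Corollary~\ref{bound-sequence-cor} directly: assuming a nonzero bounded dependence $\{q_j\}$ as in \eqref{eq-linear-cor}, one first shows (by choosing $n$ so that exactly two shifts overlap a given integer) that $q_j=0$ forces $q_{j\pm1}=0$, hence all $q_j$ vanish once one does. Then for (iii), irrationality of $a$ produces an $n_0$ with $\langle x-n_0a\rangle<\min(c,1-c)$, so a single $q_{j_0}$ appears in the $n_0$-th equation and must vanish. For (iv), the failure when $c>1-\tfrac1{2q}$ follows from Proposition~\ref{rational-case-2}(3) combined with (i), and the frame property when $c\le 1-\tfrac1{2q}$ follows by the same single-term trick, choosing $n_0$ so that $x-n_0a\in[-\tfrac1{2q},\tfrac1{2q})+\Bbb Z$. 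This avoids the $\mathcal M$ machinery entirely; you should replace your plan for (iii)--(iv) with this direct argument.
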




 Theorem \ref{main0} follows from Theorem \ref{bound-sequence-0}, the improvement of the Ron-Shen's criterion,
 and its Corollary \ref{bound-sequence-cor}. As it is independent on the core part of this paper, we postpone the proof Theorem \ref{main0}
 to the appendix in Section \ref{section-7}. The readers may go directly to the appendix without understanding the arguments of the next few sections.

\vspace{0.2cm}
Now we assume that
Proposition \ref{thm-exist-e++}, Theorem \ref{thm-gabor-eq-e-set}, Lemma \ref{lem-Y-map}  and Theorem \ref{main0} hold, and  start to verify the conclusions in  Theorem \ref{main1} case by case.

\vspace{0.2cm}

\noindent{\bf Proof of Theorem \ref{main1}.} 
%
%
(i). 
The sufficiency is an immediate consequence of Theorem \ref{main0} (i). For the necessity, we assume on the contrary that $c\not\in\Bbb N$. By Theorem \ref{main0} (iii), it is enough to consider $a$ and $c$ satisfying  \eqref{a-c-condition-006}. Observing that $\mathcal{S}=\emptyset$ clearly means that $\mathcal{E}=\emptyset$. Also it follows from Lemma \ref{lem-Y-map} that $\mathcal{S}\ne\emptyset$ implies that $Y(\alpha)\not\in \Bbb Q$, and hence $\mathcal{E}=\emptyset$ by Proposition \ref{thm-exist-e++}. Therefore,  using Theorem \ref{thm-gabor-eq-e-set}, $\mathcal{G}(H_c;a)$ is a Gabor frame. This contradiction implies the desired result.
\vspace{0.2cm}

\vspace{0.2cm}

(ii).
 In terms of the assertions  (i), (ii) and (iv) of Theorem \ref{main0},  the last task left for us is to prove the assertion   (ii) of  Theorem \ref{main1}  for the parameter pair $(a,c)$ satisfying \eqref{a-c-condition-006}. Let $x_1$, $x_2$ and $\alpha$ be as in \eqref{alpha-xx} and \eqref{alpha-xx-0}.


\vspace{0.2cm}

\emph{Necessity.} By Theorem \ref{thm-gabor-eq-e-set}, the symmetric maximal invariant set $\mathcal{E}$ of the map $\mathcal{M}=\mathcal{M}_{\alpha, x_1,x_2}$ is not empty.
Then  one of the statements (i), (ii) and (iii) of Proposition \ref{thm-exist-e++} holds with $Y(\alpha)=\frac NM$ being the simple fraction for some integers $0\le N< M$.

\vspace{0.2cm}

If Proposition \ref{thm-exist-e++} (i) holds, then 
$$\alpha=\langle\frac {\lfloor c\rfloor}{a}\rangle=\langle\frac {q\lfloor c\rfloor}{p}\rangle=\frac NM  \text{ and }
 x_2=\frac {\langle c\rangle}{a}=\frac {q\langle c\rangle}{p}<\frac 1{2M}.$$ Therefore, $p=M\gcd(\lfloor c\rfloor,p)$ and $\langle c\rangle<\frac {\gcd(\lfloor c\rfloor,p)}{2q}$. Hence, \eqref{main1.eq1} is valid by taking $n=\lfloor c\rfloor$.

\vspace{0.2cm}

 If Proposition \ref{thm-exist-e++} (ii) holds, then
$$\langle\alpha+x_2-x_1\rangle=\langle\frac {\lfloor c+1\rfloor}{a}\rangle=\langle\frac {q\lfloor c+1\rfloor}{p}\rangle=\frac NM \text{ and }
x_1=1-\frac {1-\langle c\rangle}{a}=1-\frac {q(1-\langle c\rangle)}{p}>1-\frac 1{2M}.$$ Therefore, $p=M\gcd(\lfloor c+1\rfloor,p)$  and  $1-\langle c\rangle<\frac {\gcd(\lfloor c+1\rfloor,p)}{2q}$. Hence, \eqref{main1.eq1} holds for $n=\lfloor c+1\rfloor$.

\vspace{0.2cm}

  If Proposition \ref{thm-exist-e++} (iii) holds, then it is obvious that
  $N<M$ are co-prime positive integers and $0<x_1<x_2<1$, and hence it follows from \eqref{alpha-xx} and \eqref{alpha-xx-0} that
  \begin{equation}\label{x-E-U-V}
 x_2=\frac {\langle c\rangle}{a},\ \Delta=x_2-x_1=\frac{1-a}a\ \mbox{ and }\ \alpha=\big\langle\frac {\lfloor c\rfloor}{a}\big\rangle.
 \end{equation}
Substituting above $\Delta$ and $\alpha$ into \eqref{alpha-E-U-V}, we obtain
 \begin{equation}\nonumber
  \big\langle\frac {\lfloor c\rfloor}{a}\big\rangle= \alpha=\frac NM-\frac {M-N}M\Delta
  =1-\frac {M-N}{aM},
 \end{equation}
which means $\lfloor c\rfloor+\frac{M-N}M= an=\frac {n p}q$ for some $n\in\Bbb N$, and thus $M=q/\gcd(n, q)$ and $n\not\in q\Bbb N$. 
 Recall that
$
  \big\langle\frac {\lfloor c\rfloor}{a}\big\rangle+\frac {\langle c\rangle}{a}=\alpha+x_2
  =1+\delta
$ by \eqref{alpha-E-U-V} and \eqref{x-E-U-V}.
Therefore
$$
|\frac ca-n|=|\frac {\lfloor c\rfloor}{a}+\frac {\langle c\rangle}{a}-n|=|\big\langle\frac {\lfloor c\rfloor}{a}\big\rangle+\frac {\langle c\rangle}{a}-1|=|\delta|<\frac 1{2M}-\frac {M-1}{2M}\Delta=\frac{\gcd(n, q)}{2qa}-\frac{1-a}{2a}.
$$
Hence, \eqref{main1.eq2}  holds. The necessity is proved.

\vspace{0.2cm}

\emph{Sufficiency}. First, we assume \eqref{main1.eq1} is {satisfied.}
Denote $M=p/\gcd(n,p)$ and $N=M\langle nq/p\rangle$. 
Then $N/M$ is a simple fraction.
 If $c\ge n$, then $n=\lfloor c\rfloor$ and $\langle c\rangle=c-n<\frac{\gcd(n, p)}{2q}$. Therefore, by \eqref{alpha-xx} and \eqref{alpha-xx-0},
 $$x_2=\frac {\langle c\rangle}{a}<\frac{\gcd(n, p)}{2p} =\frac 1{2M}$$
 and
$$
\alpha=\big\langle\frac {\lfloor c\rfloor}{a}\big\rangle=\langle\frac {nq}p\rangle=\frac NM.
$$
Hence, the requirement (i) of Proposition \ref{thm-exist-e++} is satisfied for $x_1, x_2$ and $\alpha$, and thus $\mathcal{E}\ne\emptyset$. Consequently, by Theorem \ref{thm-gabor-eq-e-set}, $\mathcal{G}(H_c;a)$ is not a Gabor frame.


\vspace{0.2cm}

If $c< n$, then $n=\lfloor c\rfloor+1$ and $1-\langle c\rangle=|c-n|<\frac{\gcd(n, p)}{2q}$.
Thus, by \eqref{alpha-xx} and \eqref{alpha-xx-0},
 \begin{equation}\nonumber
 x_1=\frac {a+\langle c\rangle-1}{a}>1-\frac{\gcd(n, p)}{2p}=1-\frac 1{2M}
 \end{equation}
and
 \begin{equation}\nonumber
\langle\alpha+x_2-x_1\rangle=\big\langle\frac {\lfloor c\rfloor+1}{a}\big\rangle=\frac NM. 
 \end{equation}
This means $x_1, x_2$ and $\alpha$ satisfy the requirement (ii) of Proposition \ref{thm-exist-e++}, and therefore $\mathcal{E}\ne\emptyset$ and $\mathcal{G}(H_c;a)$ is not a Gabor frame by Theorem \ref{thm-gabor-eq-e-set}.

\vspace{0.2cm}

Next, we assume \eqref{main1.eq2} is satisfied.
Set $M=q/\gcd(n,q)$.
{Because} 
$n\not\in q\Bbb N$, we have $M>1$ and $\langle na\rangle=\frac {M-N}M$ for some $N\in\{1,\cdots,M-1\}$ co-prime to $M$.
Observe that the right hand side of \eqref{main1.eq2} satisfies
$$
\frac{\gcd(n, q)}{2q}-\frac{(q-p)}{2q}=\frac 1{2M}-\frac{1-a}2<\frac1{2M}.
$$
Therefore, 
we have
$\lfloor c\rfloor=\lfloor na\rfloor$
and
\begin{equation}\label{add-601988}
|\langle c\rangle-\frac {M-N}M|=|c-na|<\frac 1{2M}-\frac{1-a}2.
\end{equation}
This means $a>\frac{M-1}M$ and $1-a<\langle c\rangle<a$,
and therefore {\eqref{alpha-xx} and \eqref{alpha-xx-0} yield that}
\begin{equation}\label{add-6011001}
x_2=\frac {\langle c\rangle}{a},\  x_1=\frac {a+\langle c\rangle-1}{a} \mbox{ and }\ \alpha=\big\langle\frac {\lfloor c\rfloor}{a}\big\rangle.
\end{equation}

Set $\Delta=\frac {1-a}a$ and $\delta=\frac{\langle c\rangle}a-\frac {M-N}{aM}$. { It is obvious that $0<\Delta<\frac 1{M-1}$} and $|\delta|<\frac 1{2M}-\frac {M-1}{2M}\Delta$ by \eqref{add-601988}, and thus \eqref{add-6011001} implies that
$$x_2=\frac {\langle c\rangle}{a}=\frac {M-N}{aM}+\delta=\frac {M-N}M(1+\Delta)+\delta,$$
$$x_1=
x_2-\Delta=\frac {M-N}M-\frac NM\Delta+\delta$$
and
$$
\alpha=\big\langle\frac {\lfloor c\rfloor}{a}\big\rangle=\big\langle\frac {\lfloor na\rfloor}{a}\big\rangle=\big\langle\frac {na-\langle na\rangle}{a}\big\rangle
=\langle 1- \frac{M-N}{aM} \rangle
=\frac NM-\frac {M-N}M\Delta.$$
Therefore the requirement (iii) of Proposition \ref{thm-exist-e++} is valid, and hence $\mathcal{E}\ne\emptyset$. Consequently, $\mathcal{G}(H_c;a)$ is not a Gabor frame by Theorem \ref{thm-gabor-eq-e-set}. The sufficiency of (ii) is proved, and the proof of Theorem \ref{main1} is complete.
\eproof


\vspace{0.2cm}

\section {The Ron-Shen's characterization and its improvements}\label{section-2}
Our journey to the preparatory works for full description of the frame set for Haar windows in Theorem \ref{main1}  starts from
the Ron-Shen's characterization in \cite{RS97}, which can be recast
as follows in our setting.
\begin{prop}\label{discretization-theorem-0}
Let $g\in L^2$ and $a>0$. Then $\mathcal{G}(g;a)$ is a Gabor frame  if and only if
   there exist $A,B>0$ such that for each $\{c_j\}_{j\in\Bbb Z} \in \ell^2$
   \begin{equation}\label{eq-discretization-0}
A \|\{c_j\}_{j\in\Bbb Z}\|^2_{\ell^2}\le \sum_{n\in\Bbb Z}\Big|\sum_{j\in\Bbb Z} c_j g(x+j- na)\Big|^2\le B \|\{c_j\}_{j\in\Bbb Z}\|^2_{\ell^2}\  \mbox{ a.e. }\ x\in\Bbb R.
\end{equation}
\end{prop}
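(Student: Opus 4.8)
The plan is to prove Proposition~\ref{discretization-theorem-0} by the classical \emph{fiberization} (Walnut-type periodization) argument, which converts the frame inequality for $f\in L^2(\Bbb R)$ into a pointwise-in-$x$ two-sided estimate on the fibers $\ell^2(\Bbb Z)$. Since here $\beta=1$, the modulations $e^{-2\pi i m\cdot}$ are $1$-periodic and the shift parameter $a$ enters only as a constant, so its rationality plays no role. First I would fix $f\in L^2(\Bbb R)$ and observe that for each $n\in\Bbb Z$ the number $\langle f,e^{-2\pi i m\cdot}g(\cdot-na)\rangle=\int_{\Bbb R}f(x)\overline{g(x-na)}\,e^{2\pi i m x}\,dx$ is, up to reflecting $m$, the $m$-th Fourier coefficient of the $1$-periodization $F_n(x):=\sum_{k\in\Bbb Z}f(x+k)\overline{g(x+k-na)}$, which lies in $L^1(\Bbb T)$ by Cauchy--Schwarz. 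Plancherel on $\Bbb T$ (valid in $[0,\infty]$) gives $\sum_m|\langle f,e^{-2\pi i m\cdot}g(\cdot-na)\rangle|^2=\int_0^1|F_n(x)|^2\,dx$, and summing over $n$ and invoking Tonelli yields
\[
\sum_{m,n\in\Bbb Z}\bigl|\langle f,e^{-2\pi i m\cdot}g(\cdot-na)\rangle\bigr|^2=\int_0^1\Bigl(\sum_{n\in\Bbb Z}\Bigl|\sum_{j\in\Bbb Z}f(x+j)\overline{g(x+j-na)}\Bigr|^2\Bigr)dx.
\]
Writing $c(x):=(\overline{f(x+j)})_{j\in\Bbb Z}$, one has $c(x)\in\ell^2(\Bbb Z)$ for a.e.\ $x\in[0,1)$ with $\int_0^1\|c(x)\|_{\ell^2}^2\,dx=\|f\|_2^2$, and the right-hand integrand equals, at $c(x)$, the middle term of \eqref{eq-discretization-0} with $\overline g$ in place of $g$; since replacing $c_j$ by $\overline{c_j}$ and conjugating shows that \eqref{eq-discretization-0} holds for all $c$ with $g$ if and only if it does with $\overline g$ (with the same $A,B$), I will pass freely between the two.

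The direction ``frame $\Rightarrow$ \eqref{eq-discretization-0}'' is then handled by localization. Assume $\mathcal{G}(g;a)$ is a frame with bounds $A,B$. Given a finitely supported $c\in\ell^2(\Bbb Z)$ and a measurable $E\subset[0,1)$ with $|E|>0$, put $f:=\sum_j c_j\chi_{E+j}\in L^2(\Bbb R)$, so that $f(x+j)=c_j\chi_E(x)$ for $x\in[0,1)$ and $\|f\|_2^2=|E|\,\|c\|_{\ell^2}^2$. The displayed identity then reads $\sum_{m,n}|\langle f,\cdots\rangle|^2=\int_E\Phi_c(x)\,dx$ with $\Phi_c(x):=\sum_n|\sum_j c_j\overline{g(x+j-na)}|^2$, and the frame bounds sandwich it between $A|E|\|c\|^2$ and $B|E|\|c\|^2$. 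Taking $E=[0,1)$ shows $\Phi_c\in L^1[0,1)$, and then the elementary fact that $\int_E\varphi\le C|E|$ for every measurable $E$ forces $\varphi\le C$ a.e.\ gives $A\|c\|^2\le\Phi_c(x)\le B\|c\|^2$ for a.e.\ $x$, for each finitely supported $c$. Running this over a countable dense family of such $c$ and intersecting the exceptional null sets yields a single conull set on which the bound holds for every $c$ in that family; a density/continuity argument (see below) then extends it to all $c\in\ell^2(\Bbb Z)$, which is \eqref{eq-discretization-0}.

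For the converse I would first record the preliminary fact that $\sum_j|g(x+j-na)|^2<\infty$ for a.e.\ $x$ and each $n$ (its integral over $[0,1)$ equals $\|g\|_2^2$), which guarantees that the middle term of \eqref{eq-discretization-0} is an a.e.-finite, measurable function of $x$ for each $c$ and, by the same density/continuity argument, that \eqref{eq-discretization-0} may be assumed to hold on a common conull set for all $c\in\ell^2(\Bbb Z)$ at once. Then, given $f\in L^2(\Bbb R)$, substituting $c=c(x)$ into the two-sided bound and integrating over $[0,1)$ through the displayed identity yields $A\|f\|_2^2\le\sum_{m,n}|\langle f,e^{-2\pi i m\cdot}g(\cdot-na)\rangle|^2\le B\|f\|_2^2$, so $\mathcal{G}(g;a)$ is a frame with bounds $A,B$.

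The main obstacle, used in both directions, is the passage ``two-sided bound on a countable dense $D\subset\ell^2(\Bbb Z)$ $\Rightarrow$ two-sided bound on all of $\ell^2(\Bbb Z)$'', at a fixed good $x$. Here one notes that $T_xc:=(\sum_j c_j\overline{g(x+j-na)})_n$ is, by Cauchy--Schwarz against the rows $(g(x+j-na))_j\in\ell^2(\Bbb Z)$, a well-defined linear map whose restriction to $D$ is bounded by $\sqrt{B}$; it therefore extends to a bounded operator $\widetilde T_x$ on $\ell^2(\Bbb Z)$, and a coordinatewise-limit argument (again Cauchy--Schwarz) identifies $\widetilde T_x c=T_xc$ for every $c$, so that $\Phi_c(x)=\|T_xc\|_{\ell^2}^2$ inherits both bounds for all $c\in\ell^2(\Bbb Z)$ by continuity of the norm. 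Apart from this operator-extension step, the proof is routine bookkeeping with Plancherel on $\Bbb T$, Tonelli, and the test functions $f=\sum_j c_j\chi_{E+j}$.
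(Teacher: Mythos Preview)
The paper does not supply its own proof of this proposition; it is quoted as the Ron--Shen characterization and attributed to \cite{RS97}. Your fiberization argument is the standard route and is correct: the Plancherel/Tonelli identity converts the frame inequality into the pointwise-in-$x$ $\ell^2$ bound, the test functions $f=\sum_j c_j\chi_{E+j}$ together with Lebesgue-type differentiation recover \eqref{eq-discretization-0} from the frame bounds for each finitely supported $c$, and your operator-extension step (boundedness of $T_x$ on a countable dense family, plus coordinatewise identification of the extension with $T_x$ via Cauchy--Schwarz against the $\ell^2$ rows $(g(x+j-na))_j$) legitimately upgrades this to all $c\in\ell^2(\Bbb Z)$ on a single conull set---which is in fact slightly more than the proposition asks, since there the exceptional null set may depend on $c$.
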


Let $W(\mathcal{C},\ell^1):=\{g\in \mathcal{C}:\sum_{k\in\Bbb Z}\sup_{x\in[0,1)}|g(x+k)|<\infty\}$ be the Wiener amalgam space. Using a technique combining
 the Ron-Shen's characterization,
Gr\"{o}chenig, Romero and St\"{o}ckler \cite{grochenigrs} combined some tricks in functional analysis, operator theory and complex analysis, and they
 presented a very strong criterion that the Gabor system $\mathcal{G}(g;a)$ generated by a window function $g$ in the Wiener amalgam space $W(\mathcal{C},\ell^1)$ is not a Gabor frame if and only if there exist $t_0\in\Bbb R$ and a nonzero bounded sequence $\textbf{0} \ne\{d_j\}_{j\in\Bbb Z}\in\ell^\infty$ such that
\begin{equation}\label{eq-rs-1}
\sum_{j\in\Bbb Z} d_j g(t_0+j-na)=0 \mbox{ for all } n\in\Bbb Z.
\end{equation}
Unfortunately, the Haar function $H$ does not belong to the Wiener amalgam space $W(\mathcal{C},\ell^1)$ and then the above bounded linear dependence does not apply directly. Observe that the Haar function is piecewise continuous. To describe the frame set of Haar function, we make a slight modification to the criterion \eqref{eq-rs-1}, through replacing the window function $g$ by its left/right limits $g^\pm(t):=\lim_{\tilde{t}\rightarrow t^\pm} g(\tilde{t})$ in the following theorem. 

\vspace{0.2cm}

\begin{thm} \label{bound-sequence-0}
Let $a>0$, and $g$ be a compactly supported function such that its left/right limits $g^\pm$ are well defined.  Then $\mathcal{G}(g;a)$ is not a Gabor frame if and only if
 there exist $t_0\in\Bbb R$ and a nonzero bounded sequence $\{q_j\}_{j\in\Bbb Z}\in \ell^\infty$ such that
\begin{equation}\label{eq-linear-4.2}
\sum_{j\in\Bbb Z} q_j h(t_0+j-na)=0\  \mbox{ for all }\ n\in\Bbb Z,
\end{equation}
for either $h=g^+$ or $h=g^-$.
\end{thm}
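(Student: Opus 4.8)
\textbf{Proof proposal for Theorem \ref{bound-sequence-0}.}

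The plan is to start from the Ron--Shen characterization in Proposition \ref{discretization-theorem-0} and carefully track how the piecewise continuity of $g$ interacts with the ``almost everywhere'' quantifier there. Recall that $\mathcal{G}(g;a)$ is \emph{not} a Gabor frame precisely when the lower bound in \eqref{eq-discretization-0} fails, i.e., for every $A>0$ there is a unit vector $\{c_j\}\in\ell^2$ and a positive-measure set of $x$ on which $\sum_n|\sum_j c_j g(x+j-na)|^2 < A$. Since $g$ is compactly supported, for each fixed $x$ the ``column'' map $\{c_j\}\mapsto \big(\sum_j c_j g(x+j-na)\big)_{n\in\Bbb Z}$ is given by a bi-infinite matrix $G(x)$ with only finitely many nonzero entries in each row and column (the band is controlled by $|\mathrm{supp}\, g|$ and $a$), and $G(x)$ is $1$-periodic in $x$. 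So the frame question is equivalent to a uniform-in-$x$ lower bound for these banded operators $G(x)$ on $x\in[0,1)$.

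First I would reduce to the following dichotomy: either $\inf_{x}\,\|G(x)^{-1}\|^{-1}$ (the infimum of the bottom of the spectrum, interpreted as $0$ when $G(x)$ is not bounded below) is bounded away from $0$ — in which case, using that the entries of $G(x)$ are bounded and the band is finite, one upgrades the pointwise lower bound to the essential lower bound in \eqref{eq-discretization-0} and concludes $\mathcal{G}(g;a)$ is a frame — or else there is a sequence $x_k$ with $\|G(x_k)\|_{\text{op}\to 0}$-bottom tending to $0$, producing near-null vectors $\{c_j^{(k)}\}$, $\|c^{(k)}\|_{\ell^2}=1$, with $\|G(x_k)c^{(k)}\|_{\ell^2}\to 0$. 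The \emph{easy direction} of the theorem is immediate once this is set up: if \eqref{eq-linear-4.2} holds for some $h=g^\pm$ and a nonzero bounded $\{q_j\}$, then picking $x$ slightly to the right (resp. left) of $t_0$ makes $g(x+j-na)$ agree with $h(t_0+j-na)$ for all the finitely many relevant indices simultaneously (here piecewise continuity and the finiteness of the band are what let us choose one $x$ that works for all $n$ and $j$ at once on a whole interval), so $G(x)\{q_j\}=0$ on a set of positive measure, violating even the weakest lower frame bound; truncating $\{q_j\}$ gives genuine $\ell^2$ near-null vectors.

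For the hard direction — non-frame implies existence of $t_0$ and a bounded null sequence for $g^+$ or $g^-$ — I would argue as follows. From the failure of the lower bound, extract $x_k\in[0,1)$, $x_k\to t_0$ (after passing to a subsequence, using compactness of $[0,1]$; identify $0$ and $1$ if needed), and unit vectors $c^{(k)}\in\ell^2$ with $\|G(x_k)c^{(k)}\|_{\ell^2}\to 0$. Passing to a further subsequence we may assume $x_k\to t_0$ monotonically, say from the right, so that for every fixed pair $(j,n)$ we have $g(x_k+j-na)\to g^+(t_0+j-na)$; this is where the substitution of $g^\pm$ for $g$ is forced, because $g$ itself need not be continuous at the limiting configuration. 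Now comes the main technical step: normalize each $c^{(k)}$ so that $\|c^{(k)}\|_{\ell^\infty}=1$ (this is harmless for the near-null property after also recentering the index — translate so that the sup is attained near index $0$, using that $G(x)$ intertwines with shifts of the index in a controlled banded way), and extract a further subsequence so that $c^{(k)}_j \to q_j$ for every fixed $j$ by a diagonal argument. The limit $\{q_j\}$ is bounded by $1$, is nonzero because $\|c^{(k)}\|_{\ell^\infty}=1$ is attained in a bounded window of indices (again the band finiteness prevents the mass from escaping to infinity after recentering), and passing to the limit in each of the finitely-many-term equations $\big(G(x_k)c^{(k)}\big)_n\to 0$ — legitimate since each is a finite sum with bounded coefficients converging termwise — yields exactly $\sum_j q_j g^+(t_0+j-na)=0$ for every $n$.

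I expect the genuine obstacle to be the \emph{recentering / non-vanishing} part of the last paragraph: a priori the unit $\ell^2$ vectors $c^{(k)}$ could spread their mass over wider and wider index ranges, so a naive termwise limit could be zero. The fix is the standard compactness-modulo-translations trick — use the banded, shift-covariant structure of $G(x)$ (the band width is a fixed constant depending only on $a$ and $|\mathrm{supp}\,g|$) to show that if the lower bound fails then it fails with \emph{localized} near-null vectors, i.e. one may take the $\ell^\infty$ norm to be attained at a bounded index, hence the $\ell^\infty$ limit $\{q_j\}$ is nonzero. A secondary point to handle with care is which of $g^+$ or $g^-$ one lands on: it depends on whether $x_k\to t_0$ from the right or the left, and one simply keeps both options in the statement. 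Once these two points are pinned down, the theorem follows, and its Corollary \ref{bound-sequence-cor} together with Theorem \ref{main0} drops out by plugging in $g=H_c$ and reading off where a bounded null sequence for $H_c^\pm$ can or cannot exist.
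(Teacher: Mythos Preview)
Your overall strategy --- Ron--Shen, near-null $\ell^2$ vectors at $x_k\to t_0$, recenter and renormalize in $\ell^\infty$, weak-$*$ limit --- is exactly the paper's, and the sufficiency is essentially right. One correction there: the claim that $g(x+\cdot)$ \emph{equals} $g^\pm(t_0+\cdot)$ on a whole interval of $x$ is too strong for a general compactly supported $g$ with one-sided limits, since such a function need not be locally constant on either side of a point. The paper instead truncates $\{q_j\}$ to a finitely supported $\{b_j\}$ first (so only finitely many $(j,n)$ are relevant) and then uses the one-sided \emph{limit} $\lim_{t\to t_0^\pm}\sum_n|\sum_j b_jg(t+j-na)|^2=\sum_n|\sum_j b_jg^\pm(t_0+j-na)|^2$ rather than equality on an interval; reordering your two steps this way repairs the argument. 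The substantive gap is in the necessity, precisely where you flag it.

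After dividing $c^{(k)}$ by $\|c^{(k)}\|_{\ell^\infty}$ and recentering at the maximizer, each output coordinate is bounded by $\|G(x_k)c^{(k)}\|_{\ell^2}/\|c^{(k)}\|_{\ell^\infty}$, and you need this ratio to tend to $0$ to pass to the limit in each (finite) row sum. Ron--Shen only gives $\|G(x_k)c^{(k)}\|_{\ell^2}/\|c^{(k)}\|_{\ell^2}\to 0$; nothing prevents $\|c^{(k)}\|_{\ell^\infty}$ from decaying at the same rate, and then the rescaled output need not vanish coordinatewise. Centering at the global $\ell^\infty$ maximum does not control this ratio, so the ``standard compactness-modulo-translations trick'' is not sufficient here. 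The paper supplies the missing ingredient as a separate quantitative lemma (Lemma~\ref{lem-ell2-contral}): for $Q\in\ell^2$, the set $\Lambda_N=\{n:|q_n|\ge\tfrac12\|Q_{[n-N,n+N]}\|_\infty\}$ already carries at least $\tfrac{3}{32N}\|Q\|_{\ell^2}^2$ of the mass. Choosing the Ron--Shen error of order $k^{-3}$ and comparing the windowed $\ell^2$ mass of the output with $\|(Q_k)_{\Lambda_{2k}}\|_{\ell^2}^2$, a pigeonhole produces an index $n_0\in\Lambda_{2k}$ at which the output is locally $\le k^{-1}|q_{k,n_0}|$. Recentering at \emph{this} $n_0$ (not the global maximum) yields a sequence bounded by $2$, equal to $1$ at the origin, with output $\le k^{-1}$ on a window of radius $\sim k$ --- exactly the control your sketch lacks.
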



As the requirement \eqref{eq-linear-4.2} is $a$-shift invariance, we may replace $t_0\in\Bbb R$ by $t_0\in [0,a)$ in Theorem \ref{bound-sequence-0}. Observe that $\lim_{x\rightarrow x_0^+}H(x)=H(x_0)$ and $\lim_{x\rightarrow x_0^-}H(x)=-H(-x_0)$. Therefore we have the following corollary. 


\begin{cor} \label{bound-sequence-cor}
$\mathcal{G}(H_c;a)$ is not a frame for $L^2$, if and only if
 there exist $t_0\in [0,a)$ and a nonzero bounded sequence $\{q_j\}_{j\in\Bbb Z}\in \ell^\infty$  
 such that
\begin{equation}\label{eq-linear-cor}
\sum_{j\in\Bbb Z} q_j H_c(t_0+j-na)=0\  \mbox{ for all }\ n\in\Bbb Z.
\end{equation}
\end{cor}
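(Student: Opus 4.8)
\textbf{Proof proposal for Corollary \ref{bound-sequence-cor}.}

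The plan is to derive the corollary directly from Theorem \ref{bound-sequence-0} applied to the window $g=H_c$. The function $H_c$ is compactly supported and piecewise constant, so its one-sided limits $H_c^\pm$ are everywhere well defined; thus Theorem \ref{bound-sequence-0} tells us that $\mathcal{G}(H_c;a)$ fails to be a frame if and only if there exist $t_0\in\mathbb R$ and a nonzero $\{q_j\}\in\ell^\infty$ with $\sum_j q_j h(t_0+j-na)=0$ for all $n\in\mathbb Z$, where $h$ is either $H_c^+$ or $H_c^-$. Using $a$-shift invariance of condition \eqref{eq-linear-4.2} — replacing $t_0$ by $t_0-\lfloor t_0/a\rfloor a$ and re-indexing the sequence — we may assume $t_0\in[0,a)$, exactly as remarked after Theorem \ref{bound-sequence-0}. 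So the only real content is to show that in this piecewise-constant situation one may replace $h\in\{H_c^+,H_c^-\}$ by $H_c$ itself.

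First I would record the elementary identity, valid for all $x\in\mathbb R$, that $H_c^+(x)=H_c(x)$ and $H_c^-(x)=-H_c(-x)$; this follows from the right-continuity of $H_c$ at every point and a direct check of the left limits at the three jump points $-c,0,c$ (and trivially elsewhere). This gives the ``$h=H_c^+$'' branch immediately: if \eqref{eq-linear-4.2} holds with $h=H_c^+=H_c$, then \eqref{eq-linear-cor} holds verbatim, and conversely \eqref{eq-linear-cor} is exactly \eqref{eq-linear-4.2} with $h=H_c^+$. Hence the corollary's condition already implies non-frameness, and it remains only to handle the case where the witness in Theorem \ref{bound-sequence-0} comes from $h=H_c^-$.

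For the ``$h=H_c^-$'' branch, suppose $\sum_j q_j H_c^-(t_0+j-na)=0$ for all $n$, i.e. $\sum_j q_j H_c(-(t_0+j-na))=0$, i.e. $\sum_j q_j H_c((-t_0)-j+na)=0$. I would now substitute $j\mapsto -j$ and $n\mapsto -n$: setting $q_j':=q_{-j}$ and $t_0':=-t_0$, this reads $\sum_j q_j' H_c(t_0'+j-na)=0$ for all $n\in\mathbb Z$, which is \eqref{eq-linear-cor} for the window $H_c$ with base point $t_0'\in\mathbb R$ and the nonzero bounded sequence $\{q_j'\}$; applying shift-invariance again brings $t_0'$ into $[0,a)$. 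So a $H_c^-$-witness for non-frameness produces a genuine $H_c$-witness of the form \eqref{eq-linear-cor}. Combining the two branches with Theorem \ref{bound-sequence-0} yields the stated equivalence.

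The argument is essentially bookkeeping, so there is no serious obstacle; the one point that deserves care is the reflection step, where one must make sure the index substitutions $j\mapsto-j$, $n\mapsto-n$ genuinely preserve membership in $\ell^\infty$ and nonvanishing of the sequence (they do, being permutations of $\mathbb Z$) and that the $a$-shift reduction is applied consistently so that the final base point lies in $[0,a)$. I would also remark, as the paper does, that the identities $\lim_{x\to x_0^+}H(x)=H(x_0)$ and $\lim_{x\to x_0^-}H(x)=-H(-x_0)$ scale trivially to $H_c$, since $H_c(x)=H(x/c)$ up to the harmless rescaling of the jump locations, so no new verification is needed for general $c>0$.
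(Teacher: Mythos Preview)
Your proposal is correct and follows exactly the approach the paper indicates in the remark preceding the corollary: use the $a$-shift invariance to reduce $t_0$ to $[0,a)$, then the identities $H_c^+(x)=H_c(x)$ and $H_c^-(x)=-H_c(-x)$ to convert both branches of Theorem \ref{bound-sequence-0} into a condition involving $H_c$ itself. The paper leaves the reflection/substitution step for the $H_c^-$ branch implicit, while you spell it out; one minor point is that the $a$-shift reduction alone (reindexing $n$) already brings $t_0$ into $[0,a)$ without needing to reindex the sequence $\{q_j\}$.
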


To prove Theorem \ref{bound-sequence-0}, we need some notations and the following technical lemma.
Given a sequence $Q=\{q_n\}_{n\in\Bbb Z}$ and $E\subset\Bbb R$, we denote $Q_E=\{q_n,n\in E\}$ and also $\{q_n\chi_E(n)\}_{n\in\Bbb Z}$ if there is no confusion, and define
\begin{equation}\label{eq-Lambda}
\Lambda_{N}=\Big\{n\in\Bbb Z: |q_n|\ge \frac {\|Q_{[n-N,n+N]}\|_\infty}2\Big\}, \ \ N\in\Bbb N.
\end{equation}

\vspace{0.2cm}

\begin{lem} \label{lem-ell2-contral}
Let $Q=\{q_n\}_{n\in\Bbb Z}\in\ell^2$ and $\Lambda_{N}, N\ge 1$ be as in \eqref {eq-Lambda}. Then 
\begin{equation}\label{ell2-contral}
\|Q_{\Lambda_{N}}\|_{\ell^2}^2 \ge \frac 3{32N} \|Q \|_{\ell^2}^2.
\end{equation}
\end{lem}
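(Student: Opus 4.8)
The plan is to show that the "locally dominant" indices in $\Lambda_N$ carry a uniform fraction of the $\ell^2$ mass of $Q$. First I would fix $N\ge 1$ and show that every index $n\in\Bbb Z$ lies within distance $N$ of some index in $\Lambda_N$: starting from any $n$, if $n\notin\Lambda_N$ then there is an index $n_1$ in the window $[n-N,n+N]$ with $|q_{n_1}|>|q_n|$ (indeed $|q_{n_1}|=\|Q_{[n-N,n+N]}\|_\infty>2|q_n|$), and repeating this we get a strictly increasing sequence of absolute values $|q_n|<|q_{n_1}|<|q_{n_2}|<\cdots$, each step moving the index by at most $N$; since $Q\in\ell^2$ we have $|q_m|\to 0$, so $|q_m|$ exceeds only finitely many of these values and the chain cannot be infinite — it must terminate at some index in $\Lambda_N$. (In fact each step doubles the absolute value, so the chain has length at most $O(\log(\|Q\|_\infty/|q_n|))$, but finiteness is all I need.) Hence $\Bbb Z=\bigcup_{k\in\Lambda_N}[k-N,k+N]$.

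Next I would upgrade this covering to a quantitative mass estimate. Since every $n$ is within $N$ of some $k(n)\in\Lambda_N$, and for such $k(n)$ we have $|q_n|\le \|Q_{[k(n)-N,k(n)+N]}\|_\infty\le 2|q_{k(n)}|$, I can write
\begin{equation}\nonumber
\|Q\|_{\ell^2}^2=\sum_{n\in\Bbb Z}|q_n|^2\le \sum_{k\in\Lambda_N}\ \sum_{n\,:\,k(n)=k}|q_n|^2\le \sum_{k\in\Lambda_N}(2N+1)\,(2|q_k|)^2=4(2N+1)\,\|Q_{\Lambda_N}\|_{\ell^2}^2,
\end{equation}
using that each fiber $\{n:k(n)=k\}$ is contained in $[k-N,k+N]$ and hence has at most $2N+1$ elements. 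This already gives $\|Q_{\Lambda_N}\|_{\ell^2}^2\ge \frac{1}{4(2N+1)}\|Q\|_{\ell^2}^2$. To reach the sharper constant $\frac{3}{32N}$ I would refine the counting: a point $k\in\Lambda_N$ cannot simultaneously be the chosen dominator for all $2N+1$ points of its window if those points themselves have large mass, and one can instead bound $\sum_{n}|q_n|^2$ by distributing mass more carefully — e.g. splitting the window $[k-N,k+N]$ and noting that the worst case forces a geometric decay of the $|q_\ell|$ along a dominating chain, so that the $2N+1$ summands contribute at most a factor like $4N\cdot\frac{2}{3}$ rather than $4(2N+1)$. The bookkeeping that squeezes $\frac14\cdot\frac{1}{2N+1}$ down to $\frac{3}{32N}$ is exactly where the constants $3$ and $32$ enter.

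The main obstacle I anticipate is precisely this last optimization of constants: the covering argument is soft and robust, but obtaining the stated $\frac{3}{32N}$ (rather than some cruder $c/N$) requires organizing the indices into disjoint dominating chains, handling the geometric-growth factor $2$ at each step, and counting overlaps between the windows $[k-N,k+N]$ for distinct $k\in\Lambda_N$ without double-counting mass. I would structure this by first discarding $k\in\Lambda_N$ whose windows are "used up" by a neighboring larger element of $\Lambda_N$, then summing the geometric series $\sum_{j\ge 0}(1/4)^j=4/3$ coming from the doubling, and finally checking the arithmetic $\frac14\cdot\frac43\cdot\frac{1}{\text{(effective window count)}}=\frac{3}{32N}$. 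Everything else — the termination of chains (from $Q\in\ell^2$), the window containment, the fiber-size bound — is routine.
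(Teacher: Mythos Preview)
Your chain argument is the right idea and coincides with the paper's, but the conclusion you draw from it --- that $\Bbb Z=\bigcup_{k\in\Lambda_N}[k-N,k+N]$ --- does not follow and is in fact false. The chain $n\to n_1\to\cdots\to n_r\in\Lambda_N$ may have many steps, each moving by at most $N$, so $n$ can sit at distance $rN$ from $\Lambda_N$ with $r$ large. Concretely, take $N=1$ and $q_j=3^j\chi_{\{0,\ldots,10\}}(j)$: then $\Lambda_1\cap\{0,\ldots,10\}=\{10\}$, so the index $5$ is at distance $5>N$ from $\Lambda_1$. Your inequality $|q_n|\le\|Q_{[k(n)-N,k(n)+N]}\|_\infty$ therefore fails, and the preliminary bound $\|Q\|_{\ell^2}^2\le 4(2N+1)\|Q_{\Lambda_N}\|_{\ell^2}^2$ is not established.

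The remedy is not a refinement of that covering but a replacement: the doubling along the chain gives exponential decay, namely if $L(k),R(k)$ are the nearest elements of $\Lambda_N$ to the left and right of $k$ and $\ell(k)=\lfloor(k-L(k))/N\rfloor$, $r(k)=\lfloor(R(k)-k)/N\rfloor$, then $|q_k|\le\max\{2^{-\ell(k)+1}|q_{L(k)}|,\,2^{-r(k)+1}|q_{R(k)}|\}$. Squaring, summing, and using that for each $\lambda\in\Lambda_N$ and each $j\ge 0$ there are at most $N$ indices $k$ with $L(k)=\lambda,\ \ell(k)=j$ (and similarly for $R$), one gets
\[
\sum_{k}|q_k|^2\ \le\ \sum_{\lambda\in\Lambda_N}|q_\lambda|^2\cdot 2N\cdot 4\sum_{j\ge 0}4^{-j}\ =\ \frac{32N}{3}\,\|Q_{\Lambda_N}\|_{\ell^2}^2.
\]
This is exactly the paper's proof, and the constant $3/(32N)$ falls out directly --- it is not an optimization layered on top of a cruder estimate. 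Your final paragraph mentions the right ingredients (the doubling, the geometric series $\sum 4^{-j}=4/3$), but frames them as a sharpening of a step that was never valid; they are the whole argument.
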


\begin{proof}
For each $k\in\Bbb Z$, let $L(k)=\sup\{j\in \Lambda_{N}\cup\{-\infty\}: j\le k\}$ and
$R(k)=\inf\{j\in \Lambda_{N}\cup\{+\infty\}: j\ge k\}$ be the nearest elements in $\Lambda_{N}\cup\{\pm\infty\}$ next to the left and right of $k$ respectively. We make the following claim:
\begin{equation}\label{eq-distence2}
|q_k|\le \max\{2^{-\ell(k)+1}|q_{L(k)}|,2^{-r(k)+1}|q_{R(k)}|\}, k\in\Bbb Z,
\end{equation}
where $ \ell(k)=\lfloor({k-L(k)})/N\rfloor$ and $r(k)=\lfloor( {R(k)-k})/N\rfloor$.

\vspace{0.2cm}


As the conclusion in \eqref{eq-distence2} is trivial for $k\in\Lambda_N$, we may assume that $k\not\in \Lambda_N$. Therefore, without loss of generality, there exists  $k_1$ such that
\begin{equation}\nonumber
|q_k|<\frac{\|Q_{[k-N,k+N]}\|_\infty}2=\frac{|q_{k_1}|}2 \  \mbox{ and }\ k<k_1\le k+N.
\end{equation}
For the case that $k_1\not\in\Lambda_N$, we can find an integer $k_2$ such that
\begin{equation}\nonumber
|q_{k_1}|<\frac{\|Q_{[k_1-N,k_1+N]}\|_\infty}2=\frac{\|Q_{[k_1,k_1+N]}\|_\infty}2=\frac{|q_{k_2}|}2 \  \mbox{ and }\ k_1<k_2\le k_1+N.
\end{equation}
Repeating the above procedure and using $\lim_{n\rightarrow\pm\infty}q_n=0$, we can find integers
$k_0(=k),k_1,\ldots,k_{j_0}\not\in\Lambda_{N}$ and $k_{j_0+1}\in\Lambda_{N}$ such that
\begin{equation}\label{eq-lem-21-3}
|q_{k_j}|<\frac{\|Q_{[k_j-N,k_j+N]}\|_\infty}2=\frac{\|Q_{[k_j,k_j+N]}\|_\infty}2=\frac{|q_{k_{j+1}}|}2 \  \mbox{ and }\ k_j<k_{j+1}\le k_j+N
\end{equation}
for all $ 0\le j\le j_0$. Therefore there exists $0\le j_1\le j_0$ such that
\begin{equation}\nonumber
{k_{j_1}}<R(k)\le k_{j_1+1}.
\end{equation}
This together with 
\eqref{eq-lem-21-3} implies that
\begin{equation}\label{eq-lem-21-5}
r(k)=\big\lfloor \frac{R(k)-k}N\big\rfloor \le \big\lfloor \frac{k_{j_1+1}-k}N\big\rfloor
 \le j_1+1,
\end{equation}
and
\begin{equation}\label{eq-lem-21-6}
|q_k|<
\frac{|q_{k_{j_1+1}}|}{2^{j_1+1}}\le \frac{\|Q_{[R(k)-N,R(k)+N]}\|_\infty}{2^{j_1+1}}\le \frac{|q_{R(k)}|}{2^{j_1}}.
\end{equation}
Combining \eqref{eq-lem-21-5} and \eqref{eq-lem-21-6} completes the proof of the Claim \eqref{eq-distence2}.


Now we continue our proof of the conclusion \eqref{ell2-contral}. By Claim \eqref{eq-distence2}, we have
\begin{eqnarray*}
\sum_{k\in\Bbb Z}q_k^2 & \hskip-0.08in \le  & \hskip-0.08in  \sum_{k\in \Bbb Z} \Big(2^{-2\ell(k)+2}|q_{L(k)}|^2+2^{-2r(k)+2}|q_{R(k)}|^2\Big)\nonumber\\
 & \hskip-0.08in = & \hskip-0.08in \sum_{\lambda\in \Lambda_N}
\sum_{j=0}^\infty  |q_\lambda|^2\   2^{-2j+2} \times \Big(\sum_{L(k)=\lambda, \ell(k)=j} 1
+\sum_{R(k)=\lambda, r(k)=j} 1\Big)
 \le
\frac{32N}{3} \sum_{\lambda\in \Lambda_N}
  |q_\lambda|^2,
\end{eqnarray*}
where
the last inequality holds as 
the sets $\{k\in\Bbb Z: L(k)=\lambda, \ell(k)=j\}$ and $\{k\in\Bbb Z: R(k)=\lambda, r(k)=j\}$ are contained in
the intervals $[\lambda+jN, \lambda+jN+N-1]$ and $[\lambda-jN-N+1, \lambda-jN]$ respectively,
and hence have their cardinality bounded by $N$.
This completes the proof.
\end{proof}


Now we are ready to prove Theorem \ref{bound-sequence-0}. And the core of the proof is the part of the necessity related to the lower bound of Ron-Shen's characterization, 
i.e., after \eqref{bound-sequence-0.thmpf.eq21} in the proof.

\vspace{0.2cm}

\noindent\textbf{Proof of Theorem \ref{bound-sequence-0}.}
As $g$ is bounded and compactly supported, we can find a positive constant $M_0>a$ such that
\begin{equation}\label{eq-thm-21-1}
\|g\|_\infty\le M_0 \  \mbox{ and }\  g(t)=0  \  \mbox{for all }\ t\not\in(-M_0,M_0).
\end{equation}

\emph{Sufficiency}.
By the $a$-shift invariance of the requirement \eqref{eq-linear-4.2}, without loss of generality, we assume there exist $t_0\in [0,a)$ and a nonzero bounded sequence $Q=\{q_j\}_{j\in\Bbb Z}$ such that \eqref{eq-linear-4.2} holds for $h=g^+$. Take an arbitrary positive constant $A>0$. Then there exists sufficient large
$N_0\in\Bbb N$ such that
\begin{equation}\label{eq-discretization-01}
\|\{q_j\}_{N_0-2M_0\le |j|\le N_0}\|_{\ell^2}^2<\frac {aA}{2M_0^{3}( {2M_0}+a)} \|\{q_j\}_{|j|\le N_0} \|_{\ell^2}^2,
\end{equation}
where such an integer $N_0$ exists because, as $N_0\rightarrow\infty$, the left hand side of \eqref{eq-discretization-01} is bounded and the right hand side of \eqref{eq-discretization-01} tends to $\infty$ if $\|Q\|_{\ell^2}=\infty$, and left/right hand side of \eqref{eq-discretization-01} tends to 0 and a scaling of $\|Q\|_{\ell^2}^2$ respectively if $\|Q\|_{\ell^2}<\infty$.


Set $\{b_j\}_{j\in\Bbb Z}=Q_{[-N_0,N_0]}$,
then by \eqref{eq-linear-4.2} and \eqref{eq-thm-21-1},
\begin{equation}\label{eq-thm-21-2}
\sum_{j\in\Bbb Z} b_jg^+(t_0+j-na)=\sum_{j=-N_0}^{N_0} b_jg^+(t_0+j-na)=-\sum_{j\not\in [-N_0,N_0]} b_jg^+(t_0+j-na)=0
\end{equation}
if $t_0-na\not\in (-N_0-M_0,-N_0+M_0)\cup(N_0-M_0,N_0+M_0)$. Combining \eqref{eq-thm-21-1}, \eqref{eq-discretization-01} and \eqref{eq-thm-21-2} with the definition of the sequence $\{b_j\}_{j\in\Bbb Z}$, we obtain

\begin{eqnarray}\nonumber
\sum_{n\in\Bbb Z}\Big|\sum_{j\in\Bbb Z} b_j g^+(t_0+j- na)\Big|^2
&  =  &  \sum_{t_0-na\in N_0+(-M_0,M_0)}\Big|\sum_{|j+t_0-na|<M_0 } b_j g^+(t_0+j- na)\Big|^2 \\\nonumber
&  +  &  \sum_{t_0-na\in -N_0+(-M_0,M_0)}\Big|\sum_{|j+t_0-na|<M_0 } b_j g^+(t_0+j- na)\Big|^2 \\\nonumber
& \le & \|g^+\|^2_\infty \sum_{t_0-na\in N_0+(-M_0,M_0)}\Big(\sum_{|j-N_0|<2M_0 } |b_j| \Big)^2 \\\nonumber
&  +  & \|g^+\|^2_\infty  \sum_{t_0-na\in -N_0+(-M_0,M_0)}\Big(\sum_{|j+N_0|<2M_0 } |b_j| \Big)^2 \\\nonumber
& \le & 2\big(\frac {2M_0}a+1\big)M_0^3\sum_{N_0-2M_0 < |j|\le N_0} |b_j |^2\\\label{bound-sequence-0.thmpf.eq4}
& < &  A \|\{b_j\}_{j\in\Bbb Z}\|_{\ell^2}^2 .
\end{eqnarray}
Observe that
\begin{eqnarray*} \lim_{t\to t_0^+}
\sum_{n\in\Bbb Z}\Big|\sum_{j\in\Bbb Z} b_j g(t+j- na)\Big |^2 & = &
\lim_{t\to t_0^+}
\sum_{|n|< (N_0+M_0+|t_0|)/a}\Big|\sum_{|j|\le N_0} b_j g(t+j- na)\Big|^2
\\
 & = &
\sum_{n\in\Bbb Z}\Big|\sum_{j\in\Bbb Z} b_j g^+(t_0+j- na)\Big|^2.
\end{eqnarray*}
This together with \eqref{bound-sequence-0.thmpf.eq4}
implies the existence of $\varepsilon>0$ such that
$$
\sum_{n\in\Bbb Z}\Big|\sum_{j\in\Bbb Z} b_j g(t+j- na)\Big|^2  <  A \|\{b_j\}_{j\in\Bbb Z}\|_{\ell^2}^2<\infty
$$
for all $t\in(t_0,t_0+\varepsilon)$, which contradicts to the frame characterization \eqref{eq-discretization-0}
in Proposition  \ref{discretization-theorem-0}.  
This completes the proof of the sufficiency.

\vspace{0.2cm}

\emph{Necessity}. Suppose that $\mathcal{G}(g;a)$ is not a Gabor frame. Observe that for sequence $\{c_j\}_{j\in\Bbb Z} \in \ell^2$,
\begin{equation}\nonumber 
\sum_{n\in\Bbb Z}\big|\sum_{j\in\Bbb Z} c_j g(t+j- na)\big|^2\le 2M_0\|g\|_\infty^2 \sum_{n\in\Bbb Z} \sum_{|t+j-na|<M_0}|c_j|^2
\le \frac{2M_0(2M_0+a)}a \|g\|_\infty^2\|\{c_j\}_{j\in\Bbb Z}\|^2_{\ell^2}
\end{equation}
for almost all $t\in\Bbb R$. Therefore for all $k\ge M_0+2a+1$,  by the  non-frame assumption for the Gabor system ${\mathcal G}(g; a)$ and Proposition  \ref{discretization-theorem-0},
 we can find  a nonzero sequence $Q_k=\{q_{k,j}\}_{j\in\Bbb Z}\in\ell^2$  and a set $E_k\subset [0, a)$ with positive Lebesgue measure
such that
\begin{equation}\label{bound-sequence-0.thmpf.eq21} 
\sum_{n\in {\mathbb Z}} \Big|\sum_{j\in {\mathbb Z}} q_{k,j}  g(t+j-na)\Big|^2\le  \big(64 k^3 (2k+a+1)/3\big)^{-1} \|Q_k\|_{\ell^2}^2,\ t\in E_k.
\end{equation}
Define
$T_{t}(Q_k)=\{(T_{t}(Q_k))_{n}\}_{n\in\Bbb Z}$
by
$(T_{t}(Q)_k)_{n}=\sum_{j\in\Bbb Z}q_{k,j}g(t+j-na), n\in \Bbb Z$.
Then by \eqref{bound-sequence-0.thmpf.eq21} and Lemma \ref{lem-ell2-contral},
we have
\begin{eqnarray}\nonumber
\|\{\|(T_{t}(Q_k))_{(\lfloor {n}/a\rfloor-k/a, \lfloor{n }/a\rfloor +k/a)}\|_{\ell^2}\}_{n\in\Bbb Z}\|_{\ell^2}^2
&  \le  & (2k+a+1)\|T_{t}(Q_k)\|_{\ell^2}^2 \\\nonumber
&  \le   &
(64 k^3/3)^{-1} \|Q_k\|_{\ell^2}^2
\le  k^{-2} \|(Q_k)_{\Lambda_{2k}}\|_{\ell^2}^2, \ t\in E_k.
\end{eqnarray}
Thus there exist $n_0\in\Lambda_{2k}$ and a measurable set $E_{k, n_0}\subset [0, a)$ with positive Lebesgue measure such that
\begin{equation} \label{bound-sequence-0.thmpf.eq22}
\|(T_{t}(Q_k))_{( \lfloor{n_0}/a\rfloor -k/a,  \lfloor{n_0}/a\rfloor +k/a)}\|_{\ell^2} \le  k^{-1} \ |q_{k, n_0}|, \ \ t\in E_{k, n_0}.
\end{equation}

Set $t'=t+n_0-a\lfloor\frac {n_0}a\rfloor$ and define $P_k=\{p_{k,j}\}_{j\in \Bbb Z}$ by
$$p_{k,j}=\frac{q_{k, n_0+j}}{q_{k, n_0}}\chi_{[-2k, 2k]}(j),\ \ j\in \Bbb Z.$$
Then
\begin{equation}  \label{bound-sequence-0.thmpf.eq23} p_{k,0}=1
\end{equation}
by the definition of the sequence $P_k$,
\begin{equation}  \label{bound-sequence-0.thmpf.eq24}
\|P_{k}\|_\infty\le \frac{\|(Q_k)_{[n_0-2k, n_0+2k]}\|_\infty}{|q_{k, n_0}|} \le 2, \ \ j\in \Bbb Z,
\end{equation}
by the assumption  $n_0\in \Lambda_{2k}$ for the sequence $Q_k$,
and
\begin{eqnarray}  \label{bound-sequence-0.thmpf.eq25}
  &  & \sum_{|n|<k/a} \Big | \sum_{|j|\le 2k} p_{k, j} g(t'+j-na)\Big|^2
  \nonumber\\
 &  = &
|q_{k, n_0}|^{-2}
\sum_{|n|<k/a} \Big | \sum_{j\in \Bbb Z} q_{k, n_0+j} g(t+n_0+j-(n+ \lfloor {n_0}/a\rfloor)a)\Big|^2\nonumber\\
& = & |q_{k, n_0}|^{-2} \|(T_{t}(Q_k))_{( \lfloor{n_0}/a\rfloor -k/a,  \lfloor{n_0}/a\rfloor +k/a)}\|_{\ell^2}^2
\le k^{-2},  \  t'\in E_{k, n_0}+ n_0-a\lfloor {n_0}/a\rfloor,
\end{eqnarray}
where the first equality follows from \eqref{eq-thm-21-1}, the definition of the sequence $P_k$ and the observation that
$|t'+j-na|>2k-2a-k=k-2a\ge M_0$ for all $j\in \Bbb Z$ with $|j|> 2k$,
and the last inequality holds by \eqref{bound-sequence-0.thmpf.eq22}.

By  \eqref{bound-sequence-0.thmpf.eq23} and \eqref{bound-sequence-0.thmpf.eq24},
without loss of generality, we assume that the bounded sequence
$P_k, k\ge M+2a+1$, converge in weak star topology in the sense that
\begin{equation} \label{bound-sequence-0.thmpf.eq26}
\lim_{k\to \infty} p_{k,j}= r_j, j\in \Bbb Z,
\end{equation}
otherwise,  replacing by a subsequence.
Set $R=\{r_j\}_{j\in \Bbb Z}$.
By \eqref{bound-sequence-0.thmpf.eq23}, \eqref{bound-sequence-0.thmpf.eq24} and
  \eqref{bound-sequence-0.thmpf.eq26}, we have that
  \begin{equation} \label{bound-sequence-0.thmpf.eq27}  R \ \ {\rm is\ a \ nonzero \ sequence\ with}\ \|R\|_\infty\le 2 .
  \end{equation}

Next from the observation that  $E_{k, n_0}+ n_0-a\lfloor {n_0}/a\rfloor\subset [0, 2a)$, we can find a
monotonic sequence $t_k\in E_{k, n_0}+ n_0-a\lfloor {n_0}/a\rfloor$ such that
\begin{equation} \nonumber
\lim_{k\to \infty} t_{k}= t_\infty\in [0, 2a],
\end{equation}
otherwise replaced by a subsequence.  Without loss of generality, we assume that $t_k, k\ge M+2a+1$, are a decreasing sequence, which implies that
\begin{equation} \nonumber
\lim_{k\to \infty}
g(t_k+j-na)= g^+(t_\infty+j-na) \ {\rm for \ all}\   j, n\in {\Bbb Z}.
\end{equation}
Therefore for any $n\in \Bbb Z$, $\{g(t_k+j-na)\}_{j\in\Bbb Z}$ tends to $\{g^{+}(t_\infty+j-na)\}_{j\in\Bbb Z}$ in norm of $\ell^{1}$. Hence,
\begin{equation}\label{bound-sequence-0.thmpf.eq30}
0=\lim_{k\rightarrow\infty}\sum_{|j|\le 2k} p_{k,j} g(t_k+j-na)=\lim_{k\rightarrow\infty}\sum_{j\in\Bbb Z} p_{k,j} g(t_k+j-na)= \sum_{j\in\Bbb Z} r_{j} g^{+}(t_{\infty}+j-na),
\end{equation}
where the first equality follows from \eqref{bound-sequence-0.thmpf.eq25}, and the last equality is due to the fact that $\{r_j\}_{j\in\Bbb Z}$ is the weak star limit of $\{p_{k,j}\}_{j\in\Bbb Z}$. Combining \eqref{bound-sequence-0.thmpf.eq27} and \eqref{bound-sequence-0.thmpf.eq30} proves
\eqref{eq-linear-4.2} and hence  completes the proof.
\eproof

\vspace{0.2cm}

\vspace{0.2cm}

\vspace{0.2cm}

\section{Piecewise linear  {transformation}  I:  maximal invariant sets}\label{section-4}


This section {will be devoted to investigating} the structure of the maximal invariant set $\mathcal{S}$ of the map $\mathcal{M}$ defined as in \eqref{map.def}, which plays an important role in the next section for analyzing the symmetric maximal invariant set.
In Subsection \ref{subsection-4-3}, we  provide some descriptions on $\mathcal{S}$ in Proposition \ref{prop-4-point}, and then we use it to prove {Theorems} \ref{thm-s-1} and \ref{thm-mod-1}.
The structures of $\mathcal{S}$ for the cases $\mathcal{S}\subset U$ and  $\mathcal{S}\subset V$ are presented in Subsection \ref{subsection-4-2}.
In Subsection \ref{subsection-4-1}, we consider the case that ${\mathcal S}\cap U\ne\emptyset$ and ${\mathcal S}\cap V\ne\emptyset$, and show in Theorem \ref{thm-4-struct} that there are three types of the structures for such $\mathcal{S}$.  {At last}, we build the relationship between the different types of the structures via the disturbance of the map $\mathcal{M}$ 
in Proposition \ref{prop-delta-tur}.

\vspace{0.2cm}

{At first,} we indicate that both the maximal invariant set $\mathcal{S}$ and the symmetric maximal invariant set $\mathcal{E}$ are well defined. This is because we may verify that
\begin{align*}
   \mathcal{S} & =\{t\in\Bbb T_{[0,1)}:\mathcal{M}^n(t)\not\in\textbf{H}\ \mbox{ for all }\ n\ge 0\}
\end{align*}
and
\begin{align*}
   \mathcal{E} &= \{t\in\Bbb T_{[0,1)}:\mathcal{M}^m(\widetilde{\mathcal{M}}^n(1-t))\not\in\textbf{H}\ \mbox{ and }\ \widetilde{\mathcal M}^m(1-{\mathcal{M}}^n(t))\not\in\widetilde{\textbf{H}} \ \mbox{ for all }\ m,n\ge 0 \}.
\end{align*}

\vspace{0.2cm}

 We shall introduce some notations which will be used in the sequel. Two sets $E_1, E_2$ in the unit circle $\Bbb T_{[0,1)}$ are called  \emph{separated} if they have positive distance. Given an interval $I\subset \Bbb R$, we use the symbol $\langle I\rangle$ to denote an arc in $\Bbb T_{[0,1)}$, that is, $\langle I\rangle:=\{\langle x\rangle: x\in I\}$. In particular, if an arc $\langle I\rangle$ in $\Bbb T_{[0,1)}$   contains $0$, we might regard it as the union of two intervals in $[0,1)$,  such as $(a,1)\cup [0,b)$ for some $0<b<a<1$. Moreover, an arc $J$ is said to be a \emph{gap} (resp. \emph{open gap}) of $\mathcal{M}$ if $J$ is the maximal arc (resp. open arc) such that $J\subset \Bbb T_{[0,1)}\setminus\mathcal{S}$. We will see, from the proof of Theorem \ref{thm-s-1}, that all gaps are left-closed and right-open intervals in $\Bbb T_{[0,1)}$,  therefore they are pairwise separated.
\vspace{0.2cm}

\subsection{Proofs of {Theorems} \ref{thm-s-1} and \ref{thm-mod-1}}\label{subsection-4-3} $\empty$

\begin{prop}\label{prop-4-point}
 For $0\le\alpha<1$ and $0\leq x_1<x_2\le 1$, we let $\mathcal{M}=\mathcal{M}_{\alpha, x_1,x_2}$ be the transformation defined as  in \eqref{map.def}, and ${\mathcal S}$ be its maximal invariant set defined in \eqref{s-set.def}. If ${\mathcal S}\ne\emptyset$, then for each $0<\varepsilon<1$,
\begin{enumerate}
   \item [{\rm(i)}] ${\mathcal S}\cap [0,\varepsilon)\ne \emptyset $ or ${\mathcal S}\cap \langle[x_2,x_2+\varepsilon)\rangle\ne \emptyset$;
   \item [{\rm(ii)}]  ${\mathcal S}\cap [1-\varepsilon,1)\ne \emptyset $ or ${\mathcal S}\cap \langle[x_1-\varepsilon,x_1)\rangle\ne \emptyset$;
   \item [{\rm(iii)}] ${\mathcal S}\cap [0,\varepsilon)\ne \emptyset $ or ${\mathcal S}\cap [1-\varepsilon,1)\ne \emptyset $;
   \item [{\rm(iv)}]  ${\mathcal S}\cap \langle[x_1-\varepsilon,x_1)\rangle\ne \emptyset$ or  ${\mathcal S}\cap \langle[x_2,x_2+\varepsilon)\rangle\ne \emptyset$.
\end{enumerate}
\end{prop}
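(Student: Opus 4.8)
The plan is to exploit the fact that the only way an orbit under $\mathcal{M}$ can fail to return to $\Bbb T_{[0,1)}\setminus\textbf{H}$ is to land in $\textbf{H}=[x_1,x_2)$, together with the observation that on $U$ the map is rotation by $\alpha+x_2-x_1$ and on $V$ it is rotation by $\alpha$, so away from the two discontinuity points $0$ (equivalently $1$) and $x_2$, and the two "pre-image" points $x_1$ and $x_2$, the map $\mathcal{M}$ is a local homeomorphism that moves points only a bounded small amount. Concretely, since $\mathcal{S}\ne\emptyset$ and $\mathcal{S}$ is closed (being $\bigcap_{n\ge 0}\mathcal{M}^{-n}(\Bbb T_{[0,1)}\setminus\textbf{H})$, a countable intersection of closed sets once one checks $\mathcal{M}^{-1}$ of a closed set minus $\textbf{H}$ is closed), I will fix a point $t\in\mathcal{S}$ and push it (or pull it back) under iterates of $\mathcal{M}$ to produce points of $\mathcal{S}$ arbitrarily close to the relevant endpoints.

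For part (i): take $t\in\mathcal{S}$; either $t$ already lies in the arc $[0,\varepsilon)\cup\langle[x_2,x_2+\varepsilon)\rangle$ and we are done, or not, in which case I want to move $t$ forward. The key point is that $\mathcal{M}$ restricted to $U\cup V$ is a bijection onto $\Bbb T_{[0,1)}\setminus\langle[x_1,x_1+x_2-x_1)\rangle=\Bbb T_{[0,1)}\setminus\langle[x_1,x_2)+\alpha\rangle$ — wait, more precisely, $\mathcal{M}(U)=\langle[\alpha+x_2-x_1, \alpha+x_2)\rangle$ and $\mathcal{M}(V)=\langle[x_2+\alpha,1+\alpha)\rangle$, so the image $\mathcal{M}(U\cup V)=\mathcal{M}(\mathcal{S})\supset\mathcal{S}$ omits exactly the arc $\langle[\alpha, \alpha+x_2-x_1)\rangle$ of length $x_2-x_1$. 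Since $\mathcal{S}=\mathcal{M}(\mathcal{S})$, every $s\in\mathcal{S}$ has a preimage in $\mathcal{S}$; iterating, I get a backward orbit $\{s_k\}\subset\mathcal{S}$ with $\mathcal{M}(s_{k+1})=s_k$. The discontinuities/branch-switch of $\mathcal{M}$ on $U\cup V$ happen only at the two points $0$ and $x_2$; between consecutive points of a backward orbit the displacement is $\pm\alpha$ or $\pm(\alpha+x_2-x_1) \pmod 1$, and the standard three-gap / equidistribution-type argument (or simply: a half-open arc of length $\varepsilon$ anchored at a branch point must be hit by \emph{some} preimage, because the forward images of $[0,x_1)$ and $[x_2,1)$ tile, up to the missing arc, all of $\Bbb T_{[0,1)}$) forces some $s_k$ to fall into $[0,\varepsilon)$ or into $\langle[x_2,x_2+\varepsilon)\rangle$. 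Part (ii) is the mirror statement obtained by running $\mathcal{M}$ forward instead of backward: $\mathcal{M}$ maps a left-neighborhood of $1$ (inside $V$) and a left-neighborhood of $x_1$ (inside $U$) to left-neighborhoods of $\langle\alpha\rangle$ and $\langle x_2+\alpha\rangle$ respectively — no, cleaner to say: apply part (i) to the coherent/reversed setup, or directly: for $t\in\mathcal{S}$, $\mathcal{M}^{n}(t)\in\mathcal{S}$ for all $n$, and the branch points from which orbits "emanate" on the right are exactly $1^{-}$ and $x_1^{-}$, so some forward iterate lands in $[1-\varepsilon,1)$ or $\langle[x_1-\varepsilon,x_1)\rangle$.

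For parts (iii) and (iv): these follow by combining (i) and (ii) with the elementary geometric fact that $\mathcal{S}$ cannot meet only $\langle[x_2,x_2+\varepsilon)\rangle$ without also meeting $[0,\varepsilon)$ (and similarly for the other pairings), because a point just to the right of $x_2$ is in $V$ and gets rotated by $\alpha$, while the forward invariance of $\mathcal{S}$ then pulls its orbit near $0$; more efficiently, I expect to argue that $\mathcal{M}$ maps $\langle[x_2,x_2+\varepsilon)\rangle\cap\mathcal{S}$ forward and, chasing the orbit, it must eventually pass within $\varepsilon$ of $0$ or of $1$ (the global discontinuity), giving (iii); and dually for (iv) using backward orbits near $x_1,x_2$. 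The \textbf{main obstacle} I anticipate is the bookkeeping at the branch points: one must be careful that "$\mathcal{S}$ meets a small arc at $x_2$" genuinely propagates, since $x_2$ is precisely where the map is discontinuous and where $\textbf{H}$ begins, so a naive continuity argument fails there; the resolution is to use the exact image formulas for $\mathcal{M}(U)$ and $\mathcal{M}(V)$ above, noting that the two images abut precisely at $\langle x_2+\alpha\rangle$ and that their union misses the arc $\langle[\alpha,\alpha+x_2-x_1)\rangle$, so the only "entrances" to $\mathcal{S}$ under forward iteration are near $\langle x_2+\alpha\rangle$ from the right (coming from $x_2^{+}\in V$) and near $\langle\alpha+x_2-x_1\rangle=\langle\alpha+x_2-x_1\rangle$ from the right (coming from $0^{+}\in U$) — translating this back through one application of $\mathcal{M}^{-1}$ pins the arcs to $0$ and $x_2$ as claimed. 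I would organize the write-up as: (a) $\mathcal{S}$ closed and forward/backward invariant; (b) exact image arcs of $U,V$; (c) prove (i) via backward orbits, (ii) via forward orbits; (d) derive (iii),(iv) from (i),(ii) by a short pigeonhole on which endpoint the orbit approaches.
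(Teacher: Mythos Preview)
Your proposal has a genuine gap: the orbit-chasing strategy for (i) and (ii) does not work, and you are missing the key idea the paper uses, namely the \emph{maximality} of $\mathcal{S}$. Concretely, there is no mechanism in your argument that forces a backward orbit $\{s_k\}\subset\mathcal{S}$ to enter $[0,\varepsilon)\cup\langle[x_2,x_2+\varepsilon)\rangle$; the phrase ``standard three-gap / equidistribution-type argument'' hides the problem rather than solving it. If, say, $\alpha$ and $\alpha+x_2-x_1$ are both rational, backward orbits can be periodic and simply avoid those arcs forever, and even in the irrational case nothing pins the orbit to the specific branch points $0$ and $x_2$. (Also, your opening claim that $\mathcal{S}$ is closed is false: $\textbf{H}=[x_1,x_2)$ is half-open, so $\Bbb T_{[0,1)}\setminus\textbf{H}$ and all its preimages are unions of half-open intervals.)

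The paper's argument for (i) is a one-line maximality trick you never touch: assume $\mathcal{S}$ misses both $[0,\varepsilon)$ and $\langle[x_2,x_2+\varepsilon)\rangle$, and set $\mathcal{S}'=\mathcal{S}-\varepsilon$. Because those two arcs are exactly the left ends of $U$ and $V$, the shift keeps $\mathcal{S}'\cap U=\mathcal{S}\cap U-\varepsilon$ and $\mathcal{S}'\cap V=\mathcal{S}\cap V-\varepsilon$ inside their respective pieces, so $\mathcal{M}(\mathcal{S}')=\mathcal{S}'$ by linearity on each piece. Then $\mathcal{S}\cup\mathcal{S}'$ is a strictly larger invariant set avoiding $\textbf{H}$, contradicting maximality. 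Part (ii) is the mirror with $\mathcal{S}+\varepsilon$. For (iii) and (iv) the paper does use (i),(ii) as input, but not via pigeonhole: assuming (iii) fails, (i),(ii) force $\mathcal{S}$ to approach $x_1$ from the left and $x_2$ from the right; one then exhibits an arc $K$ of fixed positive length disjoint from $\mathcal{S}$, $\textbf{H}$, and $[1-\varepsilon_2,1)\cup[0,\varepsilon_1)$, and shows $\mathcal{M}^n(K)$, $n\ge 1$, are pairwise disjoint arcs of the same length in $\Bbb T_{[0,1)}$, which is impossible. Your ``short pigeonhole'' sketch does not supply this measure-exhaustion idea.
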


\proof
 Let $U,V$ {and $\mathcal{S}$} be as in \eqref{map.def} {and \eqref{s-set.def}}.

(i). Suppose on the contrary that there exists an $\varepsilon\in(0,1)$ such that
${\mathcal S}\cap [0,\varepsilon)={\mathcal S}\cap \langle[x_2,x_2+\varepsilon)\rangle=\emptyset$.
Set ${\mathcal S}'={\mathcal S}-\varepsilon$.   Then  ${\mathcal S}'\cap U={\mathcal S}\cap U-\varepsilon$ and ${\mathcal S}'\cap V={\mathcal S}\cap V-\varepsilon$. The linearity of $\mathcal{M}$ on $U$ and $V$ will yield that
 ${\mathcal M}({\mathcal S}'\cap U)=\langle{\mathcal M}({\mathcal S}\cap U)-\varepsilon\rangle$ and ${\mathcal M}({\mathcal S}'\cap V)=\langle{\mathcal M}({\mathcal S}\cap V)-\varepsilon\rangle$. Consequently,
$$
{\mathcal M}({\mathcal S}')
=\langle{\mathcal M}({\mathcal S})-\varepsilon\rangle=\langle{\mathcal S}-\varepsilon\rangle={\mathcal S}'.
$$
This implies ${\mathcal M}({\mathcal S}\cup{\mathcal S}')={\mathcal S}\cup{\mathcal S}'$, contradicting to the maximality of $\mathcal S$,  we obtain (i).
\vspace{0.2cm}

(ii). {The idea of the proof is the same to that of (i). We omit the details.}

\vspace{0.2cm}

(iii). On the contrary, we may let $\varepsilon_1,\varepsilon_2\in(0,1)$ be the largest numbers such that
 \begin{equation}\label{i-ii-gap-0}
 \mathcal{S}\cap [0,\varepsilon_1)={\mathcal S}\cap [1-\varepsilon_2,1)=\emptyset.
 \end{equation}
Now it follows from  the  dichotomy in (i) and (ii)  that $0<x_1<x_2<1$ and
\begin{equation}\label{i-ii-gap}
{\mathcal S}\cap [x_1-\varepsilon,x_1)\ne \emptyset \ \mbox{ and }\ {\mathcal S}\cap [x_2,x_2+\varepsilon)\ne \emptyset
\end{equation}
hold for all $0<\varepsilon<\min{(x_1, 1-x_2)}$, and therefore $[0,\varepsilon_1)\subset U$ and $[1-\varepsilon_2,1)\subset V$. This means $\varepsilon_1<x_1$ and $\varepsilon_2<1-x_2$.
 Combining with \eqref{i-ii-gap-0} and $\mathcal{M}(\mathcal{S})=\mathcal{S}$, one has that
$$\mathcal{S}\subset \mathcal{M}([\varepsilon_1,x_1))\cup \mathcal{M}([x_2,1-\varepsilon_2))\subset\langle[\alpha+x_2-x_1+ \varepsilon_1,1+\alpha-\varepsilon_2)\rangle.$$ Let $K=\langle[\alpha-\varepsilon_2,\alpha+x_2-x_1+ \varepsilon_1)\rangle$. Then $K$ is an arc of the length $x_2-x_1+\varepsilon_1+ \varepsilon_2$ and $K\cap \mathcal{S}=\emptyset.$ This, together with \eqref{i-ii-gap-0} and \eqref{i-ii-gap},  yields that $K\cap\textbf{H}=\emptyset$ and $K\cap\big( [1-\varepsilon_2,1)\cup[0,\varepsilon_1)\big)=\emptyset$.  Therefore, $K\subset U$ or $K\subset V$, and hence $|{\mathcal M} (K)|=|K|$ and
$$
\mathcal{S}\cap\mathcal{M}(K)\subset\mathcal{M}\Big(\mathcal{S}\cap\big(K\cup\textbf{H}\big)\Big)=\emptyset.
$$
Applying the above procedure to the set $\mathcal{M}(K)$ instead of $K$, and then by induction, we will get a sequence of arcs $\mathcal{M}^n(K), n\geq 0$, with the same length $|{\mathcal M}^n(K)|=|K|$,  such that $\mathcal{M}^n(K)\subset\Bbb T_{[0,1)}\setminus \mathcal{S}$ satisfying that  $\mathcal{M}^n(K)\cap\textbf{H}=\emptyset$ and $\mathcal{M}^n(K)\cap\big([1-\varepsilon_2,1)\cup[0,\varepsilon_1)\big)=\emptyset$.
Thus, for any two integers  $1\le n_1<n_2$,
$$
\mathcal{M}^{n_1}(K)\cap \mathcal{M}^{n_2}(K)=\mathcal{M}^{n_1}(K\cap\mathcal{M}^{n_2-n_1}(K))
=\mathcal{M}^{n_1+1}\Big(\big([1-\varepsilon_2,1)\cup[0,\varepsilon_1)\big)\cap\mathcal{M}^{n_2-n_1-1}(K)\Big)=\emptyset.
$$
This means ${\mathcal M}^n(K), n\in\Bbb N$,   is a sequence of pairwise disjoint sets with the same length in $\Bbb T_{[0,1)}$, this contradiction means that (iii) holds.

\vspace{0.2cm}

(iv). The proof is similar to that of (iii).  In fact, since $\mathcal{S}\cap $\textbf{H}$=\emptyset,$ one can suppose on the contrary that there exists
  an $\varepsilon_0>0$ such that {${\mathcal S}\cap \langle[x_1-\varepsilon_0,x_2+\varepsilon_0)\rangle= \emptyset$}.
This together with (i) and (ii) implies that
\begin{equation}\nonumber
{\mathcal S}\cap [1-\varepsilon,1)\ne \emptyset \ \mbox{ and }\ {\mathcal S}\cap [0,\varepsilon)\ne \emptyset
\end{equation}
for all $0<\varepsilon<1$,
    which yields, from the definition of $\mathcal{M}$, that
\begin{equation}\nonumber
{\mathcal S}\cap \langle[\alpha-\varepsilon,\alpha)\rangle\ne \emptyset \ \mbox{ and }\ {\mathcal S}\cap \langle[\alpha+x_2-x_1,\alpha+x_2-x_1+\varepsilon)\rangle\ne \emptyset.
\end{equation}
 Let $K=\langle(\alpha,\alpha+x_2-x_1)\rangle$. Then the above conditions mean that $K$ is an open gap, and therefore {$K\cap \textbf{H}=\emptyset$ and $0\not \in K$}. Thus,  it follows that $K\subset U$ or $K\subset V$, and hence ${\mathcal M}(K)$ is an open gap with $|{\mathcal M}(K)|=|K|$. By induction, we will get a sequence of open gaps ${\mathcal M}^n(K), n\in\Bbb N$, with the same length $|{\mathcal M}^n(K)|=|K|$, and that,   for any $1\le n_1<n_2$,
\begin{equation}\nonumber
{\mathcal M}^{n_1}(K)\cap {\mathcal M}^{n_2}(K)={\mathcal M}^{n_1}(K\cap{\mathcal M}^{n_2-n_1}(K))=\emptyset.
\end{equation}
It is impossible, (iv) follows. The  proof of Proposition \ref{prop-4-point}  is  complete.
\eproof

\vspace{0.2cm}

\begin{cor}\label{cor-4-point}
Let $\mathcal{M}=\mathcal{M}_{\alpha, x_1,x_2}$ be the map defined in \eqref{map.def}, and ${\mathcal S}$ be its maximal invariant set.
If ${\mathcal S}\ne\emptyset$, then $0\le \alpha<\alpha+x_2-x_1\le 1$.
\end{cor}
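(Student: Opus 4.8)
The target statement is Corollary \ref{cor-4-point}: if $\mathcal{S}\ne\emptyset$ then $0\le\alpha<\alpha+x_2-x_1\le 1$. The left inequality $\alpha\ge 0$ is immediate from the hypothesis $0\le\alpha<1$ in the definition \eqref{map.def}, and $\alpha<\alpha+x_2-x_1$ is immediate from $x_1<x_2$. So the only content is the single inequality $\alpha+x_2-x_1\le 1$; equivalently, that the quantity $\alpha + (x_2-x_1)$ does not wrap around the circle. The plan is to derive this directly from Proposition \ref{prop-4-point}, which has just been proved and which I am entitled to use.

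The key observation is that the image $\mathcal{M}(U)$ of $U=[0,x_1)$ is the arc $\langle[\alpha+x_2-x_1,\ \alpha+x_2-x_1+x_1)\rangle = \langle[\alpha+x_2-x_1,\ \alpha+x_2)\rangle$, and the image $\mathcal{M}(V)$ of $V=[x_2,1)$ is the arc $\langle[\alpha+x_2,\ \alpha+1)\rangle$. Since $\mathcal{M}$ fixes $\textbf{H}$ pointwise and $\mathcal{S}\cap\textbf{H}=\emptyset$, invariance $\mathcal{M}(\mathcal{S})=\mathcal{S}$ forces $\mathcal{S}\subset\mathcal{M}(U)\cup\mathcal{M}(V) = \langle[\alpha+x_2-x_1,\ \alpha+1)\rangle$, an arc of length $1-(x_2-x_1)<1$; call its complementary arc $G = \langle[\alpha,\ \alpha+x_2-x_1)\rangle$, which is disjoint from $\mathcal{S}$. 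Now suppose for contradiction that $\alpha+x_2-x_1>1$. Then $G$ is an arc that contains the point $0$ in its interior (since $\alpha<1<\alpha+x_2-x_1$), i.e. $0\in\mathcal{S}^c$ together with a neighbourhood of the form $[0,\varepsilon)$ for some $\varepsilon>0$. I will then play the two halves of $G$ against Proposition \ref{prop-4-point}: part (iii) says $\mathcal{S}\cap[0,\varepsilon)\ne\emptyset$ or $\mathcal{S}\cap[1-\varepsilon,1)\ne\emptyset$ for every $\varepsilon\in(0,1)$. Since a whole neighbourhood $[0,\delta)\cup(1-\delta,1)$ of $0$ lies in the open arc $G\subset\mathcal{S}^c$ for small $\delta$, both alternatives fail, a contradiction. (If $\alpha+x_2-x_1=1$ the inequality already holds, so nothing to prove there.)

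Actually the cleanest route avoids even naming $\varepsilon_1,\varepsilon_2$: assuming $\alpha+x_2-x_1>1$, the arc $G=\langle[\alpha,\alpha+x_2-x_1)\rangle$ is an honest neighbourhood of $0$ in $\Bbb T_{[0,1)}$ containing both a right-neighbourhood $[0,\varepsilon)$ and a left-neighbourhood $(1-\varepsilon,1)$ of $0$ for $\varepsilon$ small enough, yet $G\cap\mathcal{S}=\emptyset$ by the inclusion $\mathcal{S}\subset\mathcal{M}(U)\cup\mathcal{M}(V)$ established above. This directly contradicts Proposition \ref{prop-4-point}(iii). Hence $\alpha+x_2-x_1\le 1$, and combined with the trivial inequalities we conclude $0\le\alpha<\alpha+x_2-x_1\le 1$.

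I do not expect any real obstacle here; the corollary is essentially a repackaging of Proposition \ref{prop-4-point}(iii). The one point requiring a little care is the bookkeeping of where the circle "wraps": one must check that under the assumption $\alpha+x_2-x_1>1$ the complementary arc $G$ genuinely straddles $0$ (which uses $\alpha<1$, part of the standing hypothesis on $\mathcal{M}$) so that it contains points of both $[0,\varepsilon)$ and $(1-\varepsilon,1)$ — this is exactly the configuration ruled out by part (iii). An alternative, should one prefer to avoid (iii), is to note that $\mathcal{S}\subset\mathcal{M}(U)\cup\mathcal{M}(V)$ together with the wrap-around assumption would place a gap arc straddling $0$, and then combine parts (i) and (ii) instead; but invoking (iii) is the most economical.
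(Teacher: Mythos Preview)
Your proof is correct and follows essentially the same approach as the paper's: both argue by contradiction, use $\mathcal{S}=\mathcal{M}(\mathcal{S})\subset\mathcal{M}(U)\cup\mathcal{M}(V)$ to locate a complementary arc straddling $0$ when $\alpha+x_2-x_1>1$, and then invoke Proposition~\ref{prop-4-point}(iii). The only cosmetic difference is that the paper computes $\mathcal{M}(U)=[\alpha+x_2-x_1-1,\alpha+x_2-1)$ and $\mathcal{M}(V)=[\alpha+x_2-1,\alpha)$ explicitly under the contrapositive hypothesis, whereas you describe the complement arc $G=\langle[\alpha,\alpha+x_2-x_1)\rangle$ directly; the content is identical.
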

\proof
 Since $0\le \alpha<1$ and $x_2>x_1$, it suffice to prove $\alpha+x_2-x_1\le 1$. Assume on the contrary that $\alpha+x_2-x_1>1$. Then $0<x_1+(\alpha+x_2-x_1-1)=\alpha+x_2-1\le\alpha$. Thus $\mathcal{M}(U)=\langle U+\alpha+x_2-x_1\rangle=[\alpha+x_2-x_1-1,\alpha+ x_2-1)$ and $\mathcal{M}(V)=\langle V+\alpha\rangle=[\alpha +x_2-1,\alpha)$. Therefore $\mathcal{S}\cap\big((\alpha,1)\cup[0,\alpha+x_2-x_1-1)\big)=\emptyset$  contradicting to Proposition \ref{prop-4-point} (iii).
\eproof

\vspace{0.2cm}
\vspace{0.2cm}

\noindent {\bf Proof of Theorem \ref{thm-s-1}}.
If $\alpha+x_2-x_1> 1$, then $\mathcal{S}=\emptyset$ by Corollary \ref{cor-4-point}. Moreover, since $\mathcal{M}(t)$ is equal $t+\alpha+x_2-x_1- 1$ if $t\in U$; and $t+\alpha- 1$ if $t\in V$, it follows that $\mathcal{M}^n(t)\in \textbf{H}$ for all $n>\max (\frac{x_1}{\alpha+x_2-x_1- 1}, \frac{1-x_2}{1-\alpha})$, and hence $\mathcal{M}^n(\mathbb{T}_{[0,1)}) \subset \textbf{H}$, \eqref{eq-thm-s-1} follows.
\vspace{0.2cm}

If $\alpha+x_2-x_1\leq 1$, we will conclude that \eqref{eq-thm-s-1}  by considering  the dynamical behavior of the sequence $T_n={\mathcal M}^n(T)\setminus \textbf{H}$ for all $ n\ge 0$, where $T=[\alpha, \alpha+x_2-x_1)$.
 As ${\mathcal M}$ is a bijection from $\Bbb T_{[0,1)}\setminus \textbf{H}$ to $\Bbb T_{[0,1)} \setminus T$, we have ${\mathcal M}(\Bbb T_{[0,1)}) \cap T_0=\emptyset$, which yields that
\begin{equation}\label{eq-finite-in-1}
T_n\cap \mathcal{S}\subset\mathcal{M}^{n}(T_0\cap\mathcal{S})=\emptyset
\end{equation}
for all $n\ge 0$, and
\begin{equation}\label{eq-finite-infinite-1}
T_{n_1}\cap T_{n_2}\subset\mathcal{M}^{n_1}(T_{0}\cap T_{n_2-n_1})=\emptyset
\end{equation}
 for all $0\le n_1<n_2$.

 \vspace{0.2cm}
When $T_0=\emptyset$, i.e., $\alpha=x_1$, it is clear that $\mathcal{S}=\Bbb T_{[0,1)}\setminus \textbf{H}$, so \eqref{eq-thm-s-1} holds for all $N\ge 0$.  When $T_0\ne\emptyset$, it is not hard to see  from \eqref{eq-finite-in-1} and Proposition \ref{prop-4-point} (iii) that $T_0\subset U$ or $T_0\subset V$, and thus $T_{1}=\mathcal{M}(T_0)\setminus\textbf{H}$ is a left-closed and right-open arc whenever $T_1\ne\emptyset$. By induction, one will get a sequence  $T_n, n\geq 0$  satisfying  that  either $T_n\subset U$ or $T_n\subset V$, and $T_n$ is also a left-closed and right-open arc whenever $T_n\ne\emptyset$. Notice that $T_{n+1}=\mathcal{M}(T_n)\setminus\textbf{H}$  and $|\mathcal{M}(T_n)|=|T_n|$ for all $n\ge0$. Then
\begin{equation}\label{eq-t-t-01}
|{\mathcal M}(T_{n})\cap \textbf{H}|=|T_{n}|-|T_{n+1}|\ \mbox{for all}\ n\ge 0.
\end{equation}

 \vspace{0.2cm}

 We now claim that
\begin{equation}\label{claim-101}
T_N=\emptyset\  \mbox{ for some }\ N\in\Bbb N.
\end{equation}

 Indeed, if \eqref{claim-101} is not valid, then the argument above shows that the sets $T_n,n\ge 0$, are all left-closed and right-open arcs.  Thus, by \eqref{eq-finite-infinite-1}, there are at most  two of the sets $T_n$ such that $\mathcal{M}(T_n)\cap\textbf{H}\ne\emptyset$, which means there exists $N_0\in\Bbb N$ such that ${\mathcal M}(T_n)\cap \textbf{H}= \emptyset$  for all $n\ge N_0$, and therefore $|T_n|=|T_{N_0}|>0$ for all $n\ge N_0$. This is a contradiction since $\sum_{n=0}^\infty |T_n|<1$ by \eqref{eq-finite-infinite-1}. Hence, \eqref{claim-101}  is valid.

\vspace{0.2cm}

Let's  continue to our proof. Recall that $T\cap {\mathcal M}(T_n)=\emptyset$ for all  $n\ge0$.
Then by \eqref{eq-finite-infinite-1}, \eqref{eq-t-t-01}  and \eqref{claim-101},
\begin{equation}\nonumber
\Big|\cup_{k=0}^{N-1}({\mathcal M}(T_{k})\cap \textbf{H})\cup(T\cap  \textbf{H})\Big|=\sum_{k=0}^{N-1}\big|{\mathcal M}(T_{k})\cap \textbf{H}\big|+\big|T\cap  \textbf{H}\big|=|T|-|T_{N}|=|T|=|\textbf{H}|.
\end{equation}
Since the set of the left hand side of above equality is the  union of finitely many left-closed and right-open intervals, it follows that
\begin{equation}\label{eq-finite-infinite-7}
\textbf{H}\subset \cup_{k=0}^{N-1}{\mathcal M}(T_{k})\cup T.
\end{equation}

Set $E=\Bbb T_{[0,1)}\setminus \big(\cup_{k=0}^{N-1} T_k\cup \textbf{H}\big)$.
Then $|{\mathcal M}(E)|=|E|$, and ${\mathcal M}(E)\subset E$ by \eqref{eq-finite-infinite-7} and the observation that ${\mathcal M}$ is a bijection from $\Bbb T_{[0,1)}\setminus \textbf{H}$ to $\Bbb T_{[0,1)} \setminus T$. Therefore ${\mathcal M}(E)= E$ since  they are both the union of finitely many left-closed and right-open intervals. Recall that ${\mathcal S}\subset E$ by \eqref{eq-finite-in-1}. Then $E= {\mathcal S}$ by the maximality of ${\mathcal S}$. This together with \eqref{claim-101} leads to ${\mathcal M}^N(t)\in \textbf{H}$ for all $t\in \Bbb T_{[0,1)}\setminus \mathcal{S}$.
We complete the proof of Theorem \ref{thm-s-1}.
\eproof

\vspace{0.2cm}

\vspace{0.2cm}

\noindent {\bf Proof of Theorem \ref{thm-mod-1}.}
First, by Theorem \ref{thm-s-1}, ${\mathcal S}$  can be  written as the union of finitely many pairwise disjoint  left-closed and right-open intervals in [0,1).
  It is obvious that  the map $Y$ is bijective from ${\mathcal S}$ to $\Bbb T_{[0,1)}$.

 \vspace{0.2cm}

 If $0\le t_1<t_2<1$ are two points in one interval of $\mathcal{S}$, then
 $$
 \langle Y({\mathcal M}(t_2))-Y(\mathcal{M}(t_1))\rangle=\frac {|{\mathcal S}\cap \mathcal{M}([t_1,t_2))|}{|\mathcal{S}|}=\frac {|{\mathcal S}\cap [t_1,t_2)|}{|\mathcal{S}|}=\langle Y(t_2)-Y(t_1)\rangle,
 $$
which implies  $\langle Y({\mathcal M}(t_2))-Y(t_2)\rangle=\langle Y({\mathcal M}(t_1))-Y(t_1)\rangle$.

If $0\le t_1<t_2<1$ are two points in two adjacent intervals of ${\mathcal S}$, assume that $t_1\in [a,b)\subset{\mathcal S}$, $t_2\in [c,d)\subset{\mathcal S}$ and $[b,c)\subset\Bbb T_{[0,1)}\setminus{\mathcal S}$, then
$$
 \langle Y({\mathcal M}(t_2))-Y(t_2)\rangle=\langle Y({\mathcal M}(c))-Y(c)\rangle=\lim_{t\rightarrow b^+}\langle Y({\mathcal M}(t))-Y(t)\rangle=\langle Y({\mathcal M}(t_1))-Y(t_1)\rangle.
 $$
 So,  by recursion, $\langle Y({\mathcal M}(t))-Y(t)\rangle$ is a constant for all
 $t\in {\mathcal S}$, and therefore $$\langle Y({\mathcal M}(t))-Y(t)\rangle=\langle Y({\mathcal M}(0))-Y(0)\rangle=\langle Y(\alpha+x_2-x_1)\rangle=\langle Y(\alpha)\rangle,$$
where the first equality follows from Proposition \ref{prop-4-point} (iii) and take $t\to 0$ or $t\to 1$ for desired situations.  We obtain \eqref{eq-thm-mod-01}.

 \vspace{0.2cm}

 The other conclusion in Theorem \ref{thm-mod-1} is an immediate consequence of Proposition \ref{prop-s-rational} below.
\eproof

\subsection{Maximal invariant set, Part I}
\label{subsection-4-2}

\vspace{0.2cm}

In this subsection, we study the structure of the maximal invariant set ${\mathcal S}$ when ${\mathcal S}\subset U$ and $ {\mathcal S}\subset V$ respectively.


 \begin{prop}\label{prop-s-rational}
Let $\mathcal{M}=\mathcal{M}_{\alpha, x_1,x_2}$ be the map defined in \eqref{map.def}. Then:
\begin{enumerate}
\item [{\rm(i)}] $\emptyset \ne {\mathcal S}\subset U$ if and only if $\langle\alpha+x_2-x_1\rangle=\frac pq$ and $x_1>\frac {q-1}q$, where $\frac pq$ is a simple fraction for some integers $0\le p< q$. In this case, $Y(\alpha)=\frac pq$ and
\begin{equation}\label{struct-s-rational-1}
{\mathcal S}= \bigcup_{k=0}^{q-1}\big[\frac kq,\frac {k-q+1}q+ x_1\big).
\end{equation}
\item [{\rm(ii)}] $\emptyset \ne {\mathcal S}\subset V$ if and only if $\alpha=\frac pq$ and
    $x_2<\frac {1}q$,  where $\frac pq$ is a simple fraction for some integers $0\le p< q$. In this case, $Y(\alpha)=\frac pq$ and
\begin{equation}\label{struct-s-rational-2}
{\mathcal S}= \bigcup_{k=0}^{q-1}\big[x_2+\frac kq, \frac {k+1}q\big).
\end{equation}
\end{enumerate}
\end{prop}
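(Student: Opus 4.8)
The plan is to handle case (i) in detail; case (ii) follows by an essentially symmetric argument (exchanging the roles of $U$ and $V$, of left and right endpoints, and of $\alpha + x_2 - x_1$ and $\alpha$). For case (i), first I would establish necessity. Suppose $\emptyset \ne \mathcal{S} \subset U = [0, x_1)$. On $U$ the map acts as $t \mapsto \langle t + \alpha + x_2 - x_1\rangle$, a pure rotation; so $\mathcal{M}(\mathcal{S}) = \langle \mathcal{S} + (\alpha + x_2 - x_1)\rangle = \mathcal{S}$ forces $\mathcal{S}$ to be invariant under the rotation by $\gamma := \langle \alpha + x_2 - x_1\rangle$. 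A nonempty set of finitely many intervals in $\mathbb{T}$ (Theorem \ref{thm-s-1}) invariant under a rotation must have that rotation be rational — an irrational rotation is uniquely ergodic, so its only finite-measure-theoretically-invariant closed-up-to-measure-zero set of intervals would have to be all of $\mathbb{T}$, which is impossible since $\mathcal{S} \subset U \subsetneq \mathbb{T}$. Hence $\gamma = p/q$ in lowest terms with $0 \le p < q$. Then the rotation by $p/q$ has period $q$, its orbits are the cosets of $\frac{1}{q}\mathbb{Z}$, and the maximal union of intervals inside $[0, x_1)$ invariant under it is exactly $\bigcup_{k=0}^{q-1} [\frac{k}{q}, \frac{k}{q} + r)$ where $r = \min(x_1, \text{(length forced by fitting in $[0,x_1)$)})$; requiring this to be nonempty and to actually be the maximal invariant set (not absorbed into $\mathbf{H}$, and genuinely $\subset U$) pins down $x_1 > \frac{q-1}{q}$ and $r = x_1 - \frac{q-1}{q}$, giving \eqref{struct-s-rational-1}. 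Along the way, $Y(\alpha) = p/q$ follows from Theorem \ref{thm-mod-1}: on $\mathcal{S} \subset U$ one has $\mathcal{M}^n(t) = \langle t + n\gamma\rangle$, and the squeezing map $Y$ conjugates $\mathcal{M}|_{\mathcal S}$ to rotation by $Y(\alpha)$, while the $q$ intervals of equal length are permuted cyclically by $\mathcal M$ with the same combinatorial pattern as rotation by $p/q$, so $Y(\alpha) = p/q$.

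For sufficiency, I would simply reverse the computation: assume $\gamma = \langle \alpha + x_2 - x_1\rangle = p/q$ (simple fraction) and $x_1 > \frac{q-1}{q}$. Define $\mathcal{S}_0 := \bigcup_{k=0}^{q-1} [\frac{k}{q}, \frac{k-q+1}{q} + x_1)$. One checks directly that each interval has positive length (since $x_1 > \frac{q-1}{q}$), that $\mathcal{S}_0 \subset [0, x_1) = U$ (the right endpoint of the $k$-th interval is $\frac{k}{q} + (x_1 - \frac{q-1}{q}) \le x_1$ for $k \le q-1$), hence $\mathcal{S}_0 \cap \mathbf{H} = \emptyset$, and that $\mathcal{M}$ acts on $\mathcal{S}_0$ as the rotation by $p/q$, so $\mathcal{M}(\mathcal{S}_0) = \mathcal{S}_0$. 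Thus $\mathcal{S}_0$ satisfies \eqref{s-set.def}, so $\mathcal{S}_0 \subset \mathcal{S}$ by maximality, and in particular $\mathcal{S} \ne \emptyset$. It remains to show $\mathcal{S} \subset U$ (whence $\mathcal{S} = \mathcal{S}_0$ by the necessity argument applied to this $\mathcal{S}$, or more directly by noting $\mathcal S$ is then rotation-invariant and contained in $U$, forcing $\mathcal{S} \subset \mathcal{S}_0$). For this I would invoke Proposition \ref{prop-4-point}: if $\mathcal{S} \cap V \ne \emptyset$, then combining parts (iii) and (iv) of that proposition with the explicit form of $\mathcal{M}$ near the breakpoints $0, x_1, x_2, 1$ and the fact that $x_1 > \frac{q-1}{q} \ge 0$ leads to a contradiction with the location constraints; concretely, $\mathcal{S} \cap V \ne \emptyset$ together with $\mathcal{M}(\mathcal S) = \mathcal S$ would force $\mathcal S$ to meet a neighbourhood of both $0$ and $x_2$, but $x_2 \ge x_1 > \frac{q-1}{q}$ leaves no room for $V = [x_2, 1)$ to contain a full rotation orbit of the required intervals without overlapping $\mathbf{H}$ or $U$'s orbit.

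Case (ii) is the mirror image: $\mathcal{M}$ restricted to $V = [x_2, 1)$ is rotation by $\langle \alpha\rangle$, the same uniqueness-of-finite-invariant-sets argument gives $\alpha = p/q$ a simple fraction, the maximal union of intervals inside $[x_2, 1)$ invariant under rotation by $p/q$ is $\bigcup_{k=0}^{q-1}[x_2 + \frac{k}{q}, \frac{k+1}{q})$, which is nonempty iff $x_2 < \frac{1}{q}$, and $Y(\alpha) = p/q$ again by Theorem \ref{thm-mod-1}; the containment $\mathcal{S} \subset V$ in the sufficiency direction is again extracted from Proposition \ref{prop-4-point}.

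The main obstacle I anticipate is not the rotation bookkeeping but rigorously pinning down the dichotomy "$\mathcal{S} \subset U$ vs.\ $\mathcal{S}$ meets $V$" — i.e.\ showing in the sufficiency direction that the hypotheses genuinely preclude any part of $\mathcal{S}$ escaping into $V$, and conversely that no extra intervals beyond $\mathcal{S}_0$ can survive. This is where the fine structure from Proposition \ref{prop-4-point} (especially the alternatives in (iii) and (iv) about $\mathcal{S}$ clustering near $0$, $1$, $x_1^-$, $x_2$) has to be deployed carefully against the inequality $x_1 > \frac{q-1}{q}$ (resp.\ $x_2 < \frac{1}{q}$); everything else is a routine verification that a cyclic union of equal-length intervals is rotation-invariant and sits inside the prescribed region.
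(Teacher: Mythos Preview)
Your necessity argument is sound and essentially matches the paper's: the finite-union-of-intervals structure from Theorem \ref{thm-s-1} together with rotation invariance forces $\gamma = \langle\alpha+x_2-x_1\rangle$ to be rational, after which the $1/q$-periodicity of $\mathcal{S}$ and the constraint $\mathcal{S}\cap[\frac{q-1}{q},1)\subset[0,x_1)$ pin down $x_1>\frac{q-1}{q}$ and the explicit form of $\mathcal{S}$.

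The sufficiency direction, however, has a real gap at exactly the point you flagged. Your plan is to rule out $\mathcal{S}\cap V\ne\emptyset$ via Proposition \ref{prop-4-point}, but that proposition gives you nothing here: since $0\in\mathcal{S}_0$ and the rightmost interval of $\mathcal{S}_0$ has $x_1$ as its right endpoint, all four alternatives (i)--(iv) of Proposition \ref{prop-4-point} are already witnessed by $\mathcal{S}_0\subset\mathcal{S}$, so no contradiction arises. The heuristic ``$V$ has no room for a full rotation orbit'' is also not usable, because on $V$ the map rotates by $\alpha$, not by $p/q$, and nothing in the hypotheses constrains $\alpha$ itself.

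The paper's argument is a direct orbit trace and this is the missing idea. Take $t=\min(\mathcal{S}\cap V)$; then $t-(x_2-x_1)\ge x_1$, so $t-(x_2-x_1)\notin U$, and by minimality of $t$ it cannot lie in $\mathcal{S}\cap V$ either, hence $t-(x_2-x_1)\notin\mathcal{S}$. On the other hand, $\mathcal{M}(t)=\langle t+\alpha\rangle\in\mathcal{S}$; iterating inside $U$ (each step adding $p/q$) one finds some $k\in\{0,\dots,q-1\}$ with $\mathcal{M}^{k+1}(t)=\langle t+\alpha+kp/q\rangle\in[\frac{q-1}{q},1)$. Since both $\mathcal{M}^{k+1}(t)$ and $t-(x_2-x_1)$ lie in the length-$1/q$ interval $[\frac{q-1}{q},1)$ and differ by an element of $\frac{1}{q}\mathbb{Z}$, they coincide, forcing $t-(x_2-x_1)\in\mathcal{S}$ --- the desired contradiction. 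This orbit argument (and its mirror for part (ii), taking $t$ near the supremum of $\mathcal{S}\cap U$) is what you need to supply in place of the appeal to Proposition \ref{prop-4-point}.
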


\proof
(i). \emph{Necessity}.  Note that $\emptyset \ne {\mathcal S}\subset U$ implies that $\alpha+x_2-x_1\in (0,1]\cap\Bbb Q$ by Corollary \ref{cor-4-point} and the observation that
\begin{equation}\label{S-u-v-1-121}
  {\mathcal S}=\mathcal{M}(\mathcal{S})=\langle\mathcal{S}+\alpha+x_2-x_1\rangle.
\end{equation}

  \vspace{0.2cm}

 If $\alpha+x_2-x_1=1$, then $\mathcal{S}=U$, which obviously leads to that $Y(\alpha)=Y(\alpha+x_2-x_1)=0$ and \eqref{struct-s-rational-1} holds. Write $$\alpha+x_2-x_1=\frac pq$$ for some co-prime positive integers $p<q$. Then it follows from  \eqref{S-u-v-1-121}  that
$$\langle{\mathcal S}+\frac1q\rangle={\mathcal S} \quad \hbox{and} \quad   \emptyset\ne{\mathcal S}\cap [\frac {q-1}q,1)\subset [0,x_1)\cap[\frac {q-1}q,1),$$ which implies that $x_1>\frac {q-1}q$ and
\begin{equation}\label{s-rational}
{\mathcal S}\subset \bigcup_{k=0}^{q-1}\big[\frac kq,\frac {k-q+1}q+ x_1\big).
\end{equation}
  Observing  that the right hand side of \eqref{s-rational} is invariant under the map ${\mathcal M}$. Thus,  \eqref{struct-s-rational-1} is valid  and $$Y(\alpha)=Y(\alpha+x_2-x_1)=\frac {|\mathcal{S}\cap[0,\alpha+x_2-x_1)|}{|\mathcal{S}|}=\frac {|\mathcal{S}\cap[0,p/q)|}{|\mathcal{S}|}=\frac pq,$$  as desired.

 \vspace{0.2cm}

        \emph{Sufficiency}. Notice that the set ${\mathcal S_U}= \bigcup_{k=0}^{q-1}\big[\frac kq,\frac {k-q+1}q+ x_1\big)\subset U$ is an invariant set of ${\mathcal M}$, i.e., ${\mathcal S}_U \subset \mathcal{S}$.  Therefore  it suffices to show that $\mathcal{S}\cap V=\emptyset$.

    \vspace{0.2cm}

        In fact, if $\mathcal{S}\cap V\ne \emptyset$, it is reasonable to define $$t=\min \{x: x\in\mathcal{S}\cap V\}$$ since $\mathcal{S}$ can be rewritten as the union of finitely many pairwise disjoint left-closed and right open intervals in $[0,1)$ by Theorem \ref{thm-s-1}. Thus, $t-(x_2-x_1)\not\in\mathcal{S}$.

    \vspace{0.2cm}

  However, since each $t'\in\mathcal{S}\cap [0,\frac {q-1}q)$  corresponds to a $k'\in\{1,2,\ldots,q-1\}$ such that $\mathcal{M}^{k'}(t')=\langle t'+\frac{k'p}q\rangle \in [\frac {q-1}q, 1)$, it follows that
  $$\mathcal{M}^{k+1}(t)=\mathcal{M}^{k}(\langle t+\alpha\rangle)=\langle t+\alpha+\frac{kp}q\rangle \in [\frac {q-1}q, 1)$$
   for some  $k\in\{0, 1,\ldots,q-1\}$.   Notice that $t-(x_2-x_1)\in [\frac {q-1}q, 1)$ and $(t-(x_2-x_1))-\langle t+\alpha+\frac{kp}q\rangle\in\Bbb Z/q$. Then $t-(x_2-x_1)=\mathcal{M}^{k+1}(t)\in \mathcal{S}$. It is a contradiction, and thus $\mathcal{S}\cap V=\emptyset$, as desired.

\vspace{0.2cm}

(ii). The proof is similar to that of (i), we will give the detailed proof for the sake of completeness.

    \emph{Necessity}. If ${\mathcal S}\subset V$, it follows from  $ {\mathcal S}=\langle\mathcal{S}+\alpha\rangle$ that $\alpha\in \Bbb Q$. Thus, for the case that $\alpha=0$, one has $p=0$ and $q=1$, and $\mathcal{S}=V$ by the assumption that  $\emptyset \ne {\mathcal S}\subset V$. Therefore $Y(\alpha)=0$ and \eqref{struct-s-rational-2} is valid. For the case that $\alpha\not=0$, we write $\alpha=\frac pq$ for some co-prime positive integers $p<q$, then ${\mathcal S}$ is a $\frac 1q-$periodic set, and  $$\emptyset\ne{\mathcal S}\cap [0,\frac {1}q)\subset [x_2,1)\cap[0,\frac {1}q).$$ Therefore, we have $x_2<\frac {1}q$, ${\mathcal S}= \cup_{k=0}^{q-1}\big[x_2+\frac kq,\frac {k+1}q\big)$, and obviously $Y(\alpha)=\frac pq$.

\vspace{0.2cm}

    \emph{Sufficiency}. Since the set ${\mathcal S}_V= \bigcup_{k=0}^{q-1}\big[x_2+\frac kq,\frac {k+1}q\big)\subset V$ is an invariant set of ${\mathcal M}$, so it suffices to prove  $\mathcal{S}\cap U=\emptyset$.

     \vspace{0.2cm}

   Assume on the contrary that $\mathcal{S}\cap U\ne\emptyset$. One  can choose $t\in\mathcal{S}\cap U$ such that $$t>\sup\{x: x\in\mathcal{S}\cap U\}-(x_2-x_1).$$
   This means that  $t+x_2-x_1\not\in\mathcal{S}$. We will show that it is impossible in the following:

    \vspace{0.2cm}

    (a) If $\mathcal{M}(t)=\langle t+\alpha+x_2-x_1\rangle\in [0,\frac {1}q)$, then $\mathcal{M}(t)=t+x_2-x_1$ by the observation that $t+x_2-x_1\in [0,\frac {1}q)$ and $\langle t+\alpha+x_2-x_1\rangle-(t+x_2-x_1)\in\Bbb Z/q$.  Therefore $t+x_2-x_1\in \mathcal{S}$.

   (b) If $\mathcal{M}(t)=\langle t+\alpha+x_2-x_1\rangle\not\in [0,\frac {1}q)$, then there exists $k\in\{1,2,\ldots,q-1\}$ such that $$\mathcal{M}^{k+1}(t)=\mathcal{M}^{k}(\langle t+\alpha+x_2-x_1\rangle)=\langle t+\alpha+x_2-x_1+\frac{kp}q\rangle \in [0,\frac {1}q).$$  Therefore, $\mathcal{M}^{k+1}(t)=t+x_2-x_1$, and $t+x_2-x_1\in \mathcal{S}$.

   \vspace{0.2cm}

    Thus $\mathcal{S}\cap U=\emptyset$. This finishes the proof of Proposition \ref{prop-s-rational}.
 \eproof





 \vspace{0.2cm}


\subsection{Maximal invariant set, Part II}
\label{subsection-4-1}


In this subsection, we consider the topological structure of the maximal invariant set ${\mathcal S}$ when $\mathcal{S}$ satisfies
\begin{equation}\label{S-u-v}
{\mathcal S}\cap U\ne\emptyset\ \mbox{and }\ {\mathcal S}\cap V\ne\emptyset.
\end{equation}

\vspace{0.2cm}

In this case, we have  $0<x_1<x_2<1$. 
Recall that $\alpha=0$ and $\alpha+x_2-x_1=1$ will lead to $\mathcal{S}=V$ and $\mathcal{S}=U$ respectively by Proposition \ref{prop-s-rational}. Therefore, by Corollary \ref{cor-4-point}, we have
\begin{equation}\nonumber
0<\alpha<\alpha+x_2-x_1<1.
\end{equation}

The fine structure of the maximal invariant set $\mathcal{S}$ as in \eqref{S-u-v} is characterized by the following Theorem \ref{thm-4-struct}, in which   we denote these three types of structures for $\mathcal{S}$ by \textbf{Type I}, \textbf{II} and \textbf{III}.

\vspace{0.2cm}

\begin{thm}\label{thm-4-struct}
Let $\mathcal{M}=\mathcal{M}_{\alpha, x_1,x_2}$ be the map defined in \eqref{map.def} and ${\mathcal S}$ be its maximal invariant set defined in \eqref{s-set.def}. Assume that ${\mathcal S}$ satisfies \eqref{S-u-v}. Then one of the following statements holds.
\begin{enumerate}
\item [{ \rm(i)}] (\textbf{Type I}) There exists $N\in\Bbb N$ such that ${\mathcal M}^{N-1}([\alpha,\alpha+x_2-x_1))=[x_1,x_2)$ and 
\begin{equation}\label{struct-s-1}
\Bbb T_{[0,1)}\setminus{\mathcal S}= \bigcup_{k=1}^N{\mathcal M}^{k-1}([\alpha,\alpha+x_2-x_1)).
\end{equation}
\item [{\rm (ii)}] (\textbf{Type II}) There exist {$0<\delta<\min (x_1,1-\alpha-x_2+x_1)$} and integers $0<N<M$ such that
$${\mathcal M}^{N-1}([\alpha,\alpha+x_2-x_1+\delta))=[x_1-\delta,x_2), \quad {\mathcal M}^{M-N}([x_1-\delta,x_1))=[0,\delta)$$
and
\begin{equation}\label{struct-s-2}
\Bbb T_{[0,1)}\setminus{\mathcal S}= \bigcup_{k=1}^N{\mathcal M}^{k-1}([\alpha,\alpha+x_2-x_1+\delta)) \cup \bigcup_{j=1}^{M-N}{\mathcal M}^{j-1}([x_1-\delta,x_1)).
\end{equation}
\item [{\rm (iii)}] (\textbf{Type III}) There exist {$0<\delta<\min(1-x_2,\alpha)$} and integers $0<N<M$ such that
$${\mathcal M}^{N-1}([\alpha-\delta,\alpha+x_2-x_1))=[x_1,x_2+\delta), \quad {\mathcal M}^{M-N}([x_2,x_2+\delta))=[1-\delta,1)$$
 and
\begin{equation}\label{struct-s-3}
\Bbb T_{[0,1)}\setminus{\mathcal S}= \bigcup_{k=1}^N{\mathcal M}^{k-1}([\alpha-\delta,\alpha+x_2-x_1)) \cup \bigcup_{j=1}^{M-N}{\mathcal M}^{j-1}([x_2,x_2+\delta)).
\end{equation}
\end{enumerate}
Furthermore, the unions of \eqref{struct-s-1}, \eqref{struct-s-2} and \eqref{struct-s-3} are  separated respectively.
\end{thm}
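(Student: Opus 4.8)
The plan is to run the same "iterated-image" argument as in the proof of Theorem \ref{thm-s-1}, but now tracking how the gap-interval $T_0 = [\alpha,\alpha+x_2-x_1)$ (the complement of $\mathcal{M}(\Bbb T_{[0,1)})$, which is disjoint from $\mathcal S$) propagates under $\mathcal{M}$, and using the dichotomy results of Proposition \ref{prop-4-point} to decide how the propagation terminates. Recall from that proof that the sets $T_n := \mathcal{M}^n(T_0)\setminus\textbf{H}$ are pairwise disjoint, each is a single left-closed right-open arc contained entirely in $U$ or entirely in $V$, and there is a first index at which $T_N=\emptyset$; moreover $\textbf{H}\subset\bigcup_{k=0}^{N-1}\mathcal{M}(T_k)\cup T_0$ and $\Bbb T_{[0,1)}\setminus\mathcal S = \bigcup_{k=0}^{N-1}T_k\cup\textbf{H}$. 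First I would re-examine, under the standing assumption \eqref{S-u-v} together with Corollary \ref{cor-4-point} (so $0<\alpha<\alpha+x_2-x_1<1$, hence $0<x_1<x_2<1$), how the arc $T_{N-1}$ can fail to survive: since $T_{N-1}$ is a nonempty arc lying in $U$ or in $V$, the set $\mathcal{M}(T_{N-1})$ is an arc of the same length that meets $\textbf{H}$ but whose non-$\textbf{H}$ part $T_N$ is empty, i.e. $\mathcal{M}(T_{N-1})\subset\textbf{H}=[x_1,x_2)$.

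The case split is driven by the position of $\mathcal{M}(T_{N-1})$ inside $[x_1,x_2)$ and by which endpoints of $\mathcal S$ sit at the boundary of $\textbf{H}$. In \textbf{Type I} one has $\mathcal{M}(T_{N-1}) = [x_1,x_2)$ exactly (so $|T_0| = x_2-x_1$ and the $N$ arcs $T_0,\dots,T_{N-1}$ together with $\textbf{H}$ tile $\Bbb T_{[0,1)}\setminus\mathcal S$, which is \eqref{struct-s-1}). When instead $\mathcal{M}(T_{N-1})$ is a proper sub-arc of $\textbf{H}$, Proposition \ref{prop-4-point}(iv) forces $\mathcal S$ to accumulate at one of the two endpoints $x_1$ or $x_2$; accordingly either there is a gap abutting $x_1$ from the left or a gap abutting $x_2$ from the right. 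In the first situation I set $\delta$ to be the length of that left gap at $x_1$, observe that the "enlarged" initial arc $[\alpha,\alpha+x_2-x_1+\delta)$ is still disjoint from $\mathcal S$ (its $\mathcal{M}$-iterates must eventually reach $[x_1-\delta,x_2)$, covering $\textbf{H}$ plus the extra piece $[x_1-\delta,x_1)$), and then run a \emph{second} propagation starting from $[x_1-\delta,x_1)$ — whose forward images are again disjoint arcs in $U\cup V$, disjoint from the first family and from $\mathcal S$, and which by the finiteness argument of Theorem \ref{thm-s-1} terminate after $M-N$ steps by landing on $[0,\delta)$. Exhausting $\Bbb T_{[0,1)}\setminus\mathcal S$ by these two families plus $\textbf{H}$ gives \eqref{struct-s-2}, i.e. \textbf{Type II}; the symmetric situation (gap abutting $x_2$ from the right, second propagation from $[x_2,x_2+\delta)$ terminating at $[1-\delta,1)$) gives \textbf{Type III} and \eqref{struct-s-3}. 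The bounds $0<\delta<\min(x_1,1-\alpha-x_2+x_1)$ in Type II (resp. $0<\delta<\min(1-x_2,\alpha)$ in Type III) come from requiring the enlarged arcs to stay inside $U$ (resp. $V$) and to have length $<1$, which I would extract directly from the geometry of the first non-trivial iterate. The measure-counting identity — "total length of the gap-arcs plus $|\textbf{H}|$ equals $1-|\mathcal S|$", combined with the fact that all the arcs involved are left-closed right-open and pairwise disjoint — is what upgrades these length computations to the exact set equalities \eqref{struct-s-1}–\eqref{struct-s-3}, exactly as \eqref{eq-finite-infinite-7} was upgraded in the proof of Theorem \ref{thm-s-1}.

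Finally, the "separated" claim: each displayed union is a finite union of left-closed right-open intervals which are pairwise disjoint (disjointness is \eqref{eq-finite-infinite-1} applied to the relevant seed arcs); I would argue that no two of them can share an endpoint, because if two such intervals $[a,b)$ and $[b,c)$ were adjacent their union would be a longer left-closed right-open interval, contradicting the maximality count on the number $N$ (resp. $N$ and $M-N$) of distinct arcs produced by the termination argument — equivalently, the endpoints of $\Bbb T_{[0,1)}\setminus\mathcal S$ are precisely the endpoints of the (finitely many, pairwise separated) gap-intervals of $\mathcal S$ already identified in the proof of Theorem \ref{thm-s-1}. I expect the main obstacle to be bookkeeping rather than conceptual: cleanly proving that after enlarging the seed arc by $\delta$ the \emph{first} family still consists of arcs entirely inside $U$ or $V$ and still avoids $\mathcal S$ (one must check the enlargement does not cause any iterate to straddle the boundary $x_1$ or $0$), and pinning down that the second propagation cannot itself trigger a further enlargement — i.e. that Types II and III are genuinely terminal and do not recurse. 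This is handled by noting that once the second family lands on $[0,\delta)$ (resp. $[1-\delta,1)$) the union of everything already has full measure $1-|\mathcal S|$, so there is nothing left to propagate.
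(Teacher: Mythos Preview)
Your overall strategy --- propagate the ``hole'' $T_0=[\alpha,\alpha+x_2-x_1)$ and track how it is absorbed into $\textbf{H}$ --- is the right one, and it matches the paper's argument for Type~I. The key difference is in how the three cases are separated. The paper does \emph{not} case-split via Proposition~\ref{prop-4-point}(iv) (behaviour near $x_1$ and $x_2$); it case-splits via Proposition~\ref{prop-4-point}(iii) (behaviour near $0$), distinguishing (Case~1) $\mathcal S$ accumulates at both $0^+$ and $1^-$, (Case~2) only at $1^-$, (Case~3) only at $0^+$. This choice is not cosmetic. In Case~2 the paper takes $\delta$ to be the length of the gap $[0,\delta)$ at $0^+$, and \emph{that} definition immediately gives that the enlarged seed $T_\delta=[\alpha,\alpha+x_2-x_1+\delta)$ is a gap, because its right portion $[\alpha+x_2-x_1,\alpha+x_2-x_1+\delta)=\mathcal{M}([0,\delta))$ is the image of a set disjoint from $\mathcal S$ under the bijection $\mathcal{M}|_{U\cup V}$. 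Being a gap is precisely what guarantees that each iterate $\mathcal{M}^k(T_\delta)$ lies entirely inside $U$ or $V$ --- the ``clean propagation'' you correctly flag as the main obstacle.

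With your definition of $\delta$ (the gap abutting $x_1$ from the left), the claim ``the enlarged initial arc $[\alpha,\alpha+x_2-x_1+\delta)$ is still disjoint from $\mathcal S$'' is not an observation --- it requires proof, and that proof amounts to showing that the gap at $x_1^-$ forces a gap of the same length at $0^+$, which is essentially the content you are trying to establish. So this is a genuine gap in the argument, not just bookkeeping. One can close it (if $x_1^-$ is not accumulated then (ii) forces $1^-$ to be accumulated, and a short contradiction using gap propagation shows $0^+$ cannot be accumulated either, landing you in the paper's Case~2), but this extra step is exactly what the paper's organisation sidesteps. Your account of the $T_n$-propagation in Types~II/III is also compressed: the sequence $T_n$ meets $\textbf{H}$ \emph{twice} (once partially, leaving the piece $[x_1-\delta,x_1)$, and once fully at the very end), so the index $N$ in the theorem statement is the \emph{first} hit, not your ``first empty'' index.
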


 \proof  According to Proposition \ref{prop-4-point} (iii), we  can divide the whole proof into the following three cases corresponding to (i), (ii) and (iii), respectively.

 \vspace{0.2cm}
\vspace{0.2cm}

 \emph{Case 1:} \emph{${\mathcal S}\cap [0,\varepsilon)\ne \emptyset$ and ${\mathcal S}\cap [1-\varepsilon,1)\ne \emptyset$ for all $\varepsilon\in(0,1)$.}

     \vspace{0.2cm}

     In this case, it follows from ${\mathcal M}({\mathcal S})={\mathcal S}$   that
         \begin{equation}\nonumber
           {\mathcal S}\cap [\alpha-\varepsilon,\alpha)\ne\emptyset \quad \hbox{and} \quad
         {\mathcal S}\cap [\alpha+x_2-x_1,\alpha+x_2-x_1+\varepsilon)\ne\emptyset
    \end{equation}
    for all $0<\varepsilon<\min (\alpha, 1-\alpha-x_2+x_1)$. Then the set $T=[\alpha,\alpha+x_2-x_1)$ is a gap of $\mathcal{M}$ and it is separated from $0$.

    By Theorem \ref{thm-s-1}, there is the smallest number $N\in \Bbb N$  such that $\mathcal{M}^{N-1}(T)\cap \textbf{H}\ne \emptyset$. Moreover, to each $0\le k<N-1$, if $\mathcal{M}^k(T)$ is a gap of $\mathcal{M}$,
    then  either $\mathcal{M}^k(T)\subset U$ or $\mathcal{M}^k(T)\subset V$, and thus it follows from $\mathcal{M}(\mathcal{S})=\mathcal{S}$ that $\mathcal{M}^{k+1}(T)$ is also a gap with length $|\mathcal{M}^{k+1}(T)|=|\mathcal{M}^{k}(T)|$. Therefore, $\mathcal{M}^{N-1}(T)$ is a gap with $|\mathcal{M}^{N-1}(T)|=|T|=|\textbf{H}|$, which implies $\mathcal{M}^{N-1}(T)=\textbf{H}$.

       \vspace{0.2cm}

  We next claim that $\mathcal{M}^k(T)$, $0\le k<N$, are pairwise disjoint. In fact, if there exist $0\le k_1<k_2<N$ such that $\mathcal{M}^{k_1}(T)\cap\mathcal{M}^{k_2}(T)\ne \emptyset$, then $\mathcal{M}^{k_1}(T)=\mathcal{M}^{k_2}(T)$ since both of them are gaps, and therefore $\mathcal{M}^{N-1-k_2+k_1}(T)=\mathcal{M}^{N-1}(T)=\textbf{H}$, which contradicts to the assumption of $N$.

  \vspace{0.2cm}

 Let $S^\ast=\Bbb T_{[0,1)}\setminus \cup_{k=0}^{N-1}{\mathcal M}^k(T)$. One can check that  ${\mathcal S}\subset S^\ast$ and ${\mathcal M}(S^\ast)=S^\ast$.  So we have ${\mathcal S}= S^\ast$ by the maximality of $\mathcal{S}$, (i) follows.





 \vspace{0.2cm}
 \vspace{0.2cm}

\emph{Case 2:  ${\mathcal S}\cap [1-\varepsilon,1)\ne\emptyset$ for all $\varepsilon>0$, and  ${\mathcal S}\cap [0,\delta)=\emptyset$ for some $\delta>0$.}

 \vspace{0.2cm}
Without loss of generality assume $\delta<x_1$ is the largest number such that ${\mathcal S}\cap [0,\delta)=\emptyset$. {Obviously, $\delta<1-\alpha-x_2+x_1$, the sets $[0,\delta)$} and  $T_{\delta}=[\alpha,\alpha+x_2-x_1+\delta)$ are separated gaps of $\mathcal{M}$, and $[0,\delta)$ is   separated from $\textbf{H}$.

By Theorem \ref{thm-s-1}, there is the smallest number $ N\in\Bbb N$ such that
\begin{equation}\label{cap-H-1}
{\mathcal M}^{N-1}(T_{\delta}) \cap \textbf{H}\ne \emptyset.
  \end{equation}
Thus, by the same argument as in the {\emph{Case 1}},   ${\mathcal M}^k(T_{\delta}), 0\le k\le N-1$, are pairwise disjoint gaps with the same length, and they are separated from $[0,\delta)$. Notice that Proposition \ref{prop-4-point} (i) implies that ${\mathcal S}\cap [x_2,x_2+\varepsilon)\ne \emptyset$ for all small $\varepsilon>0$. This, together with \eqref{cap-H-1}, yields that
$$
{\mathcal M}^{N-1}(T_{\delta})=[x_1-\delta,x_2),
$$
and hence ${\mathcal M}([x_1-\delta,x_1))$ is a gap with length $\delta$.

 \vspace{0.2cm}

By Theorem \ref{thm-s-1} again,  there is the smallest number  $K\in \Bbb N$   such that ${\mathcal M}^K([x_1-\delta,x_1))$ intersects $\bigcup_{k=0}^{N-1}{\mathcal M}^{k}(T_{\delta})\cup\bigcup_{j=1}^{K-1}{\mathcal M}^{j}([x_1-\delta,x_1))\cup [0,\delta)$,
  then ${\mathcal M}^j([x_1-\delta,x_1))$, $1\le j\le K$, are pairwise separated gaps with the same length $\delta$, and
${\mathcal M}^K([x_1-\delta,x_1))=[0,\delta)$.

 \vspace{0.2cm}

Let $\mathcal{S}^\ast$ be the complementary set of $\bigcup_{k=0}^{N-1}{\mathcal M}^{k}(T_{\delta})\cup\bigcup_{j=1}^{K-1}{\mathcal M}^{j}([x_1-\delta,x_1))\cup [0,\delta)$. Then ${\mathcal S}\subset S^\ast$ and ${\mathcal M}(S^\ast)=S^\ast$,  (ii) follows.

 \vspace{0.2cm}
 \vspace{0.2cm}

 \emph{Case 3: ${\mathcal S}\cap [0,\varepsilon)\ne\emptyset$ for all $\varepsilon>0$, and  ${\mathcal S}\cap [1-\delta,1)=\emptyset$ for some $\delta>0$.}

\vspace{0.2cm}

 As in \emph{Case 2}, we may assume $\delta<1-x_2$ is the largest number such that ${\mathcal S}\cap [1-\delta,1)=\emptyset$. One can {check that $\delta<\alpha$, $[1-\delta,1)$} and $T_{\delta}=[\alpha-\delta,\alpha+x_2-x_1)$ are separated gaps, and $[1-\delta,1)$ is separated from $\textbf{H}$. Applying the arguments of \emph{Case 2} to $[x_2,x_2+\delta)$ and $[1-\delta,1)$ instead of $[x_1-\delta,x_1)$ and $[0,\delta)$, respectively, we will get the desired result of (iii). This finishes the proof of Theorem \ref{thm-4-struct}.
 \eproof

 \vspace{0.2cm}

It is easy to see that $M$ is the smallest positive integer such that $\mathcal{M}^M(\delta)=\delta$ and $\mathcal{M}^M(0)=0$ in Theorem \ref{thm-4-struct} (ii) and (iii), respectively.
Therefore we have $\langle MY(\alpha)\rangle=0$ by Theorem \ref{thm-mod-1}, which means $Y(\alpha)=\frac pM$ for some integer $0\le p<M$.
 By \eqref{struct-s-2} (\emph{resp.} \eqref{struct-s-3}), the right end points of the gaps  are $\mathcal{M}^k(\delta)$ (\emph{resp.} $\mathcal{M}^k(0)$), $1\le k\le M$. Then $\langle kY(\alpha)\rangle$, $1\le k\le M$, are different to each other. This means $p$ is co-prime to $M$. Hence, we have the following corollary.


\vspace{0.2cm}
\begin{cor}\label{cor-Y}
Let $\mathcal{M}$ and ${\mathcal S}$ be as in Theorem \ref{thm-4-struct}.  Assume that Theorem \ref{thm-4-struct} {\rm (ii)} or {\rm(iii)} is valid, then $Y(\alpha)
=\frac pM$ for some $1\le p<M$ prime to $M$. 
 \end{cor}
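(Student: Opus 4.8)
The plan is to extract the two numerical facts $\langle MY(\alpha)\rangle = 0$ and ``$\gcd(p,M)=1$'' from the geometric description already produced in Theorem \ref{thm-4-struct} (ii)--(iii), using Theorem \ref{thm-mod-1} as the dictionary that translates the action of $\mathcal{M}$ on $\mathcal{S}$ into a rotation by $Y(\alpha)$ on $\Bbb T_{[0,1)}$. First I would pin down which point is the ``seed'' of the cycle of gaps. In Theorem \ref{thm-4-struct} (ii) the gaps are $\mathcal{M}^{k-1}([x_1-\delta,x_1))$ for $1\le k\le M-N$, together with the $N$ gaps coming from $T_\delta$; the key observation is that the right endpoint $\delta$ of the gap $[0,\delta)$ satisfies $\mathcal{M}^{M-N}([x_1-\delta,x_1)) = [0,\delta)$ and, tracing the first $N$ iterates back, $\mathcal{M}^M(\delta) = \delta$, while $M$ is the \emph{smallest} such positive integer because the $M$ gaps $\mathcal{M}^{k-1}(T_\delta)$ ($1\le k\le N$) and $\mathcal{M}^{j-1}([x_1-\delta,x_1))$ ($1\le j\le M-N$) are pairwise disjoint (the ``separated'' clause of Theorem \ref{thm-4-struct}). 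In case (iii) the same role is played by the point $0$ with $\mathcal{M}^M(0)=0$ minimally. So step one is: $M$ is the least period of the point $\delta$ (resp. $0$) under $\mathcal{M}$.

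Next I would push this through $Y$. Since $\delta$ (resp.\ $0$) lies on the boundary of $\mathcal{S}$, it is either in $\mathcal{S}$ or a limit of points of $\mathcal{S}$, so \eqref{eq-thm-mod-01} of Theorem \ref{thm-mod-1} gives $Y(\mathcal{M}^k(\delta)) = \langle Y(\delta)+kY(\alpha)\rangle$ (taking one-sided limits in $t$ as in the proof of Theorem \ref{thm-mod-1} if the endpoint itself is not in $\mathcal{S}$). Because $Y$ is a bijection from $\mathcal{S}$ onto $\Bbb T_{[0,1)}$, the orbit $\{\mathcal{M}^k(\delta)\}_{k=0}^{M-1}$ has exactly $M$ distinct members if and only if $\{\langle Y(\delta)+kY(\alpha)\rangle\}_{k=0}^{M-1}$ are distinct, i.e.\ if and only if $kY(\alpha)\notin\Bbb Z$ for $1\le k\le M-1$ and $MY(\alpha)\in\Bbb Z$. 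From $\mathcal{M}^M(\delta)=\delta$ and minimality of $M$ we therefore read off $MY(\alpha)\equiv 0\pmod 1$ and $kY(\alpha)\not\equiv 0\pmod 1$ for $1\le k<M$. Writing $Y(\alpha)=p/M$ with $0\le p<M$ (legitimate since $MY(\alpha)\in\Bbb Z$ and $0\le Y(\alpha)<1$), the condition $kp/M\notin\Bbb Z$ for all $1\le k<M$ is precisely $\gcd(p,M)=1$; and $p\ne 0$ because $Y(\alpha)=0$ would force $M=1$, contradicting $0<N<M$.

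The only genuine subtlety — and the step I expect to require the most care — is the justification that \eqref{eq-thm-mod-01} may be applied at the endpoints $\delta$ and $0$, which need not themselves lie in $\mathcal{S}$ (indeed $0\notin\mathcal{S}$ in case (iii) since $[1-\delta,1)$ is a gap and, depending on conventions, $0$ may be its ``image'' endpoint). This is handled exactly as in the proof of Theorem \ref{thm-mod-1}: the quantity $\langle Y(\mathcal{M}(t))-Y(t)\rangle$ is constant on $\mathcal{S}$, hence extends by one-sided limits across gap endpoints, so $Y(\mathcal{M}^k(\cdot))=\langle Y(\cdot)+kY(\alpha)\rangle$ holds at these endpoints too; and the count of distinct iterates is unaffected because $\mathcal{M}$ restricted to a neighborhood of the relevant endpoint inside $\cup_k \mathcal{M}^{k-1}(\cdot)$ is a translation. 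Once this is in place the corollary is immediate. I would write it as two short paragraphs: one identifying $M$ as the minimal period of the seed point (citing the disjointness/separation in Theorem \ref{thm-4-struct}), and one converting to the rotation picture via Theorem \ref{thm-mod-1} to conclude $Y(\alpha)=p/M$ with $\gcd(p,M)=1$ and $p\ge 1$.
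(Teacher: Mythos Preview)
Your approach is essentially identical to the paper's: identify $\delta$ (case (ii)) or $0$ (case (iii)) as a point with $\mathcal{M}^M$-period exactly $M$ by the separation of the $M$ gaps in Theorem \ref{thm-4-struct}, push this through Theorem \ref{thm-mod-1} to get $\langle MY(\alpha)\rangle=0$ while $\langle kY(\alpha)\rangle\ne 0$ for $1\le k<M$, and read off $Y(\alpha)=p/M$ with $\gcd(p,M)=1$ and $p\ge 1$.

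One small correction: your worry that the seed points might lie outside $\mathcal{S}$ is unfounded. In case (ii) the gap $[0,\delta)$ is left-closed and right-open, and $\mathcal{S}$ is a finite union of left-closed right-open intervals (Theorem \ref{thm-s-1}), so $\delta$ is the \emph{left} endpoint of an interval of $\mathcal{S}$ and hence $\delta\in\mathcal{S}$. Likewise in case (iii), the hypothesis of that case is precisely $\mathcal{S}\cap[0,\varepsilon)\ne\emptyset$ for all $\varepsilon>0$, so $0\in\mathcal{S}$; the gap $[1-\delta,1)$ sits at the \emph{other} end and does not contain $0$. Thus \eqref{eq-thm-mod-01} applies directly at these points, and no limiting argument is needed.
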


 We could easily conclude from Corollary \ref{cor-Y} that $\mathcal{S}$ must be of \textbf{Type I} if $Y(\alpha)\not\in\Bbb Q$, but $\mathcal{S}$ might be any one of \textbf{Types I, II} and \textbf{III} if $Y(\alpha)\in\Bbb Q$. The following Proposition \ref{prop-delta-tur} will focus on the case that $Y(\alpha)\in\Bbb Q$, and we will establish certain transference  relationship between these three types via disturbances of $\mathcal{M}$.


\vspace{0.2cm}
 Let ${\mathcal M}={\mathcal M}_{\alpha,x_1,x_2}$ be the map defined as \eqref{map.def} with $0<x_1<x_2<1$.  For each $\delta\in (-x_1,1-x_2)$, we call the transformation ${\mathcal M}_\delta:={\mathcal M}_{\alpha,x_1+\delta,x_2+\delta}$  the \emph{$\delta-$disturbance} of ${\mathcal M}$. By analogous to ${\mathcal M}$,  we use ${\mathcal S}_\delta$ to denote  the maximal invariant set for ${\mathcal M}_\delta$ in $\Bbb T_{[0,1)}\setminus [x_1+\delta,x_2+\delta)$, and  $Y_\delta$ to denote its squeezing map if ${\mathcal S}_\delta\ne \emptyset$.

 \vspace{0.2cm}

\begin{prop}\label{prop-delta-tur}
Let $\mathcal{M}$ and ${\mathcal S}$ be as in Theorem \ref{thm-4-struct}.
 Assume that Theorem \ref{thm-4-struct} {{\rm(i)}} is valid for $\mathcal{M}$ with $N\in\Bbb N$ and $Y(\alpha)=\frac pM$ for some co-prime positive integers $p<M$. Then
for each $|\delta'|<\frac {|\mathcal{S}|}M$,
\begin{enumerate}
\item [{\rm (i)}]  
${\mathcal M}_{\delta'}$  is well defined, and  Theorem \ref{thm-4-struct} {{\rm(ii)}} {(resp. {\rm(iii)})} is valid for  ${\mathcal M}_{\delta'}$  with $N$, $M$ and $\delta=|\delta'|$ when $\delta'>0$ {(resp. $\delta'<0$)}.
\item [{\rm (ii)}] 
${\mathcal S}_{\delta'}=\cup_{k=0}^{M-1} J_k$,
where 
\begin{equation}\label{delta-tur-1-0991}
J_k=\left\{\begin{array} {lll}
\ {\mathcal M}^k(0)+[\delta',\frac {|\mathcal{S}|}M)  &
{\rm if} \ \delta'\in [0,\frac {|\mathcal{S}|}M) \\
\
{\mathcal M}^k(0)+[0,\frac {|\mathcal{S}|}M+\delta') &
{\rm if} \ \delta'\in (-\frac {|\mathcal{S}|}M,0)
\end{array}\right.,\  k=0,1,\ldots,M-1.
\end{equation}
\item [{\rm (iii)}] $Y_{\delta'}(\alpha)=Y(\alpha)$, and ${\mathcal M}_{\delta'}(t)={\mathcal M}(t)$ for all $t\in {\mathcal S}_{\delta'}$.
\end{enumerate}

\vspace{0.2cm}

{Conversely,} if Theorem \ref{thm-4-struct} {{\rm (ii)}} {(resp. {\rm (iii)})} is valid for ${\mathcal M}$, 
then
${\mathcal M}_{-\delta}$ {(resp. ${\mathcal M}_\delta$)} is well defined, and
 Theorem \ref{thm-4-struct} {{\rm (i)}} is valid for ${\mathcal M}_{-\delta}$
 {(resp. ${\mathcal M}_{\delta}$)} with the same $N$.
\end{prop}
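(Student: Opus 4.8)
The plan is to read this converse as ``undoing'' the forward implication just proved. Consider first the Type II case, and set $\mathcal{N}:=\mathcal{M}_{-\delta}=\mathcal{M}_{\alpha,x_1-\delta,x_2-\delta}$, so that $\mathcal{M}=\mathcal{N}_\delta$; the goal is to show that $\mathcal{N}$ is of \textbf{Type I} with the same $N$. Well-definedness of $\mathcal{N}$ is immediate: since $0<\delta<\min(x_1,1-\alpha-x_2+x_1)\le x_1$, we get $0<x_1-\delta<x_2-\delta<x_2\le 1$, so $\mathcal{N}$ has the form \eqref{map.def}. Write $\textbf{H}':=[x_1-\delta,x_2-\delta)$ for the middle interval of $\mathcal{N}$, $T:=[\alpha,\alpha+x_2-x_1)$, and, following the Type II structure \eqref{struct-s-2}, $A_k:=\mathcal{M}^{k-1}([\alpha,\alpha+x_2-x_1+\delta))$ for $1\le k\le N$; thus $T$ is $A_1$ with its right $\delta$-subinterval removed and $A_N=[x_1-\delta,x_2)$.

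The key point is that $\mathcal{M}$ and $\mathcal{N}$ use the same two translation constants $\alpha+x_2-x_1$ and $\alpha$ and coincide off the perturbation strip $D:=[x_1-\delta,x_1)\cup[x_2-\delta,x_2)$, and moreover $D\subseteq[x_1-\delta,x_2)=A_N$. From the proof of Theorem \ref{thm-4-struct}(ii), Case 2, the sets $A_1,\dots,A_N$ are pairwise disjoint gaps with $A_1,\dots,A_{N-1}$ each contained in $U$ or in $V$; hence $\mathcal{M}^{N-1}$ acts on $A_1=[\alpha,\alpha+x_2-x_1+\delta)$ as a single rotation, carrying the left subinterval $T$ onto the left subinterval $[x_1-\delta,x_2-\delta)=\textbf{H}'$ of $A_N$, while $\mathcal{M}^{k-1}(T)\subseteq A_k$ for all $k$. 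Since $D\subseteq A_N$ is disjoint from $A_1,\dots,A_{N-1}$, the iterates $\mathcal{M}^{k}(T)$ with $0\le k\le N-2$ avoid $D$, so $\mathcal{M}$ and $\mathcal{N}$ agree on each of them; an induction on $k$ then gives $\mathcal{N}^{k}(T)=\mathcal{M}^{k}(T)$ for $0\le k\le N-1$, and in particular $\mathcal{N}^{N-1}(T)=\textbf{H}'$, together with pairwise disjointness of the sets $\mathcal{N}^{k-1}(T)$, $1\le k\le N$. (For $N=1$ this reduces to $T=\textbf{H}'$.) This is already the first identity of Theorem \ref{thm-4-struct}(i) for $\mathcal{N}$ with this $N$.

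There remains the identity $\Bbb T_{[0,1)}\setminus\mathcal{S}_{-\delta}=\bigcup_{k=1}^{N}\mathcal{N}^{k-1}(T)$. For $\supseteq$: if $t\in\mathcal{N}^{k-1}(T)$ then $\mathcal{N}^{N-k}(t)\in\mathcal{N}^{N-1}(T)=\textbf{H}'$, so the $\mathcal{N}$-orbit of $t$ enters $\textbf{H}'$ and $t\notin\mathcal{S}_{-\delta}$. For $\subseteq$: put $E:=\Bbb T_{[0,1)}\setminus\bigcup_{k=1}^{N}\mathcal{N}^{k-1}(T)$; since $\textbf{H}'=\mathcal{N}^{N-1}(T)$ and $T=\mathcal{N}^{0}(T)$ are both removed, $E\subseteq\Bbb T_{[0,1)}\setminus(\textbf{H}'\cup T)$. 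Because $\mathcal{N}$ is a bijection from $\Bbb T_{[0,1)}\setminus\textbf{H}'$ onto $\Bbb T_{[0,1)}\setminus T$ (the relevant ``$T$'' for $\mathcal{M}_{\alpha,x_1-\delta,x_2-\delta}$ is again $[\alpha,\alpha+x_2-x_1)$), just as used for $\mathcal{M}$ in the proof of Theorem \ref{thm-s-1}, one deduces from the disjointness of the finite $\mathcal{N}$-orbit of $T$ that $\mathcal{N}(E)\subseteq E$; as $E$ is a finite union of left-closed right-open intervals on which $\mathcal{N}$ is measure-preserving, $\mathcal{N}(E)=E$. Maximality of $\mathcal{S}_{-\delta}$ then gives $E\subseteq\mathcal{S}_{-\delta}$, hence equality, which is Theorem \ref{thm-4-struct}(i) for $\mathcal{M}_{-\delta}$; the separation claim follows as there since $\mathcal{S}_{-\delta}\ne\emptyset$ (indeed $|\mathcal{S}_{-\delta}|=1-N(x_2-x_1)\ge|\mathcal{S}|>0$).

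The Type III statement is the mirror image: take $\mathcal{N}:=\mathcal{M}_\delta$ (well-defined because $0<\delta<\min(1-x_2,\alpha)$ gives $0<x_1+\delta<x_2+\delta<1$), delete instead the \emph{left} $\delta$-subinterval of the Type III extended gap $[\alpha-\delta,\alpha+x_2-x_1)$, and run the same argument with the roles of the ``$U$-branch'' and ``$V$-branch'' interchanged. I expect the crux to be the middle paragraph: verifying that the finite forward orbit $T,\mathcal{M}(T),\dots,\mathcal{M}^{N-2}(T)$ never meets the perturbation strip $D$ — which is precisely where the structural input of Theorem \ref{thm-4-struct}(ii) enters (pairwise disjointness of the gaps $A_k$ together with $D\subseteq A_N$) — and tracking the rotation $\mathcal{M}^{N-1}|_{A_1}$ carefully enough to identify the image of $T$ as the middle interval $\textbf{H}'$ rather than some other $\delta$-sized sub-piece; once that is in place, the invariance step for $E$ is routine, copying the template of Theorems \ref{thm-s-1} and \ref{thm-4-struct}.
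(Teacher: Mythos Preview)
Your argument is correct and complete for the converse direction. It differs from the paper's proof in how you establish that $\mathcal{S}_{-\delta}$ is of \textbf{Type I}. You work directly with the complement: tracking the finite $\mathcal{N}$-orbit of $T=[\alpha,\alpha+x_2-x_1)$, identifying $\mathcal{N}^{N-1}(T)=\textbf{H}'$, and then showing $E=\Bbb T_{[0,1)}\setminus\bigcup_{k=1}^N\mathcal{N}^{k-1}(T)$ is $\mathcal{N}$-invariant so that $E=\mathcal{S}_{-\delta}$ by maximality. The paper instead thickens the old invariant set: it first observes that $\mathcal{M}$ and $\mathcal{M}_{-\delta}$ agree off $[x_1-\delta,x_2)$ (your $A_N$), so $\mathcal{M}_{-\delta}(\mathcal{S})=\mathcal{S}$; then it sets $\mathcal{S}'_{-\delta}:=\mathcal{S}+[-\delta,0]$, checks this is $\mathcal{M}_{-\delta}$-invariant and avoids $\textbf{H}'$, and concludes $\mathcal{S}'_{-\delta}\subset\mathcal{S}_{-\delta}$. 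Since $[0,\delta)$ is a gap of $\mathcal{M}$, this forces $\mathcal{S}_{-\delta}$ to contain a two-sided neighborhood of $0$ in $\Bbb T$, so by the trichotomy in the proof of Theorem~\ref{thm-4-struct} it must be \textbf{Type I}; the identification of the parameter as the same $N$ then uses the same orbit comparison you use. Your route is more self-contained (it does not reinvoke the case analysis of Theorem~\ref{thm-4-struct}), while the paper's route exhibits the useful inclusion $\mathcal{S}+[-\delta,0]\subset\mathcal{S}_{-\delta}$ explicitly. One small point you leave implicit: to say ``Theorem~\ref{thm-4-struct}(i) is valid for $\mathcal{M}_{-\delta}$'' one should also note $\mathcal{S}_{-\delta}\cap U'\ne\emptyset$ and $\mathcal{S}_{-\delta}\cap V'\ne\emptyset$; this follows at once from $\mathcal{S}\subset E=\mathcal{S}_{-\delta}$ together with $\mathcal{S}\cap U\subset[\delta,x_1-\delta)\subset U'$ and $\mathcal{S}\cap V\subset V'$.
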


\proof

 Suppose Theorem \ref{thm-4-struct} (i) holds. It follows from \eqref{struct-s-1} that all the gaps
${\mathcal M}^{\ell-1}([\alpha,\alpha+x_2-x_1)), 1\le \ell\le N,$ are separated from $0$. Therefore, by Theorem \ref{thm-mod-1},
$$
 \big\langle \frac {\ell p}M\big\rangle =\langle Y({\mathcal M}^{\ell}(0))\rangle=\langle Y({\mathcal M}^{\ell-1}(\alpha+x_2-x_1))\rangle\ne 0
 $$
for all $1\le \ell\le N$, and $\langle Y({\mathcal M}^M(0))\rangle=\langle MY(\alpha)\rangle=0$. This leads to $ N<M$ and ${\mathcal M}^M(0)=0$.

\vspace{0.2cm}
 Since $Y$ is a bijection from ${\mathcal S}$ to $\Bbb T_{[0,1)}$ by Theorem \ref{thm-mod-1}, it is reasonable to define its inverse map $Y^{-1}$ from $\Bbb T_{[0,1)}$ to ${\mathcal S}$.
 Observe that $$Y(t)\in\{Y({\mathcal M}^{\ell}(0))=\langle\frac {\ell p}M\rangle: \ell=1,\ldots,N\}\subset\{\frac kM: k=0,1,\ldots,M-1\}$$ for all $t\in\Bbb T\setminus\mathcal{S}$.
Then $$Y^{-1}\big(Y({\mathcal M}^{k}(0))+[0,\frac1M)\big)=[{\mathcal M}^k(0),{\mathcal M}^{k}(0)+\frac {|{\mathcal S}|}M), \quad k=0,1,\ldots,M-1,$$
and therefore
\begin{equation}\label{delta-tur-add91}
{\mathcal S}=\cup_{k=0}^{M-1} [{\mathcal M}^k(0),{\mathcal M}^{k}(0)+\frac {|{\mathcal S}|}M).
\end{equation}

 (i).  The definition of ${\mathcal M}_{\delta'}$ is well defined for each $|\delta'|\in (0,\frac {|\mathcal{S}|}M)$, since it is easily seen that  $\frac {|\mathcal{S}|}M<x_1<x_2<1-\frac {|\mathcal{S}|}M$ by \eqref{delta-tur-add91} and \eqref{S-u-v}.
Set
\begin{equation}\label{S-star}
{\mathcal S}_{\delta'}^\ast=\cup_{k=0}^{M-1} J_k 
 \end{equation}
with $J_k$ in \eqref{delta-tur-1-0991}. Then ${\mathcal S}_{\delta'}^\ast\cap (\textbf{H}\cup\textbf{H}_{\delta'})=\emptyset$,
$$
{\mathcal S}_{\delta'}^\ast \cap U_{\delta'}={\mathcal S}_{\delta'}^\ast\cap U=\cup_{\mathcal{M}^k(0)\in U} J_k 
$$
and
$$
{\mathcal S}_{\delta'}^\ast \cap V_{\delta'}={\mathcal S}_{\delta'}^\ast\cap V=\cup_{\mathcal{M}^k(0)\in V} J_k, 
$$
where $\textbf{H}_{\delta'}=[x_1+\delta', x_2+\delta')$, $U_{\delta'}=[0,x_1+{\delta'})$ and $V_{\delta'}=[x_2+{\delta'},1)$.
So
\begin{equation}\label{M-delta}
{\mathcal M}_{\delta'}(t)={\mathcal M}(t)\ \ \mbox{for all }\ t\in {\mathcal S}_{\delta'}^\ast,
\end{equation}
which together with \eqref{S-star} implies ${\mathcal M}_{\delta'}({\mathcal S}_{\delta'}^\ast)={\mathcal S}_{\delta'}^\ast$,
 and thus ${\mathcal S}_{\delta'}^\ast \subset {\mathcal S}_{\delta'}$. Therefore, 
\begin{equation}\label{SUV-delta'}
{\mathcal S}_{\delta'} \cap U_{\delta'}\ne \emptyset\  \mbox{ and }\ {\mathcal S}_{\delta'} \cap V_{\delta'}\ne \emptyset.
\end{equation}

 \vspace{0.2cm}

We want to prove (i) holds, it is best to split the argument into the following two cases.

 \vspace{0.2cm}

\emph{Case 1.}  $\delta'>0$. Let $k_0\in\{0,1,\cdots, M-1\}$ be the number such that $\langle\frac {k_0 p}M\rangle=\frac {M-1}M$. Then  $J_{k_0}=[1-\frac {|\mathcal{S}|}M+\delta',1)$, and thus   \eqref{SUV-delta'} means that ${\mathcal S}_{\delta'}$ is either \textbf{Type I} or \textbf{Type II}. We may therefore let $\delta''\in [0,\delta']$ be the largest number such that $[0,\delta'')\cap {\mathcal S}_{\delta'} =\emptyset$, and let $N_1>0$ be the smallest integer
such that
\begin{equation}\label{SUV-delta'+delta''}
{\mathcal M}_{\delta'}^{N_1}(\delta'')=x_2+\delta'. 
\end{equation}

 Since $x_2+\delta'=\mathcal{M}^N(0)+\delta'\in {\mathcal S}_{\delta'}^\ast$ and ${\mathcal S}_{\delta'}^\ast$ is invariant under ${\mathcal M}_{\delta'}$,   it follows that $\delta''\in {\mathcal S}_{\delta'}^\ast$, and hence $\delta''=\delta'$ by the fact that $\delta'$ is the smallest number in ${\mathcal S}_{\delta'}^\ast$.
This means $ {\mathcal S}_{\delta'}$ is \textbf{Type II}. Moreover, combining
\eqref{M-delta}, \eqref{SUV-delta'+delta''} and the fact that $N\in\{1,\cdots,M-1\}$ is the only number satisfying ${\mathcal M}^{N}(\delta')=x_2+\delta'$, we get that $N_1=N$. 
 Thus, we may assume Theorem \ref{thm-4-struct} (ii) is valid for ${\mathcal M}_{\delta'}$ with parameters $\delta'$, $N$ and $M_1$ for some integer $M_1>N$.

  \vspace{0.2cm}


According to Corollary \ref{cor-Y}, we can write $Y_{\delta'}(\alpha)=\frac {p_1}{M_1}$ for some positive integer $p_1<M_1$ prime to $M_1$. Then
\begin{equation}\label{struyt-add1}
0=Y_{\delta'}(\delta')=Y_{\delta'}\big(\mathcal{M}_{\delta'}^{M}(\delta')\big)=\langle M Y_{\delta'}(\alpha)+ Y_{\delta'}(\delta')\rangle=\big\langle \frac {M p_1}{M_1}\big\rangle.
\end{equation}
Note that $Y_{\delta'}(t)=0$ if and only if $t\in[0,\delta']$. Then for each $1\le k< M$, $\big\langle \frac {k p_1}{M_1}\big\rangle =Y_{\delta'}\big(\mathcal{M}_{\delta'}^{k}(\delta')\big) \ne 0 $ by the observation  that $\mathcal{M}_{\delta'}^{k}(\delta')\in J_k$. This together with \eqref{struyt-add1} implies $M_1=M$. Consequently, in this case, (i) follows. 

\vspace{0.2cm}
\vspace{0.2cm}

\emph{Case 2.} $\delta'<0$. By \eqref{S-star}, $0\in{\mathcal S}_{\delta'}^\ast$, which 
 means $ {\mathcal S}_{\delta'}$ is either \textbf{Type I} or \textbf{Type III}.

 \vspace{0.2cm}

 If $ {\mathcal S}_{\delta'}$ is \textbf{Type I}, by  Theorem \ref{thm-4-struct} (i) and \eqref{M-delta}, there exists $N_2\in\Bbb N$ such that $\mathcal{M}^{N_2}(0)=\mathcal{M}_{\delta'}^{N_2}(0)=x_2+\delta'$.
 This is a contradiction due to the fact that $\mathcal{M}^{N}(0)=x_2$ and 
  \begin{equation}\label{M-delta-2022-1}
 |\mathcal{M}^{k}(0)-\mathcal{M}^{N}(0)|\ge \frac {|{\mathcal S}|}M>|\delta'|\ \mbox{for all }\ k\not\equiv N\ (\mbox{mod} M)
 \end{equation}
 by \eqref{delta-tur-add91}. Therefore, $ {\mathcal S}_{\delta'}$ is \textbf{Type III}.


\vspace{0.2cm}

Let $\sigma\in (0,-\delta']$ be the largest number such that $[1-\sigma,1)\cap {\mathcal S}_{\delta'} =\emptyset$, and $0<N_3<M_3$ be the integers such that Theorem \ref{thm-4-struct} (iii) is valid for ${\mathcal M}_{\delta'}$. Then
\begin{equation}\label{M-delta+type111+1}
\mathcal{M}_{\delta'}^{N_3}(0)=\mathcal{M}_{\delta'}^{N_3-1}(\alpha+x_2-x_1)=x_2+\delta'+\sigma
\end{equation}
 and
 \begin{equation}\label{M-delta+type111+2}
 \mathcal{M}_{\delta'}^{M_3-N_3}(x_2+\delta'+\sigma)=0,
 \end{equation}
and $N_3$ and $M_3$ are the smallest positive integers such that \eqref{M-delta+type111+1} and  \eqref{M-delta+type111+2} are valid respectively.
Combining \eqref{M-delta} and \eqref{M-delta+type111+1}, $|\mathcal{M}^{N}(0)-\mathcal{M}^{N_3}(0)|=|\delta'+\sigma|< \frac {|{\mathcal S}|}M$. This together with \eqref{M-delta-2022-1} leads to $\delta'+\sigma=0$ and $N_3=N$. Thus, by \eqref{M-delta} and \eqref{M-delta+type111+2}, $M_3=M$.   We get (i).

\vspace{0.2cm}

(ii). By Theorem \ref{thm-4-struct} and \eqref{S-star}, we have $|{\mathcal S}_{\delta'}^\ast|=|{\mathcal S}_{\delta'}|$.
Recall that ${\mathcal S}_{\delta'}^\ast\subset {\mathcal S}_{\delta'}$ and both of them {are unions of
finitely many} left-closed and right-open intervals. Then
${\mathcal S}_{\delta'}^\ast={\mathcal S}_{\delta'}$.
This proves (ii).
\vspace{0.2cm}

(iii). Observe that ${\mathcal S}_{\delta'}\cap [\alpha,\alpha+x_2-x_1)={\mathcal S}\cap [\alpha,\alpha+x_2-x_1)=\emptyset$ and
$$
\{k: [{\mathcal M}^k(0),{\mathcal M}^{k}(0)+\frac {|{\mathcal S}|}M)\subset [0,\alpha), 0\le k\le M-1\}=\{k: J_k \subset [0,\alpha), 0\le k\le M-1\}
$$
by (ii) and \eqref{delta-tur-add91}. Then $Y_{\delta'}(\alpha)=Y(\alpha)$. This together with \eqref {M-delta} 
implies (iii).

\vspace{0.2cm}

\vspace{0.2cm}

 Now, we prove the converse part. Suppose Theorem \ref{thm-4-struct} (ii) holds. Notice that $0<\delta< x_1$. Then
${\mathcal M}_{-\delta}$ is well defined and
\begin{equation}\label{M-delta-9}
 \mathcal{M}(t)={\mathcal M}_{-\delta}(t)\  \mbox{ for all }\ \ t\in\Bbb T_{[0,1)}\setminus [x_1-\delta,x_2).
\end{equation}
Therefore ${\mathcal M}_{-\delta}({\mathcal S})={\mathcal S}$.
Set
\begin{equation}\label{M-delta-10}
{\mathcal S}_{-\delta}'={\mathcal S}+[-\delta, 0].
\end{equation}
Observe that $\mathcal{S}\cap[x_1-\delta,x_2)=\mathcal{S}\cap[0,\delta)=\emptyset$. Then ${\mathcal S}_{-\delta}'\cap [x_1-\delta,x_2-\delta)=\emptyset$ and ${\mathcal M}_{-\delta}(t+\varepsilon)={\mathcal M}_{-\delta}(t)+\varepsilon$ for each $t\in {\mathcal S}$ and $\varepsilon\in [-\delta, 0]$. Thus
\begin{equation}\nonumber
{\mathcal M}_{-\delta}({\mathcal S}_{-\delta}')={\mathcal M}_{-\delta}({\mathcal S})+[-\delta, 0]
={\mathcal S}+[-\delta, 0]={\mathcal S}_{-\delta}',
\end{equation}
and therefore, we have
\begin{equation}\label{M-delta-12}
{\mathcal S}_{-\delta}'\subset {\mathcal S}_{-\delta}.
\end{equation}

\vspace{0.2cm}

Because $[0,\delta)$ is a gap of ${\mathcal M}$, \eqref{M-delta-10} and \eqref{M-delta-12} yield that  $[-\varepsilon,\delta]\subset{\mathcal S}_{-\delta}$ for some $\varepsilon>0$. Thus ${\mathcal S}_{-\delta}$ is \textbf{Type I}. Let $N_4\in\Bbb N$ be the smallest integer such that Theorem \ref{thm-4-struct} (i) is valid for  ${\mathcal M}_{-\delta}$, i.e., ${\mathcal M}_{-\delta}^{N_4}([\alpha,\alpha+x_2-x_1))=[x_1-\delta,x_2-\delta)$. Then $N_4=N$ as ${\mathcal M}_{-\delta}^{k}([\alpha,\alpha+x_2-x_1))={\mathcal M}^{k}([\alpha,\alpha+x_2-x_1))$ for all $0\le k\le N-1$ by \eqref{M-delta-9} and 
\eqref{struct-s-2}. 

\vspace{0.2cm}

 If Theorem \ref{thm-4-struct} (iii) holds, the proof is similar to that Theorem \ref{thm-4-struct} (ii) holds. We omit the details.
\eproof

\vspace{0.2cm}

\section{Piecewise linear {transformation} II: symmetric maximal invariant sets} \label{section-4+1}


This section is to investigate  
the symmetric maximal invariant set $\mathcal{E}$ of the transformation $\mathcal{M}$ defined in \eqref{e-set.def}. Based on the results 
in the previous section, we provide a characterization on the symmetric maximal invariant set via the squeezing map $Y$ {(see \eqref{biject-y.def})} in Subsection \ref{subsection-4-4}, see Theorem \ref{thm-exist-e-1}, {Propositions}  \ref{thm-exist-e-3-1} and \ref{prop-tur-E} for different situations. Especially,  we present a strict condition on $\mathcal{E}\ne\emptyset$ when $\mathcal{S}$ satisfies $\mathcal{S}\cap U\ne\emptyset$ and $\mathcal{S}\cap V\ne\emptyset$, see Lemma \ref{lem-suv}. And in Subsection \ref{subse-5-2}, we prove Proposition \ref{thm-exist-e++} and Corollary \ref{thm-exist-e-2}.

\vspace{0.2cm}

Remembering that $\widetilde{\mathcal{M}}=\widetilde{\mathcal{M}}_{\alpha, x_1,x_2}: \Bbb T_{(0,1]}\rightarrow \Bbb T_{(0,1]}$ defined in \eqref{map-conv.def} is  the  coherent map of $ {\mathcal{M}}= {\mathcal{M}}_{\alpha, x_1,x_2}$, and $\widetilde{\mathcal{S}}$ is the maximal invariant set of $\widetilde{\mathcal{M}}$ in $\Bbb T_{(0,1]}\setminus \widetilde{\textbf{H}}$, i.e. the maximal set satisfying
\begin{equation}\nonumber
 \widetilde{{\mathcal S}}\subset \Bbb T_{(0,1]}\setminus \widetilde{\textbf{H}}\  \mbox{ and }\  \widetilde{\mathcal{M}} (\widetilde{{\mathcal S}})= \widetilde{{\mathcal S}};
\end{equation}
where $\widetilde{U}=(0,x_1]$,  $\widetilde{V}=(x_2,1]$ and $\widetilde{\textbf{H}}=(x_1, x_2]$. The following Lemma \ref{lem-endpoint-change} tells us that the transformation ${\mathcal{M}}$ is \emph{equivalent} to $\widetilde{\mathcal{M}}$ via the map $\pi$ defined as in \eqref{eq.pi}. This yields that all properties of ${\mathcal{M}}$ (\emph{resp.}
 ${\mathcal{S}}$) in Subsections \ref{subsection-4-2} and \ref{subsection-4-1} can be transferred to that of $\widetilde{\mathcal{M}}$ (\emph{resp.} $\widetilde{\mathcal{S}}$), with a minimum of effort by changing all the left-closed and right-open intervals into the left-open and right-closed ones.
\vspace{0.2cm}

\begin{lem}\label{lem-endpoint-change}
 Let ${\mathcal{M}}$, $\widetilde{\mathcal{M}}$ and $\pi$ be the maps defined in \eqref{map.def}, \eqref{map-conv.def} and \eqref{eq.pi}, respectively.  Then for every $E\subset \Bbb T_{[0,1)}$ being the finite union of some left-closed and right-open intervals,
\begin{equation}\label{eq-end-exch}
\widetilde{\mathcal{M}}(\pi(E))=\pi({\mathcal{M}}(E)).
\end{equation}
Especially, we have
\begin{equation}\nonumber
\widetilde{\mathcal{S}}=\pi(\mathcal{S}).
\end{equation}
\end{lem}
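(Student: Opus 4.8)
The plan is to reduce \eqref{eq-end-exch} to a single interval, check it branchwise by hand, and then read off $\widetilde{\mathcal{S}}=\pi(\mathcal{S})$ from Theorem \ref{thm-s-1}. First I would record that $\pi$, $\mathcal{M}$ and $\widetilde{\mathcal{M}}$ all respect finite unions ($\pi(\cup_iE_i)=\cup_i\pi(E_i)$ by definition, and $\mathcal{M},\widetilde{\mathcal{M}}$ act branchwise by a translation or the identity, so they carry a finite union of left-closed right-open intervals to another such union), and that $\pi$ commutes with complementation: $\pi(\Bbb T_{[0,1)}\setminus E)=\Bbb T_{(0,1]}\setminus\pi(E)$ for $E$ a finite union of left-closed right-open intervals, which is a direct endpoint check. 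Hence $\pi$ commutes with all Boolean operations, and in particular intersecting with the partition pieces $U=[0,x_1)$, $\textbf{H}=[x_1,x_2)$, $V=[x_2,1)$ commutes with $\pi$ (their $\pi$-images being $\widetilde{U},\widetilde{\textbf{H}},\widetilde{V}$). So it suffices to prove \eqref{eq-end-exch} for $E=[c,d)$ a single interval lying in one of $U,\textbf{H},V$; such an interval has length $<1$.

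The branch $E\subset\textbf{H}$ is immediate, both $\mathcal{M}$ and $\widetilde{\mathcal{M}}$ being the identity there. For $E\subset U$, set $\beta=\alpha+x_2-x_1$; then $\mathcal{M}(t)=\langle t+\beta\rangle$ on $E$ and $\widetilde{\mathcal{M}}(t)=\langle t+\beta\rangle^\ast$ on $\pi(E)\subset\widetilde{U}$, i.e. the two maps are the same circle rotation written in the fundamental domains $[0,1)$ and $(0,1]$ respectively, so the whole content is the elementary identity
\begin{equation}\nonumber
\pi\big(\langle [c,d)+\beta\rangle\big)=\big\langle\, (c,d]+\beta\,\big\rangle^\ast .
\end{equation}
I would verify this by translating so that $\langle c+\beta\rangle=c+\beta-m\in[0,1)$ for the appropriate integer $m$ and splitting into the case where $[c,d)+\beta-m\subset[0,1)$ (no wrap) and the case where it straddles the integer point (one wrap), matching endpoints in each; the only delicate points are that a right endpoint landing on an integer contributes the point $1$ under $\langle\cdot\rangle^\ast$ but the point $0$ under $\langle\cdot\rangle$, which is exactly what $\pi$ reconciles, and that $d-c<1$ keeps the two wrapped pieces disjoint. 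The branch $E\subset V$ is identical with $\alpha$ replacing $\beta$. This establishes \eqref{eq-end-exch}, and iterating gives $\widetilde{\mathcal{M}}^{\,n}(\pi(E))=\pi(\mathcal{M}^{n}(E))$ for all $n\ge0$ and all finite unions $E$ of left-closed right-open intervals, since each $\mathcal{M}^{k}(E)$ is again such a union.

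For $\widetilde{\mathcal{S}}=\pi(\mathcal{S})$: by Theorem \ref{thm-s-1}, $\mathcal{S}$ is a finite union of left-closed right-open intervals, so $\pi(\mathcal{S})$ is defined; by the iterated identity $\widetilde{\mathcal{M}}(\pi(\mathcal{S}))=\pi(\mathcal{M}(\mathcal{S}))=\pi(\mathcal{S})$, and $\pi(\mathcal{S})\subset\pi(\Bbb T_{[0,1)}\setminus\textbf{H})=\Bbb T_{(0,1]}\setminus\widetilde{\textbf{H}}$, so maximality of $\widetilde{\mathcal{S}}$ yields $\pi(\mathcal{S})\subset\widetilde{\mathcal{S}}$. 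For the reverse inclusion I would invoke the consequence of the proof of Theorem \ref{thm-s-1} that $\mathcal{M}^{N}(t)\in\textbf{H}$ for every $t\in\Bbb T_{[0,1)}\setminus\mathcal{S}$, i.e. $\mathcal{M}^{N}(\Bbb T_{[0,1)}\setminus\mathcal{S})\subset\textbf{H}$; applying $\pi$ and the iterated identity gives $\widetilde{\mathcal{M}}^{\,N}(\Bbb T_{(0,1]}\setminus\pi(\mathcal{S}))\subset\widetilde{\textbf{H}}$. Since $\widetilde{\mathcal{M}}^{\,N}(\widetilde{\mathcal{S}})=\widetilde{\mathcal{S}}\subset\Bbb T_{(0,1]}\setminus\widetilde{\textbf{H}}$, the set $\widetilde{\mathcal{S}}$ cannot meet $\Bbb T_{(0,1]}\setminus\pi(\mathcal{S})$, hence $\widetilde{\mathcal{S}}\subset\pi(\mathcal{S})$ and equality follows.

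The main obstacle is essentially bookkeeping: getting the half-open versus half-closed endpoint conventions exactly right in the wrap-around case of the single-interval identity. The only genuinely structural care needed is that the reverse inclusion $\widetilde{\mathcal{S}}\subset\pi(\mathcal{S})$ must be argued through the explicit description of $\mathcal{S}$ in Theorem \ref{thm-s-1} rather than through a maximality argument for $\widetilde{\mathcal{M}}$ directly, since $\widetilde{\mathcal{M}}$ uses a different fundamental domain and the structural results of Section \ref{section-4} are not yet available for it.
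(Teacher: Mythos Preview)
Your proposal is correct and follows essentially the same approach as the paper: reduce \eqref{eq-end-exch} to a single interval contained in one of $U$, $\textbf{H}$, $V$ and verify it directly, then deduce $\widetilde{\mathcal{S}}=\pi(\mathcal{S})$ by combining the invariance $\widetilde{\mathcal{M}}(\pi(\mathcal{S}))=\pi(\mathcal{S})$ with the absorption property $\mathcal{M}^N(\Bbb T_{[0,1)}\setminus\mathcal{S})\subset\textbf{H}$ from Theorem~\ref{thm-s-1}. The paper phrases the reverse inclusion as a contradiction via a single interval $I$ disjoint from $\mathcal{S}$, whereas you apply $\pi$ to the whole complement at once using $\pi(\Bbb T_{[0,1)}\setminus\mathcal{S})=\Bbb T_{(0,1]}\setminus\pi(\mathcal{S})$; these are equivalent formulations of the same argument.
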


\proof
%
%
%
It is easy to see that the commutative diagram \eqref{eq-end-exch} is valid when $E$ is a left-closed and right-open interval only contained in $U$ (\emph{resp.} $V$ or $\textbf{H}$), and then \eqref{eq-end-exch} also holds for general $E$ since it can be decomposed into a disjoint union of left-closed and right-open intervals coming from $U$, $V$ and $\textbf{H}$.

 By Theorem \ref{thm-s-1} and \eqref{eq-end-exch}, we can  easily check that $\widetilde{\mathcal{M}}(\pi(\mathcal{S}))
=\pi({\mathcal{M}}(S))=\pi(\mathcal{S})$, so $\pi(\mathcal{S})\subset \widetilde{\mathcal{S}}$. Moreover, if $\pi(\mathcal{S})\ne \widetilde{\mathcal{S}}$, there is an interval $I=[t_1,t_2)\subset \Bbb T_{[0,1)}$ such that $I\cap \mathcal{S}= \emptyset$ and $\pi(I)\cap \widetilde{\mathcal{S}}\ne \emptyset$. Nevertheless, by Theorem \ref{thm-s-1}, $\mathcal{M}^N(I)\subset \textbf{H}$ for some $N\in\Bbb N$, now \eqref{eq-end-exch} yields that     $\widetilde{\mathcal{M}}^N(\pi(I))=\pi({\mathcal{M}}^N(I))\subset \widetilde{\textbf{H}}$, which implies $\pi(I)\cap \widetilde{\mathcal{S}}= \emptyset$, a contradiction. This forces $\widetilde{\mathcal{S}}=\pi(\mathcal{S}).$
 \eproof

\vspace{0.2cm}

By virtue of Lemma \ref{lem-endpoint-change}, hereafter without confusion, we may denote $\widetilde{E}=\pi(E)$ for $E$ being the union of finitely many left-closed and right-open intervals. 


\subsection{Structure for symmetric maximal invariant set $\mathcal{E}$}\label{subsection-4-4} $\empty$


\begin{thm}\label{thm-exist-e-1}
Let $\mathcal{M}$ be the map defined in \eqref{map.def} with the maximal invariant set ${\mathcal S}\ne\emptyset$.  If $Y(\alpha) \not\in\Bbb Q$, then  $\mathcal{E}=\emptyset$. Furthermore, there exists $K\in \Bbb N$ such that for each $t\in\mathcal{S}$, $\widetilde{\mathcal{M}}^m(1-\mathcal{M}^n(t))\in \widetilde{\textbf{H}}$ for some $m,n\le K$.
\end{thm}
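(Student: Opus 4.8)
The plan is to transport the whole statement onto an irrational circle rotation via the squeezing map $Y$, after first reducing to the explicitly understood ``Type~I'' structure of $\mathcal{S}$.

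First I would note that, since $\mathcal{S}\neq\emptyset$ and $\theta:=Y(\alpha)\notin\Bbb Q$, the set $\mathcal{S}$ must satisfy \eqref{S-u-v} (otherwise $\mathcal{S}\subset U$ or $\mathcal{S}\subset V$ and Proposition~\ref{prop-s-rational} would make $Y(\alpha)$ rational), and then Corollary~\ref{cor-Y} excludes Types~II and III, so Theorem~\ref{thm-4-struct}(i) applies: $\Bbb T_{[0,1)}\setminus\mathcal{S}=\bigsqcup_{k=1}^{N}G_k$ with $G_k=\mathcal{M}^{k-1}(T)$, $T=[\alpha,\alpha+x_2-x_1)$, $G_N=\textbf{H}$, each $G_k$ a single left-closed right-open interval of length $x_2-x_1$ contained in $U$ or in $V$, and $\mathcal{M}(G_k)=G_{k+1}$ for $k<N$. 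Writing $\beta_k:=Y(G_k)$ (well defined, as $Y$ is constant on gaps), the containment $\alpha\in T=G_1$ gives $\beta_1=Y(\alpha)=\theta$; and since $Y$ is continuous and $Y(\mathcal{M}(t))=\langle Y(t)+\theta\rangle$ on $\mathcal{S}$, letting $\mathcal{S}$-points approach $G_k$ and using $\mathcal{M}(G_k)=G_{k+1}$ yields $\beta_{k+1}=\langle\beta_k+\theta\rangle$, hence $\beta_k=\langle k\theta\rangle$ for $k=1,\dots,N$, and these are $N$ pairwise distinct points.

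The crux, and the step I expect to be the real obstacle, is to prove $\mathcal{S}\neq 1-\widetilde{\mathcal{S}}$, where $\widetilde{\mathcal{S}}=\pi(\mathcal{S})$ by Lemma~\ref{lem-endpoint-change}. Suppose on the contrary that $\mathcal{S}=1-\pi(\mathcal{S})$. Then $G\mapsto 1-\pi(G)$ is an involution of the set of gaps of $\mathcal{S}$, and a short computation with the squeezing measure (carefully tracking the left/right endpoint conventions of $\mathcal{S}$ and $\pi(\mathcal{S})$) gives $Y\bigl(1-\pi(G_k)\bigr)=\langle-\beta_k\rangle$. Hence $\{\langle k\theta\rangle:1\le k\le N\}$ is invariant under $x\mapsto\langle-x\rangle$; taking $k=1$, $\langle-\theta\rangle=\langle j\theta\rangle$ for some $j\in\{1,\dots,N\}$, so $\langle(j+1)\theta\rangle=0$ and $\theta\in\Bbb Q$, a contradiction. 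The delicate point is that every local or combinatorial feature of a symmetric Type~I configuration is self-consistent, so the contradiction is extracted only by combining the identity $\beta_1=\theta$ with the arithmetic-progression form $\beta_k=\langle k\theta\rangle$.

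Finally I would cash this in. Because $|\mathcal{S}|=|1-\widetilde{\mathcal{S}}|$ and both are finite unions of left-closed right-open intervals, $\mathcal{S}\neq1-\widetilde{\mathcal{S}}$ forces $|\mathcal{S}\setminus(1-\widetilde{\mathcal{S}})|>0$; as $Y$ is piecewise affine with positive slope on $\mathcal{S}$ and bijective onto $\Bbb T_{[0,1)}$, the set $\mathcal{O}:=\Bbb T_{[0,1)}\setminus Y\bigl(\mathcal{S}\cap(1-\widetilde{\mathcal{S}})\bigr)$ is a finite union of intervals of positive measure, hence contains a nonempty open arc $\mathcal{O}^{\circ}$. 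Irrationality of $\theta$ makes $R_\theta$ minimal, so $\bigcup_{n\ge0}R_\theta^{-n}(\mathcal{O}^{\circ})$ covers $\Bbb T_{[0,1)}$ and already $\bigcup_{n=0}^{K_1}R_\theta^{-n}(\mathcal{O}^{\circ})=\Bbb T_{[0,1)}$ for some $K_1$ by compactness. Thus every $t\in\mathcal{S}$ admits $n\le K_1$ with $Y(\mathcal{M}^n(t))=R_\theta^{n}(Y(t))\in\mathcal{O}$, i.e. $\mathcal{M}^n(t)\notin1-\widetilde{\mathcal{S}}$, i.e. $1-\mathcal{M}^n(t)\notin\widetilde{\mathcal{S}}$; applying Theorem~\ref{thm-s-1} to $\widetilde{\mathcal{M}}$ (transported by Lemma~\ref{lem-endpoint-change}), the $\widetilde{\mathcal{M}}$-orbit of $1-\mathcal{M}^n(t)$ enters $\widetilde{\textbf{H}}$ within $N-1$ steps, so $K=\max(K_1,N)$ proves the ``furthermore''. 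And $\mathcal{E}=\emptyset$ is then immediate: $\mathcal{E}\subset\mathcal{S}$ and $1-\mathcal{E}\subset\widetilde{\mathcal{S}}$ by maximality, so if $\mathcal{E}\neq\emptyset$ then $Y(\mathcal{E})\subset Y\bigl(\mathcal{S}\cap(1-\widetilde{\mathcal{S}})\bigr)$ would be a nonempty $R_\theta$-invariant set avoiding $\mathcal{O}^{\circ}$, contradicting minimality (equivalently, for $t\in\mathcal{E}$ one has $\widetilde{\mathcal{M}}^m(1-\mathcal{M}^n(t))\in1-\mathcal{E}$, which is disjoint from $\widetilde{\textbf{H}}$, contradicting the previous sentence).
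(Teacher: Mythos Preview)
Your argument is correct and takes a genuinely different route from the paper's. Both proofs begin the same way (reduce to Type~I via Proposition~\ref{prop-s-rational} and Corollary~\ref{cor-Y}, identify the gap $Y$-values as $\beta_k=\langle k\theta\rangle$), but then diverge.

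The paper bypasses your symmetry step entirely: it observes that the point $t_0:=Y^{-1}(1-\theta)$ satisfies $\mathcal{M}(t_0)=0$, which forces $t_0\in\{1-\alpha,\,1-(\alpha+x_2-x_1)\}$; hence a small interval $J\subset\mathcal{S}$ contained in $(1-\alpha-x_2+x_1,\,1-\alpha)$ sits next to $t_0$ and has $1-J\subset(\alpha,\alpha+x_2-x_1)=\mathrm{int}\,T$. Density of $R_\theta$ then gives a uniform $K$ for orbits to hit $J$, and from $1-\mathcal{M}^n(t)\in\pi(T)$ one reaches $\widetilde{\textbf{H}}$ in $N-1$ further $\widetilde{\mathcal{M}}$-steps. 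So the paper \emph{exhibits} a concrete subinterval of $\mathcal{S}\setminus(1-\widetilde{\mathcal{S}})$ directly, via the algebraic identity $Y^{-1}(1-\theta)=1-\alpha$ or $1-(\alpha+x_2-x_1)$.

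Your route instead \emph{proves} $\mathcal{S}\neq1-\widetilde{\mathcal{S}}$ abstractly: if it failed, the involution $G\mapsto1-\pi(G)$ on gaps would make the progression $\{\langle k\theta\rangle:1\le k\le N\}$ negation-invariant, forcing $\theta\in\Bbb Q$. You then use minimality on the (necessarily positive-measure) set $Y\big(\mathcal{S}\setminus(1-\widetilde{\mathcal{S}})\big)$. This is slightly longer but buys a clean structural statement (non-symmetry of $\mathcal{S}$) that the paper's argument establishes only implicitly. Your endpoint/measure bookkeeping in the computation $Y(1-\pi(G_k))=\langle-\beta_k\rangle$ and in passing from $\mathcal{S}\neq1-\widetilde{\mathcal{S}}$ to $|\mathcal{S}\setminus(1-\widetilde{\mathcal{S}})|>0$ is correct, as is the recursion $\beta_{k+1}=\langle\beta_k+\theta\rangle$ (the right endpoint $h_k$ of each $G_k$ with $k<N$ lies in $\mathcal{S}$ and in the same piece $U$ or $V$ as $G_k$, since gaps are separated and none touch $0$, $x_1$, or $x_2$).
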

\proof
If $Y(\alpha) \not\in\Bbb Q$, then it follows from Proposition \ref{prop-s-rational} and Corollary \ref{cor-Y}  that $\mathcal{S}$ must be of \textbf{Type I}, whence we can choose the number $N\in\Bbb N $ as in Theorem \ref{thm-4-struct} (i),
and let $t_0=Y^{-1}(1-Y(\alpha))$, where $Y^{-1}$ is the inverse map of $Y$ restricted to $\mathcal{S}$. Note that $1-Y(\alpha)\not\in\{\langle nY(\alpha)\rangle: n\in\Bbb N\}$ by $Y(\alpha) \not\in\Bbb Q$. Then
\begin{equation}\nonumber
t_0\not\in\{\mathcal{M}^k(0):k=1,2, \ldots,N\},
\end{equation}
and therefore, $(t_0-\varepsilon,t_0+\varepsilon)\subset \mathcal{S}$ for some sufficient small $0<\varepsilon<x_2-x_1$ by Theorem \ref{thm-4-struct} (i).

\vspace{0.2cm}

Observe that $Y(\mathcal{M}(t_0))=\langle Y(t_0)+Y(\alpha)\rangle=0$ by Theorem \ref{thm-mod-1}. Then $\mathcal{M}(t_0)=0$, and therefore we have either $t_0=1-\alpha$ or $t_0=1-(\alpha+x_2-x_1)$, which means $J=(t_0-\varepsilon,t_0+\varepsilon)\cap(1-\alpha-x_2+x_1,1-\alpha)$ is an interval with length $|J|=\varepsilon$, and $Y(J)$ is an interval such that $|Y(J)|= \varepsilon/{|\mathcal{S}|}$.

\vspace{0.2cm}

Since $\{\langle nY(\alpha)\rangle: n\in\Bbb N\}$ is dense in $\Bbb T_{[0,1)}$,
there exists $K> N$  such that for each $t\in \mathcal{S}$,
  $$Y(\mathcal{M}^n(t))=\langle Y(t)+nY(\alpha)\rangle\in Y(J)$$
  for some $n\le K$.
This leads to $\mathcal{M}^n(t)\in J$, and $1-\mathcal{M}^n(t)\in (\alpha,\alpha+x_2-x_1)$, in other  words,   
$t\not\in {\mathcal{E}}$   by the definition of ${\mathcal{E}}$. Hence, 
$\mathcal{E}=\emptyset$ by the arbitrariness of $t$.

Moreover, by Lemma \ref{lem-endpoint-change} and
 Theorem \ref{thm-4-struct} (i), 
 $$\widetilde{\mathcal{M}}^{N-1}(1-\mathcal{M}^n(t))\in \widetilde{\mathcal{M}}^{N-1}((\alpha,\alpha+x_2-x_1])=\pi({\mathcal{M}}^{N-1}([\alpha,\alpha+x_2-x_1)))=\widetilde{\textbf{H}}.$$
We complete the proof of Theorem \ref{thm-exist-e-1}.
\eproof

\vspace{0.2cm}

By Theorem \ref{thm-exist-e-1},  it remains  to characterize the symmetric maximal invariant set $\mathcal{E}$ {when} $Y(\alpha)\in\Bbb Q$. First, we investigate the cases that ${\mathcal S}\subset U$ and ${\mathcal S}\subset V$. 


\begin{prop}\label{thm-exist-e-3-1}
Let $\mathcal{M}$ be the map defined in \eqref{map.def} with the maximal invariant set ${\mathcal S}\ne\emptyset$ and $Y(\alpha) \in \Bbb Q$. Denote $Y(\alpha)=\frac pM$ be a simple fraction. Then:
\begin{enumerate}
   \item [{\rm(i)}] If $\mathcal{S}\subset V$, then $\mathcal{E}\ne\emptyset$ if and only if $x_2<\frac 1{2M}$. 
   In this case,
    \begin{equation}\nonumber
   \mathcal{E}=\bigcup_{k=0}^{M-1}\Big[\frac kM+x_2,\frac {k+1}M-x_2\Big).
   \end{equation}
   \item [{\rm(ii)}]  If $\mathcal{S}\subset U$, then $\mathcal{E}\ne\emptyset$ if and only if $x_1>1-\frac 1{2M}$.
   In this case,
    \begin{equation}\nonumber
   \mathcal{E}=\bigcup_{k=0}^{M-1}\Big[\frac kM+1-x_1,\frac {k+1}M-1+x_1\Big).
   \end{equation}
\end{enumerate}
\end{prop}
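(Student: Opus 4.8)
The plan is to reduce everything to the explicit descriptions of $\mathcal S$ in Proposition \ref{prop-s-rational} together with the identity $\widetilde{\mathcal S}=\pi(\mathcal S)$ of Lemma \ref{lem-endpoint-change}. First I would record the two containments forced by the definition \eqref{e-set.def}: since $\mathcal M(\mathcal E)=\mathcal E$ with $\mathcal E\subset\Bbb T_{[0,1)}\setminus\textbf{H}$, maximality of $\mathcal S$ gives $\mathcal E\subset\mathcal S$; and since $\widetilde{\mathcal M}(1-\mathcal E)=1-\mathcal E$ with $1-\mathcal E\subset\Bbb T_{(0,1]}\setminus\widetilde{\textbf{H}}$, maximality of $\widetilde{\mathcal S}$ gives $1-\mathcal E\subset\widetilde{\mathcal S}=\pi(\mathcal S)$, i.e. $\mathcal E\subset 1-\pi(\mathcal S)$. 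Hence $\mathcal E\subset\mathcal S\cap\bigl(1-\pi(\mathcal S)\bigr)$, and the proposition follows once I (a) compute this intersection explicitly in each case and (b) check that whenever it is nonempty it is itself invariant under $\mathcal M$ and, after the reflection $t\mapsto 1-t$, under $\widetilde{\mathcal M}$, so that it is contained in $\mathcal E$ as well.

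For (i), Proposition \ref{prop-s-rational}(ii) gives $\alpha=p/M$ and $\mathcal S=\bigcup_{k=0}^{M-1}[x_2+k/M,(k+1)/M)$ with $x_2<1/M$; applying $\pi$ and reflecting yields $1-\pi(\mathcal S)=\bigcup_{k=0}^{M-1}[k/M,(k+1)/M-x_2)$, so intersecting the two term by term inside each residue class mod $1/M$ gives $\mathcal S\cap(1-\pi(\mathcal S))=\bigcup_{k=0}^{M-1}[k/M+x_2,(k+1)/M-x_2)$. This union is nonempty exactly when $x_2+k/M<(k+1)/M-x_2$, i.e. $x_2<1/(2M)$, and empty (hence $\mathcal E=\emptyset$) otherwise. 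When $x_2<1/(2M)$ the union is $1/M$-periodic and lies in $V$, so on it $\mathcal M$ acts by $t\mapsto\langle t+p/M\rangle$, an integer-multiple-of-$1/M$ shift that merely permutes its $M$ intervals; likewise its reflection $\bigcup_{k}(x_2+k/M,(k+1)/M-x_2]$ is $1/M$-periodic and lies in $\widetilde V=(x_2,1]$, so $\widetilde{\mathcal M}$ acts there by $t\mapsto\langle t+p/M\rangle^\ast$ and again permutes the pieces. Thus this explicit set satisfies \eqref{e-set.def}, so it is contained in $\mathcal E$, and combined with the containment from the previous paragraph we get equality, which is the asserted formula.

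Part (ii) is the mirror image. Proposition \ref{prop-s-rational}(i) gives $\langle\alpha+x_2-x_1\rangle=p/M$, $x_1>(M-1)/M$, and $\mathcal S=\bigcup_{k=0}^{M-1}[k/M,k/M+\eta)$ with $\eta:=x_1-(M-1)/M\in(0,1/M)$. Then $1-\pi(\mathcal S)=\bigcup_{k=0}^{M-1}[(k+1)/M-\eta,(k+1)/M)$, and the term-by-term intersection is $\bigcup_{k=0}^{M-1}[k/M+(1/M-\eta),k/M+\eta)$, which (using $1/M-\eta=1-x_1$ and $k/M+\eta=(k+1)/M-1+x_1$) is exactly the claimed set and is nonempty iff $1/M-\eta<\eta$, i.e. $x_1>1-1/(2M)$. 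The invariance verification is identical to that in (i), now with the set lying in $U$ (resp. its reflection in $\widetilde U$) so that $\mathcal M$ adds $\langle\alpha+x_2-x_1\rangle=p/M$ (resp. $\widetilde{\mathcal M}$ adds it via $\langle\cdot\rangle^\ast$), permuting the $1/M$-periodic pieces.

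I expect the only delicate point to be keeping the left-closed/right-open intervals (natural for $\mathcal M$, $\mathcal S$, $\mathcal E$) separated from the left-open/right-closed ones (natural for $\widetilde{\mathcal M}$, $\widetilde{\mathcal S}$) when passing back and forth through $\pi$ and the reflection $t\mapsto 1-t$; once that bookkeeping is in place, every invariance claim reduces to the trivial fact that a shift by an integer multiple of $1/M$ permutes a $1/M$-periodic finite union of intervals. Note that in the present two cases one does not need the general identity $\mathcal E=\mathcal S\cap(1-\pi(\mathcal S))$ of Corollary \ref{thm-exist-e-2}: here $Y(\alpha)\in\Bbb Q$ forces $\mathcal S$ into the rigid periodic form above, and the identity can be verified directly as indicated.
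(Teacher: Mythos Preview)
Your proposal is correct and follows essentially the same route as the paper's proof: both establish $\mathcal E\subset\mathcal S\cap(1-\widetilde{\mathcal S})$, compute this intersection explicitly from the formulas in Proposition~\ref{prop-s-rational}, and then verify that the resulting $1/M$-periodic union of intervals is invariant under $\mathcal M$ and (after reflection) under $\widetilde{\mathcal M}$, so that equality holds. Your handling of the endpoint conventions via $\pi$ and Lemma~\ref{lem-endpoint-change} is slightly more explicit than the paper's, but the substance is identical.
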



 \proof
(i). {It follows from Proposition \ref{prop-s-rational} (ii)} that $\alpha=Y(\alpha)=\frac {p}{M}$, $x_2<\frac 1M$ and
\begin{equation}\label{struct-s-rational-21}
{\mathcal S}= \bigcup_{k=0}^{M-1}\big[x_2+\frac k{M}, \frac {k+1}{M}\big).
\end{equation}
  If $\mathcal{E}\ne \emptyset$, then it follows from $\mathcal{E}\subset\mathcal{S}\cap {(1-\widetilde{\mathcal{S}})}$ that {$\mathcal{S}\cap (1-\widetilde{\mathcal{S}})\ne \emptyset$}. This together with \eqref{struct-s-rational-21} implies $x_2<\frac 1{2M}$.
\vspace{0.2cm}

Conversely, if $x_2<\frac 1{2M}$, then
\begin{equation}\nonumber
\mathcal{S}\cap {(1-\widetilde{\mathcal{S}})}=\bigcup_{k=0}^{M-1}\Big[\frac kM+x_2,\frac {k+1}M-x_2\Big).
\end{equation}
Observe that $\mathcal{M}(t)=\langle t+\frac {p}M\rangle$ when $t\in \mathcal{S}$, and $1-\widetilde{\mathcal{M}}(1-t)=1-\langle1-t+\frac pM\rangle^\ast=\langle t-\frac{p}M\rangle$ when $t\in 1-\widetilde{\mathcal{S}}$. Then
{$\mathcal{S}\cap (1-\widetilde{\mathcal{S}})$}
is an invariant set for the maps $\mathcal{M}$ and $1-\widetilde{\mathcal{M}}(1-\cdot)$. Therefore, $\mathcal{E}=\mathcal{S}\cap {(1-\widetilde{\mathcal{S}})}
 \ne \emptyset$, (i) follows.

\vspace{0.2cm}

(ii). The proof is similar as (i). 
\eproof

\vspace{0.2cm}
\vspace{0.2cm}


{We next study} the structure of the symmetric maximal invariant set $\mathcal{E}$ when $\mathcal{S}$ satisfies
\begin{equation}\label{SUV-S5}
{\mathcal S}\cap U\ne\emptyset\ \mbox{and }\ {\mathcal S}\cap V\ne\emptyset.
\end{equation}


\begin{lem}\label{lem-suv}
 Suppose ${\mathcal S}$ satisfies \eqref{SUV-S5} and $Y(\alpha)=\frac pM$ for some co-prime
$p,M\in\Bbb N$. If $\mathcal{E}\ne \emptyset$, then $N=M-1$, where $N$ is the number in Theorem \ref{thm-4-struct}.
\end{lem}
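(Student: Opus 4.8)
The plan is to reduce to the \textbf{Type I} structure of Theorem \ref{thm-4-struct} and then pin $N$ down by a counting argument over the period‑$M$ orbits of $\mathcal{M}$.

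\emph{Reduction to Type I.} Suppose first that $\mathcal{S}$ is of \textbf{Type II} (resp. \textbf{III}), with parameters $\delta,N,M$. By Proposition \ref{prop-delta-tur} the disturbance $\mathcal{M}_{-\delta}$ (resp. $\mathcal{M}_{\delta}$) is of \textbf{Type I} with the same $N$, and applying the forward part of that proposition to $(\mathcal{M}_{-\delta})_{+\delta}=\mathcal{M}$ gives $Y_{-\delta}(\alpha)=Y(\alpha)=p/M$. Moreover $\mathcal{E}\ne\emptyset$ is inherited: $\mathcal{M}$ and $\mathcal{M}_{-\delta}$ agree on the gap‑free set $\mathcal{S}$, and $\widetilde{\mathcal{M}},\widetilde{\mathcal{M}}_{-\delta}$ agree on $\widetilde{\mathcal{S}}$, while the shifted holes $\textbf{H}_{-\delta}$ and $\widetilde{\textbf{H}}_{-\delta}$ lie inside the relevant gap of $\mathcal{S}$ (resp. of $\widetilde{\mathcal{S}}$); hence every $t\in\mathcal{E}$ stays in the symmetric maximal invariant set of $\mathcal{M}_{-\delta}$. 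So from now on I take $\mathcal{S}$ to be of \textbf{Type I}.

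\emph{Structure in Type I.} From the proof of Proposition \ref{prop-delta-tur} one has $N<M$, $\mathcal{M}^M(0)=0$, $\mathcal{S}=\bigcup_{k=0}^{M-1}\big[\mathcal{M}^k(0),\mathcal{M}^k(0)+|\mathcal{S}|/M\big)$, the gaps of $\mathcal{S}$ are $\big[\mathcal{M}^k(0)-\Delta,\mathcal{M}^k(0)\big)$ for $k=1,\dots,N$ (with $\Delta=x_2-x_1$), and the last one is $\textbf{H}=[x_1,x_2)$, so $x_2=\mathcal{M}^N(0)$. A direct computation shows that $\Phi:=1-\widetilde{\mathcal{M}}(1-\cdot)$ is itself a transformation of the form \eqref{map.def} with parameters $1-\alpha-\Delta,\,1-x_2,\,1-x_1$; by Lemma \ref{lem-endpoint-change} its maximal invariant set is $1-\widetilde{\mathcal{S}}$, which is again of \textbf{Type I} with the same $N$ (its gap $[1-\alpha-\Delta,1-\alpha)$ equals $1-\pi([\alpha,\alpha+\Delta))$, and $\Phi^{N-1}$ of it is $1-\widetilde{\textbf{H}}$) and with $Y_\Phi(\alpha_\Phi)=1-p/M$. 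Since $\textbf{H}$ and $1-\widetilde{\textbf{H}}$ are gaps of $\mathcal{S}$ and of $1-\widetilde{\mathcal{S}}$ respectively, the conditions $\mathcal{E}\cap\textbf{H}=\emptyset$ and $\mathcal{E}\cap(1-\widetilde{\textbf{H}})=\emptyset$ are automatic once $\mathcal{E}\subset\mathcal{S}\cap(1-\widetilde{\mathcal{S}})$; thus $\mathcal{E}$ is exactly the maximal set invariant under both $\mathcal{M}$ and $\Phi$ contained in $\mathcal{S}\cap(1-\widetilde{\mathcal{S}})$. Finally, by maximality $\mathcal{E}=1-\pi(\mathcal{E})$: using Lemma \ref{lem-endpoint-change}, the reflected set is a finite union of left‑closed right‑open intervals invariant under $\mathcal{M}$ and $\Phi$ and avoiding $\textbf{H}\cup(1-\widetilde{\textbf{H}})$, hence $\subset\mathcal{E}$, and running the argument once more forces equality.

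\emph{Counting and the main obstacle.} Since $Y$ conjugates $\mathcal{M}|_{\mathcal{S}}$ to the rotation $R_{p/M}$ of exact period $M$, any nonempty $\mathcal{M}$‑invariant subset of $\mathcal{S}$ meets each of the $M$ fibres $Y^{-1}(\{y+k/M\})$, and likewise $\Phi|_{1-\widetilde{\mathcal{S}}}$ has period $M$. Hence, if $\mathcal{E}\ne\emptyset$, then $0=Y^{-1}(0)\in\mathcal{E}$ and consequently $\Phi(0)=1-\widetilde{\mathcal{M}}(1)=1-\alpha\in\mathcal{E}\subset\mathcal{S}$, and more generally the $\mathcal{M}$‑ and $\Phi$‑orbits of $0$ — which together meet every gap‑endpoint $\mathcal{M}^k(0)$, $1\le k\le N$, of $\mathcal{S}$ and every gap‑endpoint of $1-\widetilde{\mathcal{S}}$ — lie inside $\mathcal{S}\cap(1-\widetilde{\mathcal{S}})$. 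The delicate step, which I expect to be the main obstacle, is to read off from this that $\mathcal{E}\ne\emptyset$ forces $1-\widetilde{\mathcal{S}}=\mathcal{S}$: equivalently, that if $N\le M-2$ then the $\tfrac1M\mathbb{Z}$‑saturation in the $Y$‑coordinate of the forbidden set $Y\big((1-\widetilde{\mathcal{S}})^c\cap\mathcal{S}\big)$ is all of $\mathbb{T}$, which requires careful bookkeeping of where the $N$ collapsed gap‑points of $\mathcal{S}$ and the $N$ collapsed gap‑points of $1-\widetilde{\mathcal{S}}$ sit relative to the lattice $\tfrac1M\mathbb{Z}$. Once $1-\widetilde{\mathcal{S}}=\mathcal{S}$ (so $\mathcal{S}$ is symmetric and $\mathcal{E}=\mathcal{S}$), the set $\mathcal{S}$ avoids $1-\widetilde{\textbf{H}}=[1-x_2,1-x_1)$, so $[1-x_2,1-x_1)$ is a gap of the \textbf{Type I} set $\mathcal{S}$ of length $\Delta$; comparing its right endpoint $1-x_1$ against the list $\{\mathcal{M}^k(0):1\le k\le N\}$ of right endpoints of gaps, together with $\mathcal{E}=1-\pi(\mathcal{E})$, forces $\alpha+x_2=1$. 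But then $1-\alpha=x_2\in V$, so $1-\alpha$ is the unique preimage of $0$ in $\mathcal{S}$, i.e. $1-\alpha=\mathcal{M}^{M-1}(0)$, whence $\mathcal{M}^N(0)=x_2=1-\alpha=\mathcal{M}^{M-1}(0)$, i.e. $N\equiv M-1\pmod M$, and since $0<N<M$ this yields $N=M-1$.
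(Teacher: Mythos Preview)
Your reduction to \textbf{Type I} matches the paper's (it is relegated to a footnote there), and your final paragraph --- deducing $N=M-1$ once you know $1-\widetilde{\mathcal{S}}=\mathcal{S}$ --- is fine. The trouble is everything in between.

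The step ``$0=Y^{-1}(0)\in\mathcal{E}$'' is not justified. Periodicity of $\mathcal{M}|_{\mathcal S}$ only tells you that a nonempty $\mathcal{M}$-invariant set meets each interval $J_k=[\mathcal{M}^k(0),\mathcal{M}^k(0)+|\mathcal{S}|/M)$; it does \emph{not} say that the specific endpoints $\mathcal{M}^k(0)$ lie in $\mathcal{E}$. For that you would need a point of $\mathcal{E}$ whose $Y$-value already sits in $\tfrac1M\mathbb{Z}$, which is exactly what you are trying to establish. (A posteriori one learns $\mathcal{E}=\mathcal{S}$ in \textbf{Type I}, so $0\in\mathcal{E}$ is the \emph{conclusion}, not a tool.) Since all your subsequent orbit-tracing hinges on $0\in\mathcal{E}$, the argument does not start. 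You also candidly label the passage from $\mathcal{E}\ne\emptyset$ to $1-\widetilde{\mathcal{S}}=\mathcal{S}$ as the ``main obstacle'' and leave it open; but that passage is essentially the whole lemma.

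The paper's argument avoids both issues by never asserting any specific point lies in $\mathcal{E}$. It works with the slices $E:=\mathcal{E}\cap J_0$ and $F:=(1-\mathcal{E})\cap\pi(J_0)$ as abstract subsets of $[0,|\mathcal{S}|/M)$, uses $\mathcal{M}$- and $\widetilde{\mathcal{M}}$-invariance to write $\mathcal{E}=\bigcup_k(\mathcal{M}^k(0)+E)$ and $1-\mathcal{E}=\bigcup_k(\mathcal{M}^k(0)+F)$, and extracts the reflection identity $E=|\mathcal{S}|/M-F$. Then, assuming either $\mathcal{M}^{M-1}(0)\in U$ or $N\le M-2$, it computes $\mathcal{E}$ on two overlapping windows of width $|\mathcal{S}|/M$ straddling $\alpha$ (or $\alpha+x_2-x_1$) and compares them to obtain a \emph{self-shift} relation of the form $E+(x_2-x_1)=E\cap[x_2-x_1,|\mathcal{S}|/M)$, which forces $E=\emptyset$. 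That sliding trick is the missing idea in your proposal; I would recommend replacing the orbit-of-$0$ strategy with this slice-and-shift argument.
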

\proof

As $\emptyset\not=\mathcal{E}\subset\mathcal{S},$  we might assume the assertion (i) of Theorem \ref{thm-4-struct} holds for $\mathcal{M}$ {without loss of generalization}\footnote{{
If the assertion (ii) of Theorem \ref{thm-4-struct} holds for $\mathcal{M}$, Proposition \ref{prop-delta-tur} yields that the result of  Theorem \ref{thm-4-struct} (i) holds for ${\mathcal M}_{-\delta}$, and ${\mathcal S}\subset {\mathcal S}_{-\delta}$ and ${\mathcal M}(t)={\mathcal M}_{-\delta}(t)$ for all $t\in{\mathcal S}$; whence Lemma \ref{lem-endpoint-change} guarantees that $\widetilde{{\mathcal S}}\subset \widetilde{{\mathcal S}}_{-\delta}$ and $\widetilde{{\mathcal M}}(t)=\widetilde{{\mathcal M}}_{-\delta}(t)$ for all $t\in\widetilde{{\mathcal S}}$, from which one clearly has $\mathcal{E}\subset\mathcal{E}_{-\delta}$, where $\mathcal{E}_{-\delta}$ stands for the symmetric maximal invariant set for $\mathcal{M}_{-\delta}$. Similarly, if the assertion (iii) of Theorem \ref{thm-4-struct} holds, then the result of Theorem \ref{thm-4-struct} (i) also holds for ${\mathcal M}_{\delta}$ and $\mathcal{E}\subset\mathcal{E}_{\delta}$. At the same time, Proposition \ref{prop-delta-tur} ensures that $N$ is invariant under above disturbances of $\mathcal{M}$. So we may always assume that Theorem \ref{thm-4-struct} (i) holds for $\mathcal{M}$.}}.
By Proposition \ref{prop-delta-tur} (ii), we have ${\mathcal M}^M(0)=0$ and 
\begin{equation}\label{delta-tur-01-1}
{\mathcal S}=\bigcup_{k=0}^{M-1} J_k, 
\end{equation}
where $J_k=[{\mathcal M}^k(0),{\mathcal M}^k(0)+\frac {|\mathcal{S}|}M)$, $k=0,1,\ldots,M-1$, are {pairwise disjoint}, and
\begin{equation}\label{delta-tur-01+1}
J_{k+1}={\mathcal M}(J_k),\ \mbox{ for all } \ k=0,1,\ldots,M-1,
\end{equation}
here we denote $J_M=J_0$.
%
Set $E=\mathcal{E}\cap J_0$, and $F=(1-\mathcal{E})\cap \pi(J_0)$.
{Note that $\mathcal{E}\subset{\mathcal S}$ and $1-\mathcal{E}\subset{\widetilde{\mathcal S}}$ are invariant sets for $\mathcal{M}$ and $\widetilde{\mathcal{M}}$ respectively, it then follows from \eqref{delta-tur-01-1} and \eqref{delta-tur-01+1} that}
\begin{equation}\label{delta-tur-02-1}
\mathcal{E}=\bigcup_{k=0}^{M-1}{\mathcal M}^k(E) =\bigcup_{k=0}^{M-1} {\big({\mathcal M}^k(0)+E\big)}
\end{equation}
 and
\begin{equation}\label{delta-tur-03-1}
 1-\mathcal{E}=\bigcup_{k=0}^{M-1}{\widetilde{\mathcal M}}^k(F) =\bigcup_{k=0}^{M-1} {\big({\mathcal M}^k(0)+F\big)}.
 \end{equation}
 Let $k_0\in\{0,1,\cdots, M-1\}$ be the number such that $k_0 p\equiv M-1\ (\mbox{mod}\ M)$.  Then
 $$J_{k_0}=Y^{-1}\big([\frac {M-1}M,1)\big)=[1-\frac {|\mathcal{S}|}M,1),$$ which means
 $M^{k_0}(0)=1-\frac {|\mathcal{S}|}M$, and hence \eqref{delta-tur-03-1} yields that
\begin{equation}\label{delta-tur-04-1}
E=\Big(1-\bigcup_{k=0}^{M-1} {\big({\mathcal M}^k(0)+F\big)}  \Big)\cap J_0=
1-{\mathcal M}^{k_0}(0)-F.
\end{equation}

\vspace{0.2cm}

If ${\mathcal M}^{M-1}(0)\in U$, then ${\mathcal M}^{M-1}(0)=1-\alpha-x_2+x_1$, by \eqref{delta-tur-03-1},
\begin{eqnarray} \nonumber
{\mathcal M}^{M-1}(0)+F
& = & \big(1-\mathcal{E}\big)\cap \pi(J_{M-1}) \nonumber\\
& = & \big(1-\mathcal{E}\big)\cap (1-\alpha-x_2+x_1, 1-\alpha-x_2+x_1+\frac {|\mathcal{S}|}M].\nonumber
\end{eqnarray}
This together with \eqref{delta-tur-04-1} implies
\begin{equation}\label{delta-tur-0-1121}
\mathcal{E}\cap [\alpha+x_2-x_1-\frac {|\mathcal{S}|}M, \alpha+x_2-x_1)=1-{\mathcal M}^{M-1}(0)-F=\alpha+x_2-x_1-\frac {|\mathcal{S}|}M+E,
\end{equation}
which means $\frac {|\mathcal{S}|}M > x_2-x_1$ since $E\ne\emptyset$ by \eqref{delta-tur-02-1}. 
Observe that $M^{k_0}(0)=1-\frac {|\mathcal{S}|}M\in V$. Then $$\alpha-\frac {|\mathcal{S}|}M+E=\mathcal{M}^{k_0+1}(0)+E=\mathcal{E}\cap[\alpha-\frac {|\mathcal{S}|}M,\alpha),$$
and therefore the left hand side of \eqref{delta-tur-0-1121} becomes
\begin{equation}\label{delta-tur-0-1121-1}
\mathcal{E}\cap [\alpha+x_2-x_1-\frac {|\mathcal{S}|}M, \alpha)=\big(\alpha-\frac {|\mathcal{S}|}M+E\big)\cap [\alpha+x_2-x_1-\frac {|\mathcal{S}|}M, \alpha).
\end{equation}
Comparing the right hand {side} of \eqref{delta-tur-0-1121} and \eqref{delta-tur-0-1121-1}, one has $E+x_2-x_1=E\cap[x_2-x_1,\frac {|\mathcal{S}|}M)$, which obviously leads to $E=\emptyset$, and thus $\mathcal{E=\emptyset}$ by \eqref{delta-tur-02-1}. This is a contradiction. Therefore we have ${\mathcal M}^{M-1}(0)\in V$, and then ${\mathcal M}^{M-1}(0)=1-\alpha$.

\vspace{0.2cm}

Now we prove $N=M-1$ by {the proof of} contradiction. Assume that $1\le N<M-1$. Recall that the gaps ${\mathcal M}^{k}([\alpha,\alpha+x_2-x_1))=[{\mathcal M}^{k+1}(0)-x_2+x_1,{\mathcal M}^{k+1}(0))$, $k=0,1,\ldots,N-1$, are located at the left hand {side} of $J_{k+1}$, $k=0,1,\ldots,N-1$, respectively.
Then the gaps are separated from ${\mathcal M}^{M-1}(0)$, and therefore, 
$J_{k_0-1}=\big[{\mathcal M}^{M-1}(0)-\frac {|\mathcal{S}|}M,{\mathcal M}^{M-1}(0)\big)=[1-\alpha-\frac{|\mathcal{S}|}M, 1-\alpha)$. 
Thus,
${\mathcal M}^{k_0-1}(0)+F=\big(1-\mathcal{E}\big)\cap (1-\alpha-\frac {|\mathcal{S}|}M, 1-\alpha]$ by \eqref{delta-tur-03-1}. This together with \eqref{delta-tur-04-1} implies
\begin{equation}\label{delta-tur-0-1122}
\mathcal{E}\cap [\alpha, \alpha+\frac {|\mathcal{S}|}M)=1-{\mathcal M}^{k_0-1}(0)-F=\alpha+E.
\end{equation}
This means $\frac {|\mathcal{S}|}M > x_2-x_1$ since the right hand side is not an empty set. And then the left hand side of \eqref{delta-tur-0-1122} becomes
$$
\mathcal{E}\cap [\alpha+x_2-x_1, \alpha+\frac {|\mathcal{S}|}M)=(\alpha+x_2-x_1+E)\cap [\alpha+x_2-x_1, \alpha+\frac {|\mathcal{S}|}M).
$$
Comparing the right hand {side} of the above equation and \eqref{delta-tur-0-1122} leads to $E=\emptyset$. This contradicts to $\mathcal{E\ne\emptyset}$. Hence, $N=M-1$. {The lemma is proved.}
\eproof

\vspace{0.2cm}



Now, we present the structure of the symmetric maximal invariant set $\mathcal{E}$ when $Y(\alpha)\in\Bbb Q$ and $\mathcal{S}$ satisfies \eqref{SUV-S5} through disturbances of the map $\mathcal{M}=\mathcal{M}_{\alpha,x_1,x_2}$. {As in} Proposition \ref{prop-delta-tur}, we let $\mathcal{M}_{\delta}=\mathcal{M}_{\alpha,x_1+\delta,x_2+\delta}$ be the $\delta -$disturbance of $\mathcal{M}$ for some $\delta$ under the premise that it is well defined. And we let $\mathcal{S}_\delta$ ($\mathcal{E}_\delta$) be the (symmetric) maximal invariant set of $\mathcal{M}_{\delta}$. 



\begin{prop}\label{prop-tur-E}
Let $\mathcal{M}=\mathcal{M}_{\alpha, x_1,x_2}$ be the map defined in \eqref{map.def} with the maximal invariant set $\mathcal{S}$ satisfying \eqref{SUV-S5} and $Y(\alpha)=\frac pM$ for some co-prime integers $1\le p<M$.
Suppose that
Theorem \ref{thm-4-struct} {\rm (i)} is valid for $\mathcal{M}$ with  $N=M-1$. Let $\Delta=x_2-x_1$,
then:
\begin{enumerate}
   \item [{\rm (i)}] $\Delta<\frac 1{M-1}$ and $|\mathcal{S}|=1-(M-1)\Delta$.
   \item [{\rm (ii)}]  $x_1=1-\frac {p}M(1+\Delta)$, $x_2=\frac {M-p}M(1+\Delta)$ and $\alpha=\frac pM-\frac {M-p}M\Delta$.
   \item [{\rm (iii)}]
For each $|\delta|<\frac{|\mathcal{S}|}M$, $\mathcal{M}_{\delta}=\mathcal{M}_{\alpha,x_1+\delta,x_2+\delta}$ is well defined, and $\mathcal{E}_\delta\ne \emptyset$ if and only if $ |\delta|<\frac{|\mathcal{S}|}{2M}$. In this case,
$${\mathcal E}_\delta=\mathcal{S}_\delta \cap \big(1-\widetilde{\mathcal{S}_\delta}\big)
=\bigcup_{k=0}^{M-1} \Big(\frac kM(1+\Delta)+[|\delta|,\frac {|\mathcal{S}|}M-|\delta|)\Big).
$$
\end{enumerate}
\end{prop}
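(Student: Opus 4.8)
The plan is to reduce everything to Proposition \ref{prop-delta-tur} and to an explicit computation of the orbit $\{\mathcal{M}^k(0)\}_{k=0}^{M-1}$ by means of the squeezing map $Y$. For \textbf{(i)}: since Theorem \ref{thm-4-struct}(i) holds with $N=M-1$, Proposition \ref{prop-delta-tur} gives $\mathcal{M}^M(0)=0$ and $\mathcal{S}=\bigcup_{k=0}^{M-1}J_k$ with the intervals $J_k=[\mathcal{M}^k(0),\mathcal{M}^k(0)+\tfrac{|\mathcal{S}|}{M})$ pairwise disjoint. By \eqref{struct-s-1}, $\Bbb T_{[0,1)}\setminus\mathcal{S}$ is the union of the $M-1$ pairwise separated gaps $\mathcal{M}^{k-1}([\alpha,\alpha+\Delta))$; each such gap is contained in $U$ or in $V$, where $\mathcal{M}$ acts by a translation, hence has length $\Delta$. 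Therefore $|\Bbb T_{[0,1)}\setminus\mathcal{S}|=(M-1)\Delta$, so $|\mathcal{S}|=1-(M-1)\Delta$, and as $\mathcal{S}\neq\emptyset$ this forces $\Delta<\tfrac1{M-1}$.

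For \textbf{(ii)}, by Theorem \ref{thm-mod-1} we have $Y(\mathcal{M}^k(0))=\langle kY(\alpha)\rangle=\langle kp/M\rangle$; writing $r=r(k)=kp\bmod M$, this says $|\mathcal{S}\cap[0,\mathcal{M}^k(0))|=\tfrac rM|\mathcal{S}|$. The $M-1$ gaps collapse under $Y$ to the points $\langle jp/M\rangle$, $j=1,\dots,M-1$, i.e. to $\{1/M,\dots,(M-1)/M\}$, and exactly the $r$ of them with collapse point $\le r/M$ lie to the left of the interval endpoint $\mathcal{M}^k(0)$, no gap being split there; hence $[0,\mathcal{M}^k(0))$ contains gap-mass $r\Delta$. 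Adding, $\mathcal{M}^k(0)=\tfrac rM|\mathcal{S}|+r\Delta=\tfrac rM(1+\Delta)$ for $1\le k\le M-1$. Since $0\in U$ and $0<\alpha+\Delta<1$ (Corollary \ref{cor-4-point} and the remarks preceding Theorem \ref{thm-4-struct}), $\mathcal{M}(0)=\alpha+\Delta$, and $r(1)=p$ gives $\alpha=\tfrac pM(1+\Delta)-\Delta=\tfrac pM-\tfrac{M-p}M\Delta$. Moreover $\textbf{H}=[x_1,x_2)=\mathcal{M}^{M-2}([\alpha,\alpha+\Delta))$ by \eqref{struct-s-1}; since the $\mathcal{S}$-interval immediately to the right of $[\alpha,\alpha+\Delta)$ is $J_1$ and $\mathcal{M}^{M-2}$ is a composition of translations on the pieces met (preserving this adjacency), the $\mathcal{S}$-interval immediately right of $\textbf{H}$ is $J_{M-1}$, so $x_2=\mathcal{M}^{M-1}(0)=\tfrac{M-p}M(1+\Delta)$ and $x_1=x_2-\Delta=1-\tfrac pM(1+\Delta)$.

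For \textbf{(iii)}, for $|\delta|<\tfrac{|\mathcal{S}|}M$ Proposition \ref{prop-delta-tur} says $\mathcal{M}_\delta$ is well defined, $\mathcal{M}_\delta=\mathcal{M}$ on $\mathcal{S}_\delta$, and $\mathcal{S}_\delta=\bigcup_{k=0}^{M-1}J_k^\delta$ with $J_k^\delta$ as in \eqref{delta-tur-1-0991}; also $\widetilde{\mathcal{S}_\delta}=\pi(\mathcal{S}_\delta)$ by Lemma \ref{lem-endpoint-change}. By (ii), $\{\mathcal{M}^k(0):0\le k\le M-1\}=\{\tfrac kM(1+\Delta):0\le k\le M-1\}$, and the involution $r\mapsto M-1-r$ identifies, for each $k$, the interval $1-\pi(J_k^\delta)$ with $[\mathcal{M}^{k'}(0),\mathcal{M}^{k'}(0)+\tfrac{|\mathcal{S}|}M)$ shortened by $|\delta|$ at the end opposite to that at which $J_{k'}^\delta$ is shortened. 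Intersecting $\mathcal{S}_\delta$ with $1-\widetilde{\mathcal{S}_\delta}=\bigcup_k\big(1-\pi(J_k^\delta)\big)$ interval by interval (cross terms vanish as distinct $J_k$ are disjoint) then gives, for either sign of $\delta$,
\begin{equation}\nonumber
\mathcal{S}_\delta\cap\big(1-\widetilde{\mathcal{S}_\delta}\big)=\bigcup_{k=0}^{M-1}\Big(\tfrac kM(1+\Delta)+[\,|\delta|,\ \tfrac{|\mathcal{S}|}M-|\delta|\,)\Big)=:A,
\end{equation}
which is nonempty exactly when $|\delta|<\tfrac{|\mathcal{S}|}{2M}$. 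Finally, from \eqref{e-set.def} together with the maximality of $\mathcal{S}_\delta$ and $\widetilde{\mathcal{S}_\delta}$ one gets $\mathcal{E}_\delta\subseteq\mathcal{S}_\delta$ and $1-\mathcal{E}_\delta\subseteq\widetilde{\mathcal{S}_\delta}$, hence $\mathcal{E}_\delta\subseteq A$; conversely $A\subseteq\mathcal{S}_\delta$ and $\pi(A)=1-A$ by the $r\mapsto M-1-r$ symmetry, so $A$ avoids $\textbf{H}_\delta$ and avoids $1-\widetilde{\textbf{H}}_\delta$, $\mathcal{M}_\delta(A)=\mathcal{M}(A)=A$ since $\mathcal{M}$ permutes the $J_k$ cyclically by translations, and $\widetilde{\mathcal{M}_\delta}(1-A)=\widetilde{\mathcal{M}_\delta}(\pi(A))=\pi(\mathcal{M}_\delta(A))=\pi(A)=1-A$ by Lemma \ref{lem-endpoint-change}; thus $A$ satisfies all the defining requirements in \eqref{e-set.def}, and maximality of $\mathcal{E}_\delta$ gives $A\subseteq\mathcal{E}_\delta$. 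Hence $\mathcal{E}_\delta=A$.

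I expect the main difficulty to be bookkeeping rather than a new idea: in (ii) one deals with a discontinuous circle map, so checking that the orbit point $\mathcal{M}^{M-1}(0)$ is exactly the right endpoint $x_2$ of $\textbf{H}$, and that no gap is split when computing $|\mathcal{S}\cap[0,\mathcal{M}^k(0))|$, needs care with the half-open intervals and the identification $\Bbb T_{[0,1)}\cong\Bbb T_{(0,1]}$; in (iii) the delicate point is the interval-by-interval matching of $\mathcal{S}_\delta$ against $1-\widetilde{\mathcal{S}_\delta}$ through $\pi$ and the index involution $r\mapsto M-1-r$, and keeping the cases $\delta\ge0$ and $\delta<0$ separate.
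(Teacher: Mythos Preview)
Your proposal is correct and follows essentially the same approach as the paper. Both arguments for (i) count the $M-1$ gaps of length $\Delta$; both arguments for (ii) obtain the orbit formula $\mathcal{M}^k(0)=\langle kp/M\rangle(1+\Delta)$ (you via $Y$ and a gap count, the paper via the equivalent observation that the $M$ intervals $J_k$ and the $M-1$ gaps tile $[0,1)$ alternately), and both then read off $\alpha$ from $\mathcal{M}(0)$ and $x_2$ from $\mathcal{M}^{M-1}(0)$; and for (iii) both compute $\mathcal{S}_\delta\cap(1-\widetilde{\mathcal{S}_\delta})$ interval by interval via Proposition~\ref{prop-delta-tur} and Lemma~\ref{lem-endpoint-change}, then verify this set satisfies the defining conditions \eqref{e-set.def} using the symmetry $\pi(A)=1-A$ coming from the index involution $r\mapsto M-1-r$. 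Your caveat about the adjacency step in (ii) is well placed: the paper simply asserts $x_2=\mathcal{M}^{M-1}(0)$, and the clean way to see it is that for $\ell<M-2$ the right endpoint of the gap $\mathcal{M}^\ell(T)$ cannot be $0$, $x_1$, or $x_2$ (else the gap would meet $\textbf{H}$ or force two $J$-intervals to be nonadjacent at $0$), so continuity from the left holds at each step.
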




\proof
(i). By Theorem \ref{thm-4-struct} (i), the map $\mathcal{M}$ has $M-1$ gaps with length $\Delta$, which yields that $|\mathcal{S}|=1-(M-1)\Delta$, and therefore $\Delta<\frac 1{M-1}$. (i) follows.
\vspace{0.2cm}

(ii). By Proposition \ref{prop-delta-tur} (ii) (for the case $\delta'=0$), we have ${\mathcal S}=\cup_{k=0}^{M-1} J_k$, where
$$J_k=[{\mathcal M}^k(0),{\mathcal M}^k(0)+\frac {|\mathcal{S}|}M),\ \ 0\le k\le M-1,$$
which together with the $M-1$ gaps {tile} $[0,1)$, alternatively. Therefore,
\begin{equation}\label{delta-map-06-1}
\mathcal{M}^k(0)
=\langle \frac {kp}M\rangle (1+\Delta),\ 0\le k\le M-1.
\end{equation}
{Thus, $$\alpha=\mathcal{M}(0)-(x_2-x_1)=\frac pM-\frac {M-p}M\Delta,$$ and
$x_2=\mathcal{M}^{M-1}(0)=\frac {M-p}M (1+\Delta)$ and $x_1=x_2-\Delta=1-\frac {p}M (1+\Delta)$, (ii) follows.}

\vspace{0.2cm}

(iii).  
We only prove (iii) for the case that $\delta\ge 0$ since the proof for $\delta<0$ is similar. By Proposition \ref{prop-delta-tur} (i) and (ii),
$\mathcal{M}_\delta$ is well defined and
\begin{equation}\nonumber
\mathcal{S}_\delta=\bigcup_{k=0}^{M-1}\Big(\mathcal{M}^k(0)+[\delta,\frac {|\mathcal{S}|}M)\Big). 
\end{equation}
{By \eqref{delta-map-06-1},}
\begin{eqnarray}\nonumber
1-\widetilde{\mathcal{S}_\delta}
& = & \bigcup_{k=0}^{M-1}\Big[1-\mathcal{M}^k(0)-\frac {|\mathcal{S}|}M,  1-\mathcal{M}^k(0)  -\delta  \Big)\\ \nonumber
& = & \bigcup_{k=0}^{M-1}\Big[\mathcal{M}^{M-1-k}(0),  \mathcal{M}^{M-1-k}(0) +\frac {|\mathcal{S}|}M -\delta  \Big). 
\end{eqnarray}
 Thus
\begin{equation}\label{eq-map-sset-21171}
\mathcal{E}_\delta \subset \mathcal{S}_\delta \cap \big(1-\widetilde{\mathcal{S}_\delta} \big)=\emptyset\  {\mbox{ whenever }\ \delta\ge \frac {|\mathcal{S}|}{2M}};
\end{equation}
{and, for $\delta< \frac {|\mathcal{S}|}{2M}$,}
\begin{equation}\label{delta-tur-E-4}
\mathcal{S}_\delta \cap \big(1-\widetilde{\mathcal{S}_\delta}\big)
 = \bigcup_{k=0}^{M-1}\Big[\mathcal{M}^k(0)+\delta,\mathcal{M}^k(0)+\frac {|\mathcal{S}|}M-\delta\Big).
\end{equation}
 We obtain that
$\mathcal{M}_\delta \Big(\mathcal{S}_\delta \cap \big(1-\widetilde{\mathcal{S}_\delta}\big) \Big)=\mathcal{S}_\delta \cap \big(1-\widetilde{\mathcal{S}_\delta}\big)$, since $\mathcal{M}_\delta(t)=\mathcal{M}(t)$ holds for all $t\in \mathcal{S}_\delta$ by Proposition \ref{prop-delta-tur} (iii). By Lemma \ref{lem-endpoint-change},
 $\widetilde{\mathcal{M}_\delta} \Big(\widetilde{\mathcal{S}_\delta} \cap \big(1-\mathcal{S}_\delta\big) \Big)=\widetilde{\mathcal{S}_\delta} \cap \big(1-\mathcal{S}_\delta\big)$. Therefore,
\begin{equation}\label{delta-tur-E-61}
\mathcal{E}_\delta=\mathcal{S}_\delta \cap \big(1-\widetilde{\mathcal{S}_\delta} )=\bigcup_{j=0}^{M-1} \Big(\frac jM(1+\Delta)+[\delta,\frac {|\mathcal{S}|}M-\delta)\Big),
\end{equation}
where the last equality follows from \eqref{delta-tur-E-4} and \eqref{delta-map-06-1}. Combining \eqref{eq-map-sset-21171} and \eqref{delta-tur-E-61} proves (iii).
\eproof

\vspace{0.2cm}

\subsection{Proofs of Proposition \ref{thm-exist-e++} and Corollary \ref{thm-exist-e-2}}\label{subse-5-2} $\empty$


\vspace{0.2cm}

\noindent{\bf Proof of Proposition \ref{thm-exist-e++}.} Apparently, $Y(\alpha)\in\Bbb Q$ is a consequence of Theorem \ref{thm-exist-e-1}.
 Notice that  the statement (i) (\emph{resp.} (ii)) of  Proposition \ref{thm-exist-e++} is equivalent to $\mathcal{E}\ne\emptyset$ when $\mathcal{S}\subset V$ (\emph{resp.} $\mathcal{S}\subset U$) by Propositions  \ref{prop-s-rational} and \ref{thm-exist-e-3-1}.
Therefore, we only need to prove the part related to (iii) in  Proposition \ref{thm-exist-e++}.

\vspace{0.2cm}

Assume $\mathcal{S}\cap U\ne\emptyset$, $\mathcal{S}\cap V\ne\emptyset$ and $\mathcal{E}\ne\emptyset$.
Obviously, we have $Y(\alpha)\ne 0$, and therefore $M>1$. 
 Set $\Delta=x_2-x_1$. Using Theorem \ref{thm-4-struct} and Proposition \ref{prop-delta-tur}, it can be seen that $\Delta<\frac 1{M-1}$, and there exists $|\delta|<\frac{1-(M-1)\Delta}{2M}$ such that  Theorem \ref{thm-4-struct} (i) is valid for $\mathcal{M}_{\delta}=\mathcal{M}_{\alpha,x_1+\delta,x_2+\delta}$.
Observe that $|\mathcal{S}_\delta|={1-(M-1)\Delta}$. Then (iii) can be obtained by Proposition \ref{prop-tur-E}.

\vspace{0.2cm}

{Conversely, assume} that (iii) is valid. Set
$$
I_k=\frac kM(1+\Delta)+[|\delta|,\frac {1-(M-1)\Delta}M-|\delta|),\ k=0,1,\ldots, M-1,
$$
and $E=\cup_{k=0}^{M-1} I_k$. Then we may verify that $I_k\subset U$ when $0\le k< M-N$, and $I_k\subset V$ when $M-N\le k< M$. Thus
 \begin{equation}\label{E-structu-1011}
 \mathcal{M}(I_k)=I_{k+N}\ \mbox{ for all } k=0,1,\ldots, M-1.
 \end{equation}
Here we denote $I_j=I_k$ if $j\equiv k$ (mod $M$). This means $\mathcal{M}(E)=E$, $E\cap \textbf{H}=\emptyset$ and $\widetilde{E}=1-E$, and then $E\cap (1-\widetilde{\textbf{H}})=\emptyset$ and $\widetilde{\mathcal{M}}(1-E)=1-E$ by Lemma \ref{lem-endpoint-change}. Thus,
$E\subset\mathcal{E}$. Therefore by  Lemma \ref{lem-suv} and Proposition \ref{prop-tur-E}, we have $E=\mathcal{E}$. So it remains to prove $Y(\alpha)=\frac NM$. And this is followed by \eqref{E-structu-1011} and Theorem \ref{thm-mod-1}. We complete the proof of Proposition  \ref{thm-exist-e++}.
\eproof


\vspace{0.2cm}
\vspace{0.2cm}

\noindent{\bf Proof of Corollary \ref{thm-exist-e-2}.}
{If ${\mathcal E}\ne\emptyset$, then only one of assertions (i), (ii) and (iii) in Proposition \ref{thm-exist-e++} holds. If  (i) or (ii) holds, it follows from Propositions \ref{prop-s-rational} and \ref{thm-exist-e-3-1} that  \eqref{eq-sym-e-set} holds; and if the assertion (iii)  in Proposition \ref{thm-exist-e++} holds,  then Propositions \ref{prop-delta-tur} and \ref{prop-tur-E} implies that \eqref{eq-sym-e-set} holds.}


\vspace{0.2cm}

 Recall that
\begin{equation}\nonumber
\mathcal{E}=\cup_{k=0}^{M-1} I_k,
\end{equation}
with $I_k=\frac kM+[x_2,\frac 1M-x_2)$, $I_k=\frac kM+[1-x_1,x_1-\frac {M-1}M)$ or $I_k=\frac kM(1+\Delta)+[|\delta|,\frac {1-(M-1)\Delta}M-|\delta|)$, $k=0,1,\ldots, M-1$, corresponding to the case that the requirement (i), (ii) or (iii) of Proposition \ref{thm-exist-e++} is valid, respectively. It is easy to check that
 \begin{equation}\label{E-structu-1011-11}
 \mathcal{M}(I_k)=I_{k+N}\ \mbox{ for all } k=0,1,\ldots, M-1,
 \end{equation}
where we denote $I_j=I_k$ if $j\equiv k$ (mod $M$). Observe that $1-I_k=\pi(I_{M-k-1}), k=0,1,\ldots, M-1$. This together with \eqref{E-structu-1011-11} leads to \eqref{eq-map-invers} immediately.
\eproof

\section{Sufficient and necessary conditions for Gabor frames $\mathcal{G}(H_c;a)$} \label{section-5}
{The main purpose of this section is to prove Theorem \ref{thm-gabor-eq-e-set}. For this, we first prove Lemma \ref{lem-Y-map}, and then } present two propositions ({Propositions} \ref{prop-gabor-suff-e-set} and \ref{prop-lin-e-set}) on the connections between $\mathcal{G}(H_c;a)$ and $\mathcal{M}=\mathcal{M}_{\alpha, x_1,x_2}$.


\vspace{0.2cm}

Let $a,c$ be as in \eqref{a-c-condition-1.3}, and let $\mathcal{M}:=\mathcal{M}_{\alpha, x_1,x_2}$ be the map defined in \eqref{map.def} with $x_1,x_2$ and $\alpha$ in \eqref{alpha-xx} and \eqref{alpha-xx-0}.
Then
\begin{equation}\label{alpha-x-x-1+2022}
\langle\alpha+x_2-x_1\rangle=\big\langle\frac {\lfloor c\rfloor+1}{a}\big\rangle\ \mbox{ if }\  U\ne\emptyset,
\end{equation}
 and therefore we have
\begin{equation}\label{alpha-x-x-1}
\mathcal{M}(t)=\Big\langle t+ \frac{\lfloor c\rfloor+1} a \Big\rangle
 \mbox{ if }
 t\in U \mbox{  and } \mathcal{M}(t)=\Big\langle t+ \frac{\lfloor c\rfloor} a \Big\rangle
 \mbox{ if }
 t\in V.
\end{equation}


\vspace{0.2cm}

\noindent{\bf Proof of Lemma \ref{lem-Y-map}.}
If $\mathcal{S}\subset V$, then $Y(\alpha)=\alpha \in \Bbb Q$ by
Proposition \ref{prop-s-rational}, and then
$a\in\Bbb Q$ by \eqref{alpha-xx-0}.
If $\mathcal{S}\subset U$, then {Proposition \ref{prop-s-rational} yields that} $Y(\alpha)=\langle\alpha+x_2-x_1 \rangle\in \Bbb Q$, and then
$a\in\Bbb Q$ by \eqref{alpha-x-x-1+2022}.
Therefore, {it suffices to deal with $\mathcal{S}$ for which}
\begin{equation}\nonumber
{\mathcal S}\cap U\ne\emptyset\ \mbox{and }\ {\mathcal S}\cap V\ne\emptyset.
\end{equation}
In this case, by \eqref{alpha-xx} and \eqref{alpha-xx-0},
we have $x_1=\frac {a+\langle c\rangle-1}{a}$, $x_2=\frac {\langle c\rangle}{a}$ and $\alpha=\big\langle\frac {\lfloor c\rfloor}{a}\big\rangle$.

\vspace{0.2cm}

\emph{Sufficiency}. Assume $a \in\Bbb Q$. Then $\alpha,\alpha+x_2-x_1\in\Bbb Q$. 
This means for each $t\in\mathcal{S}$, $\{\mathcal{M}^n(t):n\in\Bbb N\}$ is a finite set, and so is $\{\langle Y(t)+nY(\alpha)\rangle:n\in\Bbb N\}$ by Theorem \ref{thm-mod-1}. Therefore $Y(\alpha)\in\Bbb Q$. 

\vspace{0.2cm}

\emph{Necessity}. Assume $a \not\in\Bbb Q$ and $Y(\alpha)=\frac pM$ for some co-prime integers $1\le p<M$.  
Recall that $\alpha$, $x_2-x_1$ and $Y(\alpha)$ are invariant under the disturbance of the map $\mathcal{M}$ for some small $\delta$ by Proposition \ref{prop-delta-tur}. {There is no loss of generality in assuming that Theorem \ref{thm-4-struct} (i) holds. By} Proposition \ref{prop-delta-tur} (ii) ({taking} $\delta'=0$),
\begin{equation}\nonumber
\alpha=\mathcal{M}(0)-(x_2-x_1)=p\frac {1-N(x_2-x_1)}M+s(x_2-x_1),
\end{equation}
where $N$ is the number in Theorem \ref{thm-4-struct} (i), and $s$ is the cardinality of $\{k: \langle\frac {kp}M\rangle<\frac pM, 1\le k\le N\}$.
Substituting $\alpha=\langle \frac {\lfloor c\rfloor}a\rangle$ and $x_2-x_1=\frac{1-a}a$ into above equation, we obtain
$$
\frac {\lfloor c\rfloor}a-\Big\lfloor  \frac {\lfloor c\rfloor}a  \Big\rfloor=
\frac {p-sM+pN}M +\frac {sM-pN}{Ma}.
$$
This means ${\lfloor c\rfloor}=\frac {sM -pN}{M}$, and then $pN$ can be divided exactly by $M$, which contradicts {to the fact} that $1\le N<M $ are co-prime integers. Therefore, $Y(\alpha)\not\in\Bbb Q$. We complete the proof of the lemma.
\eproof

\vspace{0.2cm}
\vspace{0.2cm}

To prove Theorem \ref{thm-gabor-eq-e-set}, we need some preparations. Let 
$\widetilde{\mathcal{M}}=\widetilde{\mathcal{M}}_{\alpha, x_1,x_2}$ be the coherent map of $ {\mathcal{M}}$  defined in \eqref{map-conv.def}. 
For each $t\in \Bbb T_{[0,1)}$, set
\begin{equation}\label{rabcnewplus.def-2}
\mathcal{L}^{\{i\}}(t)=\mathcal{L}^{\{i\}}_{\alpha, x_1,x_2}(t):=\left\{\begin{array} {lll}
\mathcal{M}(t)  &
{\rm if} \ i=1, \\
1-\widetilde{\mathcal{M}}(1-t) &
{\rm if} \ i=-1.
\end{array}\right.
\end{equation}
Let $\Sigma:=\{1,-1\}$,
$\Sigma^n:=\overbrace{\Sigma\times\cdots\times \Sigma}^n$,
$1\le n<\infty$,
be the $n$ copies of $\Sigma$, and
$\Sigma^{\ast} := \bigcup_{1\le n<\infty}\Sigma^n$.
Given $\textbf{i}=i_1i_2\cdots i_n\in\Sigma^n$, denote $\textbf{i}|_k=i_1i_2\cdots i_k$ for each $k\le n$,
and $\sigma(\textbf{i})=\sum_{j=1}^n i_j$. Moreover, we define $\mathcal{L}^{\{\textbf{i}\}}(t):=\mathcal{L}^{\{i_n\}}(\mathcal{L}^{\{\textbf{i}|_{n-1}\}}(t))$ by recursion.

\vspace{0.2cm}

\begin{prop}\label{prop-gabor-suff-e-set}
Let $a,c$ and $\mathcal{M}$ be as in Theorem \ref{thm-gabor-eq-e-set}.
If $\mathcal{G}(H_c;a)$ is not a Gabor frame, then for each $N\in\Bbb N$, there exists $t\in \Bbb T_{[0,1)}$ such that
$$
\mathcal{L}^{\{\textbf{i}\}}(t)\in \Bbb T_{[0,1)}\setminus \big(\textbf{H}\cup (1-\widetilde{\textbf{H}})\big)  \mbox{ for all }\  \textbf{i}\in \Sigma^N.
$$
\end{prop}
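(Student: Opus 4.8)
The plan is to extract, from the failure of the frame property, an infinite orbit that avoids the "holes" $\textbf{H}$ and $1-\widetilde{\textbf{H}}$ under both branches $\mathcal{M}$ and $1-\widetilde{\mathcal{M}}(1-\cdot)$, and to do this first at each finite depth $N$. The starting point is Corollary \ref{bound-sequence-cor}: since $\mathcal{G}(H_c;a)$ is not a frame, there are $t_0\in[0,a)$ and a nonzero bounded sequence $\{q_j\}_{j\in\Bbb Z}\in\ell^\infty$ with $\sum_{j} q_j H_c(t_0+j-na)=0$ for all $n\in\Bbb Z$. The first step is to unpack what this linear dependence says locally: for a fixed real point $s$, the value $\sum_j q_j H_c(s+j-na)$ is an alternating-type sum of the coefficients $q_j$ over those shifted translates of the two "teeth" $[0,c)$ and $[-c,0)$ of $H_c$ that straddle $s+j$. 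Because $1<c<2$ and $0<a<1$, each translate of the support of $H_c$ overlaps a bounded number of lattice points $s+j$, so the vanishing relation couples $q_j$'s in a banded way; I expect that, after the normalization $c=\beta/2$, $a=\alpha\beta$ already built in, the combinatorics of which teeth cover which points $\langle \cdot \rangle$ is exactly encoded by the position of the relevant fractional part relative to $x_1=\max((a+\langle c\rangle-1)/a,0)$ and $x_2=\min(\langle c\rangle/a,1)$, i.e. by whether a point lies in $U$, $V$ or $\textbf H$ as in \eqref{map.def}, and that the index shift produced when passing from one column of coefficients to the next is precisely $\lfloor c\rfloor+1$ over $U$ and $\lfloor c\rfloor$ over $V$, matching \eqref{alpha-x-x-1}.

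Granting that dictionary, the second step is to translate the nonvanishing of $\{q_j\}$ into a statement about the dynamics of $\mathcal{M}$ and its coherent companion $\widetilde{\mathcal M}$. Roughly: the vanishing relations let one "propagate" the coefficient pattern along the orbit of a base point $t$ (the fractional part of $t_0$ in suitable units) under the two maps $\mathcal{L}^{\{1\}}=\mathcal{M}$ and $\mathcal{L}^{\{-1\}}=1-\widetilde{\mathcal M}(1-\cdot)$, where the choice of branch at each step records whether we are looking to the right (forward, via $\mathcal{M}$) or to the left (backward, via the coherent map, which by Corollary \ref{thm-exist-e-2} behaves as an inverse) along the $a$-lattice. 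If at some step an iterate $\mathcal{L}^{\{\textbf i|_k\}}(t)$ landed in $\textbf{H}$ or in $1-\widetilde{\textbf H}$, then the corresponding teeth would fail to overlap the lattice point in the way needed to keep propagating, and the vanishing relation would force a block of the $q_j$'s to be zero; iterating this would force $\{q_j\}\equiv 0$ by the banded structure, contradicting nonvanishing. Hence for the specific base point $t$ coming from $\{q_j\}$, no iterate up to depth $N$ can fall into the forbidden set — which is exactly the claim. The quantifier "for each $N$" is then free: the same $t$ works for all $N$ simultaneously, so one only needs to run the propagation argument out to an arbitrary finite depth.

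I would organize the write-up as: (a) a lemma making the "dictionary" precise — for $t\in\Bbb T_{[0,1)}$ and $\textbf i\in\Sigma^\ast$, express $\sum_j q_j H_c(\cdot)$ evaluated along the relevant lattice column in terms of $\mathcal{L}^{\{\textbf i\}}(t)$ and a shifted window of the sequence $\{q_j\}$; (b) the observation that landing in $\textbf H\cup(1-\widetilde{\textbf H})$ produces a forced zero in $\{q_j\}$; (c) a propagation/induction on $N$ showing that a forced zero at depth $k$ spreads, via the vanishing relations and bandedness, to all of $\{q_j\}$, contradicting $\{q_j\}\neq 0$; (d) conclude. The main obstacle, and where the real work lies, is step (a): setting up bookkeeping clean enough that the index shifts $\lfloor c\rfloor$ versus $\lfloor c\rfloor+1$, the sign pattern of $H_c$, and the branch $i=\pm1$ all line up correctly, and in particular handling the boundary/degenerate cases where $x_1=0$ or $x_2=1$ (i.e. $U=\emptyset$ or $V=\emptyset$) and the case $\langle c\rangle$ near $0$ or $1$, where the definition \eqref{alpha-xx-0} of $\alpha$ switches. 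A secondary subtlety is that $H_c$ is only piecewise continuous, so I must be careful about which one-sided limit is relevant — but this is exactly what Theorem \ref{bound-sequence-0} and Corollary \ref{bound-sequence-cor} were set up to control, so I would lean on the fact that $t_0$ can be taken in $[0,a)$ and the relation holds with the right-continuous representative $H_c$ throughout.
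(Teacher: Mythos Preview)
Your dictionary in step (a) is right and matches the paper exactly: differencing the vanishing relation at two consecutive $n$'s yields
\[
2q_k=\sum_{j\in[-ax+c,\,-ax+a+c)}q_{k+j}+\sum_{j\in[-ax-c,\,-ax+a-c)}q_{k+j},
\]
where $x=\langle(t_0+k)/a\rangle$; each sum has at most one integer index; the first sum is empty precisely when $x\in\textbf{H}$ and the second precisely when $x\in 1-\widetilde{\textbf{H}}$; and the resulting index shifts are $\lfloor c\rfloor$ or $\lfloor c\rfloor+1$, producing exactly $\mathcal{L}^{\{\pm1\}}$ via \eqref{alpha-x-x-1}.

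The gap is in your steps (b)--(c). Landing in $\textbf{H}$ (but not in $1-\widetilde{\textbf{H}}$) does \emph{not} force a zero: it gives $2q_k=q_{k+j_{-1}}$, i.e.\ a doubling, and there is no reason this should cascade into $\{q_j\}\equiv 0$. Only landing in $\textbf{H}\cap(1-\widetilde{\textbf{H}})$ would force $q_k=0$, and that intersection can be empty. So the ``forced zero propagates'' mechanism does not get started, and your assertion that a single $t$ works for all $N$ is not supported by the argument. (The propagation-of-zeros trick does work when $\tfrac{a}{2}\le c<1$, cf.\ the proof of Theorem~\ref{main0}(iii)--(iv), but not in the regime $c>1$.)

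The paper's proof supplies the missing idea: a \emph{near-supremum} argument. Normalize so that $\sup_j q_j=1$, fix $N$, set $\varepsilon=2^{-N-1}$, and choose $k$ with $q_k>1-\varepsilon$. Since each sum above has at most one term and each term is $\le 1$, the equality $2q_k>2-2\varepsilon$ forces \emph{both} sums to be nonempty (hence $x\notin\textbf{H}\cup(1-\widetilde{\textbf{H}})$) and forces both new coefficients to exceed $1-2\varepsilon$. Iterating $N$ times, losing a factor $2$ in the tolerance at each step, shows $\mathcal{L}^{\{\mathbf{i}\}}(x)\notin\textbf{H}\cup(1-\widetilde{\textbf{H}})$ for every $\mathbf{i}\in\Sigma^N$. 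Note that $k$ (and hence $t=x$) depends on $N$; the proposition only asks for this, and the paper does not claim a single $t$ for all $N$.
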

\proof
By Corollary \ref{bound-sequence-cor}, there exist {a} $t_0\in [0,a)$ and a nonzero bounded sequence $\{q_j\}_{j\in\Bbb Z}$  
such that
\begin{equation}\label{eq-linear-cor-19}
\sum_{j\in\Bbb Z} q_j H_c(t_0+j-na)=0\  \mbox{ for all }\ n\in\Bbb Z.
\end{equation}
Without loss of generality, we may assume  $\|\{q_j\}_{j\in\Bbb Z}\|_{\ell^\infty}=\sup\{q_j: j\in\Bbb Z\}=1$.

\vspace{0.2cm}

For each $N\in\Bbb N$, let $\varepsilon=2^{-N-1}$
and $k\in\Bbb Z$ be a number such that $q_k>1-\varepsilon$. Then {it follows from \eqref{eq-linear-cor-19} that}
\begin{equation}\label{eq-linear-cor-102}
\sum_{j\in\Bbb Z} q_{k+j} H_c((t_0+k)+j-na)=0\  \mbox{ for all }\ n\in\Bbb Z.
\end{equation}
Set $x=\langle \frac{t_0+k}a\rangle$, and then take $n=\big\lfloor \frac{t_0+k}a\big\rfloor$ and $\big\lfloor \frac{t_0+k}a\big\rfloor+1$ respectively in \eqref{eq-linear-cor-102}.
We obtain
\begin{equation}\label{eq-linear-cor-103}
\sum_{j\in[-a x,-a x+c) } q_{k+j} - \sum_{j\in[-ax-c,-a x) } q_{k+j}=0
\end{equation}
and
\begin{equation}\label{eq-linear-cor-104}
\sum_{j\in[-a x+a,-a x+a+c) } q_{k+j} - \sum_{j\in[-a x+a-c,-a x+a) } q_{k+j}=0.
\end{equation}
Comparing \eqref{eq-linear-cor-103} with \eqref{eq-linear-cor-104},
\begin{equation}\nonumber
2q_{k}-\sum_{j\in[-a x+c, -a x+a+c)}q_{k+j} -\sum_{j\in[-a x-c, -a x+a-c)}q_{k+j}=0.
\end{equation}
This, together with the assumption of $q_k$, implies that there exist unique integers
\begin{equation}\label{eq-linear-cor-1004}
j_{\{1\}}\in[-a x+c, -a x+a+c)\ \mbox{ and }\ j_{\{-1\}}\in[-a x-c, -a x+a-c)
\end{equation}
 such that
\begin{equation}\label{eq-linear-cor-6}
q_{k+j_{\{1\}}},q_{k+j_{\{-1\}}} >1-2\varepsilon.
\end{equation}
Obviously, \eqref{eq-linear-cor-1004} implies $x\in \Bbb T_{[0,1)}\setminus \big(\textbf{H}\cup (1-\widetilde{\textbf{H}})\big)$, and
\begin{equation}\nonumber
j_{\{1\}}=\left\{\begin{array} {lll}
\lfloor c\rfloor &
{\rm if} \ x\in [x_2, 1) \\
\lfloor c\rfloor+1 &
{\rm if} \ x\in [0,x_1)
\end{array}\right. \
\mbox{ and } \
j_{\{-1\}}=\left\{\begin{array} {lll}
-\lfloor c\rfloor &
{\rm if} \ 1-x\in (x_2,1] \\
-\lfloor c\rfloor-1 &
{\rm if} \ 1-x\in (0,x_1]
\end{array}\right..
\end{equation}
Set $k^{\{i\}}=k+j_{\{i\}}$ and $x^{\{i\}}=\langle \frac{t_0+k^{\{i\}}}a\rangle$, $i\in\Sigma$.
Then $x^{\{i\}}=\mathcal{L}^{\{i\}}(x)$.

Recall {that} \eqref{eq-linear-cor-6}. Then replacing $k$ by $k^{\{i\}}$ in \eqref {eq-linear-cor-102}, and
repeating the procedure, we have $x^{\{i\}}\in \Bbb T_{[0,1)}\setminus \big(\textbf{H}\cup (1-\widetilde{\textbf{H}})\big)$, $i\in \Sigma$. Consequently, by repeating the procedure $N$ times, we obtain
$$x^{\{\textbf{i}\}}=\mathcal{L}^{\{\textbf{i}\}}(x)\in \Bbb T_{[0,1)}\setminus \big(\textbf{H}\cup (1-\widetilde{\textbf{H}})\big) \mbox{ for all }\ \textbf{i}\in \Sigma^N.$$
 This proves the proposition.
\eproof

\vspace{0.2cm}

Proposition \ref{prop-gabor-suff-e-set} provides the ingredient for the sufficiency of Theorem \ref{thm-gabor-eq-e-set}.  Now, we consider the converse part and focus our attention on the situation that $\mathcal{E}\ne\emptyset$.

\vspace{0.2cm}

For each $t\in\mathcal{E}$, by Corollary \ref{thm-exist-e-2},
\begin{equation}\nonumber
\mathcal{L}^{\{-1\}}(\mathcal{L}^{\{1\}}(t))=\mathcal{L}^{\{1\}}(\mathcal{L}^{\{-1\}}(t))=t.
\end{equation}
Observe that $\mathcal{L}^{\{1\}}$ and $\mathcal{L}^{\{-1\}}$ are the bijections from $\mathcal{E}$ onto itself. We may denote $\mathcal{L}(t)=\mathcal{L}^{\{1\}}(t)$, and then $\mathcal{L}^{\{-1\}}(t)=\mathcal{L}^{-1}(t)$.    Consequently,
\begin{equation}\nonumber
\mathcal{L}^{\{\textbf{i}\}}(t)=\mathcal{L}^{\sigma(\textbf{i})}(t),\ \ t\in\mathcal{E},
\end{equation}
for each word $\textbf{i}\in\Sigma^\ast$.
%
%
 Set
\begin{equation}\label{eq-lambda.def}
\lambda(t)=\left\{\begin{array} {lll}
\lfloor c\rfloor &
{\rm if} \ t\in [x_2, 1)\cap \mathcal{E}, \\
\lfloor c\rfloor+1 &
{\rm if} \ t\in [0,x_1)\cap \mathcal{E},
\end{array}\right.
\end{equation}
and define the sequence $\{\lambda_n(t)\}_{n\in\Bbb Z}$ by $\lambda_0(t)=0$,
and
\begin{equation}\label{eq-lambda-sequence.def}
\lambda_k(t)=\sum_{j=0}^{k-1}\lambda(\mathcal{L}^j(t))\ \mbox{ and }\  \lambda_{-k}(t)=-\sum_{j=-k}^{-1}\lambda(\mathcal{L}^j(t)), \ k\in\Bbb N.
\end{equation}
\vspace{0.2cm}
{Thus,}
\begin{equation}\label{eq-lambda-seque-1}
\lambda_{n+1}(t)=\lambda_{n}(t)+\lambda(\mathcal{L}^n(t)),\ n\in\Bbb Z,
\end{equation}
and
\begin{equation}\label{eq-lambda-seque-2}
\mathcal{L}^{\{\textbf{i}\}}(t)=\mathcal{L}^{\sigma(\textbf{i})}(t)=\Big\langle t+\frac{\lambda_{\sigma(\textbf{i})}(t)}a\Big\rangle,\ \textbf{i}\in\Sigma^\ast.
\end{equation}

\vspace{0.2cm}

By virtue of Theorem \ref{thm-exist-e-1}, {put} $Y(\alpha)=\frac pM$ be the simple fraction for some integers $0\le p<M$. {By} Proposition \ref{thm-exist-e++}, $\mathcal{E}=\cup_{k=0}^{M-1} I_k$ and $\mathcal{L}$ is an isometric map from $I_k$ to $I_{k+p}$, where we denote $I_m=I_n$ if $m\equiv n\ (\mbox{mod}\ M)$. Thus, $\lambda_n(t_1)=\lambda_n(t_2)$, $n\in\Bbb Z$, when $t_1,t_2\in I_k$ for some $k\in\Bbb Z$.


\begin{lem}\label{lem-lambad-sym-e}
Suppose $\mathcal{E}\ne\emptyset$, then for each $t\in \mathcal{E}\cap \widetilde{\mathcal{E}}$,
$\lambda_n(t)=-\lambda_{-n}(1-t)$ for all $ n\in\Bbb Z$.
\end{lem}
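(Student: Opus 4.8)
The plan is to prove the identity $\lambda_n(t)=-\lambda_{-n}(1-t)$ by a straightforward induction on $|n|$, using the coherence between $\mathcal{M}$ and $\widetilde{\mathcal{M}}$ recorded in Lemma \ref{lem-endpoint-change} and Corollary \ref{thm-exist-e-2}. The base case $n=0$ is immediate from the definition $\lambda_0(\cdot)=0$. The inductive step rests on a single pointwise observation: if $t\in\mathcal{E}\cap\widetilde{\mathcal{E}}$, then $\lambda(t)=-\lambda(1-\mathcal{L}(t))$, i.e.\ the increment picked up by the orbit of $t$ under $\mathcal{L}$ at its first step is the negative of the increment picked up by the orbit of $1-t$ under $\mathcal{L}^{-1}$ at its first (backward) step. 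This is exactly the content of \eqref{eq-lambda.def} combined with the fact that $\mathcal{L}^{\{-1\}}(s)=1-\widetilde{\mathcal{M}}(1-s)$, so that $1-\mathcal{L}(t)$ is moved by $\widetilde{\mathcal{M}}$, and one compares which of the two cases ($t\in[x_2,1)$ versus $t\in[0,x_1)$) occurs for $t$ against which case ($1-\mathcal{L}(t)\in(x_2,1]$ versus $1-\mathcal{L}(t)\in(0,x_1]$) occurs for $1-\mathcal{L}(t)$; the symmetry of the partition under $s\mapsto 1-s$ together with $\mathcal{M}(t)=\langle t+\lambda(t)/a\rangle$ on $\mathcal{E}$ forces $\lambda(1-\mathcal{L}(t))=-\lambda(t)$.

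Concretely, I would first check the $n=1$ case: writing $s=\mathcal{L}(t)$, one has $1-t=\mathcal{L}^{-1}(1-s)$ by Corollary \ref{thm-exist-e-2}, hence $\lambda_{-1}(1-t)=-\lambda(\mathcal{L}^{-1}(1-t))=-\lambda(1-s)=-\lambda(1-\mathcal{L}(t))$, and then invoke the pointwise identity $\lambda(1-\mathcal{L}(t))=-\lambda(t)=-\lambda_1(t)$ to conclude. For general $n\ge 1$, use \eqref{eq-lambda-seque-1}: $\lambda_{n+1}(t)=\lambda_n(t)+\lambda(\mathcal{L}^n(t))$, and correspondingly $\lambda_{-(n+1)}(1-t)=\lambda_{-n}(1-t)-\lambda(\mathcal{L}^{-(n+1)}(1-t))$. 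Since $\mathcal{L}^{-(n+1)}(1-t)=1-\mathcal{L}^{n+1}(t)$ on $\mathcal{E}$ (again by Corollary \ref{thm-exist-e-2}), apply the pointwise identity at the point $\mathcal{L}^n(t)\in\mathcal{E}\cap\widetilde{\mathcal{E}}$ to get $\lambda(1-\mathcal{L}^{n+1}(t))=-\lambda(\mathcal{L}^n(t))$, and combine with the inductive hypothesis $\lambda_n(t)=-\lambda_{-n}(1-t)$. The case $n\le -1$ is symmetric, or can be deduced by replacing $t$ with $1-t$ and noting $\mathcal{E}\cap\widetilde{\mathcal{E}}$ is stable under $s\mapsto 1-s$.

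The main obstacle — really the only non-bookkeeping point — is establishing the pointwise identity $\lambda(1-s)=-\lambda(s)$ for $s$ ranging over the relevant orbit points, i.e.\ verifying that the two branches of \eqref{eq-lambda.def} are interchanged correctly under reflection. Here one must be careful that membership of $t$ in $[x_2,1)$ versus $[0,x_1)$ exactly mirrors membership of $1-t$ in $(x_2,1]$ versus $(0,x_1]$ — which is true since $1-[x_2,1)=(0,1-x_2]$ and $1-[0,x_1)=(1-x_1,1]$, and for points of $\mathcal{E}\cap\widetilde{\mathcal{E}}$ (which avoids $\textbf{H}$ and $1-\widetilde{\textbf{H}}$) these reflections land in $\widetilde{U}$ and $\widetilde{V}$ respectively — together with the observation that $\mathcal{M}$ shifts by $(\lfloor c\rfloor+1)/a$ on $U$ and $\lfloor c\rfloor/a$ on $V$ (see \eqref{alpha-x-x-1}) while $\widetilde{\mathcal{M}}$ shifts by the same amounts on $\widetilde{U},\widetilde{V}$, so that $\lambda$ and its reflected analogue agree up to sign. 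Once this is pinned down, the induction is purely formal.
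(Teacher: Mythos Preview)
Your inductive framework is sound and matches the paper's approach in spirit: the lemma does reduce to a single pointwise identity, applied term by term along the orbit. However, the pointwise identity you state is wrong, and your justification for it rests on a false premise.

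First, the identity $\lambda(1-\mathcal{L}(t))=-\lambda(t)$ (equivalently $\lambda(1-s)=-\lambda(s)$) cannot hold: by \eqref{eq-lambda.def}, $\lambda$ takes only the values $\lfloor c\rfloor$ and $\lfloor c\rfloor+1$, both strictly positive. The correct identity, which the paper proves as \eqref{eq-lambda-sym-1}, is
\[
\lambda(t)=\lambda\big(\mathcal{L}^{-1}(1-t)\big)=\lambda\big(1-\mathcal{L}(t)\big),
\]
with a \emph{plus} sign. With this sign your induction does go through: $\lambda_{-1}(1-t)=-\lambda(\mathcal{L}^{-1}(1-t))=-\lambda(t)=-\lambda_1(t)$, and the general step follows exactly as you wrote. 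With the minus sign you claim, the inductive step gives $\lambda_{-(n+1)}(1-t)=-\lambda_n(t)+\lambda(\mathcal{L}^n(t))$ instead of $-\lambda_n(t)-\lambda(\mathcal{L}^n(t))$, which is off by $2\lambda(\mathcal{L}^n(t))\neq 0$.

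Second, and more substantively, your justification for the pointwise identity is incorrect. You claim that $t\in V=[x_2,1)$ forces $1-t\in\widetilde U=(0,x_1]$ (and dually). But $1-[x_2,1)=(0,1-x_2]$ lies in $\widetilde U$ only when $1-x_2\le x_1$, which is not implied by any of the standing hypotheses; in case~(iii) of Proposition~\ref{thm-exist-e++} one generically has $x_1+x_2\ne 1$. The $U/V$ partition is simply not symmetric under $s\mapsto 1-s$, even restricted to $\mathcal{E}\cap\widetilde{\mathcal{E}}$. What the paper actually uses is the explicit combinatorics of $\mathcal E=\bigcup_{k=0}^{M-1}I_k$ from Proposition~\ref{thm-exist-e++}: one has $U\cap\mathcal E=\bigcup_{k=0}^{M-N-1}I_k$, $V\cap\mathcal E=\bigcup_{k=M-N}^{M-1}I_k$, and $1-I_k=\widetilde I_{M-k-1}$, so that if $t\in I_k$ then $\mathcal{L}^{-1}(1-t)\in I_{M-k-1-N}$, and an index check shows $I_k$ and $I_{M-k-1-N}$ always lie on the \emph{same} side ($U$ or $V$). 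This index bookkeeping is the real content of the proof; it does not follow from Corollary~\ref{thm-exist-e-2} or Lemma~\ref{lem-endpoint-change} alone.
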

\proof
For the cases $\mathcal{S}\subset V$ and $\mathcal{S}\subset U$, we have $\lambda(t)=\lfloor c\rfloor$ and $\lfloor c\rfloor+1$ respectively for all $t\in \mathcal{E}$, and hence we get the desired result by \eqref{eq-lambda-sequence.def}.
Therefore, we may assume $\mathcal{S}\cap U\ne\emptyset $ and $\mathcal{S}\cap V\ne\emptyset$. {By} Proposition \ref{thm-exist-e++} (precisely, \eqref{alpha-E-U-V} and \eqref{E-structu}), 
 it is easy to see that
\begin{equation}\label{lem-lambad-sym-e-2401}
U\cap\mathcal{E}=\cup_{k=0}^{M-p-1}I_k,\ \ V\cap\mathcal{E}=\cup_{k=M-p}^{M-1}I_k
\end{equation} 
 and
\begin{equation}\label{lem-lambad-sym-e-24021}
I_k=1-\widetilde{I}_{M-k-1}\ \mbox{ for all }\ 0\le k<M.
\end{equation}

\vspace{0.2cm}

%
%

Now assume $t\in\mathcal{E}\cap \widetilde{\mathcal{E}}$. If $t\in U$, 
then by \eqref{lem-lambad-sym-e-2401}, there exists $k\in \{0,1,\ldots, M-p-1\}$ such that $t\in I_k\cap\widetilde{I_k}$, and then by \eqref{lem-lambad-sym-e-24021}, $1-t\in I_{M-1-k}$, which implies $\mathcal{L}^{-1}(1-t)\in I_{M-1-k-p}\subset U$.
Similarly, if $t\in V$, 
then there exists $k\in \{M-p,\ldots,M-1\}$ such that $t\in I_k\cap\widetilde{I_k}$, and then 
$\mathcal{L}^{-1}(1-t)\in I_{2M-1-k-p}\subset V$.  Consequently,
\begin{equation}\label{eq-lambda-sym-1}
\lambda(t)=\lambda\big(\mathcal{L}^{-1}(1-t)\big),\ \  t\in \mathcal{E}\cap \widetilde{\mathcal{E}}.
\end{equation}

 Observe that for each $t\in \mathcal{E}\cap \widetilde{\mathcal{E}}$,
$$
\mathcal{L}^{-j-1}(1-t)=\mathcal{L}^{-j}(1-\mathcal{L}(t))=\cdots=\mathcal{L}^{-1}(1-\mathcal{L}^j(t)),\  j\ge 0.
$$
{By} \eqref{eq-lambda-sym-1},
$$
\lambda(\mathcal{L}^{-j-1}(1-t))=\lambda(\mathcal{L}^{-1}(1-\mathcal{L}^j(t)))=\lambda( \mathcal{L}^j(t)) ,\  j\ge 0.
$$
This together with \eqref{eq-lambda-sequence.def} implies $\lambda_n(t)=-\lambda_{-n}(1-t)$ for all $n\ge0$. {For} the case $n<0$, the statement is obvious through replacing $t$ by $1-t$.
\eproof

\vspace{0.2cm}

\begin{prop}\label{prop-lin-e-set}
Let $a,c$ and $\mathcal{M}$ be as in Theorem \ref{thm-gabor-eq-e-set}.
If $\mathcal{E}\ne\emptyset$, then for each $t\in\mathcal{E}$,
\begin{equation}\label{eq-lin-e-set-1}
\sum_{j\in\Bbb Z} \chi_{[0,c)}(ta+\lambda_j(t)-na)=1
\end{equation}
and
\begin{equation}\label{eq-lin-e-set-2}
\sum_{j\in\Bbb Z} \chi_{[-c,0)}(ta+\lambda_j(t)-na)=1
\end{equation}
for all $ n\in\Bbb Z$.
\end{prop}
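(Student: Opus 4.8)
The plan is to reinterpret each sum as the number of visits of the lifted orbit of $t$ to a window of length $c$, and to exploit that $\mathcal{E}$ avoids $\textbf{H}$ (and $1-\widetilde{\textbf{H}}$). Set $z_j:=t+\lambda_j(t)/a$; by \eqref{eq-lambda-seque-2} we have $\langle z_j\rangle=\mathcal{L}^j(t)\in\mathcal{E}\subset\Bbb T_{[0,1)}\setminus\big(\textbf{H}\cup(1-\widetilde{\textbf{H}})\big)$ for all $j$, and $\{z_j\}_{j\in\Bbb Z}$ is strictly increasing with $z_j\to\pm\infty$. Since $ta+\lambda_j(t)-na=a(z_j-n)$, the left-hand side of \eqref{eq-lin-e-set-1} equals $\#\{j:z_j\in[n,n+c/a)\}$ and that of \eqref{eq-lin-e-set-2} equals $\#\{j:z_j\in[n-c/a,n)\}$; so it suffices to show that for each $n\in\Bbb Z$ the windows $[n,n+c/a)$ and $[n-c/a,n)$ each contain exactly one $z_j$.

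The mechanism is a gap dichotomy read off from \eqref{alpha-xx} and \eqref{alpha-x-x-1}. By \eqref{eq-lambda-seque-1}, $z_{j+1}-z_j=\lambda(\mathcal{L}^j(t))/a$; when $\mathcal{L}^j(t)\in V$ (so $x_2<1$ and $ax_2=\langle c\rangle$) this equals $\lfloor c\rfloor/a=c/a-x_2<c/a$, and when $\mathcal{L}^j(t)\in U$ (so $x_1>0$ and $a(1-x_1)=1-\langle c\rangle$) it equals $(\lfloor c\rfloor+1)/a=c/a+(1-x_1)>c/a$; since $\mathcal{L}^j(t)\in\mathcal{E}\setminus\textbf{H}$ always lies in $U$ or $V$, exactly one of these holds. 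This already settles \eqref{eq-lin-e-set-1}. If $[n,n+c/a)$ contained two of the $z_j$, it would contain two consecutive ones $z_j,z_{j+1}$; then $z_{j+1}-z_j<c/a$ forces $\mathcal{L}^j(t)\in V$ and $z_j\in[n,n+x_2)\subset[n,n+1)$, whence $\mathcal{L}^j(t)=z_j-n<x_2$, contradicting $\mathcal{L}^j(t)\in V=[x_2,1)$. If $[n,n+c/a)$ contained none, let $j$ be the largest index with $z_j<n$; then $z_{j+1}\ge n+c/a$, so the gap exceeds $c/a$, forcing $\mathcal{L}^j(t)\in U$ and $z_j=z_{j+1}-c/a-(1-x_1)\ge n-1+x_1$, i.e. $z_j\in[n-1+x_1,n)\subset[n-1,n)$, whence $\mathcal{L}^j(t)=z_j-(n-1)\ge x_1$, contradicting $\mathcal{L}^j(t)\in U=[0,x_1)$.

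For \eqref{eq-lin-e-set-2} I would reflect the orbit using Lemma \ref{lem-lambad-sym-e}: for $t\in\mathcal{E}$ with $1-t\in\mathcal{E}$ it gives $\lambda_j(t)=-\lambda_{-j}(1-t)$, so reindexing $j\mapsto -j$ and setting $m:=1-n$, $w_k:=(1-t)+\lambda_k(1-t)/a$, one gets $\#\{j:z_j\in[n-c/a,n)\}=\#\{k:w_k\in(m,m+c/a]\}$ with $\langle w_k\rangle=\mathcal{L}^k(1-t)\in\mathcal{E}\setminus\textbf{H}$. The same dichotomy and sandwich argument, now for the left-open right-closed window $(m,m+c/a]$, yields exactly one such $k$: two consecutive $w_k,w_{k+1}$ in it would give gap $<c/a$, hence $\mathcal{L}^k(1-t)\in V$ and $w_k\in(m,m+x_2]\subset(m,m+1)$, so $\mathcal{L}^k(1-t)\in(0,x_2]\cap V=\{x_2\}$, which by the structure of $\mathcal{E}$ in Proposition \ref{thm-exist-e++} makes $1-t$ a left endpoint of an interval of $\mathcal{E}$ and $t$ a right endpoint — impossible for $t\in\mathcal{E}$; and an empty window would give, for $k$ largest with $w_k\le m$, gap $>c/a$, hence $\mathcal{L}^k(1-t)\in U$ and $w_k\in(m-1+x_1,m]\subset(m-1,m]$, forcing $\mathcal{L}^k(1-t)\in(x_1,1)\cap U=\emptyset$ if $w_k<m$ and $\mathcal{L}^k(1-t)=0\notin\mathcal{E}$ if $w_k=m$. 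The finitely many $t\in\mathcal{E}$ with $1-t\notin\mathcal{E}$ (the left endpoints of the intervals of $\mathcal{E}$) are covered by one-sided limits, since $\chi_{[0,c)}$ and $\chi_{[-c,0)}$ are right-continuous and the $\lambda_k$ are constant on each interval of $\mathcal{E}$.

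The point I expect to cost the most care is the bookkeeping in \eqref{eq-lin-e-set-2}: because $\chi_{[-c,0)}$ is oriented oppositely to $\chi_{[0,c)}$, the reflection turns $[n,n+c/a)$ into the half-open-on-the-other-side window $(m,m+c/a]$, and one must then dispose of the boundary configurations ($\mathcal{L}^k(1-t)=x_2$; $w_k=m$; the left endpoints of $\mathcal{E}$; the fact that $0\notin\mathcal{E}$) as well as of the degenerate cases $x_1=0$ or $x_2=1$, where $U$ or $V$ is empty and half of the dichotomy is vacuous. Everything else — the identities $ax_2=\langle c\rangle$, $a(1-x_1)=1-\langle c\rangle$, $\langle z_j\rangle=\mathcal{L}^j(t)$, and $0\notin\mathcal{E}$ — is immediate from \eqref{alpha-xx}, \eqref{alpha-x-x-1}, \eqref{eq-lambda-seque-2} and Proposition \ref{thm-exist-e++}.
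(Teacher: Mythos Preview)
Your proposal is correct and follows essentially the same approach as the paper: both lift to the real line via $z_j=t+\lambda_j(t)/a$, use the gap dichotomy $z_{j+1}-z_j\in\{\lfloor c\rfloor/a,(\lfloor c\rfloor+1)/a\}$ together with $\langle z_j\rangle=\mathcal{L}^j(t)\notin\textbf{H}$ to pin down a unique $z_j$ in each window, reflect via Lemma~\ref{lem-lambad-sym-e} for \eqref{eq-lin-e-set-2}, and extend from $\mathcal{E}\cap\widetilde{\mathcal{E}}$ to $\mathcal{E}$ by right-continuity of the indicators and constancy of $\lambda_j$ on each $I_k$. The only cosmetic difference is that the paper, after reflecting, applies \eqref{eq-lin-e-set-1} to $1-t$ and then converts the resulting half-open interval $(-c,0]$ to $[-c,0)$ by checking directly that $\mathcal{L}^{-\ell'}(t)\notin\{0,\langle c/a\rangle,1-\langle c/a\rangle\}$ (using $0\notin\widetilde{\mathcal{E}}$ and $\langle c/a\rangle=\mathcal{M}(x_2)\notin\widetilde{\mathcal{E}}$), whereas you re-run the window-counting for $(m,m+c/a]$ and dispose of the boundary case $\mathcal{L}^k(1-t)=x_2$ via the left-endpoint structure of $\mathcal{E}$; the paper's route is slightly shorter and avoids the case analysis of whether $x_2$ is a left endpoint of $\mathcal{E}$ or lies outside $\mathcal{E}$ altogether (both occur, depending on the sign of $\delta$ in Proposition~\ref{thm-exist-e++}(iii)), but your argument goes through once that dichotomy is made explicit.
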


\proof
First, we prove \eqref{eq-lin-e-set-1} is valid for all $t\in\mathcal{E}\cap \widetilde{\mathcal{E}}$.
For each $n\in\Bbb Z$, let $\ell\in\Bbb Z$ be the smallest number such that $ta+\lambda_\ell(t)\ge na$. Then
\begin{equation}\label{eq-lam-uni-0001}
ta+\lambda_{\ell-1}(t)< na.
\end{equation}

If $\mathcal{L}^{\ell-1}(t)\in [x_2,1)$, {by} \eqref{eq-lambda-seque-1}, \eqref{eq-lambda.def} and \eqref{eq-lam-uni-0001},
\begin{equation}\label{eq-lam-uni}
ta+\lambda_\ell(t)=ta+\lambda_{\ell-1}(t)+\lambda(\mathcal{L}^{\ell-1}(t))<na+\lfloor c\rfloor\le na+c.
\end{equation}
If $\mathcal{L}^{\ell-1}(t)\in [0,x_1)$, then $x_1=\frac {a+\langle c\rangle-1}{a}$ by \eqref{alpha-xx}, and then {it follows from}  \eqref{eq-lambda-seque-2} and \eqref{eq-lam-uni-0001} {that}
 $$
 ta+\lambda_{\ell-1}(t)=\big\lfloor t+\frac{\lambda_{\ell-1}(t)}a\big\rfloor a+\mathcal{L}^{\ell-1}(t) a< (n-1)a+x_1a=na-1+\langle c\rangle,
 $$
and therefore {\eqref{eq-lambda.def} implies that }
\begin{equation}\label{eq-lam-uni-2}
ta+\lambda_\ell(t)=ta+\lambda_{\ell-1}(t)+\lambda(\mathcal{L}^{\ell-1}(t))<na-1+\langle c\rangle+\lfloor c+1\rfloor=na+c.
\end{equation}
Combining \eqref{eq-lam-uni} with \eqref{eq-lam-uni-2}, {by}  the assumption of $\ell$, we have
\begin{equation}\label{eq-lam-uni-0}
ta+\lambda_\ell(t)\in[na,na+c).
\end{equation}

Similarly, if $\mathcal{L}^{\ell}(t)\in [0,x_1)$, {by}  \eqref{eq-lambda-seque-1}, \eqref{eq-lam-uni-0} and \eqref{eq-lambda.def},
\begin{equation}\label{eq-lam-uni-3}
ta+\lambda_{\ell+1}(t)=ta+\lambda_{\ell}(t)+\lambda(\mathcal{L}^{\ell}(t))\ge na+\lfloor c\rfloor+1>na+c.
\end{equation}
If $\mathcal{L}^{\ell}(t)\in [x_2,1)$, in view of $x_2=\frac {\langle c\rangle}{a}$ by \eqref{alpha-xx}, {by}  \eqref{eq-lambda-seque-2} and \eqref{eq-lam-uni-0},
$$
ta+\lambda_{\ell}(t)=\big\lfloor\frac {ta+\lambda_{\ell}(t)}a\big\rfloor a + \mathcal{L}^{\ell}(t) a \ge na+x_2a=na+\langle c\rangle,
$$
and therefore by \eqref{eq-lambda.def},
\begin{equation}\label{eq-lam-uni-4}
ta+\lambda_{\ell+1}(t)=ta+\lambda_{\ell}(t)+\lambda(\mathcal{L}^{\ell}(t))\ge na+\langle c\rangle+\lfloor c\rfloor=na+c.
\end{equation}
Combining \eqref{eq-lam-uni-3} with \eqref{eq-lam-uni-4}, we have
\begin{equation}\nonumber
ta+\lambda_{\ell+1}(t)\not\in[na,na+c).
\end{equation}
This, together with \eqref{eq-lam-uni-0001} and \eqref {eq-lam-uni-0}, implies $\ell\in\Bbb Z$ is the only number such that $ta+\lambda_\ell(t)-na\in [0,c)$. Therefore, \eqref{eq-lin-e-set-1} is valid for all $t\in\mathcal{E}\cap \widetilde{\mathcal{E}}$.

\vspace{0.2cm}
\vspace{0.2cm}

Next, we prove \eqref{eq-lin-e-set-2} is valid for all $t\in\mathcal{E}\cap \widetilde{\mathcal{E}}$. Observe that $1-t \in\mathcal{E}\cap \widetilde{\mathcal{E}}$. Then replacing $t$ by $1-t$ in {the} above procedure, we have  {that,} for each $n\in\Bbb Z$, there exists  unique $\ell'\in\Bbb Z$ such that
$$
a(1-t)+\lambda_{\ell'}(1-t)-na\in [0,c).
$$
Therefore, by Lemma \ref{lem-lambad-sym-e}, $\ell'\in\Bbb Z$ is the only number such that
\begin{equation}\nonumber
at+\lambda_{-\ell'}(t)+(n-1)a\in (-c,0].
\end{equation}
By \eqref{eq-lambda-seque-2}, we have $\langle t+\frac{\lambda_{-\ell'}(t)}a \rangle =\mathcal{L}^{-\ell'}(t)\in \widetilde{\mathcal{E}}\cap\mathcal{E}$.
Recall that $0\not\in\widetilde{\mathcal{E}}$, and $\langle\frac ca\rangle=\mathcal{M}(x_2)\not\in\widetilde{\mathcal{E}}$ when $x_2\ne 1$, and $\langle\frac ca\rangle=\alpha\not\in\mathcal{E}$ when $x_2= 1$. Then $\langle t+\frac{\lambda_{-\ell'}(t)}a \rangle\not\in \{0,\langle\frac ca\rangle, 1-\langle\frac ca\rangle\}$.
%
Hence, $\ell'\in\Bbb Z$ is the unique number such that
$at+\lambda_{-\ell'}(t)+(n-1)a\in [-c,0)$.  Therefore, \eqref{eq-lin-e-set-2} is valid for $t\in\mathcal{E}\cap \widetilde{\mathcal{E}}$.

\vspace{0.2cm}
\vspace{0.2cm}


Now we prove \eqref{eq-lin-e-set-1} and \eqref{eq-lin-e-set-2} are valid for  $t\in \mathcal{E}$.  Fix $0\le k\le M-1$. For each $j\in\Bbb Z$, $\lambda_j(t)$ {is} the same for all
$t\in I_k$ by \eqref{eq-lambda.def} and \eqref{eq-lambda-sequence.def}. Then
the left hand sides of \eqref{eq-lin-e-set-1} and \eqref{eq-lin-e-set-2} are right continuous functions for $t\in I_k$. Therefore, \eqref{eq-lin-e-set-1} and \eqref{eq-lin-e-set-2} are valid for $t\in I_k$ since they are valid for $t\in I_k\cap \widetilde{I_k}\subset \mathcal{E}\cap \widetilde{\mathcal{E}}$. We conclude the proof {by} using the fact $\mathcal{E}=\cup_{k=0}^{M-1}I_k$.
\eproof

\vspace{0.2cm}

Now, we start to prove Theorem \ref{thm-gabor-eq-e-set}.

\vspace{0.2cm}

\noindent{\bf Proof of Theorem \ref{thm-gabor-eq-e-set}.} Recall that $H_c(x)=\chi_{[0,c)}(x)-\chi_{[-c,0)}(x)$.
The necessity is a consequence of Proposition \ref{prop-lin-e-set} and Corollary \ref{bound-sequence-cor}. 
Now we prove the sufficiency.

\vspace{0.2cm}

Assume that $\mathcal{G}(H_c;a)$ is not a Gabor frame. {By} Proposition \ref{prop-gabor-suff-e-set} and Theorem \ref{thm-s-1}, we have $\mathcal{S}\ne\emptyset$.
Furthermore, it can be seen from Proposition \ref{prop-gabor-suff-e-set} and Theorem \ref{thm-exist-e-1} that $Y(\alpha)\in \Bbb Q$.
 This together with Lemma \ref{lem-Y-map}  implies $a\in \Bbb Q$.
Denote $a=p/q$ for some co-prime positive integers $1\le p< q$.
{By} \eqref{alpha-x-x-1} and \eqref{rabcnewplus.def-2},
$p(t-\mathcal{L}^{\{\textbf{i}\}}(t))\in \Bbb Z$ for each $t\in \Bbb T_{[0,1)}$ and $\textbf{i}\in\Sigma^\ast$. Therefore,
$K_t=\{\mathcal{L}^{\{\textbf{i}\}}(t): \textbf{i}\in\Sigma^\ast \}$ is a subset of $\Bbb T_{[0,1)}$ with cardinality $\#(K_t)\le p$.
Set $K_{t,n}=\{\mathcal{L}^{\{\textbf{i}\}}(t): \textbf{i}\in\Sigma^j, j\le n \}$. Then $K_{t,n}\subset K_{t,n+1}\subset K_{t}$ for all $n\in\Bbb N$.

\vspace{0.2cm}

By Proposition \ref{prop-gabor-suff-e-set}, there exists $t_0\in\Bbb T_{[0,1)} $ such that
\begin{equation}\label{eq-Kset-cardinality}
K_{t_0,p+1}\subset \Bbb T_{[0,1)}\setminus \big(\textbf{H}\cup (1-\widetilde{\textbf{H}})\big).
\end{equation}
{By} the pigeonhole principle, there exists $k\le p$ such that $K_{t_0,k}= K_{t_0,k+1}$. Thus, we have
$$
\mathcal {L}^{\{i\}}(K_{t_0,k})\subset K_{t_0,k}, \ \ i\in\Sigma.
$$
For each $i\in\Sigma$, $\mathcal {L}^{\{i\}}(t)$ is an injection for $t\in \Bbb T_{[0,1)}\setminus \big(\textbf{H}\cup (1-\widetilde{\textbf{H}})\big)$. {By} \eqref{eq-Kset-cardinality}, $\#(\mathcal {L}^{\{i\}}(K_{t_0,k}))=\#(K_{t_0,k})$, and therefore, $\mathcal {L}^{\{i\}}(K_{t_0,k})= K_{t_0,k}$.
This implies {that} $K_{t_0,k}\subset \mathcal{E}$, and consequently $\mathcal{E}\ne\emptyset$, the sufficiency is proved,  and thus the proof of Theorem \ref{thm-gabor-eq-e-set} is completed.
\eproof

\vspace{0.2cm}

\section{Appendix:  Proof of Theorem \ref{main0} }\label{section-7}

For each $x\in\Bbb R$ and $q\in \Bbb N$, denote $\lfloor x\rfloor_q= {\lfloor qx\rfloor}/q$ and $\langle x\rangle_q=x-\lfloor x\rfloor_q$. {For  $u,v >0$, we define}
\begin{equation}\nonumber%
H_{u,v}:=-\chi_{[-u,0)}+\chi_{[0,v)}.
\end{equation}

Assume that $a= p/q$ for some co-prime integers $1\le p\le q$. {For} each $x\in \Bbb R$ and $\{q_n\}_{n\in\Bbb Z}\in\ell^\infty$, the sequence 
$\{\sum_{n\in\Bbb Z}q_nf(x+n-aj)\}_{j\in\Bbb Z}$
 is determined only by the values of $f$ at $x+\Bbb Z/q$.
 From this observation, we have the following lemma.
\vspace{0.2cm}

\begin{lem}\label{H-U-V}
Let $a= p/q$ with co-prime integers $1\le p\le q$, and $u,v \in \Bbb N/q$. Then $\mathcal{G}(H_{u,v};a)$ is not a Gabor frame if and only if there exist  $x\in \{ j/q: j=0,1,\dots,p-1\}$ and a nonzero bounded sequence $\{q_j\}_{j\in\Bbb Z}\in\ell^\infty$ 
such that
\begin{equation}\label{H-U-V-1}
\sum_{j\in\Bbb Z} q_j H_{u,v}(x+j-na)=0\  \mbox{ for all }\ n\in\Bbb Z.
\end{equation}
\end{lem}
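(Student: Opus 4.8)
The plan is to derive Lemma \ref{H-U-V} from Theorem \ref{bound-sequence-0} together with the observation preceding the lemma. The ``if'' direction is almost immediate: $H_{u,v}=-\chi_{[-u,0)}+\chi_{[0,v)}$ is compactly supported and right-continuous, so its one-sided limits exist and $H_{u,v}^{+}=H_{u,v}$; hence a relation $\sum_{j} q_j H_{u,v}(x+j-na)=0$ for all $n\in\Bbb Z$, with $x\in\{j/q:j=0,\dots,p-1\}\subset[0,a)$ and $\{q_j\}\in\ell^\infty\setminus\{0\}$, is exactly a relation of type \eqref{eq-linear-4.2} with $h=H_{u,v}^{+}$, so $\mathcal{G}(H_{u,v};a)$ is not a Gabor frame by Theorem \ref{bound-sequence-0}.

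For the ``only if'' direction, suppose $\mathcal{G}(H_{u,v};a)$ is not a Gabor frame. By Theorem \ref{bound-sequence-0} there are $t_0\in[0,a)$, a nonzero $\{q_j\}\in\ell^\infty$, and $h\in\{H_{u,v}^{+},H_{u,v}^{-}\}$ with $\sum_{j\in\Bbb Z} q_j h(t_0+j-na)=0$ for all $n\in\Bbb Z$; for each fixed $n$ this is a finite sum (by compact support of $H_{u,v}$ and boundedness of $\{q_j\}$), so all identities below may be checked termwise. I would record two structural facts. First, since $u,v\in\Bbb N/q$, every discontinuity of $H_{u,v}$ lies in $\tfrac1q\Bbb Z$; thus $H_{u,v}$ is constant on each interval $[\tfrac kq,\tfrac{k+1}q)$, which gives $H_{u,v}(\lfloor s\rfloor_q+m)=H_{u,v}(s+m)$ for every $s\in\Bbb R$ and $m\in\tfrac1q\Bbb Z$, and also $H_{u,v}^{-}(s+m)=H_{u,v}((s-\tfrac1q)+m)$ when $s\in\tfrac1q\Bbb Z$. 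Second, because $\gcd(p,q)=1$ one has $\Bbb Z+a\Bbb Z=\tfrac1q\Bbb Z$, and the condition ``$\sum_j q_j H_{u,v}(s+j-na)=0$ for all $n$'' is invariant under replacing $s$ by $s+t$ with $t\in\Bbb Z+a\Bbb Z$, provided $\{q_j\}$ is shifted by the corresponding integer (which keeps it bounded and nonzero).

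Using the first fact I would transport the relation for $h$ at $t_0$ to a relation for $H_{u,v}$ at a lattice point. Set $x=\lfloor t_0\rfloor_q$ unless $t_0\in\tfrac1q\Bbb Z$ and $h=H_{u,v}^{-}$, in which case set $x=t_0-\tfrac1q$. Since $a=p/q$ we have $j-na\in\tfrac1q\Bbb Z$ for all $j,n$, so in every case the structural fact yields $H_{u,v}(x+j-na)=h(t_0+j-na)$ for all $j,n\in\Bbb Z$ (using right-continuity of $H_{u,v}$ when $h=H_{u,v}^{+}$, the remark that $H_{u,v}=h$ off the jump set when $t_0\notin\tfrac1q\Bbb Z$, and the left-limit identity in the last case). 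Hence $\sum_{j} q_j H_{u,v}(x+j-na)=0$ for all $n$, with $x\in\tfrac1q\Bbb Z$. Finally, since $[0,a)\cap\tfrac1q\Bbb Z=\{0,1/q,\dots,(p-1)/q\}$ and $\Bbb Z+a\Bbb Z=\tfrac1q\Bbb Z$, I would use the second fact to shift $x$ by an element of $\Bbb Z+a\Bbb Z$ into this set (only the case $t_0=0$, $h=H_{u,v}^-$, where $x=-1/q$, actually requires a shift, namely by $\tfrac1q=m+na\in\Bbb Z+a\Bbb Z$), adjusting the sequence accordingly; the resulting $x$ and sequence satisfy \eqref{H-U-V-1}.

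The substantive part is entirely the bookkeeping in the last paragraph: lining up the values of $H_{u,v}$ on the translated lattice $x+\tfrac1q\Bbb Z$ with the one-sided limits $h$ on $t_0+\tfrac1q\Bbb Z$, in particular getting the orientation of the half-open intervals right so that the left-limit case is handled correctly. Everything else is formal — the finiteness of the sums makes the termwise reasoning legitimate, and the $(\Bbb Z+a\Bbb Z)$-invariance is read off directly from the definitions.
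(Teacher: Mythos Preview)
Your proof is correct and follows essentially the same approach as the paper's: both invoke Theorem \ref{bound-sequence-0} for the sufficiency, and for the necessity both use that $H_{u,v}$ is constant on each interval $[\tfrac kq,\tfrac{k+1}q)$ to replace $t_0$ by $\lfloor t_0\rfloor_q$ (with the same exceptional handling of $h=H_{u,v}^{-}$, $t_0\in\tfrac1q\Bbb Z$ via $x=t_0-\tfrac1q$), and then shift into $\{j/q:j=0,\dots,p-1\}$ by $a$-shift invariance. Your presentation is a bit more explicit about the bookkeeping (the $\Bbb Z+a\Bbb Z=\tfrac1q\Bbb Z$ observation and pinpointing exactly when a final shift is needed), but the argument is the same.
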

\proof
Apparently, the sufficiency is a consequence of Theorem \ref{bound-sequence-0} and we only need to prove the necessity. Assume that $\mathcal{G}(H_{u,v};a)$ is not a Gabor frame, it follows from Theorem \ref{bound-sequence-0} that there  exist  $t_0\in\Bbb R$ and a nonzero bounded sequence $\{q_j\}_{j\in\Bbb Z}\in \ell^\infty$ such that
\begin{equation}\nonumber
\sum_{j\in\Bbb Z} q_j g(t_0+j-na)=0\  \mbox{ for all }\ n\in\Bbb Z,
\end{equation}
for either $g(x)=H_{u,v}^+(x)$ or $g(x)=H_{u,v}^-(x)$. Observe that $H_{u,v}^+(x)=H_{u,v}(\lfloor x\rfloor_q)$ for all $x\in\Bbb R$, and $H_{u,v}^-(x)=H_{u,v}(\lfloor x\rfloor_q)$ for all $x\not\in \Bbb Z/q$. Then \eqref{H-U-V-1} is valid for $x=\lfloor t_0\rfloor_q$ except the only case that
\begin{equation}\nonumber
g(x)=H_{u,v}^-(x)\ \mbox{ and }\ t_0\in\Bbb Z/q.
\end{equation}
In this case, observing that $H_{u,v}^-(t_0+j-na)=H_{u,v}(t_0+j-na- 1/{q})$ for all $n,j\in\Bbb Z$,  we just need to choose $x=t_0-1/{q}$ and then \eqref{H-U-V-1} holds. As \eqref{H-U-V-1} is $a-$shift invariant we may take $x\in \{ j/q:j=0,1,\dots,p-1\}$. This completes the proof of the necessity.
\eproof

\vspace{0.2cm}

\vspace{0.2cm}

\begin{prop} \label{rational-case-2}
Let $a= p/q$ for some co-prime integers $1\le p\le q$, and $c>0$. Then:
\begin{enumerate}
   \item [{\rm(1)}] {If $\langle c\rangle_q\in(0,\frac 1{2q})$,} then $\mathcal{G}(H_c;a)$ is a Gabor frame if and only if both $\mathcal{G}(H_{\lfloor c\rfloor_q};a)$ and $\mathcal{G}(H_{\lfloor c\rfloor_q,\lfloor c\rfloor_q+\frac 1q};a)$
       are Gabor frames.
   \item [{\rm(2)}] {If $\langle c\rangle_q = \frac 1{2q}$,} then $\mathcal{G}(H_c;a)$ is a Gabor frame if and only if  $\mathcal{G}(H_{\lfloor c\rfloor_q,\lfloor c\rfloor_q+\frac 1q};a)$
       is a Gabor frame.
   \item [{\rm(3)}] {If $\langle c\rangle_q\in(\frac 1{2q},\frac1q)$}, then $\mathcal{G}(H_c;a)$ is a Gabor frame if and only if both
   $\mathcal{G}(H_{\lfloor c\rfloor_q+\frac1q};a)$ and
   $\mathcal{G}(H_{\lfloor c\rfloor_q,\lfloor c\rfloor_q+\frac 1q};a)$
    are Gabor frames.
\end{enumerate}
\end{prop}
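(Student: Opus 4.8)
The plan is to reduce the frame property of $\mathcal{G}(H_c;a)$ with $a=p/q$ to that of finitely many piecewise‑constant ``$q$‑adic'' windows on the grid $\frac1q\Bbb Z$, to which Lemma \ref{H-U-V} applies directly. By Corollary \ref{bound-sequence-cor}, $\mathcal{G}(H_c;a)$ fails to be a frame precisely when there are $t_0\in[0,a)$ and a nonzero $\{q_j\}_{j\in\Bbb Z}\in\ell^\infty$ with $\sum_j q_j H_c(t_0+j-na)=0$ for all $n\in\Bbb Z$. Writing $t_0=s+\ell/q$ with $s=\langle t_0\rangle_q\in[0,1/q)$ and $\ell=\lfloor qt_0\rfloor\in\{0,\dots,p-1\}$, one has $t_0+j-na=s+(\ell+qj-np)/q$, so the killing condition becomes $\sum_j q_j G_s(\ell+qj-np)=0$ for all $n$, where $G_s(k):=H_c(s+k/q)$; thus everything depends on $H_c$ only through the sampled sign patterns $G_s$, $s\in[0,1/q)$, and as $t_0$ runs over $[0,a)$ the pair $(s,\ell)$ runs over all of $[0,1/q)\times\{0,\dots,p-1\}$.

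Next I would compute $G_s$ explicitly. Put $m=\lfloor qc\rfloor$, so $\lfloor c\rfloor_q=m/q$ and $r:=\langle c\rangle_q\in(0,1/q)$. A short computation shows that $G_s$ carries $+1$ on the indices $0,1,\dots,m$ if $s<r$ and on $0,1,\dots,m-1$ if $s\ge r$, and $-1$ on $-m,\dots,-1$ if $s<1/q-r$ and on $-m-1,\dots,-1$ if $s\ge 1/q-r$. Hence $G_s$ is exactly the $(1/q)$-sampling pattern of one of four windows built from $\lfloor c\rfloor_q$ and $\lfloor c\rfloor_q+1/q$: namely $H_{\lfloor c\rfloor_q,\lfloor c\rfloor_q+1/q}$ (if $s<r$ and $s<1/q-r$), $H_{\lfloor c\rfloor_q+1/q}$ (if $1/q-r\le s<r$, possible only when $r>1/(2q)$), $H_{\lfloor c\rfloor_q}$ (if $r\le s<1/q-r$, possible only when $r<1/(2q)$), and $H_{\lfloor c\rfloor_q+1/q,\lfloor c\rfloor_q}$ (if $s\ge r$ and $s\ge 1/q-r$). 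Since $H_{u,v}(-x)$ and $-H_{v,u}(x)$ agree off a finite set, the unitary $f(x)\mapsto f(-x)$ carries $\mathcal{G}(H_{u,v};a)$ onto a relabelling of $\mathcal{G}(-H_{v,u};a)$, so $\mathcal{G}(H_{u,v};a)$ and $\mathcal{G}(H_{v,u};a)$ share the frame property; in particular $H_{\lfloor c\rfloor_q+1/q,\lfloor c\rfloor_q}$ may be replaced by $H_{\lfloor c\rfloor_q,\lfloor c\rfloor_q+1/q}$. On the other side, for $u,v\in\Bbb N/q$, Lemma \ref{H-U-V} says $\mathcal{G}(H_{u,v};a)$ fails to be a frame precisely when $\sum_j q_j G'(\ell+qj-np)=0$ for all $n$, for some $\ell\in\{0,\dots,p-1\}$ and nonzero $\{q_j\}\in\ell^\infty$, where $G'$ is the $(1/q)$-sampling pattern of $H_{u,v}$ — the very same system as above. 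So whenever $G_s=G'$, the statements ``$H_c$ is killed by some sequence at some $t_0$ with $\langle t_0\rangle_q=s$'' and ``$\mathcal{G}(H_{u,v};a)$ is not a frame'' are equivalent.

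Finally I would read off, in each regime, which patterns actually occur as $s$ runs over $[0,1/q)$: when $r<1/(2q)$ the types $H_{\lfloor c\rfloor_q,\lfloor c\rfloor_q+1/q}$, $H_{\lfloor c\rfloor_q}$, $H_{\lfloor c\rfloor_q+1/q,\lfloor c\rfloor_q}$ occur; when $r=1/(2q)$ only $H_{\lfloor c\rfloor_q,\lfloor c\rfloor_q+1/q}$ and $H_{\lfloor c\rfloor_q+1/q,\lfloor c\rfloor_q}$ occur; and when $r>1/(2q)$ the types $H_{\lfloor c\rfloor_q,\lfloor c\rfloor_q+1/q}$, $H_{\lfloor c\rfloor_q+1/q}$, $H_{\lfloor c\rfloor_q+1/q,\lfloor c\rfloor_q}$ occur. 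Collapsing the reflected window into $H_{\lfloor c\rfloor_q,\lfloor c\rfloor_q+1/q}$, we get that $\mathcal{G}(H_c;a)$ is a frame iff none of the occurring windows is killed, which is precisely (1), (2) and (3). The computation of $G_s$ is routine; the points needing care are verifying that the sampled identity $G_s=G'$ makes the two killing systems literally identical (so the same sequence $\{q_j\}$ transfers between them), keeping track of the transition values of $s$, and the degenerate case $\lfloor c\rfloor_q=0$ (where some of the windows degenerate, e.g. $H_{\lfloor c\rfloor_q}$ becomes the zero function, which is never a frame), handled directly so the stated equivalence still holds. I expect the main obstacle to be organizing this case bookkeeping cleanly rather than any genuine analytic difficulty.
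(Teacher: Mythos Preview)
Your proposal is correct and follows essentially the same approach as the paper. The paper also reduces the killing condition for $H_c$ to that of the four $q$-adic reference windows by analyzing, for each $x$, which of the four sign patterns the samples $H_c(x+j-na)$ realize (their assertions (A)--(D) are exactly your four cases for $G_s$, with their $\langle x\rangle_q$ playing the role of your $s$), invokes Lemma~\ref{H-U-V} on the reference windows, and collapses $H_{\lfloor c\rfloor_q+1/q,\lfloor c\rfloor_q}$ into $H_{\lfloor c\rfloor_q,\lfloor c\rfloor_q+1/q}$ via the same reflection symmetry you use.
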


\proof
It is easy to check the following statements:
\begin{enumerate}
   \item [{(i)}] If $\langle c\rangle_q\le\langle x\rangle_q $, then $(-x+[0,c))\cap\Bbb Z/q=(-\lfloor x\rfloor_q+[0,\lfloor c\rfloor_q))\cap\Bbb Z/q$;
   \item [{(ii)}] If $\langle c\rangle_q >\langle x\rangle_q $, then $(-x+[0,c))\cap\Bbb Z/q=(-\lfloor x\rfloor_q+[0,\lfloor c\rfloor_q+\frac 1q))\cap\Bbb Z/q$;
   \item [{(iii)}] If $\langle c\rangle_q+\langle x\rangle_q <\frac1q$, then $(-x+[-c,0))\cap\Bbb Z/q=(-\lfloor x\rfloor_q+[-\lfloor c\rfloor_q,0))\cap\Bbb Z/q$;
   \item [{(iv)}] If $\langle c\rangle_q+\langle x\rangle_q \ge \frac1q$, then $(-x+[-c,0))\cap\Bbb Z/q=(-\lfloor x\rfloor_q+[-\lfloor c\rfloor_q-\frac 1q,0))\cap\Bbb Z/q$.
  \end{enumerate}


\vspace{0.2cm}
Combining with the corresponding statements above, we have the following assertions:

\vspace{0.2cm}
\begin{enumerate}
\item  [{(A)}] If $\langle c\rangle_q\le\langle x\rangle_q $ and $\langle c\rangle_q+\langle x\rangle_q <\frac1q$, then for any $n\in\Bbb Z$,
$$\sum_{j\in\Bbb Z}q_jH_c(x+j-an)=0\mbox{ if and only if } \sum_{j\in\Bbb Z}q_jH_{\lfloor c\rfloor_q}(\lfloor x\rfloor_q +j-an)=0;$$
\item  [{(B)}] If $\langle c\rangle_q\le\langle x\rangle_q $ and $\langle c\rangle_q+\langle x\rangle_q \ge \frac1q$, then for any $n\in\Bbb Z$,
$$\sum_{j\in\Bbb Z}q_jH_c(x+j-an)=0\mbox{ if and only if } \sum_{j\in\Bbb Z}q_jH_{\lfloor c\rfloor_q+\frac 1q,\lfloor c\rfloor_q}(\lfloor x\rfloor_q +j-an)=0;$$
\item  [{(C)}] If $\langle c\rangle_q >\langle x\rangle_q $ and $\langle c\rangle_q+\langle x\rangle_q <\frac1q$, then for any $n\in\Bbb Z$,
$$\sum_{j\in\Bbb Z}q_jH_c(x+j-an)=0\mbox{ if and only if } \sum_{j\in\Bbb Z}q_jH_{\lfloor c\rfloor_q,\lfloor c\rfloor_q+\frac1q}(\lfloor x\rfloor_q +j-an)=0;$$
\item  [{(D)}] If $\langle c\rangle_q >\langle x\rangle_q $ and $\langle c\rangle_q+\langle x\rangle_q \ge \frac1q$, then for any $n\in\Bbb Z$,
$$\sum_{j\in\Bbb Z}q_jH_c(x+j-an)=0\mbox{ if and only if } \sum_{j\in\Bbb Z}q_jH_{\lfloor c\rfloor_q+\frac1q}(\lfloor x\rfloor_q +j-an)=0.$$
  \end{enumerate}

Now let's prove the proposition item by item.

\vspace{0.2cm}

(1). \emph{Sufficiency}. Suppose that $\mathcal{G}(H_c;a)$ is not a Gabor frame. Using Corollary \ref{bound-sequence-cor}, there exist $x\in \Bbb R$ and a nonzero bounded sequence $\{q_j\}_{j\in\Bbb Z}\in\ell^\infty$ such that
$$
\sum_{j\in\Bbb Z} q_j H_c(x+j-na)=0\  \mbox{ for all }\ n\in\Bbb Z.
$$
{By assertions} (A), (B) and (C) and Lemma \ref{H-U-V}, at least one of
$\mathcal{G}(H_{\lfloor c\rfloor_q};a)$, $\mathcal{G}(H_{\lfloor c\rfloor_q,\lfloor c\rfloor_q+\frac 1q};a)$
and $\mathcal{G}(H_{\lfloor c\rfloor_q+\frac 1q,\lfloor c\rfloor_q};a)$ is not a Gabor frame. Observe that $\mathcal{G}(H_{\lfloor c\rfloor_q,\lfloor c\rfloor_q+\frac 1q};a)$
being a Gabor frame is equivalent to that for $\mathcal{G}(H_{\lfloor c\rfloor_q+\frac 1q,\lfloor c\rfloor_q};a)$. Thus, $\mathcal{G}(H_{\lfloor c\rfloor_q};a)$ and $\mathcal{G}(H_{\lfloor c\rfloor_q,\lfloor c\rfloor_q+\frac 1q};a)$ are not both the  Gabor frames.

\vspace{0.2cm}

\emph{Necessity}. If $\mathcal{G}(H_{\lfloor c\rfloor_q};a)$ is not a Gabor frame, it follows  immediately from Lemma \ref{H-U-V} that there exist $x\in \Bbb Z/q$ and a nonzero bounded sequence $\{q_j\}_{j\in\Bbb Z}\in\ell^\infty$ such that
\begin{equation}\label{prop-proof-1}
\sum_{j\in\Bbb Z} q_j H_{\lfloor c\rfloor_q}( x+j-na)=0\  \mbox{ for all }\ n\in\Bbb Z.
\end{equation}
Set $t= x+\frac1{2q}$. Observe that $\langle c\rangle_q\le\langle t\rangle_q $ and $\langle c\rangle_q+\langle t\rangle_q <\frac1q$. {By} \eqref{prop-proof-1} and {assertion} (A), one can get $\sum_{j\in\Bbb Z}q_jH_c(t+j-an)=0$ for all $n\in\Bbb Z$. Thus, $\mathcal{G}(H_c;a)$ is not a Gabor frame by Corollary \ref{bound-sequence-cor}.

\vspace{0.2cm}

On the other hand, if $\mathcal{G}(H_{\lfloor c\rfloor_q,\lfloor c\rfloor_q+\frac 1q};a)$ is not a Gabor frame,
{by}  Lemma \ref{H-U-V}, there exist $x\in\Bbb Z/q$ and a nonzero bounded sequence $\{q_j\}_{j\in\Bbb Z}\in\ell^\infty$ such that
$$
\sum_{j\in\Bbb Z} q_j H_{\lfloor c\rfloor_q,\lfloor c\rfloor_q+\frac 1q}( x+j-na)=0\  \mbox{ for all }\ n\in\Bbb Z,
$$
which together with {assertion} (C) implies
$ \sum_{j\in\Bbb Z} q_j H_c( x+j-na)=0\  \mbox{ for all }\ n\in\Bbb Z$.
Therefore, by Corollary \ref{bound-sequence-cor}, $\mathcal{G}(H_c;a)$ is not a Gabor frame. Thus, (1) is valid.

\vspace{0.2cm}

(2). According to {assertions} (B) and (C),  $\mathcal{G}(H_c;a)$ is a Gabor frame if and only if both $\mathcal{G}(H_{\lfloor c\rfloor_q,\lfloor c\rfloor_q+\frac 1q};a)$ and $\mathcal{G}(H_{\lfloor c\rfloor_q+\frac 1q,\lfloor c\rfloor_q};a)$ are Gabor frames by and Corollary \ref{bound-sequence-cor} and Lemma \ref{H-U-V}. Observe that $\mathcal{G}(H_{\lfloor c\rfloor_q,\lfloor c\rfloor_q+\frac 1q};a)$
being a Gabor frame is equivalent to that for $\mathcal{G}(H_{\lfloor c\rfloor_q+\frac 1q,\lfloor c\rfloor_q};a)$. Then (2) follows.

\vspace{0.2cm}

(3). The proof is similar as (1) but replacing {assertion} (A) by (D). We complete the proof of Proposition \ref{rational-case-2}.
\eproof

\vspace{0.2cm}
\vspace{0.2cm}

Now we start to prove Theorem \ref{main0}.

\vspace{0.2cm}

\noindent\textbf{Proof of Theorem \ref{main0}.}
(i). The conclusion follows from Corollary \ref{bound-sequence-cor} by taking $t_0=0$ and $q_j=1$ for all  $j\in\Bbb Z$ in \eqref{eq-linear-cor}.

\vspace{0.2cm}

(ii). For the case $ \langle c\rangle \ne \frac 12$, let $[c]$ be the integer such that $|c-[c]|<\frac 12$.
Recall that $\mathcal{G}(H_{[c]};a)$ is not a Gabor frame by (i).
 Then $\mathcal{G}(H_c;a)$ is not a Gabor frame by using Proposition \ref{rational-case-2} (1) and (3) with respect to the cases $c>[c]$ and $c<[c]$. 

\vspace{0.2cm}

Now we prove $\mathcal{G}(H_{c};a)$ is a Gabor frame when $ \langle c\rangle = \frac 12$. Suppose on the contrary that $\mathcal{G}(H_{c};a)$ is not a Gabor frame. {By} Proposition \ref{rational-case-2} (2), $\mathcal{G}(H_{\lfloor c\rfloor,\lfloor c\rfloor+1};a)$ is not a Gabor frame, which together with Lemma \ref{H-U-V} implies that there exists a nonzero bounded sequence
$\{q_j\}_{j\in\Bbb Z}\in\ell^\infty$ such that
$$
 \sum_{j\in\Bbb Z}q_jH_{\lfloor c\rfloor,\lfloor c\rfloor+1}(j-n)=0 \ \mbox{ for all } \ n\in\Bbb Z.
$$
This means
\begin{equation}\label{a=1.1}
\sum_{j=1}^{\lfloor c\rfloor}q_{n-j}=\sum_{j=0}^{\lfloor c\rfloor} q_{n+j}\ \mbox{ for all }  \ n\in\Bbb Z.
\end{equation}
Comparing \eqref{a=1.1} by taking $n=k$ and $k+1$, we have $2q_k=q_{k-\lfloor c\rfloor}+q_{k+\lfloor c\rfloor+1}$. Iterating this equation once more, we obtain
\begin{equation}\label{a=1.2}
4q_k=q_{k-2\lfloor c\rfloor}+2q_{k+1}+q_{k+2\lfloor c\rfloor+2} \mbox{ for all } k\in\Bbb Z.
\end{equation}

\vspace{0.2cm}

Let $M=\|\{q_j\}_{j\in\Bbb Z}\|_\infty$. Without loss of generality, we assume $M=\sup_{j\in\Bbb Z} q_j$ since otherwise, we may replace $q_j$ by $-q_j$, $j\in\Bbb Z$.
Set $\varepsilon=2^{-\lfloor c\rfloor-2}M$. Then there exists  $n_0\in\Bbb Z$ such that $q_{n_0}>M-\varepsilon$. This together with \eqref{a=1.2} implies $q_{n_0+1}>M-2\varepsilon$. Replacing $\varepsilon$ and $n_0$ by $2\varepsilon$ and $n_0+1$ respectively,  we can repeat this procedure $j$ times to obtain $q_{n_0+j}>M-2^j\varepsilon$ for all $j\in\Bbb N$. Therefore,
$$
\sum_{j=1}^{\lfloor c\rfloor}q_{n_0-j}\le M {\lfloor c\rfloor} \le M( {\lfloor c\rfloor}+1)-2^{\lfloor c\rfloor+1}\varepsilon < \sum_{j=0}^{\lfloor c\rfloor} q_{n_0+j}.
$$
This contradicts to \eqref{a=1.1}. Hence, $\mathcal{G}(H_{c};a)$ is a Gabor frame when $ \langle c\rangle = \frac 12$. We {have} proved (ii).

\vspace{0.2cm}

(iii). 
Assume that
there exist $x\in\Bbb R$ and $\{q_j\}_{j\in\Bbb Z}\in \ell^\infty
$ such that
\begin{equation}\label{proof-th1.1-1}
\sum_{j\in\Bbb Z}q_jH_{c}(x+j-na)=0 \mbox{ for all } n\in\Bbb Z.
\end{equation}


For each $j\in\Bbb Z$, let $n_1,n_2\in\Bbb Z$ be the smallest/largest integer such that $n_1>\frac{x+j+1-c}a$
and $n_2\le \frac{x+j-1+c}a$ respectively.
{By} \eqref{proof-th1.1-1}, $q_{j+1}H_{c}(x+j+1-n_1a)+q_{j}H_{c}(x+j-n_1a)=0$ and $q_{j-1}H_{c}(x+j-1-n_2a)+q_{j}H_{c}(x+j-n_2a)=0$. This  means that $q_j=0$ will lead to $q_{j-1}=q_{j+1}=0$, and therefore $q_k=0$ for all $k\in\Bbb Z$ by iteration.

\vspace{0.2cm}

Note that $a\not\in \Bbb Q$. There exists $n_0\in \Bbb Z$ such that $\langle x- n_0a\rangle < \min(c,1-c)$. Then $j_0=-\lfloor x -n_0a \rfloor$ is the only integer such that $x+j_0-n_0a\in [-c,c)$. Thus $q_{j_0}=0$  by \eqref {proof-th1.1-1}, and therefore, $q_j=0$ for all $j\in\Bbb Z$ by the argument above. Hence $\mathcal{G}(H_c;a)$ is a Gabor frame by Corollary \ref{bound-sequence-cor}.
We {have} proved (iii).

\vspace{0.2cm}

(iv). If $1-\frac 1{2q}<c< 1$, then $\mathcal{G}(H_c;a)$ is not a Gabor frame by Proposition \ref{rational-case-2} (3) and the observation that $\mathcal{G}(H;a)$ is not a Gabor frame by (i).



\vspace{0.2cm}

Now assume $c\le 1-\frac 1{2q}$. By the same argument in (iii), we only need to find an integer $j_0\in\Bbb Z$ such that $q_{j_0}=0$ when \eqref{proof-th1.1-1} is valid for some $x\in\Bbb R$ and $\{q_j\}_{j\in\Bbb Z}\in \ell^\infty$.
%
%
Let $n_0\in\Bbb Z$ be an integer such that $x- n_0a\in [-\frac 1{2q},\frac1{2q})+\Bbb Z$, and  $j_0\in\Bbb Z$ be the number such that $x+j_0-n_0a\in [-\frac 1{2q},\frac1{2q})$. Then $j_0$ is the only integer satisfying $x+j_0-n_0a\in[-c,c)$, and $q_{j_0}=0$ by \eqref {proof-th1.1-1}.
This proves (iv), and the proof of Theorem \ref{main0} is completed.
\eproof

\vspace{0.2cm}

\end{document}